\documentclass{amsart}

\usepackage{amssymb}
\usepackage{latexsym}
\usepackage{amsmath}

\textwidth=140mm \textheight=208mm \calclayout

\def\wt{\widetilde}
\def\wh{\widehat}
\def\ov{\overline}
\def \im{{\rm Im}}
 \def\up{\upharpoonright}
\def\cH{\mathcal H}

\def\cK{\mathcal K} \def\cL{\mathcal L}
\def\cM{\mathcal M} \def\cN{\mathcal N}  
 \def\cT{\mathcal T} \def\cI{\mathcal I}

\def \gH{\mathfrak H}   \def \gN{\mathfrak N}

\def \bC{\mathbb C}    \def\bR{\mathbb R}
\def\bH{\mathbb H} 
\def \l{\lambda}
\def \a{\alpha} \def \b{\beta}    
 \def \t{\theta} \def\g {\gamma}
  
\def \f{\varphi}  \def \G{\Gamma} \def\D {\Delta}

\def \C{\widetilde {\mathcal C}}
\def \CA{\C(\cH_0,\cH_1)}

\def \cd {\cdot}

\def\HH {\cH_0\oplus\cH_1}
\def\AC {AC(\cI; \bH)}   \def\LI {L_\Delta^2(\cI)}
\def\lI {\cL_\Delta^2(\cI)}
\def\tma{\cT_{max}} \def\tmi{\cT_{min}} \def\Tma{T_{max}} \def\Tmi{T_{min}}

\def \dom {{\rm dom}\,}  \def \ran {{\rm ran}\,}  \def \ker{{\rm ker\,}}
 \def \mul {{\rm mul}\,} \def \sign {{\rm sign}\,}

\def \exa { {Ext}_A}
  \def\tm{\times}

\def  \RH {\wt R_\a (\cH_0,\cH_1)}
\def  \RP {\wt R_{+1} (\wt\cH_b,\cH_b)}
\def  \RM {\wt R_{-1} (\wt\cH_b,\cH_b)}

 \def \RZ {\wt R_\a^0 (\cH_0,\cH_1)}
\def  \Rh {\wt R (\cH)} 
\def \pair {\tau=\{\tau_+,\tau_-\}}
\def \CR {\bC\setminus\bR}

\newcommand {\lo}[1] {\cL_\D^2[#1,\bH ]}
\newcommand {\Ca}[1] {\emph {Case #1}}

\def\bt{\{\cH,\G_0,\G_1\}}
\def\bta{\{\cH_0\oplus \cH_1,\Gamma _0,\Gamma _1\}}

\newtheorem{theorem}{Theorem}[section]
\newtheorem{proposition}[theorem]{Proposition}
\newtheorem{corollary}[theorem]{Corollary}
\newtheorem{lemma}[theorem]{Lemma}
\newtheorem{assertion}[theorem]{Assertion}
\theoremstyle{definition}

\theoremstyle{definition}
\newtheorem {definition} [theorem]{Definition}
\theoremstyle{remark}
\newtheorem{remark}[theorem]{Remark}
\numberwithin{equation}{section}
\begin{document}

\title[On Titchmarsh-Weyl functions ]
{On Titchmarsh-Weyl functions  of first-order symmetric systems with arbitrary
deficiency indices}
\author{Sergio Albeverio}
\author{Mark Malamud}
\author {Vadim Mogilevskii}

\subjclass[2010]{34B08, 34B20, 34B40,47B25, 47E05}

\keywords{First-order symmetric system, decomposing boundary triplet,
Nevanlinna boundary conditions , $m$-function}

\begin{abstract}
We study general (not necessarily Hamiltonian) first-order symmetric  systems
$J y'(t)-B(t)y(t)=\D(t) f(t)$ on an interval $[a,b\rangle $ with the regular
endpoint $a$. The deficiency indices $n_\pm$ of the corresponding  minimal
relation $\Tmi$ may be arbitrary (possibly unequal).  Our approach is based on
the concept of a decomposing boundary triplet, which enables one to parametrize
various classes of extensions of $\Tmi$ (self-adjoint, $m$-dissipative, etc.)
in terms of boundary conditions imposed on regular and singular values of a
function $y\in \dom \tma$ at the endpoints $a$ and $b$ respectively. In
particular, we describe self-adjoint and $\l$-depending Nevanlinna boundary
conditions which are analogs  of separated ones for Hamiltonian systems.  With
a boundary value problem involving such conditions we associate the
$m$-function $m(\cd)$, which is an analog of the Titchmarsh-Weyl coefficient
for the Hamiltonian system. In the simplest   case of minimal (unequal)
deficiency indices $n_\pm$ the $m$-function $m(\cd)$ coincides with the
rectangular Titchmarsh-Weyl coefficient introduced by Hinton and Schneider. We
parametrize all $m$-functions in terms of the Nevanlinna boundary parameter at
the endpoint $b$ by means of the formula similar to the known Krein formula for
resolvents. Application of these results to differential operators of an odd
order enables us to complete the results by Everitt and Krishna Kumar on the
Titchmarsh-Weyl theory of such operators.
\end{abstract}
\maketitle
\section{Introduction}
Assume that $H$ and $\wh H$ are finite dimensional Hilbert spaces with $\dim
H=\nu_+$ and $\dim \wh H=\wh\nu$ and let
\begin{gather}\label{1.1}
H_0=H\oplus\wh H, \qquad \bH=H_0\oplus H=H\oplus\wh H \oplus H.
\end{gather}

The main object of the paper is a first-order symmetric  system of differential
equations defined on an interval $\cI=[a,b\rangle, -\infty<a <b\leq\infty,$
with the regular endpoint $a$ and singular, generally speaking, endpoint $b$.
Such a system is of the form \cite{Atk,GK}
\begin {equation}\label{1.2}
J y'(t)-B(t)y(t)=\D(t) f(t), \quad t\in\cI,
\end{equation}
where $B(t)=B^*(t)$ and $\D(t)\geq 0$ are the $[\bH]$-valued functions on $\cI$
and
\begin {equation} \label{1.3}
J=\begin{pmatrix} 0 & 0&-I_H \cr 0& i I_{\wh H}&0\cr I_H&
0&0\end{pmatrix}:H\oplus\wh H\oplus H \to H\oplus\wh H\oplus H.
\end{equation}
We suppose that the system \eqref{1.2} is definite, that is for each $\l\in\bC$
the equalities
\begin {equation}\label{1.5}
J y'(t)-B(t)y(t)=\l \D(t) y(t)
\end{equation}
and $\D(t)y(t)=0$ a.e. on $\cI$ yield $y(t)=0, \; t\in\cI$.

The system \eqref{1.2} is called Hamiltonian if $\wh H=\{0\}$, in which case
\begin {equation}\label{1.5a}
J=\begin{pmatrix}  0&-I_H \cr  I_H& 0\end{pmatrix}:H\oplus H \to H\oplus H.
\end{equation}

Assume that $\lI$ is the semi-Hilbert space of $\bH$-valued functions $f(t)$ on
$\cI$ with $||f||_\D^2:=\int\limits_{\cI}(\D (t)f(t),f(t))_\bH \,dt<\infty$,
$\LI$ is the corresponding Hilbert space of equivalence classes, $\pi$ is a
quotient map from $\lI$ onto $\LI$ and $\wt\pi=\pi\oplus\pi$. Denote also by
$\lo{\cK}$ the set of all operator functions $Y(t)(\in [\cK,\bH])$ on $\cI$
such that $Y(t)h\in \lI$ for each $h\in\cK$ (here $\cK$ is a finite dimensional
Hilbert space).

As is known the extension theory of symmetric linear  relations is the natural
approach to boundary value problems involving symmetric systems (see
\cite{Orc,LT82,DLS88,DLS93,HSW00,Kac03,BHSW10,LesMal03} and references
therein). According to \cite{Orc} the system \eqref{1.2} generates linear
relations $\tmi$ and $\tma$ in $\lI$ and minimal and maximal relations $\Tmi=
\wt\pi\tmi$ and $\Tma= \wt\pi\tma$ in $\LI$. It turns out that  $\Tmi$ is a
closed symmetric relation with not necessarily equal deficiency indices $n_\pm$
and $\Tma=\Tmi^*$. Moreover, the equality
\begin {equation}\label{1.6}
[y,z]_b=\lim _{t\uparrow b} (J y(t), z(t)), \quad y,z\in\dom\tma,
\end{equation}
defines a skew-Hermitian bilinear form on  the domain of $\tma$ with finite
indices of inertia $\nu_{b+}$ and $\nu_{b-}$.

A description of various classes of extensions of $\Tmi$ (self-adjoint,
$m$-dissipative, etc.) in terms of boundary conditions is an important problem
in the spectral theory of symmetric systems. In particular, a boundary value
problem  for the system \eqref{1.2} with self-adjoint separated boundary
conditions generates the Fourier transform with the spectral function of the
minimal dimension. Assume that the system \eqref{1.2} is Hamiltonian,
$n_+=n_-=:n$ and let $y(t)=\{y_0(t),y_1(t)\}(\in H\oplus H)$ be  the
representation of a function  $y\in\dom\tma$. Then according to \cite{HinSch93}
the general form of  self-adjoint separated boundary conditions is
\begin {equation}\label{1.7}
\cos B_1 \, y_0(a)+\sin B_1 \, y_1(a)=0, \quad [y,\chi_j]_b=0, \quad j=1\div
\nu_b,\;\; \; y\in\dom\tma,
\end{equation}
where $B_1=B_1^*\in [H], \; \nu_b=n-\dim H$ and $\chi_1,\;\chi_2, \; \dots,\;
\chi_{\nu_b} $ are linearly independent modulo $\dom\tmi$ functions  from
$\dom\tma$ such that $\chi_j(0)=0$ and $[\chi_j,\chi_k]_b=0,\; j\neq k$. An
element $y_b:=\{[y,\chi_j]_b\}_1^{\nu_b}\in \bC^{\nu_b}$ is called a singular
boundary value   of a function $y\in\dom\tma$. Observe that for differential
operators the notion of a singular boundary value as well as formula
\eqref{1.7} go back to the paper by Calkin \cite{Cal39} (see also
\cite[Ch.13.2] {DunSch}).

Boundary conditions \eqref{1.7} generate a self-adjoint extension $\wt A$ of
$\Tmi$ given by $\wt A=\wt\pi\{\{y,f\}\in\tma:\, y \;\; \text{satisfies
\eqref{1.7}}\}$. The resolvent of $\wt A$ is defined  by  $(\wt A-\l)^{-1}\wt f
= \pi y_f$, where $y_f$ is the $\cL_\D^2$-solution of the boundary problem
involving the system
\begin {equation}\label{1.7a}
J y'(t)-B(t)y(t)=\l\D (t)y+\D(t) f(t), \quad f\in\wt f, \quad \l\in \CR,
\end{equation}
and the boundary conditions \eqref{1.7}. Moreover, according to \cite{HinSch93}
the Titchmarsh - Weyl coefficient $M_{TW}(\l)(\in [H])$ of the boundary problem
\eqref{1.7a}, \eqref{1.7} is defined by the relations
\begin {equation}\label{1.7b}
v(t,\l):=\f(t,\l)M_{TW}(\l)+\psi(t,\l)\in \lo{H} \;\;\;\;\text{and}\;\;\;\;
[v(\cd,\l)h,\chi_j]_b=0, \;\;\;\ h\in H.
\end{equation}
for all $j=1\div\nu_b$. Here $\f(\cd,\l)$ and $\psi(\cd,\l)$ are the
$[H,\bH]$-valued operator solutions of Eq. \eqref{1.5} with the initial data
$\f(a,\l)= (\sin B_1\,:\, -\cos B_1)^\top$ and $\psi (a,\l)= (-\cos B_1\,:\,
\sin B_1)^\top$. Note also the paper \cite{Kra89}, in which  the Titchmarsh -
Weyl coefficient is defined by means of a limiting process from a compact
interval $[a,\b]\subset \cI$. It turns out that $M_{TW}(\cd)$ is a Nevanlinna
operator function, i.e.,  $M_{TW}(\cd)$ is holomorphic on $\CR$ and $\im\l\cd
\im M_{TW}(\l)\geq 0, \; M_{TW}^*(\l)=M_{TW}(\ov\l), \; \l\in\CR$. Moreover,
the spectral function  of $M_{TW}(\cd)$ is a spectral function of the
corresponding Fourier transform with the minimal dimension.

Another approach to description of boundary conditions is based on the concept
of a decomposing boundary triplet for $\Tma$ (see \cite{Mog11} for symmetric
systems and \cite{Mog09.1,Mog09.2,Mog11mz} for differential operators). To
explain this concept note that there exist finite-dimensional Hilbert spaces
$\cH_b$ and $\wh\cH_b$ and a surjective linear map
\begin{gather}\label{1.8}
\G_b=(\G_{0b}:\,  \wh\G_b:\,  \G_{1b})^\top:\dom\tma\to
\cH_b\oplus\wh\cH_b\oplus \cH_b
\end{gather}
such that the bilinear form \eqref{1.6} admits the representation
\begin {equation}\label{1.9}
[y,z]_b=i\cdot\sign (\nu_{b+}-\nu_{b-}) (\wh\G_b y, \wh\G_b
z)-(\G_{1b}y,\G_{0b}z)+(\G_{0b}y,\G_{1b}z).
\end{equation}
Moreover, let $X_a\in [\bH]$ be the operator such that $X_a^* JX_a=J$,  and let
\begin {equation*}
\G_a=\left(\G_{0a}\,:\, \wh\G_a\,:\, \G_{1a}\right)^\top :\AC\to H\oplus \wh
H\oplus H.
\end{equation*}
be the block representation of the linear map $\G_a y=X_a y(a), \; y\in \AC$
(here $\AC$ is the set of all absolutely continuous $\bH$-valued functions on
$\cI$).  By using $\cH_b, \; \wh \cH_b$ and $\G_a, \; \G_b$ one constructs the
Hilbert space $\cH_0$, the subspace $\cH_1$ in $\cH_0$ and the linear maps
$\G_j':\dom\tma\to \cH_j, \; j\in\{0,1\},$ such that the classical Lagrange's
identity takes the form
\begin {equation}\label{1.11}
(f,z)_\D - (y,g)_\D= (\G_1'y, \G_0'z)- (\G_0'y, \G_1'z)+i\,\sign (n_+
-n_-)(P_2\G_0'y, P_2\G_0'z)
\end{equation}
(in \eqref{1.11} $\{y,f\}, \;\{z,g\}\in\tma $ and $P_2$ is the orthoprojector
in $\cH_0$ onto $\cH_2:=\cH_0\ominus\cH_1$). Finally, a decomposing boundary
triplet for $\Tma$ is defined as a collection $\Pi=\bta$, in which
$\G_j:\Tma\to\cH_j, \; j\in\{0,1\},$ are the linear maps given by
\begin{gather}\label{1.12}
\G_0\{\wt y, \wt f\}=\G_0'y, \qquad \G_1\{\wt y, \wt f\}=\G_1'y, \qquad \{\wt
y, \wt f\}\in\Tma.
\end{gather}
In the case of equal deficiency indices $n_+=n_-$ one has
\begin {equation*}
\cH=H_0\oplus\cH_b(:=\cH_0=\cH_1)
\end{equation*}
and  the decomposing boundary triplet takes the form $\Pi=\bt$, where
\begin {gather}
\G_0\{\wt y, \wt f\}= \{- \G_{1a}y +i(\wh\G_a-\wh\G_b)y,\,
\G_{0b}y\} (\in H_0\oplus\cH_b),\label{1.13}\\
\G_1\{\wt y, \wt f\}= \{ \G_{0a}y + \tfrac 1 2(\wh\G_a+\wh\G_b)y,\,
-\G_{1b}y\}(\in H_0\oplus\cH_b), \quad  \{\wt y, \wt f\}\in\Tma .\label{1.14}
\end{gather}
Moreover, for the Hamiltonian system with $n_+=n_-$ one has $\cH=H\oplus\cH_b$
and
\begin {equation}\label{1.15}
\G_0\{\wt y, \wt f\}= \{- \G_{1a}y , \G_{0b}y\} (\in H\oplus\cH_b), \quad
\G_1\{\wt y, \wt f\}= \{ \G_{0a}y, -\G_{1b}y\}(\in H\oplus\cH_b).
\end{equation}
It turns out that $\G_b y $ can be represented as a singular boundary value
$y_b$ of a function $y\in\dom\tma$ (for more details see Remark \ref{rem3.2a}).
Therefore the operators \eqref{1.13} and \eqref{1.14} are defined , in fact, by
means of boundary values of a function $y$ at the endpoints $a$ (regular value)
and $b$ (singular value). At the same time emphasize that a concrete form of
the map $\G_b$ satisfying \eqref{1.9} does not matter, which is suitable  for a
compact representation of boundary conditions. To illustrate this assertion
note that according to \cite{Mog11} self-adjoint separated boundary conditions
exists only for a Hamiltonian system \eqref{1.2} with $n_+=n_-$, in which case
the general form of such conditions is
\begin {gather}
\cos B_1 \, y_0(a)+\sin B_1 \, y_1(a)=0,\label{1.16}\\
\cos B_2\G_{0b} y \, +\sin B_2  \,\G_{1b} y=0,\quad y\in\dom\tma,\label{1.17}
\end{gather}
with self-adjoint operators  $B_1\in [H]$ and $B_2 \in [\cH_b]$. Formulas
\eqref{1.16} and \eqref{1.17} seem to be more convenient than \eqref{1.7},
because they enable one to parametrize regular self-adjoint boundary conditions
\eqref{1.16} (at the point $a$) and singular ones \eqref{1.17} (at the point
$b$) by means of self-adjoint boundary parameters $B_1$ and $B_2$ respectively.

In the present paper we investigate boundary value problems for general (not
necessarily Hamiltonian) symmetric systems \eqref{1.2} with the aid of
decomposing boundary triplets. We do not impose any restrictions on the
deficiency indices $n_\pm$ of $\Tmi$.  To cover the case $n_+\neq n_-$ we
consider the following problems:

-- to find and describe $\l$-depending Nevanlinna (in particular, self-adjoint)
boundary conditions which are analogs of self-adjoint separated boundary
conditions for Hamiltonian systems;

-- to find the operator functions which are analogs of the Titchmarsh-Weyl
coefficient for Hamiltonian systems and describe these functions in terms of
boundary conditions.

We suppose that solution of these problems will give rise to generalized
Fourier transforms for the system \eqref{1.2} with the spectral functions of
the minimally possible dimension. Our investigations are based on a fact that a
decomposing boundary triplet $\Pi=\bta$ is a boundary triplet for $\Tma$ in the
sense of \cite{Mog06.2}; moreover, in the case $n_+=n_-$ a decomposing triplet
$\Pi=\bt$ is a boundary triplet (boundary value space) for $\Tma$ in the sense
of \cite{GorGor,Mal92}. This makes it possible to apply to the systems
\eqref{1.2} the general theory of boundary triplets for abstract symmetric
relations in Hilbert spaces (see \cite{GorGor,DM91,DM00,Mal92,Mog06.2} and
references therein).

Assume for simplicity that  $n_+=n_-$ and let $\Pi=\bt$ be a decomposing
boundary triplet \eqref{1.13}, \eqref{1.14} for $\Tma$. By using the results in
\cite{DM00,Mal92} we show that
\begin{gather}\label {1.18}
T:=\{\{\wt y, \wt f\}\in\Tma: \, \G_{1a}y=0, \;\wh\G_a y=\wh\G_b y,\; \G_{0b}y
=\G_{1b}y=0 \}
\end{gather}
is a symmetric extension of $\Tmi$ and each generalized resolvent $R(\l)$ of
$T$ is defined by $R(\l)\wt f=\pi (y_f(\cd,\l)),  \;\l\in\CR,$ where $f\in\lI,
\; \pi f =\wt f $ and $y_f(\cd,\l)$ is the $\cL_\D^2$-solution of the following
boundary value problem:
\begin{gather}
J y'-B(t)y=\l \D(t)y+\D(t)f(t), \quad t\in\cI,\label{1.19}\\
\G_{1a}y=0, \quad \wh \G_a y= \wh\G_b y,\label{1.20}\\
C_0(\l)\G_{0b}y+C_1(\l)\G_{1b}y=0, \quad \l\in\CR. \label{1.21}
\end{gather}
Here $C_0(\l)(\in [\cH_b])$ and $C_1(\l)(\in [\cH_b])$ are components of a
Nevanlinna operator pair $\tau(\l)=\{(C_0(\l),C_1(\l))\}$, so that \eqref{1.21}
defines a Nevanlinna boundary condition at the singular endpoint $b$. A pair
$\tau=\tau(\l)$ plays a role of a boundary parameter, since $R(\l)$ runs over
the set of generalized resolvents of $T$ when $\tau (\l)$ runs over the set
$\wt R(\cH_b)$ of all Nevanlinna operator pairs. To emphasize this fact we
write $R(\l)=R_\tau(\l)$. Observe also that a particular case of a boundary
parameter $\tau\in\wt R(\cH_b)$ is $\tau (\l)=\{(I,K(\l))\}$, where $K(\l)$ is
a Nevanlinna operator function.

The boundary problem \eqref{1.19}-\eqref{1.21} defines a canonical resolvent
$R_\tau(\l)$ if and only if $\tau$ is a self-adjoint operator pair
$\tau=\{(\cos B,\sin B)\}$ with some $B=B^*\in [\cH_b]$. In this case
\begin {equation}\label{1.21a}
R_\tau(\l)=(\wt A_\tau - \l)^{-1}, \; \l\in\CR,
\end{equation}
where  $\wt A_\tau$ is a self-adjoint extension of $\Tmi$ defined by the
following mixed boundary conditions (c.f. \eqref{1.16} and \eqref{1.17}):
\begin {equation*}
\G_{1a}y=0, \quad \wh \G_a y= \wh\G_b y,\quad \cos B\cd\G_{0b}y+\sin
B\cd\G_{1b}y=0.
\end{equation*}

For each $\l\in\CR$ denote by $\wh\gN_\l(\subset \Tma)$ the subspace of all
$\{\wt y,\wt f\}\in\Tma$ such that $\wt f =\l \wt y$. According to
\cite{DM91,Mal92} one associates with the decomposing boundary triplet
$\Pi=\bt$ for $\Tma$ the $\g$-field $\g(\l)(\in [\cH,\LI])$ and the abstract
Weyl function $M(\l)(\in [\cH])$ defined by
\begin {equation}\label{1.23}
\g(\l)=\pi_1(\G_0 \up\wh\gN_\l)^{-1}, \quad M(\l)h=\G_1 \{\g(\l)h, \l \g(\l)h
\}, \;\; h\in\cH, \;\;\l\in\CR.
\end{equation}
It turns out that the $\g$-field satisfies the equality
\begin {equation*}
(\g(\l)h)(t)=\pi (Z(t,\l)h), \quad h\in\cH, \quad \l\in\CR,
\end{equation*}
with some operator $\cL_\D^2$-solution $Z(\cd,\l)\in \lo{\cH}$ of Eq.
\eqref{1.5}. This fact enables us to show, that for each Nevanlinna boundary
parameter $\tau (\l)=\{(C_0(\l),C_1(\l))\}$ there exists a unique operator
$\cL_\D^2$-solution $v_\tau(\cd,\l)\in\lo{H_0}\; (\l\in\CR)$ of Eq. \eqref{1.5}
satisfying the boundary conditions
\begin{gather}
\G_{1a}(v_\tau(t,\l)h_0)=-P_H h_0,\nonumber\\
i(\wh \G_a - \wh\G_b )(v_\tau(t,\l)h_0)=P_{\wh H} h_0 ,\label{1.24}\\
C_0(\l)\G_{0b}(v_\tau(t,\l)h_0)+C_1(\l)\G_{1b}(v_\tau(t,\l)h_0)=0,  \quad
h_0\in H_0, \quad \l\in\CR\label{1.25}
\end{gather}
(here $P_H$ and $P_{\wh H}$ are the orthoprojectors in $H_0$ onto $H$ and $\wh
H$ respectively). By using the solution $v_\tau(\cd,\l)$ we introduce the
concept of the $m$-function $m_\tau(\cd):\CR\to [H_0]$ corresponding to the
boundary parameter $\tau$ or, equivalently, to the boundary value problem
\eqref{1.19}-\eqref{1.21}. This function is defined by the following statement:

--- for each $\tau(\l) = \{(C_0(\l),C_1(\l))\}\in \wt R(\cH_b)$ there exists a unique
operator function $m_\tau(\l)(\in [H_0])$ such that the operator solution
\begin {equation}\label{1.28}
v_\tau(t,\l):=\f(t,\l)m_\tau(\l) +\psi (t,\l), \quad \l\in\CR,
\end{equation}
of Eq. \eqref{1.5} belongs to $\lo{H_0}$ and satisfies the boundary conditions
\eqref{1.24} and \eqref{1.25}.

Here $\f(\cd,\l)$ and $\psi (\cd,\l)$ are the $[H_0,\bH]$-valued solutions of
Eq. \eqref{1.5} with the initial data
\begin {equation}\label{1.29}
X_a\f(a,\l)=\begin{pmatrix} I_{H_0}\cr 0\end{pmatrix}(\in [H_0, H_0\oplus H]),
\quad X_a\psi(a,\l)=\begin{pmatrix} -\tfrac i 2 P_{\wh H}\cr
-P_H\end{pmatrix}(\in [H_0, H_0\oplus H]).
\end{equation}
The $m$-function $m_\tau(\cd)$ is called canonical if $\tau=\{(\cos B,\sin
B)\}$ is a selfa-adjoint operator pair or, equivalently, if $m_\tau(\cd)$
corresponds to the canonical resolvent \eqref{1.21a}. In this case the boundary
condition \eqref{1.25} can be written as
\begin {equation}\label{1.29a}
\cos B\cd\G_{0b}(v_\tau(t,\l)h_0)+\sin B\cd\G_{1b}(v_\tau(t,\l)h_0)=0,\quad
h_0\in H_0, \quad \l\in\CR.
\end{equation}
It turns out that under the special choice of the maps $\G_{0b}$ and $\G_{1b}$
the condition \eqref{1.29a} takes the form of the second relation in
\eqref{1.7b}. This and \eqref{1.28} imply that in the case of the Hamiltonian
system \eqref{1.2} the canonical $m$-function $m_\tau(\cd)$ coincides with the
Titchmarsh-Weyl coefficient $M_{TW}(\cd)$ in the sense of \cite{HinSch93} (for
more details see Remark \ref{rem6.9}).

We show in the paper that all $m$-functions can be parametrized immediately in
terms of the Nevanlinna boundary parameter $\tau$ by means of the formula
similar to the known Krein formula for resolvents. More precisely the following
theorem holds
\begin{theorem}\label{th1.1}
Let $\Pi=\bt$ be a decomposing boundary triplet  for $\Tma$ and let
\begin {equation}\label{1.30}
M(\l)=\begin{pmatrix} m_0(\l) & M_2(\l) \cr M_3(\l) & M_4(\l)
\end{pmatrix}:H_0\oplus\cH_b\to H_0\oplus\cH_b, \quad \l\in\CR,
\end{equation}
be the block representation of the  Weyl function \eqref{1.23}. Then for every
Nevanlinna boundary parameter $\tau (\l)=\{(C_0(\l), C_1(\l))\}$  the
corresponding $m$-function $m_\tau(\cd)$ is of the form
\begin {equation}\label{1.31}
m_\tau(\l)=m_0(\l)+M_2(\l)(C_0(\l)-C_1(\l)M_4(\l))^{-1}C_1(\l)M_3(\l),
\quad\l\in\CR.
\end{equation}
\end{theorem}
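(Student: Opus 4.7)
The plan is to read off $m_\tau(\lambda)$ by applying the Weyl function identity to $v_\tau$ and then using the singular boundary condition \eqref{1.25} as an algebraic equation in $\mathcal{H}_b$. The key observation is that the solution $v_\tau(\cdot,\l)h_0$ is, for each fixed $h_0\in H_0$, an $\cL_\D^2$-solution of \eqref{1.5}, so $\wt\pi\{v_\tau(\cd,\l)h_0,\,\l v_\tau(\cd,\l)h_0\}$ lies in $\wh\gN_\l$ and hence equals $\gamma(\l)h$ for a unique $h=h_0'\oplus h_b\in H_0\oplus\cH_b=\cH$.

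First I would compute $\G_0$ on $\{\wt v_\tau h_0,\l\wt v_\tau h_0\}$ using the concrete formula \eqref{1.13}. The regular boundary conditions \eqref{1.24} give $-\G_{1a}(v_\tau h_0)=P_H h_0$ and $i(\wh\G_a-\wh\G_b)(v_\tau h_0)=P_{\wh H}h_0$, so adding these and applying \eqref{1.13} shows $\G_0\{\wt v_\tau h_0,\l\wt v_\tau h_0\}=\{h_0,\,\G_{0b}(v_\tau h_0)\}$. Therefore the element $h\in\cH$ corresponding to $v_\tau h_0$ under the $\g$-field is precisely $h=h_0\oplus h_b$ with $h_b:=\G_{0b}(v_\tau h_0)$.

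Next I would compute $\G_1$ via \eqref{1.14} and identify it with $M(\l)h$. Reading off the $\cH_b$-component yields $-\G_{1b}(v_\tau h_0)=M_3(\l)h_0+M_4(\l)h_b$. Inserting this in the Nevanlinna boundary condition \eqref{1.25} produces the scalar identity
\begin{equation*}
C_0(\l)h_b-C_1(\l)\bigl(M_3(\l)h_0+M_4(\l)h_b\bigr)=0,
\end{equation*}
so $(C_0(\l)-C_1(\l)M_4(\l))h_b=C_1(\l)M_3(\l)h_0$. Invertibility of $C_0(\l)-C_1(\l)M_4(\l)$ for $\l\in\CR$ is the standard fact used for Krein's formula applied to Nevanlinna pairs and the Weyl function $M_4$ of the ``singular'' boundary triplet on $\cH_b$; I would either cite it from the general theory of boundary triplets or verify it directly using $\im M_4(\l)>0$ and the dissipativity of $\tau(\l)$. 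This gives $h_b=(C_0(\l)-C_1(\l)M_4(\l))^{-1}C_1(\l)M_3(\l)h_0$.

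Finally, I would extract $m_\tau$ from the $H_0$-component of $\G_1\{\wt v_\tau h_0,\l\wt v_\tau h_0\}=M(\l)h$, which reads
\begin{equation*}
\G_{0a}(v_\tau h_0)+\tfrac12(\wh\G_a+\wh\G_b)(v_\tau h_0)=m_0(\l)h_0+M_2(\l)h_b.
\end{equation*}
Using the initial data \eqref{1.29} together with the decomposition $v_\tau=\f m_\tau+\psi$, a routine computation in $H_0=H\oplus\wh H$ shows $\G_{0a}v_\tau=P_H m_\tau$, $\wh\G_a v_\tau=P_{\wh H}m_\tau-\tfrac i2 P_{\wh H}$, and $\wh\G_b v_\tau=P_{\wh H}m_\tau+\tfrac i2 P_{\wh H}$ (the latter coming from \eqref{1.24}). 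Hence the left-hand side above collapses to $m_\tau(\l)h_0$, and combining with the value of $h_b$ found above yields exactly \eqref{1.31}. The main bookkeeping obstacle is keeping track of the orthoprojectors $P_H,P_{\wh H}$ and the sign $i$ through the two distinct triplet formulas \eqref{1.13}--\eqref{1.14}; once those identifications are made, the formula is algebraically forced.
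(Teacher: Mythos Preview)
Your argument is correct and is in fact more direct than the paper's. You exploit immediately the defining relation $\G_1\up\wh\gN_\l=M(\l)\G_0\up\wh\gN_\l$ applied to $v_\tau(\cd,\l)h_0$, read off the $H_0$- and $\cH_b$-components separately, and then solve the single linear equation in $\cH_b$ forced by \eqref{1.25}. The identification of the $H_0$-component of $\G_1$ with $m_\tau(\l)h_0$ via \eqref{1.24} and Definition~\ref{def5.2} is exactly right.

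The paper proceeds more indirectly: it first constructs, in Proposition~\ref{pr4.1}, two families of $\cL_\D^2$-solutions $v_0(\cd,\l)$ and $u_\pm(\cd,\l)$ as restrictions of the $\g$-field to the summands $H_0$ and $\cH_b$, records all of their boundary data, and then in Theorem~\ref{th4.2} establishes the explicit decomposition $v_\tau=v_0-u_+(\tau_++M_{4+})^{-1}M_{3+}$. Only after that does Theorem~\ref{th5.4} apply $\G_{0a}+\wh\G_a$ to this decomposition and invoke the identity $-(\tau_+ +M_{4+})^{-1}=(C_0-C_1M_{4+})^{-1}C_1$ from \cite{MalMog02} to obtain \eqref{1.31}. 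Your route bypasses the intermediate objects $v_0,u_\pm$ entirely; the paper's route has the compensating advantage that the decomposition of $v_\tau$ is itself needed elsewhere (e.g.\ in the proof of the inequality \eqref{1.32} and in the Green's function analysis announced in the introduction). For the bare statement of Theorem~\ref{th1.1} your argument is the cleaner one.
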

Note that a description of all canonical $m$-functions of a differential
operator in the case of maximal deficiency indices of the minimal operator
 can be found in \cite{Gor66,Ful77,Hol85}; similar result for Hamiltonian
systems was obtained in \cite{HinSha82}. In these papers each canonical
$m$-function $m_\tau(\cd)$ is represented as a certain linear fractional
transformation of a self-adjoint boundary parameter $\tau$. Observe also that
for a differential operator of an even order with arbitrary (possibly unequal)
deficiency indices  a description of $m$-functions in the form \eqref{1.31} was
obtained in \cite{Mog07}.

It turns out that $m_\tau(\cd) $ is a Nevanlinna operator function satisfying
the inequality
\begin {equation}\label{1.32}
(\im \,\l)^{-1}\cd \im\, m_\tau(\l)\geq \int_\cI v_\tau^*(t,\l)\D(t)
v_\tau(t,\l)\, dt, \quad\l\in\CR,
\end{equation}
Moreover, the canonical $m$-function $m_\tau(\cd) $ satisfies the identity
\begin {equation}\label{1.33}
m_\tau(\mu)-m_\tau^*(\l)= (\mu-\ov\l)\int_\cI v_\tau^*(t,\l)\D(t)
v_\tau(t,\mu)\, dt, \quad \mu,\l\in\CR,
\end{equation}
which implies that  for the canonical $m$-function  the inequality \eqref{1.32}
turns into the equality. The identity \eqref{1.33} follows from the fact that
$m_\tau(\cd)$ is the abstract Weyl function of a boundary triplet for some
symmetric extension of $\Tmi$. Note that for the Titchmarsh-Weyl coefficient
$M_{TW}(\cd)$ of the Hamiltonian system the identity \eqref{1.33} was proved in
\cite{HinSch93}.

In the case of minimal equal deficiency indices $n_+=n_-(=\nu_-)$ the extension
$T$ in \eqref{1.18} is self-adjoint and the boundary condition \eqref{1.21}
vanishes. Therefore in this case there exists a unique (canonical) $m$-function
$m(\cd)$ of the problem \eqref{1.19}, \eqref{1.20}, which coincides with the
abstract Weyl function $M(\l)$ (see \eqref{1.23}).

Actually we consider symmetric systems with arbitrary (possibly unequal)
deficiency indices $n_\pm$. To this end we use the decomposing boundary triplet
$\Pi=\bta$ with possibly unequal Hilbert spaces $\cH_0$  and $\cH_1$ (see
\eqref{1.12}), which enables us to obtain the results similar to those
specified above for the case $n_+=n_-$. In particular, we define the
$m$-function $m_\tau(\l)(\in [H_0])$ and describe all the $m$-functions by
means of formulas similar to \eqref{1.31}. It turns that $m_\tau(\cd)$ is a
Nevanlinna function, which in the case $n_+ < n_-$  has the triangular form
\begin {equation}\label{1.34}
m_\tau(\l)=\begin{pmatrix} m_{1,\tau}(\l) & 0 \cr  m_{+,\tau}(\l) & \tfrac i 2
I\end{pmatrix}, \quad \l\in\bC_+.
\end{equation}
Emphasize that for the system \eqref{1.2} with $n_+\neq n_-$ there are no
longer canonical $m$-functions.

The simplest situation is in the case of minimal deficiency indices
$n_\pm=\nu_\pm$ (for not Hamiltonian systems \eqref{1.2} this implies that $n_+
< n_- $). In this case  there exists a unique $m$-function $m(\cd)$, which has
the triangular form
\begin {equation}\label{1.35}
m(\l)=\begin{pmatrix} M(\l) & 0 \cr  N_+(\l) & \tfrac i 2 I_{\wh
H}\end{pmatrix}:H\oplus\wh H \to H\oplus\wh H , \quad \l\in\bC_+.
\end{equation}
Here the entries  $M(\l)$ and $N_+(\l)$ are taken from the block representation
\begin {equation}\label{1.36}
M_+(\l)=(M(\l) \,:\, N_+(\l))^\top :H \to H\oplus\wh H, \quad \l\in\bC_+,
\end{equation}
of the abstract Weyl function $M_+(\cd)$ corresponding to the decomposing
boundary triplet $\Pi$ (see Definition \ref{def2.10}). Note in this connection
that the systems \eqref{1.2} with minimal deficiency indices $n_\pm$ were
studied in the paper by Hinton and Schneider \cite{HinSch06}, where the concept
of the ''rectangular'' Titchmarsh-Weyl coefficient $M_{TW}(\l)(\in
[H,H\oplus\wh H]), \; \l\in\bC_+, $ was introduced. This coefficient is defined
by the relation
\begin {equation}\label{1.37}
\f(t,\l)M_{TW}(\l)+\chi (t,\l)\in \lo{H}, \quad \l\in\bC_+,
\end{equation}
where $\f(t,\l)(\in [H_0,\bH])$ and $\chi(t,\l)(\in [H,\bH])$ are the operator
solutions of Eq. \eqref{1.5} with the initial data
\begin {equation}\label{1.38}
X_a\f(a,\l)=\begin{pmatrix} I_{H_0}\cr 0\end{pmatrix}(\in [H_0, H_0\oplus H]),
\quad X_a\chi(a,\l)=\begin{pmatrix} 0\cr -I_H\end{pmatrix}(\in [H, H_0\oplus
H])
\end{equation}
(c.f. \eqref{1.29}). It is not difficult to prove that the abstract Weyl
function \eqref{1.36} coincides with $M_{TW}(\l)$ (see Remark \ref{rem6.3}).

In the final part of the paper we consider the operators generated by a
differential expression $l[y]$ of an odd order $r=2n+1$ defined on an interval
$\cI=[a, b\rangle $ (see \eqref{6.39}). Such differential operators have been
investigated in the papers by Everitt and Krishna  Kumar
\cite{Eve72,EveKum76.1,EveKum76.2,Kum82}, where the limiting process from the
compact intervals $[a,\b]\subset \cI$ was used for construction of
$(n+1)$-component operator $L^2$-solutions $v(t,\l)$ of the equation $l[y]=\l
y, \; \l\in\CR$. With each solution $v(t,\l)$  the authors associate curtain
boundary conditions and the Titchmarsh-Weyl matrix $M_{TW}(\l)=
(m_{rs}(\l))_{r,s=1}^{k+1}$. These results are not completed; in particular,
they do not enable to define self-adjoint boundary conditions without some
hardly verifiable assumptions even in the case of equal minimally possible
deficiency indices $n_+(L_0)=n_-(L_0)=n+1$ of the minimal operator $L_0$.

Our approach is based on the known fact \cite{KogRof75} that the equation
$l[y]=\l y$ is equivalent to some  symmetric not Hamiltonian system
\eqref{1.5}. This enables us to extend the results obtained for symmetric
systems to differential operators of an odd order with arbitrary deficiency
indices $n_\pm(L_0)$. In particular, we define the $m$-function $m_\tau(\cd)$
of such an operator and describe all $m$-functions immediately in terms of a
Nevanlinna boundary parameter $\tau$.

Note in conclusion that the Green's functions of generalized resolvents
$R_\tau(\l) $ and the generalized Fourier transform for symmetric systems  will
be considered in the forthcoming paper.
\section{Preliminaries}
\subsection{Notations}
The following notations will be used throughout the paper: $\gH$, $\cH$ denote
Hilbert spaces; $[\cH_1,\cH_2]$  is the set of all bounded linear operators
defined on the Hilbert space $\cH_1$ with values in the Hilbert space $\cH_2$;
$[\cH]:=[\cH,\cH]$; $A\up \cL$ is the restriction of an operator $A$ onto the
linear manifold $\cL$; $P_\cL$ is the orthogonal projector in $\gH$ onto the
subspace $\cL\subset\gH$; $\bC_+\,(\bC_-)$ is the upper (lower) half-plane  of
the complex plane.

Recall that a closed linear   relation   from $\cH_0$ to $\cH_1$ is a closed
linear subspace in $\cH_0\oplus\cH_1$. The set of all closed linear relations
from $\cH_0$ to $\cH_1$ (in $\cH$) will be denoted by $\C (\cH_0,\cH_1)$
($\C(\cH)$). A closed linear operator $T$ from $\cH_0$ to $\cH_1$  is
identified  with its graph $\text {gr}\, T\in\CA$.

For a linear relation $T\in\C (\cH_0,\cH_1)$  we denote by $\dom T,\,\ran T,
\,\ker T$ and $\mul T$  the domain, range, kernel and the multivalued part of
$T$ respectively. Recall also that the inverse and adjoint linear relations of
$T$ are the relations $T^{-1}\in\C (\cH_1,\cH_0)$ and $T^*\in\C (\cH_1,\cH_0)$
defined by
\begin{gather}
T^{-1}=\{\{h_1,h_0\}\in\cH_1\oplus\cH_0:\{h_0,h_1\}\in T\}\nonumber\\
T^* = \{\{k_1,k_0\}\in \cH_1\oplus\cH_0:\, (k_0,h_0)-(k_1,h_1)=0, \;
\{h_0,h_1\}\in T\}\label{2.0}.
\end{gather}

  In the
case $T\in\CA$ we write $0\in \rho (T)$ if $\ker T=\{0\}$\ and\ $\ran T=\cH_1$,
or equivalently if $T^{-1}\in [\cH_1,\cH_0]$; $0\in \wh\rho (T)$\ \ if\ \ $\ker
T=\{0\}$\ and\   $\ran T$ is a closed subspace in $\cH_1$. For a linear
relation $T\in \C(\cH)$ we denote by $\rho (T):=\{\l \in \bC:\ 0\in \rho
(T-\l)\}$ and $\wh\rho (T)=\{\l \in \bC:\ 0\in \wh\rho (T-\l)\}$ the resolvent
set and the set of regular type points  of $T$ respectively.

Recall also the following definition.
\begin{definition}\label{def2.0}
A holomorphic operator function $\Phi (\cd):\bC\setminus\bR\to [\cH]$ is called
a Nevanlinna function  if $\im\, \l\cd \im \Phi (\l)\geq 0 $ and $\Phi ^*(\l)=
\Phi (\ov \l), \; \l\in\bC\setminus\bR$.
\end{definition}
\subsection{Holomorphic operator pairs }
Let  $\Lambda$ be an open set in $\bC$, let $\cK,\cH_0,\cH_1$  be Hilbert
spaces and let $C_j(\cd):\Lambda \to [\cH_j,\cK], \; j\in \{0,1\}$ be a pair of
holomorphic operator functions (in short a holomorphic pair). Two such pairs
$C_j(\cd):\Lambda\to [\cH_j,\cK]$ and $C_j'(\cd):\Lambda\to [\cH_j,\cK']$ are
said to be equivalent if there exists a holomorphic isomorphism $\f(\cd):
\Lambda\to [\cK, \cK']$ such that $C_j'(\l)=\f (\l)C_j(\l), \l\in\Lambda,\;j\in
\{0,1\}$. Clearly, the set of all holomorphic pairs splits into disjoint
equivalence classes; moreover, the equality
\begin {equation}\label{2.1}
\tau (\l)=\{(C_0(\l), C_1(\l));\cK\}:=\{\{h_0,h_1\}\in \cH_0\oplus\cH_1:
C_0(\l)h_0+C_1(\l)h_1=0\}
\end{equation}
allows us to identify such a class with the $\CA$-valued function $\tau(\l), \;
\l\in\Lambda $.

In what follows, unless otherwise stated, $\cH_0$ is a Hilbert space, $\cH_1$
is a subspace in $\cH_0$,  $\cH_2:=\cH_0\ominus\cH_1$ and $P_j$ is the
orthoprojector in $\cH_0$ onto $\cH_j,\; j\in\{1,2\}$.

Let $\a\in \{-1,+1\}$. With each linear relation $\t\in\CA$ we associate the
$\tm$-adjoint linear relation $\t_\a^\times\in\CA$ given by
\begin {equation*}
\t_\a^\times=\{\{k_0,k_1\}\in \cH_0\oplus\cH_1:(k_1,h_0)-(k_0,h_1)+i\a (P_2
k_0,P_2 h_0)=0\;\;\text{for all}\;\; \{h_0,h_1\}\in\t\}.
\end{equation*}
It follows from \eqref{2.0} that  in the case $\cH_0=\cH_1=:\cH$ one has
$\t_\a^\times=\t^*$.

Next assume that  $\cK_+$ and $\cK_-$ are auxiliary Hilbert spaces and
\begin {equation}\label{2.2}
\tau_+(\l)=\{(C_0(\l),C_1(\l));\cK_+\}, \;\;\l\in\bC_+; \;\;\;\;
\tau_-(\l)=\{(D_0(\l),D_1(\l));\cK_-\}, \;\;\l\in\bC_-
\end{equation}
are  equivalence classes of the holomorphic pairs
\begin {gather}
(C_0(\l):C_1(\l)):\HH\to\cK_+,\;\;\;\;\l\in\bC_+ \label{2.3}\\
(D_0(\l):D_1(\l)):\HH\to\cK_-,\;\;\;\;\l\in\bC_-.\label{2.3a}
\end{gather}
Assume also that
\begin {equation*}
C_0(\l)=(C_{01}(\l):C_{02}(\l)):\cH_1\oplus\cH_2\to\cK_+ ;\;\;\;
D_0(\l)=(D_{01}(\l):D_{02}(\l)):\cH_1\oplus\cH_2\to\cK_-
\end{equation*}
are the block representations of $C_0(\l)$ and $D_0(\l)$.
\begin{definition}\label{def2.1}
Let as before $\a\in \{-1,+1\}$. A collection $\pair$ of two holomorphic pairs
\eqref{2.2} (more precisely, of the equivalence classes of the corresponding
pairs) belongs to the class $\RH$ if it satisfies the following relations:
\begin{gather}
2\,\im(C_{1}(\l)C_{01}^*(\l))+\a C_{02}(\l)C_{02}^*(\l)\geq 0,\;\;\l\in\bC_+
\label{2.6}\\
\quad 2\,\im(D_1(\l)D_{01}^*(\l))+\a D_{02}(\l)D_{02}^*(\l)\leq
0,\;\;\l\in\bC_- \label{2.6a}\\
C_1(\l)D_{01}^*(\ov\l)-C_{01}(\l)D_{1}^*(\ov\l)+i\a C_{02}(\l)D_{02}^*
(\ov\l)=0, \;\;\; \l\in\bC_+\label{2.7}\\
\text {if}\; \a=+1, \;\text{then} \;\; 0\in \rho (C_0(\l)-iC_1(\l)P_1) \;\;
\text{and} \;\; 0\in\rho (D_{01}(\l)+iD_{1}(\l))\label{2.8}\\
\text {if}\; \a=-1, \;\text {then} \;\; 0\in\rho (C_{01}(\l)-iC_{1}(\l)) \;\;
\text{and} \;\; 0\in \rho (D_{0}(\l)+iD_{1}(\l)P_1).\label{2.8a}
\end{gather}

A collection $\pair\in\RH$ belongs to the class $\RZ$ if for some (and hence
for any) $\l\in\bC_+$ one has
\begin{gather*}
2\,\im(C_{1}(\l)C_{01}^*(\l))+\a C_{02}(\l)C_{02}^*(\l)= 0,\\
 0\in\rho
(C_{01}(\l)+iC_{1}(\l)) \;\;\text{if}\;\; \a=+1\;\; \text{and}\;\; 0\in\rho
(C_{0}(\l)+iC_{1}(\l)P_1) \;\;\text{if}\;\; \a=-1.
\end{gather*}
\end{definition}
The following proposition is immediate from Definition \ref{def2.1} and the
results of \cite{Mog06.1}.
\begin{proposition}\label{pr2.2}
1) If $\pair\in\RH$, then $(-\tau_\pm (\ov\l))_\a^\tm=-\tau_\mp(\l),
\;\l\in\bC_\mp,$ and the following equality holds
\begin {equation}\label{2.8b}
\tau_\mp(\l)=\{\{-h_1-i\a P_2 h_0, -P_1h_0\}:\{h_1,h_0\}\in (\tau_\pm
(\ov\l))^*\}.
\end{equation}

2) Each collection $\pair\in\RH$ given by \eqref{2.2} satisfies the relations
\begin{gather}
\text{if}\;\;\a=+1, \;\;\text {then} \;\;\dim \cK_+=\dim\cH_0\;\; \text{and}
\;\;\dim \cK_-=\dim\cH_1;\label{2.9}\\
\text{if}\;\;\a=-1,\; \;\text {then} \;\;\dim \cK_+=\dim\cH_1\;\; \text{and}
\;\;\dim \cK_-=\dim\cH_0\label{2.9a}
\end{gather}

3) The set $\RZ$ is not empty if and only if $\dim\cH_0= \dim\cH_1$. This
implies that in the case $\dim\cH_0<\infty$ the set $\RZ$ is not empty if and
only if $\cH_0= \cH_1=:\cH$.

4) Each collection $\pair\in\RZ$ can be represented as a constant
\begin {equation}\label{2.10}
\tau_\pm(\l)\equiv \{(C_0,C_1);\cK\}=\t(\in\CA),\quad \l\in\bC_\pm,
\end{equation}
where $C_j\in [\cH_j,\cK],\; j\in\{0,1\}$ and   $(-\t)_\a^\tm =-\t$.
\end{proposition}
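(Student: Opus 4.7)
The plan is to handle the four assertions in sequence, largely by unwinding Definition \ref{def2.1} and invoking the adjoint calculus for holomorphic pairs developed in \cite{Mog06.1}.

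For assertion 1, I would compute $(-\tau_+(\ov\l))_\a^\tm$ directly from the definition of the $\tm$-adjoint. Writing out the defining Lagrange-type identity against all $\{h_0, h_1\}$ with $C_0(\ov\l)h_0 - C_1(\ov\l) h_1 = 0$, and applying a standard Gram-matrix argument, the set of admissible $\{k_0, k_1\}$ is parametrized as $k_0 = -h_1 - i\a P_2 h_0$, $k_1 = -P_1 h_0$ with $\{h_1,h_0\} \in (\tau_+(\ov\l))^*$, which is exactly formula \eqref{2.8b}. To identify this with $-\tau_-(\l)$, I then expand $(\tau_+(\ov\l))^*$ via \eqref{2.0} and insert the block decomposition of $D_0(\l)$: relation \eqref{2.7} is precisely the compatibility condition that makes the parametrized pair coincide with $\{(D_0(\l), -D_1(\l)); \cK_-\}$, i.e., with $-\tau_-(\l)$. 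The $-$ case is symmetric.

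Assertion 2 is then immediate: under $\a = +1$, condition \eqref{2.8} makes $C_0(\l) - iC_1(\l)P_1 \in [\cH_0, \cK_+]$ and $D_{01}(\l) + iD_1(\l) \in [\cH_1, \cK_-]$ isomorphisms, so $\dim\cK_+ = \dim\cH_0$ and $\dim\cK_- = \dim\cH_1$; the $\a = -1$ case uses \eqref{2.8a} symmetrically.

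For assertion 4 — the main obstacle — I would prove rigidity. The equality in the $\im$-inequality of Definition \ref{def2.1}, coupled with the sharpened invertibility in the definition of $\RZ$, expresses that $-\tau_+(\l)$ is $\tm$-self-adjoint for every $\l \in \bC_+$, namely $(-\tau_+(\l))_\a^\tm = -\tau_+(\l)$. Invoking the corresponding rigidity result of \cite{Mog06.1} — the analogue of the classical fact that a Nevanlinna function with identically vanishing imaginary part is a constant self-adjoint operator — I conclude that $\tau_+(\l) \equiv \t := \{(C_0, C_1); \cK\}$ with $\cK := \cK_+$. Applying assertion 1 on $\bC_-$ then gives $\tau_-(\l) = \t$ as well, and the relation $(-\t)_\a^\tm = -\t$ is recorded. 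Finally, assertion 3 falls out of 2 and 4: a nonempty $\RZ$ has a common parameter space $\cK$ with $\dim\cK = \dim\cH_0 = \dim\cH_1$, which in finite dimensions forces $\cH_0 = \cH_1$; conversely, when $\cH_0 = \cH_1 =: \cH$, any self-adjoint $\t \in \C(\cH)$ yields a constant collection belonging to $\RZ$ upon direct verification of all clauses of Definition \ref{def2.1}.
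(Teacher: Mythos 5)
Your proposal is consistent with the paper, which gives no proof of this proposition beyond declaring it ``immediate from Definition \ref{def2.1} and the results of \cite{Mog06.1}''; your sketch unwinds exactly those two ingredients --- a direct computation of the $\tm$-adjoint using \eqref{2.7} for part 1, the invertibility clauses \eqref{2.8}--\eqref{2.8a} for part 2, the rigidity theorem of \cite{Mog06.1} for part 4, and their combination for part 3. The only caveat is that your converse in part 3 produces an element of $\RZ$ only when $\cH_0=\cH_1$, which settles the finite-dimensional case actually used in the paper but not literally the stated equivalence when $\dim\cH_0=\dim\cH_1=\infty$ with $\cH_1$ a proper subspace of $\cH_0$ (for that one again falls back on \cite{Mog06.1}).
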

Moreover, one can easily prove the following proposition.
\begin{proposition}\label{pr2.3}
If $\dim\cH_0<\infty$, then a collection $\pair$ of two holomorphic pairs
\eqref{2.2} belongs to the class $\RH$ if and only if it satisfies
\eqref{2.6}--\eqref{2.7}, \eqref{2.9}, \eqref{2.9a} and the following relations
\begin {equation}\label{2.11}
\ran (C_0(\l): C_1(\l))=\cK_+,\;\;\l\in\bC_+; \qquad \ran (D_0(\l): D_1(\l))=
\cK_-,\;\;\l\in\bC_-.
\end{equation}
\end{proposition}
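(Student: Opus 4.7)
The plan is to split the equivalence into its two directions, with the ``only if'' direction being immediate and the ``if'' direction requiring a short calculation that leverages finite-dimensionality.

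For the forward direction, if $\pair \in \RH$, then \eqref{2.6}--\eqref{2.7} are part of the definition, and \eqref{2.9}, \eqref{2.9a} follow from Proposition \ref{pr2.2}(2). The only nontrivial point is \eqref{2.11}: surjectivity of $(C_0(\l):C_1(\l))$ (for $\l\in\bC_+$) follows because, in the $\a=+1$ case, $\ran(C_0(\l)-iC_1(\l)P_1)\subseteq\ran(C_0(\l):C_1(\l))$, and by \eqref{2.8} the left-hand side equals $\cK_+$. The $\a=-1$ case and the treatment of $(D_0(\l):D_1(\l))$ via \eqref{2.8a} are analogous.

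For the converse direction, the dimensional identities \eqref{2.9}, \eqref{2.9a} reduce the relevant operators in \eqref{2.8}, \eqref{2.8a} to square operators between finite-dimensional spaces, so each $\rho$-condition becomes equivalent to injectivity of the adjoint. Take $\a=+1$ and assume $(C_0(\l)-iC_1(\l)P_1)^*v=0$ for some $v\in\cK_+$; a direct block computation gives $C_{02}^*(\l)v=0$ and $C_{01}^*(\l)v=-iC_1^*(\l)v$. Substituting this into \eqref{2.6} and testing on $v$, the quadratic form $\langle(2\im(C_1C_{01}^*)+C_{02}C_{02}^*)v,v\rangle$ collapses to $-2\|C_1^*(\l)v\|^2$, and nonnegativity forces $C_1^*(\l)v=0$, hence $C_0^*(\l)v=0$ as well. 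Then $v\in\ker(C_0(\l):C_1(\l))^*=\{0\}$ by surjectivity from \eqref{2.11}, establishing the first half of \eqref{2.8}. The second half of \eqref{2.8}, involving $(D_{01}(\l)+iD_1(\l))$, is proved by the same computation applied to \eqref{2.6a} with reversed sign, and the $\a=-1$ case in \eqref{2.8a} is handled identically after swapping the roles of $C_{01}$ and $C_1$ in the $+1$ statement.

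The main obstacle is purely bookkeeping: keeping straight which operator acts on which space ($\cK_\pm$ vs.\ $\cH_1$), tracking that $\im(C_1C_{01}^*)$ is an operator on $\cK_\pm$ (not on $\cH_1$), and writing the adjoint in block form compatible with the orthogonal decomposition $\cH_0=\cH_1\oplus\cH_2$. Condition \eqref{2.7} plays no role in the derivation of \eqref{2.8}, \eqref{2.8a}; it is carried along only because it is part of the definition of $\RH$. Once the square-operator reduction via \eqref{2.9}, \eqref{2.9a} is in place and the key quadratic-form identity above is recorded, all four $\rho$-conditions in \eqref{2.8}, \eqref{2.8a} follow by essentially the same two-line argument.
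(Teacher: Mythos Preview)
Your argument is correct. The forward direction is immediate, and for the converse your quadratic-form computation is the natural one: with $C_{01}^*(\l)v=-iC_1^*(\l)v$ and $C_{02}^*(\l)v=0$ one gets
\[
\bigl\langle\bigl(2\,\im(C_1C_{01}^*)+\a\,C_{02}C_{02}^*\bigr)v,\,v\bigr\rangle
=2\,\im\bigl\langle -iC_1^*(\l)v,\,C_1^*(\l)v\bigr\rangle
=-2\|C_1^*(\l)v\|^2,
\]
so \eqref{2.6} forces $C_1^*(\l)v=0$, hence $C_0^*(\l)v=0$, and \eqref{2.11} gives $v=0$. The $D$-case is handled the same way using \eqref{2.6a}; note that there $\|D_{02}^*(\l)v\|^2$ is not killed by the kernel condition but by the inequality itself, since the whole form is $\leq 0$ while both summands are $\geq 0$.

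The paper does not supply a proof of this proposition (it merely asserts that ``one can easily prove'' it), so there is nothing to compare against; your approach is exactly the straightforward finite-dimensional argument the authors have in mind. Your observation that \eqref{2.7} is not used in deriving \eqref{2.8}--\eqref{2.8a} is also correct: \eqref{2.7} links $\tau_+$ and $\tau_-$, whereas each invertibility condition involves only one of them.
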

\begin{remark}\label{rem2.4}
1) It follows from Proposition \ref{pr2.2}, 2) that for each collection
$\pair\in\RH$ one can put in the representation \eqref{2.2}
$\cK_+=\cH_0,\;\cK_-=\cH_1$ in the case $\a=+1$ and $\cK_+=\cH_1,\;\cK_-=\cH_0$
in the case $\a=-1$.

2) If $\cH_1=\cH_0=:\cH$, then the class $\Rh:=\wt R_\a (\cH,\cH)\; (\a\in
\{-1,+1\})$ coincides with the well-known class of Nevanlinna functions $\tau
(\cd)$ with values in $\C (\cH)$ (see, for instance, \cite{DM00}). In this case
the collection \eqref{2.2} turns into the Nevanlinna pair
\begin {equation}\label{2.19}
\tau(\l)=\{(C_0(\l),C_1(\l));\cH\}, \quad \l\in\CR,
\end{equation}
with $C_0(\l), \; C_1(\l)\in [\cH]$. In view of \eqref{2.6}--\eqref{2.8a} such
a pair is characterized by the relations (cf. \cite[Definition 2.2]{DM00})
\begin{gather}
\im \l\cd \im (C_1(\l)C_0^*(\l))\geq 0,\quad C_1(\l)C_0^*(\ov\l)-
C_0(\l)C_1^*(\ov\l)=0,  \;\; \l\in\CR,\label{2.20}\\
0\in\rho (C_0(\l)-iC_1(\l)), \;\; \l\in\bC_+; \qquad 0\in\rho (C_0(\l)+
iC_1(\l)), \;\; \l\in\bC_-.
\end{gather}
Moreover, the function $\tau(\cd)$ belongs to the class $\wt R^0(\cH):=\wt
R_\a^0(\cH,\cH)$ if and only if it admits the representation in the form of the
constant (cf. \eqref{2.10})
\begin {equation}\label{2.22}
\tau(\l)\equiv \{(C_0,C_1);\cH\}=\t(\in\C (\cH)),\quad \l\in\CR
\end{equation}
with the operators $C_j\in [\cH]$ such that $\im (C_1C_0^*)=0 $ and $0\in\rho
(C_0\pm i C_1)$ (this means that $\t=\t^*$). Observe also that according to
\cite{Rof69} each $\tau\in\wt R^0(\cH)$ admits the normalized representation
\eqref{2.22} with
\begin {equation}\label{2.23}
C_0=\cos B, \qquad C_1=\sin B, \qquad B=B^*\in [\cH].
\end{equation}

Assume now that $n:=\dim \cH<\infty$, $e=\{e_j\}_1^n$ is an orthonormal basis
in $\cH$,  $\tau (\l)=\{(C_0 (\l),C_1(\l)); \cH\}$ is a pair of holomorphic
operator-functions $C_l(\cd):\CR\to [\cH]$ and $C_l(\l)= (c_{kj,l} (\l)
)_{k,j=1}^n $ is the matrix representations of the operator $C_l(\l),\;
l\in\{0,1\},$ in the basis $e$. Then by Proposition \ref{pr2.3} $\tau$ belongs
to the class $\wt R(\cH)$ if and only if the matrices $C_0(\l)$ and $C_1(\l)$
satisfy \eqref{2.20} and the following equality:
\begin {equation*}
\text {rank} \,(C_0(\l):C_1(\l))=n, \quad \l\in\CR.
\end{equation*}
Moreover, the operator pair $\t=\{(C_0,C_1);\cH\}$ belongs to the class $\wt
R^0(\cH)$ if and only if  $\im (C_1C_0^*)=0$ and $\text{rank}\, (C_0:C_1)=n$
(here $C_l=(c_{kj,l} )_{k,j=1}^n$ is the matrix representation of the operator
$C_l, \; l\in \{0,1\},$ in the basis $e$). Note that such a ''matrix''
definition of the classes $\wt R(\cH)$ and $\wt R^0(\cH)$ in the case $\dim
\cH<\infty$ can be found, e.g. in \cite{DLS93,Kov83}
\end{remark}
\subsection{Boundary triplets and Weyl functions}
Let $A$ be a closed  symmetric linear relation in the Hilbert space $\gH$, let
$\gN_\l(A)=\ker (A^*-\l)\; (\l\in\wh\rho (A))$ be a defect subspace of $A$, let
$\wh\gN_\l(A)=\{\{f,\l f\}:\, f\in \gN_\l(A)\}$ and let $n_\pm (A):=\dim
\gN_\l(A)\leq\infty, \; \l\in\bC_\pm$ be deficiency indices of $A$. Denote by
$\exa$ the set of all proper extensions of $A$, i.e., the set of all relations
$\wt A\in \C (\gH)$ such that $A\subset\wt A\subset A^*$.

 Next assume that $\cH_0$ is a Hilbert space,  $\cH_1$ is a subspace
in $\cH_0$ and   $\cH_2:=\cH_0\ominus\cH_1$, so that $\cH_0=\cH_1\oplus\cH_2$.
Denote by $P_j$ the orthoprojector  in $\cH_0$ onto $\cH_j,\; j\in\{1,2\} $.
\begin{definition}\label{def2.5}
Let $\a\in\{-1,+1\}$. A collection $\Pi_\a=\bta$, where $\G_j: A^*\to \cH_j, \;
j\in\{0,1\}$ are linear mappings, is called a boundary triplet for $A^*$, if
the mapping $\G :\wh f\to \{\G_0 \wh f, \G_1 \wh f\}, \wh f\in A^*,$ from $A^*$
into $\cH_0\oplus\cH_1$ is surjective and the following Green's identity
\begin {equation}\label{2.28}
(f',g)-(f,g')=(\G_1  \wh f,\G_0 \wh g)_{\cH_0}- (\G_0 \wh f,\G_1 \wh
g)_{\cH_0}+i\a (P_2\G_0 \wh f,P_2\G_0 \wh g)_{\cH_2}
\end{equation}
 holds for all $\wh
f=\{f,f'\}, \; \wh g=\{g,g'\}\in A^*$.

In the sequel we will also use the notation $\Pi_+$ (resp. $\Pi_-$) instead of
$\Pi_{+1}$ (resp. $\Pi_{-1}$).
\end{definition}
\begin{proposition}\label{pr2.6}
Let $\Pi_\a=\bta$ be a boundary triplet for   $A^*$. Then
\begin {gather}
\dim \cH_1=n_-(A)\leq n_+(A)=\dim \cH_0, \;\;\text{if}\;\; \a=+1;\label{2.29}\\
\dim \cH_1=n_+(A)\leq n_-(A)=\dim \cH_0, \;\;\text{if}\;\; \a=-1.\label{2.30}
\end{gather}
Conversely for any symmetric relation  $A$ with $n_-(A)\leq n_+(A)$ (resp.
$n_+(A)\leq n_-(A)$) there exists a boundary triplet $\Pi_+$ (resp. $\Pi_-$)
for $A^*$.
\end{proposition}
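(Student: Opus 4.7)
I would prove Proposition \ref{pr2.6} by combining Green's identity \eqref{2.28}, the surjectivity of $\Gamma := (\Gamma_0, \Gamma_1)^\top : A^* \to \cH_0 \oplus \cH_1$, and von Neumann's decomposition of $A^*$; I focus on the case $\alpha = +1$, as $\alpha = -1$ is symmetric under swapping $\wh\gN_\lambda(A)$ and $\wh\gN_{\bar\lambda}(A)$. The first step is to establish $\ker\Gamma = A$: if $\hat f \in A$ and $\hat g \in A^*$, the defining property of $A^*$ gives $(f', g) = (f, g')$, so \eqref{2.28} reduces to
\[
(\Gamma_1 \hat f, \Gamma_0 \hat g)_{\cH_0} - (\Gamma_0 \hat f, \Gamma_1 \hat g)_{\cH_0} + i(P_2 \Gamma_0 \hat f, P_2 \Gamma_0 \hat g)_{\cH_2} = 0,
\]
and surjectivity lets $(\Gamma_0 \hat g, \Gamma_1 \hat g)$ range freely through $\cH_0 \times \cH_1$; splitting the $\cH_0$-argument against $\cH_0 = \cH_1 \oplus \cH_2$ and using $\Gamma_1 \hat f \in \cH_1 \perp \cH_2$ pins $\Gamma_0 \hat f = \Gamma_1 \hat f = 0$. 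Conversely, $\hat f \in \ker\Gamma$ collapses \eqref{2.28} to $(f', g) = (f, g')$ for every $\hat g \in A^*$, so $\hat f \in (A^*)^* = A$.

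Given $\ker\Gamma = A$ and surjectivity, $\Gamma$ descends to an isomorphism $A^*/A \cong \cH_0 \oplus \cH_1$; combined with von Neumann's formula $\dim(A^*/A) = n_+(A) + n_-(A)$, this yields $n_+(A) + n_-(A) = \dim\cH_0 + \dim\cH_1$. To separate the two dimensions, I set $\hat g = \hat f$ in \eqref{2.28} and divide by $i$ to obtain
\[
2\,\im(f', f) = 2\,\im(\Gamma_1 \hat f, \Gamma_0 \hat f)_{\cH_0} + \|P_2 \Gamma_0 \hat f\|^2_{\cH_2}.
\]
For $\hat f = \{f, \lambda f\} \in \wh\gN_\lambda(A)$ with $\lambda \in \bC_+$, the hypothesis $\Gamma_0 \hat f = 0$ forces $\im\lambda\,\|f\|^2 = 0$, hence $\hat f = 0$; so $\Gamma_0 \upharpoonright \wh\gN_\lambda(A)$ is injective and $n_+(A) \leq \dim\cH_0$. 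Dually, for $\hat f \in \wh\gN_{\bar\lambda}(A)$ with $\Gamma_1 \hat f = 0$, the identity becomes $-2\im\lambda\,\|f\|^2 = \|P_2 \Gamma_0 \hat f\|^2 \geq 0$, so $\hat f = 0$ and $n_-(A) \leq \dim\cH_1$. Summing these bounds and comparing with the dimension equality above forces both to be equalities, and $\cH_1 \subseteq \cH_0$ then gives $n_-(A) \leq n_+(A)$.

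For the converse, given $A$ with $n_-(A) \leq n_+(A)$, I would take $\cH_0 := \gN_i(A)$, fix an isometric embedding $V : \gN_{-i}(A) \hookrightarrow \gN_i(A)$ (available by the dimensional hypothesis), and set $\cH_1 := \ran V$. Writing each $\hat f \in A^*$ uniquely via von Neumann's direct sum decomposition $A^* = A + \wh\gN_i(A) + \wh\gN_{-i}(A)$ as $\hat f_A + \{f_+, if_+\} + \{f_-, -if_-\}$ with $f_\pm \in \gN_{\pm i}(A)$, I would define $\Gamma_0, \Gamma_1$ by linear combinations of $f_+$ and $V f_-$ modelled on the classical boundary triplet formulas and verify \eqref{2.28} by direct computation; the complement $\cH_2 = \gN_i(A) \ominus \ran V$, of dimension $n_+(A) - n_-(A)$, is precisely what absorbs the correction term on $\cH_2$. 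Surjectivity of $\Gamma$ then follows from the direct sum structure.

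The main obstacle is the converse: one must tune the coefficients in the defining formulas for $\Gamma_0$ and $\Gamma_1$ so that the correction term in \eqref{2.28} appears with exactly the prescribed coefficient $+i$ on $\cH_2$ and so that $\Gamma_1$ takes values in $\cH_1 = \ran V$ rather than in all of $\cH_0$. In contrast, the forward direction reduces, once the kernel identification is in hand, to a short dimension count together with the two injectivity arguments on defect subspaces.
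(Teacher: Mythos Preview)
The paper does not prove Proposition \ref{pr2.6} in the text; Remark \ref{rem2.13} explicitly defers the proof (for $\alpha=+1$, with $\alpha=-1$ claimed analogous) to the reference \cite{Mog06.2}. So there is no in-paper argument to compare against, only your proposal to assess on its own.

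Your forward argument is sound. The identification $\ker\Gamma=A$ is carried out correctly via surjectivity and Green's identity, and the two injectivity statements on defect subspaces are clean: setting $\hat g=\hat f$ in \eqref{2.28} indeed yields
\[
2\,\im\,(f',f)=2\,\im\,(\G_1\hat f,\G_0\hat f)_{\cH_0}+\|P_2\G_0\hat f\|^2_{\cH_2},
\]
from which $\G_0\up\wh\gN_\lambda(A)$ injective ($\lambda\in\bC_+$) and $\G_1\up\wh\gN_{\bar\lambda}(A)$ injective follow. One caveat: your closing step ``sum the two inequalities and compare with $n_+(A)+n_-(A)=\dim\cH_0+\dim\cH_1$'' forces equality only when the cardinals involved are finite; with infinite deficiency indices, cardinal addition does not cancel and a direct surjectivity argument (essentially Proposition \ref{pr2.8}) is needed instead. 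For this paper that is harmless, since all deficiency indices arising from the symmetric systems under study are finite, but you should flag the restriction if you present the argument in full generality.

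Your converse sketch follows the standard von Neumann route and is the natural construction. You correctly identify the only real work: choosing the coefficients in the $(\G_0,\G_1)$ formulas so that the cross term lands on $\cH_2=\gN_i(A)\ominus\ran V$ with coefficient $+i$ and so that $\ran\G_1\subset\cH_1$. Writing out, e.g., $\G_0\hat f=f_+ + Vf_-$ and $\G_1\hat f=i(P_{\cH_1}f_+ - Vf_-)$ (or a similar normalisation) and checking \eqref{2.28} against $(f',g)-(f,g')=2i(f_+,g_+)-2i(f_-,g_-)$ is a short computation; surjectivity is then immediate from the von Neumann decomposition. This is presumably what \cite{Mog06.2} does.
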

\begin{proposition}\label{pr2.7}
Let $\Pi_\a=\bta$ be a boundary triplet for  $A^*$. Then:

1) $\ker \G_0\cap\ker\G_1=A$ and $\G_j$ is a bounded operator from $A^*$ into
$\cH_j, \;  j\in\{0,1\}$;

 2) The equality
\begin {equation}\label{2.31}
A_0:=\ker \G_0=\{\wh f\in A^*:\G_0 \wh f=0\}
 \end{equation}
defines the maximal symmetric extension $A_0\in\exa$ such that $\bC_+\subset
\rho (A_0) $ in the case $\a=+1$ and  $\bC_-\subset \rho (A_0) $ in the case
$\a=-1$.
\end{proposition}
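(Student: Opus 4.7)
I would prove Proposition 2.7 in the order it is stated, relying throughout on Green's identity \eqref{2.28} and the surjectivity of the combined map $\G=(\G_0,\G_1)^\top:A^*\to\cH_0\oplus\cH_1$.

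For the identity $\ker\G_0\cap\ker\G_1=A$, the inclusion $\supset$ is a short calculation: if $\wh f\in\ker\G_0\cap\ker\G_1$ then the right-hand side of \eqref{2.28} vanishes for every $\wh g\in A^*$, so $\wh f\in(A^*)^*=A$ (the last equality uses that $A$ is closed and symmetric). For the inclusion $\subset$, take $\wh f\in A$: then $(f',g)-(f,g')=0$ for every $\wh g\in A^*$, hence
\[
(\G_1\wh f,\G_0\wh g)-(\G_0\wh f,\G_1\wh g)+i\a(P_2\G_0\wh f,P_2\G_0\wh g)=0,\quad \wh g\in A^*.
\]
Letting $\{\G_0\wh g,\G_1\wh g\}$ range over all of $\cH_0\oplus\cH_1$ (surjectivity of $\G$), first choosing $\G_0\wh g\in\cH_1$ arbitrary and $\G_1\wh g=0$ gives $\G_1\wh f=0$; then $\G_0\wh g\in\cH_2$ arbitrary gives $P_2\G_0\wh f=0$; finally $\G_1\wh g$ arbitrary and $\G_0\wh g=0$ gives $P_1\G_0\wh f=0$.

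Boundedness of $\G_0,\G_1$ will follow from the closed graph theorem once I show the combined map $\G$ is closed from the Hilbert space $A^*$ (with the graph norm) to $\cH_0\oplus\cH_1$. If $\wh f_n\to\wh f$ and $(\G_0\wh f_n,\G_1\wh f_n)\to(h_0,h_1)$, I pass to the limit in \eqref{2.28} (left-hand side by inner-product continuity, right-hand side by using the already-known values on the limiting pair) and subtract to get
\[
(\G_1\wh f-h_1,\G_0\wh g)-(\G_0\wh f-h_0,\G_1\wh g)+i\a(P_2(\G_0\wh f-h_0),P_2\G_0\wh g)=0
\]
for all $\wh g\in A^*$. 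Surjectivity of $\G$ then forces $\G_0\wh f=h_0$ and $\G_1\wh f=h_1$ by the same three test choices as above.

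For part 2), symmetry of $A_0=\ker\G_0$ is immediate: on $\wh f,\wh g\in A_0$ every term on the right of \eqref{2.28} vanishes. I treat the case $\a=+1$ (the case $\a=-1$ is identical with $\bC_-$ replacing $\bC_+$). Fix $\l\in\bC_+$. Plugging $\wh f=\wh g=\{f,\l f\}\in A_0$ into \eqref{2.28} yields $2i\,\im\l\cdot\|f\|^2=0$, so $A_0-\l$ is injective. Next I identify $A_0^*$: for $\wh g\in A_0^*$, Green's identity restricted to $\wh f\in A_0$ reduces to $(\G_1\wh f,\G_0\wh g)=0$; surjectivity of $\G$ together with $\ker\G_0=A_0$ gives $\G_1(A_0)=\cH_1$, hence $P_1\G_0\wh g=0$, i.e.\
\[
A_0^*=\{\wh g\in A^*:\G_0\wh g\in\cH_2\}.
\]
On any such $\wh g=\{g,g'\}$, Green's identity (with $\G_1\wh g\in\cH_1\perp\cH_2\ni\G_0\wh g$ and $P_2\G_0\wh g=\G_0\wh g$) gives $\im(g',g)=\tfrac12\|\G_0\wh g\|^2\ge 0$. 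Applying this to $\wh g=\{g,\ov\l g\}\in\ker(A_0^*-\ov\l)$ for $\l\in\bC_+$ forces $-\im\l\cdot\|g\|^2\ge 0$, hence $g=0$. Therefore $\ran(A_0-\l)$ is dense in $\gH$; since $A_0$ is symmetric and $\l\notin\bR$, $\l\in\wh\rho(A_0)$, so the range is closed, whence $\l\in\rho(A_0)$.

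Finally, maximality of $A_0$ in the symmetric sense follows from $\bC_+\subset\rho(A_0)$: any symmetric extension $\wt A\in\exa$ with $A_0\subset\wt A$ satisfies $\wt A-\l\supset A_0-\l$ with the latter already surjective and the former injective (symmetry plus $\l\notin\bR$), forcing $\wt A=A_0$. The one step requiring care is the closed-graph argument for boundedness, because it depends on first proving $\ker\G=A$ and exploiting surjectivity on both slots $\cH_0$ and $\cH_1$ separately — but once the same ``three test choices'' trick is set up, all three assertions fall out of the same computation.
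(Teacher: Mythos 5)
Your proof is correct and is essentially the standard argument that the paper itself omits, deferring instead to \cite{Mog06.2} (see Remark \ref{rem2.13}): Green's identity \eqref{2.28} plus surjectivity of $\G$ for $\ker\G_0\cap\ker\G_1=A$ and the closed graph theorem, then the sign of $\im(g',g)$ on $A_0^*$ to get $\bC_+\subset\rho(A_0)$ and maximality. Two cosmetic points only: you have swapped the labels ``$\subset$'' and ``$\supset$'' in part 1) (the arguments themselves are attached to the right inclusions), and you assert the equality $A_0^*=\{\wh g\in A^*:\G_0\wh g\in\cH_2\}$ while proving only the inclusion ``$\subset$'', which is the only one your argument uses (the converse is an equally short consequence of \eqref{2.28}).
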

In the following two propositions we denote by $\pi_1$  the orthoprojector  in
$\gH\oplus\gH$ onto $\gH\oplus \{0\}$.
\begin{proposition}\label{pr2.8}
Let $n_-(A)\leq n_+(A)$ and let $\Pi_+=\bta$ be a boundary triplet for
$A^*$.Then:

1) the operators $\G_0\up \wh \gN_\l (A), \;\l\in\bC_+,$ and $P_1\G_0\up \wh
\gN_z (A),\; z\in\bC_-,$ isomorphically map $\wh\gN_\l (A)$ onto $\cH_0$ and
$\wh\gN_z(A)$ onto $\cH_1$ respectively. Therefore the equalities
\begin{gather}
\g_{+} (\l)=\pi_1(\G_0\up\wh \gN_\l (A))^{-1}, \;\;\l\in\Bbb C_+;\quad \g_{-}
(z)=\pi_1(P_1\G_0\up\wh\gN_z (A))^{-1}, \;\; z\in\Bbb C_-,
\label{2.32}\\
M_{+}(\l)h_0=\G_1\{\g_+(\l)h_0, \l\g_+(\l)h_0\}, \quad h_0\in\cH_0, \quad
\l\in\bC_+\label{2.32a}\\
M_{-}(z)h_1=(\G_1+iP_2\G_0)\{\g_-(z)h_1, z\g_-(z)h_1\}, \quad h_1\in\cH_1,
\quad z\in\bC_- \label{2.32b}
\end{gather}
correctly define the operator functions $\g_{+}(\cdot):\Bbb C_+\to[\cH_0,\gH],
\; \; \g_{-}(\cdot):\Bbb C_-\to[\cH_1,\gH]$ and $M_{+}(\cdot):\bC_+\to
[\cH_0,\cH_1], \;\; M_{-}(\cdot):\bC_-\to [\cH_1,\cH_0]$, which are holomorphic
on their domains. Moreover, the equality $M_+^*(\ov\l)=M_-(\l), \;\l\in\bC_-,$
is valid.

2) assume that
\begin{gather}
M_+(\l)=(M(\l):N_+(\l)):\cH_1\oplus\cH_2\to \cH_1, \quad
\l\in\bC_+\label{2.33}\\
M_-(z)=(M(z):N_-(z))^\top:\cH_1\to\cH_1\oplus\cH_2 , \quad
z\in\bC_-\label{2.34}
\end{gather}
are the block representations of $M_+(\l)$ and $M_-(z)$ respectively and let
\begin{gather}
\cM(\l)=\begin{pmatrix}M(\l) & N_+(\l) \cr 0 & \tfrac i 2 I_{\cH_2}
\end{pmatrix}:\cH_1\oplus\cH_2\to \cH_1\oplus\cH_2, \quad
\l\in\bC_+\label{2.35}\\
\cM(\l)=\begin{pmatrix}M(\l) & 0 \cr  N_-(\l)  & -\tfrac i 2 I_{\cH_2}
\end{pmatrix}:\cH_1\oplus\cH_2\to \cH_1\oplus\cH_2, \quad
\l\in\bC_-.\label{2.36}
\end{gather}
Then $\cM(\cd)$ is a Nevanlinna operator function satisfying the identity
\begin {equation}\label{2.37}
\cM(\mu)-\cM^*(\l)=(\mu-\ov\l)\g_+^*(\l)\g_+(\mu), \quad \mu,\l\in \bC_+ .
\end{equation}
\end{proposition}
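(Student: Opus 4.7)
\emph{Part 1, injectivity.} I would start by applying Green's identity \eqref{2.28} (with $\a=+1$) to $\wh f=\wh g=\{f,\l f\}\in\wh\gN_\l(A)$; the left-hand side equals $2i\im\l\cd\|f\|^2$ and the right-hand side simplifies to $2i\im(\G_1\wh f,\G_0\wh f)_{\cH_0}+i\|P_2\G_0\wh f\|^2_{\cH_2}$. For $\l\in\bC_+$ the assumption $\G_0\wh f=0$ then forces $f=0$, so $\G_0\up\wh\gN_\l(A)$ is injective. For $z\in\bC_-$, if $P_1\G_0\wh f=0$ then $\G_0\wh f=P_2\G_0\wh f\in\cH_2$ is orthogonal to $\G_1\wh f\in\cH_1$, the inner-product term vanishes, and $2\im z\cd\|f\|^2=\|P_2\G_0\wh f\|^2\geq 0$ forces $f=0$, giving injectivity of $P_1\G_0\up\wh\gN_z(A)$.

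\emph{Part 1, surjectivity.} For $\l\in\bC_+$ surjectivity of $\G_0\up\wh\gN_\l(A)$ onto $\cH_0$ is immediate: Proposition \ref{pr2.7} gives $\bC_+\subset\rho(A_0)$ with $A_0=\ker\G_0$, hence the decomposition $A^*=A_0\dotplus\wh\gN_\l(A)$ holds, and since $\G$ is surjective so is $\G_0:A^*\to\cH_0$, which descends to $\wh\gN_\l(A)$. The expected main obstacle is surjectivity of $P_1\G_0\up\wh\gN_z(A)$ onto $\cH_1$ for $z\in\bC_-$, as no extension of $A$ has yet been exhibited whose resolvent set covers $\bC_-$. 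The plan is to construct a maximal dissipative extension $A_\sharp\in\exa$; a natural candidate is $A_\sharp=\{\wh f\in A^*:P_2\G_0\wh f=0,\;\G_1\wh f=-i\G_0\wh f\}$, which satisfies $\im(f',f)=-\|\G_0\wh f\|^2\leq 0$ by Green's identity and has codimension $\dim\cH_0$ over $A$ in $A^*$, forcing $\dim(A_\sharp/A)=\dim\cH_1=n_-(A)$ and hence maximal dissipativity. Thus $\bC_-\subset\rho(A_\sharp)$ and $A^*=A_\sharp\dotplus\wh\gN_z(A)$; the passage from this decomposition to surjectivity of $P_1\G_0\up\wh\gN_z(A)$ is the delicate step, for which I would defer to the boundary-triplet machinery in \cite{Mog06.2}. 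Holomorphy of $\g_\pm(\cd)$ and $M_\pm(\cd)$ on their respective half-planes is then automatic from the holomorphic dependence of $\wh\gN_\l(A)$ on $\l$ together with boundedness of $\G_0,\G_1$.

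\emph{Identity $M_+^*(\ov\l)=M_-(\l)$.} Fix $\l\in\bC_-$, $h_0\in\cH_0$, $h_1\in\cH_1$ and apply \eqref{2.28} to $\wh f=\{\g_+(\ov\l)h_0,\ov\l\g_+(\ov\l)h_0\}$, $\wh g=\{\g_-(\l)h_1,\l\g_-(\l)h_1\}$. The left-hand side vanishes because $(\ov\l x,y)-(x,\l y)=0$. Using $\G_0\wh f=h_0$, $\G_1\wh f=M_+(\ov\l)h_0$, $P_1\G_0\wh g=h_1$, $(\G_1+iP_2\G_0)\wh g=M_-(\l)h_1$ and the orthogonality $\cH_1\perp\cH_2$, the right-hand side reduces to $(M_+(\ov\l)h_0,h_1)_{\cH_1}-(h_0,M_-(\l)h_1)_{\cH_0}$, yielding the claimed identity.

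\emph{Part 2.} I would apply \eqref{2.28} to $\wh f=\{\g_+(\mu)h,\mu\g_+(\mu)h\}$, $\wh g=\{\g_+(\l)k,\l\g_+(\l)k\}$ with $\mu,\l\in\bC_+$ and $h,k\in\cH_0$. The left-hand side equals $(\mu-\ov\l)(\g_+^*(\l)\g_+(\mu)h,k)_{\cH_0}$. Writing $\cM(\l)=M_+(\l)+\tfrac i 2 P_2$ as an operator on $\cH_0=\cH_1\oplus\cH_2$ (so that $\cM^*(\l)=M_+^*(\l)-\tfrac i 2 P_2$), the right-hand side collapses to $((\cM(\mu)-\cM^*(\l))h,k)_{\cH_0}$, which gives \eqref{2.37}. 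Setting $\mu=\l$ yields $\im\cM(\l)=\im\l\cd\g_+^*(\l)\g_+(\l)\geq 0$ on $\bC_+$; the symmetry $\cM^*(\l)=\cM(\ov\l)$ for $\l\in\bC_+$ follows from the block forms \eqref{2.35}--\eqref{2.36} upon invoking $M_+^*(\ov\l)=M_-(\l)$. Combined with holomorphy, these establish the Nevanlinna property of $\cM(\cd)$, the case $\l\in\bC_-$ following by symmetry.
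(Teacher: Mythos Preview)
The paper does not actually prove Proposition~\ref{pr2.8}; Remark~\ref{rem2.13} simply cites \cite{Mog06.2} for the proof of Propositions~\ref{pr2.6}--\ref{pr2.8}. Your proposal therefore goes well beyond what the paper itself supplies, and the parts you work out in full---injectivity via Green's identity, the relation $M_+^*(\ov\l)=M_-(\l)$, and the identity \eqref{2.37} leading to the Nevanlinna property---are correct and are exactly the kind of direct computations one would expect in \cite{Mog06.2}.

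There is, however, a sign slip in your construction of $A_\sharp$. With $P_2\G_0\wh f=0$ and $\G_1\wh f=-i\G_0\wh f$ you correctly obtain $\im(f',f)=-\|\G_0\wh f\|^2\le 0$, but an extension with this sign is maximal \emph{accumulative}, giving $\bC_+\subset\rho(A_\sharp)$, not $\bC_-$. To place $\bC_-$ in the resolvent set you need $\G_1\wh f=+i\G_0\wh f$, which yields $\im(f',f)=+\|\G_0\wh f\|^2\ge 0$. This is easily repaired and does not affect the rest of your outline, especially since you defer the passage from the decomposition $A^*=A_\sharp\dotplus\wh\gN_z(A)$ to surjectivity of $P_1\G_0\up\wh\gN_z(A)$ to \cite{Mog06.2} anyway.

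One further remark: in the setting that matters for this paper (symmetric systems with $\dim\bH<\infty$), the deficiency indices are finite, so by Proposition~\ref{pr2.6} one has $\dim\wh\gN_z(A)=n_-(A)=\dim\cH_1<\infty$ for $z\in\bC_-$. Your injectivity argument then already forces $P_1\G_0\up\wh\gN_z(A)$ to be an isomorphism by a dimension count, and the $A_\sharp$ construction is unnecessary. It is only in the infinite-index case that the maximal-dissipative route (or the machinery of \cite{Mog06.2}) is genuinely needed.
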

Similar statements for the triplet $\Pi_-$ are specified in the following
proposition.
\begin{proposition}\label{pr2.9}
Let $n_+(A)\leq n_-(A)$ and let $\Pi_-=\bta$ be a boundary triplet for
$A^*$.Then:

1) the equalities
\begin{gather}
\g_{+} (\l)=\pi_1(P_1\G_0\up\wh \gN_\l (A))^{-1}, \;\;\l\in\Bbb C_+;\quad
\g_{-} (z)=\pi_1(\G_0\up\wh\gN_z (A))^{-1}, \;\; z\in\Bbb C_-,
\label{2.38}\\
M_{+}(\l)h_1=(\G_1-iP_2\G_0)\{\g_+(\l)h_1, \l\g_+(\l)h_1\}, \quad h_1\in\cH_1,
\quad\l\in\bC_+\label{2.39}\\
M_{-}(z)h_0=\G_1\{\g_-(z)h_0, z\g_-(z)h_0\}, \quad h_0\in\cH_0, \quad z\in\bC_-
\label{2.40}
\end{gather}
correctly define the holomorphic operator functions $\g_{+}(\cdot):\Bbb
C_+\to[\cH_1,\gH], \; \; \g_{-}(\cdot):\Bbb C_-\to[\cH_0,\gH]$ and
$M_{+}(\cdot):\bC_+\to [\cH_1,\cH_0], \;\; M_{-}(\cdot):\bC_-\to
[\cH_0,\cH_1]$.

2) assume that
\begin{gather}
M_+(\l)=(M(\l):N_+(\l))^\top:\cH_1\to\cH_1\oplus\cH_2 , \quad
\l\in\bC_+\label{2.41}\\
M_-(z)=(M(z):N_-(z)):\cH_1\oplus\cH_2\to \cH_1, \quad z\in\bC_-\label{2.42}
\end{gather}
are the block representations of $M_+(\l)$ and $M_-(z)$ respectively and let
\begin{gather}
\cM(\l)=\begin{pmatrix}M(\l) & 0 \cr N_+(\l) & \tfrac i 2 I_{\cH_2}
\end{pmatrix}:\cH_1\oplus\cH_2\to \cH_1\oplus\cH_2, \quad
\l\in\bC_+\label{2.43}\\
\cM(\l)=\begin{pmatrix}M(\l) & N_-(\l) \cr  0  & -\tfrac i 2 I_{\cH_2}
\end{pmatrix}:\cH_1\oplus\cH_2\to \cH_1\oplus\cH_2, \quad
\l\in\bC_-.\label{2.44}
\end{gather}
Then $\cM(\cd)$ is a Nevanlinna operator function satisfying the identity
\begin {equation}\label{2.44a}
\cM(\mu)-\cM^*(\l)=(\mu-\ov\l)\g_-^*(\l)\g_-(\mu), \quad \mu,\l\in \bC_- .
\end{equation}
\end{proposition}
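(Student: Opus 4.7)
The plan is to prove Proposition~\ref{pr2.9} by mirroring the proof of Proposition~\ref{pr2.8}, with the roles of $\a=+1$ and $\a=-1$ (and correspondingly $\bC_+$ and $\bC_-$) exchanged. Throughout I would invoke the Green's identity \eqref{2.28} with $\a=-1$, together with Proposition~\ref{pr2.7}~(2), which ensures that $A_0:=\ker\G_0$ is maximal symmetric with $\bC_-\subset\rho(A_0)$.

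I would settle the isomorphism assertions of part~1 first. For $z\in\bC_-$, the decomposition $A^*=A_0\dotplus\wh\gN_z(A)$, combined with $\G_0\up A_0=0$ and the surjectivity of $\G$, shows that $\G_0\up\wh\gN_z(A)$ is a bijection onto $\cH_0$, so \eqref{2.38} and \eqref{2.40} correctly define $\g_-,M_-$; holomorphicity is standard. For $\l\in\bC_+$, substituting $\wh f=\wh g=\{f,\l f\}$ with $f\in\gN_\l(A)$ into \eqref{2.28} and using $\G_1\wh f\in\cH_1\perp\cH_2$ gives
\begin{equation*}
2\,\im\l\cd\|f\|^2+\|P_2\G_0\wh f\|^2=2\,\im(\G_1\wh f,P_1\G_0\wh f).
\end{equation*}
This yields injectivity of $P_1\G_0\up\wh\gN_\l(A)$ and, after Cauchy--Schwarz on the right and the continuity of $\G_1$, a lower bound of the form $\|\wh f\|\leq C_\l\|P_1\G_0\wh f\|$, so the range of $P_1\G_0\up\wh\gN_\l(A)$ is closed in $\cH_1$. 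For density I would argue by duality: if $k_1\in\cH_1$ is orthogonal to this range, surjectivity of $\G$ yields $\wh g\in A_0$ with $\G_1\wh g=k_1$, and substituting $\wh g$ against an arbitrary $\wh f\in\wh\gN_\l(A)$ into \eqref{2.28} forces, after correction by an element of $A$, $\wh g\in\wh\gN_{\ov\l}(A_0)=\{0\}$ (since $\ov\l\in\bC_-\subset\rho(A_0)$), so $k_1=0$. Once bijectivity is established, \eqref{2.38} and \eqref{2.39} define bounded holomorphic $\g_+$, $M_+$, and the relation $M_+^*(\ov\l)=M_-(\l)$ follows by applying Green's identity to a pair of defect elements drawn from $\bC_+$ and $\bC_-$.

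For part~2, I would apply Green's identity with $\a=-1$ to $\wh f_\mu=\{\g_-(\mu)h,\mu\g_-(\mu)h\}$ and $\wh g_\l=\{\g_-(\l)k,\l\g_-(\l)k\}$ for $h,k\in\cH_0$, $\l,\mu\in\bC_-$, obtaining
\begin{equation*}
(\mu-\ov\l)(\g_-(\mu)h,\g_-(\l)k)_\gH=(M_-(\mu)h,k)_{\cH_0}-(h,M_-(\l)k)_{\cH_0}-i(P_2h,P_2k)_{\cH_2}.
\end{equation*}
Substituting the block form $M_-(\cd)=(M(\cd):N_-(\cd))$ from \eqref{2.42} and matching with a direct computation of $\cM(\mu)-\cM^*(\l)$ from \eqref{2.44} produces \eqref{2.44a}. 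Setting $\mu=\l\in\bC_-$ then yields $\im\cM(\l)=\im\l\cd\g_-^*(\l)\g_-(\l)$, so $\im\l\cd\im\cM(\l)\geq 0$ on $\bC_-$; the companion identity $M_+^*(\ov\l)=M_-(\l)$, combined with the block structures \eqref{2.43} and \eqref{2.44}, gives $\cM^*(\ov\l)=\cM(\l)$, whence the Nevanlinna inequality extends to $\bC_+$ by complex conjugation. The main obstacle is the surjectivity half of part~1 in the infinite-dimensional setting: injectivity and the lower bound are immediate from Green's identity, but closing the range and proving density require the duality argument above combined with $\bC_-\subset\rho(A_0)$; everything else is a careful transcription of the proof of Proposition~\ref{pr2.8} with the opposite sign of $\a$.
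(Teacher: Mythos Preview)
Your proposal is correct and follows exactly the approach the paper indicates: in Remark~\ref{rem2.13} the authors state that Propositions~\ref{pr2.6}--\ref{pr2.8} are proved in \cite{Mog06.2} for the triplets $\Pi_+$ and that ``for the triplets $\Pi_-$ the proof is similar,'' so the paper gives no independent proof of Proposition~\ref{pr2.9} at all. Your mirror argument---swapping the sign of $\a$ and interchanging the roles of $\bC_\pm$---is precisely this ``similar'' proof, and your detailed verification of the identity \eqref{2.44a} via Green's identity with $\a=-1$ applied to defect elements is the standard computation.

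One minor caution: your duality argument for surjectivity of $P_1\G_0\up\wh\gN_\l(A)$ is slightly compressed at the step ``after correction by an element of $A$, $\wh g\in\wh\gN_{\ov\l}(A_0)=\{0\}$.'' What you actually get from Green's identity is that $g'-\ov\l g\perp\gN_\l(A)=\ker(A^*-\l)$, hence $g'-\ov\l g\in\ov{\ran(A-\ov\l)}$; combining this with $\{g,g'\}\in A_0$ and $\ov\l\in\rho(A_0)$ requires one more line to conclude $k_1=\G_1\wh g=0$. This is routine but worth spelling out if you write it up in full.
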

\begin{definition}\label{def2.10}
The operator functions $\g_\pm(\cd)$ and $M_\pm(\cd)$ defined in Propositions
\ref{pr2.8} and \ref{pr2.9} are called the $\g$-fields and the Weyl functions,
respectively, corresponding to the boundary triplet $\Pi_\a$.
\end{definition}
\begin{proposition}\label{pr2.10a}
Let $\Pi_\a=\bta$ be a boundary triplet for $A^*$ and let $\g_\pm(\cd)$ and
$M_\pm(\cd)$ be the corresponding $\g$-fields and Weyl functions respectively.
Moreover, let the spaces $\cH_0$ and $\cH_1$ be decomposed as
\begin {equation*}
\cH_1=\wh\cH\oplus\dot\cH_1, \qquad \cH_0=\wh\cH\oplus\dot\cH_0
\end{equation*}
(so  that $\dot\cH_0=\dot\cH_1\oplus\cH_2$) and let
\begin {equation*}
\G_0=( \wh\G_0 : \dot\G_0)^\top : A^* \to \wh\cH\oplus\dot\cH_0, \qquad \G_1=(
\wh\G_1 : \dot\G_1 )^\top: A^* \to \wh\cH\oplus\dot\cH_1
\end{equation*}
be the block representations of the operators $\G_0$ and $\G_1$. Then:

1)The equality
\begin {equation*}
\wt A=\{\wh f\in A^*: \wh\G_0 \wh f=\dot\G_0\wh f= \dot\G_1\wh f=0\}
\end{equation*}
defines a closed symmetric extension $\wt A\in\exa$ and the adjoint relation
$\wt A^*$ of $\wt A$ is
\begin {equation*}
\wt A^*=\{\wh f\in A^*: \wh\G_0 \wh f=0\}.
\end{equation*}

If in addition $n_\pm (A)<\infty$, then the deficiency indices of $\wt A$ are
$n_\pm (\wt A)=n_\pm (A)-\dim \wh \cH$.

 2) The collection $\dot\Pi_\a=\{\dot\cH_0\oplus\dot\cH_1, \dot \G_0\up \wt
A^*, \dot \G_1\up \wt A^*\}$  is a boundary triplet for $\wt A^*$.

3) The $\g$-fields $\dot\g_\pm(\cd)$ and the Weyl functions $\dot M_\pm (\cd)$
corresponding to $\dot\Pi_\a$ are given by
\begin{gather*}
\dot\g_+(\l)=\g_+(\l)\up \dot\cH_0, \qquad \dot M_+(\l)=P_{\dot\cH_1}M_+(\l)\up
\dot\cH_0, \quad \l\in\bC_+\\
\dot\g_-(\l)=\g_-(\l)\up \dot\cH_1, \qquad \dot M_-(\l)=P_{\dot\cH_0}M_-(\l)\up
\dot\cH_1, \quad \l\in\bC_-
\end{gather*}
in the case $\a=+1$ and by the same formulas with $\dot\cH_1 \;(\dot\cH_0)$ in
place of $\dot\cH_0$ (resp. $\dot\cH_1$) in the case $\a=-1$.
\end{proposition}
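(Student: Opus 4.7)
The plan is to verify the three claims in turn, relying on two tools only: the abstract Green's identity \eqref{2.28} and the surjectivity of the map $\G=(\G_0,\G_1)^\top:A^*\to\cH_0\oplus\cH_1$.

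For claim 1, I would first observe that $A\subset\wt A$ is immediate because both $\G_0$ and $\G_1$ vanish on $A$ (Proposition \ref{pr2.7}(1)), and closedness of $\wt A$ follows from the same proposition since $\wh\G_0,\dot\G_0,\dot\G_1$ are bounded. Symmetry of $\wt A$ is read off \eqref{2.28}: for $\wh f,\wh g\in\wt A$ all three components of $\G_0$ vanish on both arguments, so the entire right-hand side collapses to zero. To identify $\wt A^*$, I would take $\wh g\in A^*$ arbitrary and, using $\G_0\wh f=0$ and $\dot\G_1\wh f=0$ for $\wh f\in\wt A$, reduce the right-hand side of \eqref{2.28} to $(\wh\G_1\wh f,\wh\G_0\wh g)_{\wh\cH}$. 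The surjectivity of $\G$ immediately produces, for any prescribed $\wh h\in\wh\cH$, an element $\wh f\in A^*$ with $\G_0\wh f=0$, $\dot\G_1\wh f=0$, $\wh\G_1\wh f=\wh h$; hence $\wh\G_1\up\wt A$ is onto $\wh\cH$, and the vanishing of the pairing forces $\wh\G_0\wh g=0$. The deficiency-index formula I would postpone to claim 2.

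For claim 2, the Green's identity for $\dot\Pi_\a$ falls out of \eqref{2.28} by specialization: for $\wh f,\wh g\in\wt A^*$ one has $\G_j\wh f=(0,\dot\G_j\wh f)$ (and similarly for $\wh g$) in the decomposition $\cH_0=\wh\cH\oplus\dot\cH_0$, $\cH_1=\wh\cH\oplus\dot\cH_1$, so the $\wh\cH$-components contribute nothing to the pairings, while $P_2\G_0\wh f=\dot P_2\dot\G_0\wh f$ with $\dot P_2$ the orthoprojector in $\dot\cH_0$ onto $\cH_2$. This yields exactly the identity for $\dot\Pi_\a$ with the same $\a$. Surjectivity of $(\dot\G_0,\dot\G_1)^\top$ on $\wt A^*$ again reduces to surjectivity of $\G$: pick $\wh f\in A^*$ with $\G_0\wh f=(0,\dot h_0)$ and $\G_1\wh f=(0,\dot h_1)$, so automatically $\wh f\in\wt A^*$. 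Once $\dot\Pi_\a$ is a boundary triplet, applying Proposition \ref{pr2.6} to it and using $\dim\dot\cH_j=\dim\cH_j-\dim\wh\cH$ yields $n_\pm(\wt A)=n_\pm(A)-\dim\wh\cH$, completing claim 1 as well.

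For claim 3, I would work out the case $\a=+1$ in detail and remark that $\a=-1$ is fully analogous. The decisive observation is $\wh\gN_\l(\wt A)=\wh\gN_\l(A)\cap\ker\wh\G_0$: if $\dot h_0\in\dot\cH_0$ and $\wh f\in\wh\gN_\l(A)$ is the unique element with $\G_0\wh f=(0,\dot h_0)$, then automatically $\wh\G_0\wh f=0$, whence $\wh f\in\wh\gN_\l(\wt A)$ and $\dot\g_+(\l)\dot h_0=\g_+(\l)\dot h_0$ when $\dot h_0$ is viewed in $\cH_0$. The same bookkeeping with $P_1\G_0$ versus $\dot P_1\dot\G_0$ gives $\dot\g_-(z)=\g_-(z)\up\dot\cH_1$. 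Substituting into the defining formulas \eqref{2.32a}, \eqref{2.32b} for $\dot\Pi_+$ and projecting the outputs onto $\dot\cH_1$, resp.\ $\dot\cH_0$, produces the stated formulas $\dot M_+(\l)=P_{\dot\cH_1}M_+(\l)\up\dot\cH_0$ and $\dot M_-(\l)=P_{\dot\cH_0}M_-(\l)\up\dot\cH_1$. The main obstacle I expect is not conceptual but notational: one must verify carefully that the correction term $iP_2\G_0$ in \eqref{2.32b} is precisely what projects to $i\dot P_2\dot\G_0$ under the restriction to $\wt A^*$, so that the Weyl function formula for $\dot\Pi_+$ lines up with the one obtained by projection. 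The case $\a=-1$ is handled by the same argument with the roles of $\dot\cH_0$ and $\dot\cH_1$ interchanged in the formulas for $\g_\pm$ and $M_\pm$.
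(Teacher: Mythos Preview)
Your proposal is correct. The paper itself omits the proof entirely, remarking only that it is similar to that of Proposition~4.1 in \cite{DM00}; your direct verification via the Green's identity \eqref{2.28} and the surjectivity of $\G$ is exactly the natural argument one would expect that reference to contain, adapted to the present $\Pi_\a$ setting with possibly unequal $\cH_0,\cH_1$. The only point worth double-checking in your write-up is the bookkeeping you flag yourself: that $P_2\G_0=\dot P_2\dot\G_0$ (which holds because $\cH_2\subset\dot\cH_0$ and $\wh\cH\perp\cH_2$) and that $P_{\dot\cH_0}(\G_1+iP_2\G_0)=\dot\G_1+i\dot P_2\dot\G_0$; both identities go through as you indicate.
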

We omit the proof of Proposition \ref{pr2.10a}, since it is similar to that of
Proposition 4.1 in \cite{DM00} (see also remark \ref{rem2.13} below).

Recall further the following definition.
\begin{definition}\label{def2.11}
An operator function $R(\cd):\CR\to [\gH]$ is called a generalized resolvent of
a symmetric linear relation $A\in\C (\gH)$ if there exist a Hilbert space $\wt
\gH\supset\gH$ and a self-adjoint linear relation $\wt A\in \C (\wt\gH)$ such
that $A\subset \wt A$ and $R(\l) =P_\gH (\wt A- \l)^{-1}\up \gH, \;\; \l \in
\CR$.

$R(\cd)$ is a canonical resolvent if and only if $\wt\gH=\gH$. In this case
$R(\l) =(\wt A- \l)^{-1},\; \l \in \CR$.
\end{definition}
\begin{theorem}\label{th2.12}
Let $\Pi_\a=\bta$ be a boundary triplet for $A^*$. If $\pair\in\RH$ is a
collection of holomorphic pairs \eqref{2.2}, then for every $g\in\gH$ and
$\l\in\CR$ the abstract boundary value problem
\begin{gather}
\{f,\l f+g\}\in A^*\label{2.45}\\
C_0(\l)\G_0\{f,\l f+g\}-C_1(\l)\G_1\{f,\l f+g\}=0, \quad \l\in\bC_+
\label{2.46}\\
D_0(\l)\G_0\{f,\l f+g\}-D_1(\l)\G_1\{f,\l f+g\}=0, \quad \l\in\bC_-\label{2.47}
\end{gather}
has a unique solution $f=f(g,\l)$ and the equality $R(\l)g:=f(g,\l)$ defines a
generalized resolvent $R(\l)=R_\tau (\l)$ of the relation $A$. Conversely, for
each generalized resolvent $R(\l)$ of $A$ there exists a unique $\tau\in\RH$
such that $R(\l)=R_\tau (\l)$. Moreover, $R_\tau(\l)$ is a canonical resolvent
if and only if $\tau\in\RZ$.


\end{theorem}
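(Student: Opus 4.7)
The strategy is to adapt the Krein-type parametrization of generalized resolvents \cite{DM91,DM00} to the present framework with possibly unequal auxiliary spaces $\cH_0$ and $\cH_1$. I treat the case $\a = +1$; the case $\a = -1$ is symmetric. Fix $g \in \gH$ and $\l \in \bC_+$; by Proposition \ref{pr2.7}(2) one has $\bC_+ \subset \rho(A_0)$, so the decomposition $A^* = A_0 \dotplus \wh{\gN}_\l(A)$ is available. Accordingly, any candidate $\wh f := \{f, \l f + g\} \in A^*$ splits uniquely as $\wh f = \wh f_0 + \{h_\l, \l h_\l\}$, where $\wh f_0 = \{(A_0 - \l)^{-1} g,\; \l(A_0 - \l)^{-1}g + g\} \in A_0$ is forced by $g$ and $h_\l \in \gN_\l(A)$ is to be determined.

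Second, I would reduce the BVP \eqref{2.45}--\eqref{2.46} to a linear equation for $h_0 := \G_0 \wh f \in \cH_0$. Proposition \ref{pr2.8}(1) gives $h_\l = \g_+(\l) h_0$ and $\G_1\{h_\l, \l h_\l\} = M_+(\l) h_0$. Applying the Green's identity \eqref{2.28} to $\wh f_0$ paired with an arbitrary element of $\wh{\gN}_{\ov \l}(A)$ and using $\G_0 \wh f_0 = 0$ produces the Krein-type identity $\G_1 \wh f_0 = \g_-^*(\ov \l) g$. Substituting these relations into \eqref{2.46} yields
\begin{equation*}
(C_0(\l) - C_1(\l) M_+(\l))\, h_0 = C_1(\l)\, \g_-^*(\ov \l)\, g.
\end{equation*}
The central obstacle, and the main technical step, is to show that $C_0(\l) - C_1(\l) M_+(\l)$ is boundedly invertible from $\cH_0$ onto $\cK_+$ for every $\l \in \bC_+$. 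This should follow by combining the Nevanlinna inequality \eqref{2.6}, the normalization \eqref{2.8} of Definition \ref{def2.1}, and the Nevanlinna property of the block function $\cM(\cd)$ in \eqref{2.35}: an imaginary-part computation, using $\im(M(\l)) \geq 0$ on $\cH_1$, rules out a nontrivial kernel, and \eqref{2.8} is then used to force closed range and surjectivity.

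Once invertibility is in place, the unique solution of the BVP takes the Krein-type form
\begin{equation*}
R_\tau(\l)\, g = (A_0 - \l)^{-1} g + \g_+(\l)\bigl(C_0(\l) - C_1(\l) M_+(\l)\bigr)^{-1} C_1(\l)\, \g_-^*(\ov \l)\, g, \quad \l \in \bC_+,
\end{equation*}
with an analogous formula on $\bC_-$ built from $\tau_-$ and $M_-$. The compatibility condition \eqref{2.7}, combined with the symmetry $M_+^*(\ov \l) = M_-(\l)$ from Proposition \ref{pr2.8}(1), guarantees $R_\tau(\ov \l) = R_\tau(\l)^*$; a direct check using \eqref{2.6}--\eqref{2.6a} and the Nevanlinna property of $\cM(\cd)$ shows that $R_\tau(\cd)$ is itself a Nevanlinna operator function on $\CR$, so by Naimark's dilation theorem it is a generalized resolvent of $A$.

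For the converse, given any generalized resolvent $R(\l)$ of $A$, the map $g \mapsto R(\l)g - (A_0 - \l)^{-1} g$ takes values in $\gN_\l(A)$, so there exists a unique $T_+(\l) \in [\gH, \cH_0]$ with $R(\l) g = (A_0 - \l)^{-1} g + \g_+(\l) T_+(\l) g$; reading off $\G_0$ and $\G_1$ on this element and inverting the calculation above recovers a unique $\tau_+$ of class $\widetilde R_{+1}$ with $R = R_\tau$ on $\bC_+$, and the symmetry $R(\ov\l) = R(\l)^*$ then supplies the matching $\tau_-$ on $\bC_-$ and forces \eqref{2.7}, so $\tau \in \RH$. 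Finally, $R_\tau$ is canonical iff the underlying dilation is trivial, iff the associated self-adjoint extension of $A$ lies in $\exa$; via Proposition \ref{pr2.2}(4) this is equivalent to $\tau$ being $\l$-independent and $\times$-self-adjoint, i.e., $\tau \in \RZ$.
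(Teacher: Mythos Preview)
The paper does not actually contain a proof of Theorem~\ref{th2.12}: as stated in Remark~\ref{rem2.13}(1), the proof for $\a=+1$ is deferred to the reference \cite{Mog06.2}, with the remark that the case $\a=-1$ is similar. Your outline is the standard Krein-type argument that underlies those results, and it is correct in structure; in particular, your derivation of the reduced equation $(C_0(\l)-C_1(\l)M_+(\l))h_0=C_1(\l)\g_-^*(\ov\l)g$ via the decomposition $A^*=A_0\dotplus\wh\gN_\l(A)$ and the identity $\G_1\wh f_0=\g_-^*(\ov\l)g$ is exactly what is done in that literature (cf.\ also \cite[Lemma~2.1]{MalMog02}, which is invoked later in the paper precisely for the invertibility of $C_0(\l)-C_1(\l)M_{4+}(\l)$). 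One small caution: in the converse direction you should verify not only that the recovered $\tau_+$ satisfies \eqref{2.6} and \eqref{2.8}, but also that the pair $(\tau_+,\tau_-)$ assembled from the two half-planes satisfies the compatibility \eqref{2.7}; this requires using both $R(\ov\l)=R(\l)^*$ and the relation $M_+^*(\ov\l)=M_-(\l)$, and is where the $\times$-adjoint structure in Proposition~\ref{pr2.2}(1) enters nontrivially.
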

In the following corollary we reformulate the statement of Theorem \ref{th2.12}
for   parameters $\tau$ of a special form.
\begin{corollary}\label{cor2.12a}
Assume that $\cH'$ and $\wt\cH_1$ are Hilbert spaces, $\cH_1$ is a subspace in
$\wt\cH_1$ and $\Pi_-=\{(\cH'\oplus\wt\cH_1)\oplus \cH_1,\G_0,\G_1\}$ is a
boundary triplet for $A^*$. If $\pair\in\wt R_{-1}(\wt \cH_1,\cH_1)$ is a
collection \eqref{2.2}, then the direct statement of Theorem \ref{th2.12} holds
with the following boundary conditions in place of  \eqref{2.46} and
\eqref{2.47}:
\begin{gather*}
C_0(\l)P_{\wt\cH_1}\G_0\{f,\l f+g\}-C_1(\l)\G_1\{f,\l f+g\}=0, \quad \l\in\bC_+
\\
 D_0(\l)P_{\wt\cH_1}\G_0\{f,\l f+g\}-D_1(\l)\G_1\{f,\l f+g\}=0,
\quad P_{\cH'}\G_0\{f,\l f+g\}=0,\quad  \l\in\bC_-
\end{gather*}
\end{corollary}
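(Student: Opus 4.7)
The plan is to reduce the corollary to the direct statement of Theorem~\ref{th2.12} by lifting the given collection $\pair\in \wt R_{-1}(\wt\cH_1,\cH_1)$ to a collection $\wt\tau=\{\wt\tau_+,\wt\tau_-\}\in \wt R_{-1}(\cH_0,\cH_1)$ on the full boundary space $\cH_0:=\cH'\oplus\wt\cH_1$, chosen so that the standard boundary conditions of Theorem~\ref{th2.12} for $\wt\tau$ coincide with the modified ones stated in the corollary. Concretely, for $\l\in\bC_+$ I would set $\wt C_0(\l):=C_0(\l)P_{\wt\cH_1}\in[\cH_0,\cK_+]$ and $\wt C_1(\l):=C_1(\l)$; for $\l\in\bC_-$, enlarging the target space to $\wt\cK_-:=\cK_-\oplus\cH'$, I would set
\begin{equation*}
\wt D_0(\l):=\begin{pmatrix} D_0(\l)P_{\wt\cH_1} \\ P_{\cH'} \end{pmatrix}:\cH_0\to\wt\cK_-,\qquad \wt D_1(\l):=\begin{pmatrix} D_1(\l) \\ 0 \end{pmatrix}:\cH_1\to\wt\cK_-.
\end{equation*}
Then $\wt D_0(\l)\G_0\wh f-\wt D_1(\l)\G_1\wh f=0$ encodes simultaneously both boundary conditions at $\l\in\bC_-$ stated in the corollary (its second component being precisely $P_{\cH'}\G_0\wh f=0$), while $\wt C_0(\l)\G_0\wh f-\wt C_1(\l)\G_1\wh f=0$ encodes the condition at $\l\in\bC_+$.

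The substantive step is to check $\wt\tau\in \wt R_{-1}(\cH_0,\cH_1)$ in the sense of Definition~\ref{def2.1} with $\a=-1$. Putting $\wt\cH_2:=\wt\cH_1\ominus\cH_1$, one has $\cH_2=\cH_0\ominus\cH_1=\cH'\oplus\wt\cH_2$; relative to the decomposition $\cH_0=\cH_1\oplus\cH'\oplus\wt\cH_2$ the block forms are
\begin{equation*}
\wt C_0=(C_{01}\,:\,0\,:\,C_{02}),\qquad \wt D_0=\begin{pmatrix} D_{01} & 0 & D_{02} \\ 0 & I_{\cH'} & 0 \end{pmatrix},\qquad \wt D_1=\begin{pmatrix} D_1 \\ 0 \end{pmatrix}.
\end{equation*}
A short block computation reduces the relations \eqref{2.6}, \eqref{2.6a} and \eqref{2.7} for $\wt\tau$ to the ones hypothesised for $(C_0,C_1)$ and $(D_0,D_1)$: indeed $\wt C_{02}\wt C_{02}^*=C_{02}C_{02}^*$, while $\wt D_{02}\wt D_{02}^*=\mathrm{diag}(D_{02}D_{02}^*,\,I_{\cH'})$, so the extra $-I_{\cH'}$ block only strengthens \eqref{2.6a}, and the cross-identity \eqref{2.7} picks up no new terms because $\wt D_{01}$ has a vanishing second row. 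As for \eqref{2.8a}, the operator $\wt D_0+i\wt D_1 P_1$ is block-lower-triangular with diagonal blocks $D_0+iD_1 P_{\cH_1}\in[\wt\cH_1,\cK_-]$ and $I_{\cH'}$, both isomorphisms by hypothesis; and $\wt C_{01}-i\wt C_1=C_{01}-iC_1$ is invertible for the same reason.

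Once $\wt\tau\in \wt R_{-1}(\cH_0,\cH_1)$ is verified, the direct statement of Theorem~\ref{th2.12} applied to $\wt\tau$ yields, for every $g\in\gH$ and $\l\in\CR$, a unique solution $f=f(g,\l)$ of \eqref{2.45} subject to the boundary conditions $\wt C_0(\l)\G_0\{f,\l f+g\}-\wt C_1(\l)\G_1\{f,\l f+g\}=0$ for $\l\in\bC_+$ and $\wt D_0(\l)\G_0\{f,\l f+g\}-\wt D_1(\l)\G_1\{f,\l f+g\}=0$ for $\l\in\bC_-$; by construction these are exactly the modified conditions of the corollary, and $R_\tau(\l)g:=f(g,\l)$ is the desired generalized resolvent. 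I expect the only mild technical obstacle to be the bookkeeping of the three nested subspaces $\cH_1\subset\wt\cH_1\subset\cH_0$ in the block calculations; once a single orthogonal decomposition is fixed, everything reduces to routine matrix multiplication.
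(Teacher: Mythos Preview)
Your proposal is correct and follows essentially the same approach as the paper: lift $\tau$ to a collection $\dot\tau$ on the full boundary space $\cH_0=\cH'\oplus\wt\cH_1$ by setting $\dot C_0=C_0P_{\wt\cH_1}$, $\dot C_1=C_1$, $\dot D_0=P_{\cH'}+D_0P_{\wt\cH_1}$, $\dot D_1=D_1$ (the paper's notation; your column-matrix form is the same operator), verify $\dot\tau\in\wt R_{-1}(\cH'\oplus\wt\cH_1,\cH_1)$, and invoke Theorem~\ref{th2.12}. One minor slip: the operator $\wt D_0+i\wt D_1 P_1$ is block-diagonal (or anti-diagonal, depending on your ordering of the summands), not block-lower-triangular, but the conclusion that it is an isomorphism is unaffected.
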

\begin{proof}
Let $\dot\tau_+(\l)=\{(\dot C_0(\l), \dot C_1(\l)), \cK_+\},\;\l\in\bC_+,$ and
$\dot\tau_-(\l)=\{(\dot D_0(\l), \dot D_1(\l)),\cH'\oplus
\cK_-\},\;\l\in\bC_-,$ be holomorphic operator pairs with
\begin{gather*}
\dot C_0(\l)=C_0(\l)P_{\wt\cH_1}\,(\in [\cH'\oplus\wt\cH_1, \cK_+] ),\quad \dot
C_1(\l)=C_1(\l) \,(\in [\cH_1,\cK_+])\\
\dot D_0(\l)=P_{\cH'}+D_0(\l)P_{\wt\cH_1}\,(\in [\cH'\oplus\wt\cH_1,
\cH'\oplus\cK_-] ),\quad \dot D_1(\l)=D_1(\l)) \,(\in [\cH_1,\cH'\oplus\cK_-]).
\end{gather*}
Then the direct calculations show that the operator functions $\dot C_j(\cd)$
and $\dot D_j(\cd), \; j\in\{0,1\},$ satisfy the relations
\eqref{2.6}--\eqref{2.7}, \eqref{2.8a} and hence a collection
$\dot\tau=\{\dot\tau_+, \dot\tau_-\}$ belongs to $\wt
R_{-1}(\cH'\oplus\wt\cH_1, \cH_1)$. Applying now Theorem \ref{th2.12} to
$\dot\tau$ we arrive at the desired statement.
\end{proof}
\begin{remark}\label{rem2.13}
1) For $\a=+1$ definition of the boundary triplet $\Pi_\a=\Pi_+$ and the
corresponding Weyl functions $M_\pm(\cd)$ are given in the paper
\cite{Mog06.2}. Moreover, the proof of Propositions \ref{pr2.6}-\ref{pr2.8} and
Theorem \ref{th2.12} for the triplets $\Pi_+$  is adduced in this paper as well
(for the triplets $\Pi_-$ the proof is similar).

2) If $\cH_0=\cH_1:=\cH$, then   the triplet $\Pi_\a$ turns into the boundary
triplet (boundary value space) $\Pi=\{\cH,\G_0,\G_1\}$ for $A^*$ in the sense
of \cite{GorGor,Mal92}.In this case $n_+(A)=n_-(A)=\dim \cH$, $\,A_0(=\ker \G_0
)$ is a self-adjoint extension of $A$ and according
 to \cite{DM91,Mal92,DM95} the relations
\begin {equation}\label{2.48}
\g(\l)=\pi_1(\G_0\up\wh\gN_\l(A))^{-1}, \qquad \G_1\up\wh \gN_\l(A)=M(\l)\G_0
\up\wh\gN_\l(A), \qquad \l\in\rho (A_0)
\end{equation}
define the $\g$-field $\g(\cd):\rho (A_0)\to [\cH,\gH]$ and the Weyl function
$M(\cd):\rho (A_0)\to [\cH]$ corresponding to the triplet $\Pi$. It follows
from \eqref{2.48} that $\g(\cd)$ and $M(\cd)$ are associated
 with the operator functions  $\g_\pm(\cd)$ and $M_\pm(\cd)$ from Definition
 \ref{def2.10}  via $\g(\l)=\g_{\pm}(\l)$ and $M(\l)=M_{\pm}(\l),
\;\l\in\bC_\pm $. Moreover, for such a triplet the identity \eqref{2.37} takes
the form
\begin {equation}\label{2.49}
M(\mu)-M^*(\l)=(\mu-\ov\l)\g^*(\l)\g(\mu), \quad \mu,\l\in\CR.
\end{equation}
 Observe also that  for the triplet $\Pi=\bt$ all the results
in this subsection were obtained in \cite{DM91,Mal92,DM95,DM00}.

In what follows a boundary triplet $\Pi=\bt$ in the sense  of
\cite{GorGor,Mal92} will be sometimes called an ordinary boundary triplet for
$A^*$.
\end{remark}
\section{Decomposing boundary triplets for symmetric systems}
\subsection{Notations}
Let $\cI=[ a,b\rangle\; (-\infty < a< b\leq\infty)$ be an interval of the real
line (in the case  $b<\infty$ the endpoint $b$ may or may not belong to $\cI$),
let $\bH$ be a finite-dimensional Hilbert space, let $\AC$ be the set of all
functions $f(\cd):\cI\to \bH$ which are absolutely continuous on each segment
$[a,\b]\subset \cI$ and let $AC (\cI):=AC(\cI;\bC)$. Denote also by
$\cL_{loc}^1(\cI; [\bH])$ the set of all Borel operator functions $F(\cd)$
defined almost everywhere on $\cI$ with values in $[\bH]$ and such that
$\int\limits_{[a,\b]}||F(t)||\,dt<\infty$ for each $\b\in \cI$.

Next assume that $\D(\cd)\in \cL_{loc}^1(\cI;[\bH])$ is an operator function
such that $\D(t)\geq 0 $ a.e. on $\cI$ and let $\lI$ be the linear space of all
Borel functions $f(\cd)$ defined almost everywhere on $\cI$ with values in
$\bH$ and such that $\int\limits_{\cI}(\D (t)f(t),f(t))_\bH \,dt<\infty$.
Moreover, for a given finite-dimensional Hilbert space $\cK$ denote by
$\lo{\cK}$ the set of all Borel operator-functions $F(\cd): \cI\to [\cK,\bH]$
such that there exists the integral $\int\limits_{\cI} F^*(t)\D(t)F(t)\,dt$. It
is clear that the latter condition is equivalent to $F(t)h\in \lI$ for each
$h\in\cK$.

As is known \cite{Kac50, DunSch} $\lI$ is a semi-Hilbert space with the
semi-definite inner product $(\cd,\cd)_\D$ and semi-norm $||\cd||_\D$ given by
\begin {equation}\label{3.0}
(f,g)_\D=\int_{\cI}(\D (t)f(t),g(t))_\bH \,dt, \quad
||f||_\D=((f,f)_\D)^{\frac1 2 }, \qquad f,g\in \lI.
\end{equation}
The semi-Hilbert space $\lI$ gives rise to the Hilbert space $\LI=\lI /
\{f\in\lI: ||f||_\D=0\}$, i.e., $\LI$ is the Hilbert space of all equivalence
classes.  The inner product and norm in $\LI$ are defined by
\begin {equation*}
(\wt f, \wt g)=(f,g)_\D, \quad ||\wt f||=(\wt f, \wt f)^{\frac 1 2}=||f||_\D,
\qquad \wt f, \wt g\in\LI,
\end{equation*}
where $f\in\wt f \; (g\in\wt g)$ is any representative of the class $\wt f$
(resp. $\wt g$).

In the sequel we systematically use the quotient map $\pi$ from $\lI$ onto
$\LI$ given by $\pi f=\wt f(\ni f), \; f\in \lI$. Moreover, we let
$\wt\pi=\pi\oplus\pi: (\lI)^2 \to (\LI)^2$, so that $\wt \pi\{f,g\}=\{\wt f,
\wt g\}, \;\; f,g \in \lI$.
\subsection{Symmetric systems}
In this subsection we provide some known results on symmetric systems of
differential equations.

Let as above $\cI=[ a,b\rangle \;(-\infty < a <b\leq\infty )$ be an interval
and let $\bH$ be a Hilbert space with $n:=\dim \bH<\infty$. Moreover, let
$B(\cd), \D (\cd)\in \cL_{loc}^1(\cI; [\bH])$ be operator functions such that
$B(t)=B^*(t)$ and $\D(t)\geq 0$ a.e. on $\cI$ and let $J\in [\bH]$ be a
signature operator ( this means that $J^*=J^{-1}=-J$).

A first-order symmetric  system on an interval $\cI$ (with the regular endpoint
$a$) is a system of differential equations of the form
\begin {equation}\label{3.1}
J y'(t)-B(t)y(t)=\D(t) f(t), \quad t\in\cI,
\end{equation}
where $f(\cd)\in \lI$. Together with \eqref{3.1} we consider also the
homogeneous  system
\begin {equation}\label{3.2}
J y'(t)-B(t)y(t)=\l \D(t) y(t), \quad t\in\cI, \quad \l\in\bC.
\end{equation}
A function $y\in\AC$ is a solution of \eqref{3.1} (resp. \eqref{3.2}) if the
equality \eqref{3.1} (resp. \eqref{3.2} holds a.e. on $\cI$. Moreover, a
function $Y(\cd,\l):\cI\to [\cK,\bH]$ is an operator solution of the equation
\eqref{3.2} if $y(t)=Y(t,\l)h$ is a (vector) solution of this equation for each
$h\in\cK$ (here $\cK$ is a Hilbert space with $\dim\cK<\infty$).

Everywhere below we suppose that the system \eqref{3.1} is definite in the
sense of the following definition.
 \begin{definition}\label{def3.1}$\,$\cite{GK,Orc,KogRof75}
The symmetric system \eqref{3.1} is called definite if for each $\l\in\bC$ and
each solution $y$ of \eqref{3.2} the equality $\D(t)y(t)=0$ (a.e. on $\cI$)
implies $y(t)=0, \; t\in\cI$.
\end{definition}
As is known \cite{Orc} the symmetric system \eqref{3.1} induces the
\emph{maximal relations} $\tma$ in $\lI$ and $\Tma$ in $\LI$, which are defined
by
\begin {equation}\label{3.4}
\tma=\{\{y,f\}\in(\lI)^2 :y\in\AC \;\;\text{and}\;\; J y'(t)-B(t)y(t)=\D(t)
f(t)\;\;\text{a.e. on}\;\; \cI \}
\end{equation}
and $\Tma=\wt\pi \tma$. Moreover the Lagrange's identity
\begin {equation}\label{3.6}
(f,z)_\D-(y,g)_\D=[y,z]_b - (J y(a),z(a)),\quad \{y,f\}, \; \{z,g\} \in\tma.
\end{equation}
holds with
\begin {equation}\label{3.7}
[y,z]_b:=\lim_{t \uparrow b}(J y(t),z(t)), \quad y,z \in\dom\tma.
\end{equation}
Formula \eqref{3.7} defines the boundary bilinear form $[\cd,\cd]_b $ on $\dom
\tma$, which plays an essential role in our considerations. By using this form
we define the \emph{minimal relations} $\tmi$ in $\lI$ and $\Tmi$ in $\LI$ via
\begin {equation}\label{3.10}
\tmi=\{\{y,f\}\in\tma: y(a)=0 \;\; \text{and}\;\; [y,z]_b=0\;\;\text{for
each}\;\; z\in \dom \tma \}.
\end{equation}
and $\Tmi= \wt\pi \tmi$. According to \cite{Orc} $\Tmi$ is a closed symmetric
linear relation in $\LI$ and $\Tmi^*=\Tma$.

For each $\l\in\bC$ denote by $\cN_\l$ the linear space of all solutions of the
homogeneous system \eqref{3.2} belonging to $\lI$. Definition \eqref{3.4} of
$\tma$ implies that
\begin{equation*}
\cN_\l=\ker (\tma-\l)=\{y\in\lI:\; \{y,\l y\}\in\tma\}, \quad\l\in\bC.
\end{equation*}
and hence $\cN_\l\subset \dom\tma$.

Assume that
\begin{gather*}
n_\pm:=n_\pm (\Tmi )=\dim \gN_\l (\Tmi), \quad \l\in\bC_\pm,
\end{gather*}
are deficiency indices of  $\Tmi$. It is easily seen that $ \pi\cN_\l=\gN_\l
(\Tmi)$ and $\ker(\pi\up\cN_\l)=\{0\},\;\; \l\in\bC$.  This implies that $\dim
\cN_\l=n_\pm, \; \l\in\bC_\pm$.

Let $J\in [\bH]$ be the signature operator  in \eqref{3.1} and let
\begin{gather*}
\nu_+=\dim\ker (i J-I) \;\;\; \text{and} \;\;\; \nu_-=\dim\ker (i J+I).
\end{gather*}
In what follows we suppose that
\begin {equation}\label{3.12}
\wh\nu:=\nu_- - \nu_+\geq 0 .
\end{equation}
In this case one can assume without loss of generality that the following
statements hold:

(i) the Hilbert space $\bH$ is of the form
\begin{gather}\label{3.16}
\bH=H\oplus\wh H \oplus H,
\end{gather}
where $H$ and $\wh H$ are finite dimensional Hilbert spaces with
\begin {equation}\label{3.16a}
 \dim H=\nu_+, \qquad  \dim \wh H=\wh\nu;
\end{equation}

 (ii) the operator $J$ is of the form \eqref{1.3}.

Introducing the Hilbert space
\begin {equation} \label{3.17a}
H_0=H\oplus\wh H
\end{equation}
one can represent the equality \eqref{3.16} as
\begin {equation} \label{3.17b}
\bH=(H\oplus\wh H) \oplus H=H_0\oplus H.
\end{equation}

Let $\nu_{b+} $ and $\nu_{b-}$ be  indices of inertia of the skew-Hermitian
bilinear form \eqref{3.7}. Then $\nu_{b\pm}<\infty$ and the following equality
holds \cite{BHSW10,Mog11}
\begin {equation} \label{3.17c}
n_+=\nu_+ +\nu_{b+}, \qquad n_-=\nu_- +\nu_{b-}.
\end{equation}
Therefore $\Tmi$ has  equal deficiency indices $n_+=n_-$ if and only if
\begin {equation} \label{3.17d}
\wh\nu=\nu_{b+}-\nu_{b-}.
\end{equation}
Observe also that  according to \cite [Lemma 5.1]{Mog11} there exist Hilbert
spaces $\cH_b$ and $\wh\cH_b$ and a surjective linear map
\begin{gather}\label{3.18}
\G_b=(\G_{0b}:\,  \wh\G_b:\,  \G_{1b})^\top:\dom\tma\to
\cH_b\oplus\wh\cH_b\oplus \cH_b
\end{gather}
such that for all $y,z \in \dom\tma$ the following equality is valid
\begin{gather}\label{3.19}
[y,z]_b=i\cdot\sign (\nu_{b+}-\nu_{b-}) (\wh\G_b y, \wh\G_b
z)-(\G_{1b}y,\G_{0b}z)+(\G_{0b}y,\G_{1b}z).
\end{gather}
Moreover, for such a map $\G_b$ one has $\ker \G_b=\ker [\cd,\cd]_b$ and
\begin {equation}\label{3.20}
\dim\cH_b=\min\{\nu_{b+},\nu_{b-}\}, \qquad \dim \wh\cH_b=|\nu_{b+}- \nu_{b-}|.
\end{equation}

Recall that the system \eqref{3.1} is called regular if $\cI=[a,b]$ is a
compact interval and both the integrals $\int_\cI ||B(t)||\, dt$ and $\int_\cI
||\D(t)||\, dt$ are finite. For a regular system one can put $\cH_b=H, \; \wh
\cH_b=\wh H$ and $\G_b y= X_b y(b), \; y\in \dom\tma,$ where $X_b\in [\bH]$ and
$X_b^*J X_b=J $.

Next assume that $X_a\in [\bH]$ is the operator such that $X_a^* JX_a=J$ and
let $\G_a: \AC\to \bH$ be the linear map given by
\begin {equation}\label{3.21}
\G_ay =X_a y(a), \quad y\in \AC.
\end{equation}
In accordance with the decomposition \eqref{3.16} $\G_a$ admits the block
representation
\begin {equation}\label{3.22}
\G_a=\left(\G_{0a}\,:\, \wh\G_a\,:\, \G_{1a}\right)^\top :\AC\to H\oplus \wh
H\oplus H.
\end{equation}
The particular case of the operator $X_a$ is (cf. \cite{HinSch06})
\begin {equation}\label{3.23}
X_a=\begin{pmatrix} X_{00} & 0 & X_{01} \cr 0& I & 0 \cr  X_{10} & 0 & X_{11}
\end{pmatrix}:H\oplus\wh H \oplus H\to H\oplus\wh H \oplus H,
\end{equation}
where the entries $X_{jk}$ satisfy
\begin {equation*}
\im (X_{00}X_{01}^*)=0, \quad \im (X_{10}X_{11}^*)=0, \quad
-X_{10}X_{01}^*+X_{11}X_{00}^*=I_H.
\end{equation*}
If $X_a$ is given by \eqref{3.23} and the function $y\in\AC$ is decomposed as
\begin {equation*}
y(t)=\{y_0(t),\,\wh y(t), \, y_1(t) \}(\in H\oplus\wh H \oplus H), \quad
t\in\cI,
\end{equation*}
then in the representation \eqref{3.22} one has
\begin {equation}\label{3.24}
\G_{0a}y=X_{00}y_0(a)+X_{01}y_1(a), \quad \wh\G_{a}y=\wh y(a), \quad
\G_{1a}y=X_{10}y_0(a)+X_{11}y_1(a).
\end{equation}

Let $\l\in\bC$. By using the operator $X_a$ we associate with each operator
solution $Y(\cd,\l):\cI\to [\cK,\bH]$ of the equation \eqref{3.2} the operator
$Y_a (\l)\in [\cK,\bH]$ given by
\begin {equation}\label{3.26}
Y_a(\l)=X_a Y(a,\l)
\end{equation}
(recall that here $\cK$ is a finite-dimensional  Hilbert space).
\begin{lemma}\label{lem3.2}
1) If $Y(\cd,\l)\in \lo{\cK}$ is an operator solution  of Eq.  \eqref{3.2},
then the relation
\begin {equation}\label{3.30}
\cK\ni h\to (Y(\l) h)(t)=Y(t,\l)h \in\cN_\l.
\end{equation}
defines the linear map $Y(\l):\cK\to \cN_\l$ and,conversely, for each such a
map $Y(\l)$ there exists a unique operator solution $Y(\cd,\l)\in \lo{\cK}$ of
Eq. \eqref{3.2} such that \eqref{3.30} holds.

2) Let $Y(\cd,\l)\in \lo{\cK}$ be an operator solution of Eq. \eqref{3.2} and
let $F(\l)=\pi Y(\l)(\in [\cK, \LI])$. Then for each $\wt f\in \LI$
\begin {equation}\label{3.31}
F^*(\l)\wt f=\int_\cI Y^*(t,\l)\D(t)f(t)\, dt, \quad f\in\wt f.
\end{equation}
\end{lemma}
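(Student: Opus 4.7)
The lemma splits into two essentially independent assertions, neither of which is deep; the whole proof is a matter of carefully unfolding definitions, with one minor measurability check.

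For part 1, the forward direction is immediate: if $Y(\cd,\l)\in\lo{\cK}$ is an operator solution of \eqref{3.2} then for each $h\in\cK$ the function $t\mapsto Y(t,\l)h$ is absolutely continuous, satisfies \eqref{3.2}, and lies in $\lI$ (the latter by the very definition of $\lo{\cK}$), hence lies in $\cN_\l$; linearity in $h$ is obvious. For the converse direction, let $Y(\l):\cK\to\cN_\l$ be a linear map. Since elements of $\cN_\l$ are genuine (pointwise defined) absolutely continuous functions, the formula $Y(t,\l)h:=(Y(\l)h)(t)$ defines, for each $t\in\cI$, a linear operator $Y(t,\l)\in[\cK,\bH]$. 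To check that the resulting function $Y(\cd,\l):\cI\to[\cK,\bH]$ is Borel measurable, I fix a basis $\{e_j\}$ of $\cK$ and observe that each coordinate $t\mapsto Y(t,\l)e_j=(Y(\l)e_j)(t)$ is absolutely continuous. The condition $Y(\cd,\l)\in\lo{\cK}$ is then read off from $Y(\l)e_j\in\cN_\l\subset\lI$. Uniqueness of $Y(\cd,\l)$ with \eqref{3.30} is tautological.

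For part 2, the plan is simply to test $F^*(\l)\wt f\in\cK$ against an arbitrary $h\in\cK$. Expanding,
\begin{equation*}
(F^*(\l)\wt f,h)_\cK=(\wt f,F(\l)h)_{\LI}=(f,Y(\cd,\l)h)_\D=\int_\cI(\D(t)f(t),Y(t,\l)h)_\bH\,dt
\end{equation*}
for any representative $f\in\wt f$. Moving the adjoint inside produces $\int_\cI(Y^*(t,\l)\D(t)f(t),h)_\cK\,dt$, and since $\dim\cK<\infty$ one pulls the inner product out of the integral to obtain $\bigl(\int_\cI Y^*(t,\l)\D(t)f(t)\,dt,\,h\bigr)_\cK$. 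Since $h\in\cK$ is arbitrary, this yields \eqref{3.31}. Two harmless points to verify in passing: integrability of $Y^*(\cd,\l)\D(\cd)f(\cd)$ as a $\cK$-valued function, which follows coordinate-wise from the semi-inner-product Cauchy--Schwarz bound $|(\D(t)f(t),Y(t,\l)h)_\bH|\leq(\D(t)f(t),f(t))^{1/2}(\D(t)Y(t,\l)h,Y(t,\l)h)^{1/2}$ combined with $f,Y(\cd,\l)h\in\lI$ and ordinary Cauchy--Schwarz on $\cI$; and independence of the right-hand side of \eqref{3.31} on the choice of representative $f\in\wt f$, which holds because $||f-f'||_\D=0$ forces $\D(t)(f(t)-f'(t))=0$ a.e.\ on $\cI$.

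There is no genuine obstacle in this proof; the only place that demands a half-moment of care is the Borel measurability of the reconstructed operator-valued function in Part 1, which as noted reduces to the absolute continuity of finitely many coordinate solutions.
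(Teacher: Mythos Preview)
Your proof is correct and follows the only natural route; the paper itself does not give a detailed argument, merely declaring part 1 ``obvious'' and referring part 2 to the proof of an analogous formula in \cite{Mog09.1}. Your treatment supplies exactly the details one would expect behind those references, including the minor measurability check and the Cauchy--Schwarz justification of integrability, so there is nothing to compare or correct.
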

The first statement of this lemma is obvious, while the second one can be
proved in the same way as formula (3.70) in \cite{Mog09.1}.

Clearly, for each solution $Y(\cd,\l) \in \lo{\cK}$ of Eq.  \eqref{3.2} the
operator \eqref{3.26} admits the representation
\begin {equation}\label{3.32}
Y_{a}(\l)=\G_{a} Y(\l),
\end{equation}
where $Y(\l)$ is defined in Lemma \ref{lem3.2}.
\begin{remark}\label{rem3.2a}
According to \cite[Remark 5.2]{Mog11} one can construct the map $\G_b$ by using
the following assertion:

--- there exist systems  of functions
$\{\psi_j\}_1^{\nu_b},\;\{\f_j\}_1^{\hat\nu_b} $ and $\{\t_j\}_1^{\nu_b}$ in
$\dom\tma$ with $\nu_b=\text{min} \{\nu_{b+}, \nu_{b-}\}$ and
$\hat\nu_b=|\nu_{b+}- \nu_{b-}|$ such that  the operators
\begin {equation}\label{3.33}
\G_{0b}y =\{[y,\psi_j ]_b\}_1^{\nu_b}, \quad   \hat\G_b y=\{[y,\f_j
]_b\}_1^{\hat\nu_b}, \quad \G_{1b}y =\{[y,\t_j ]_b\}_1^{\nu_b}, \quad
y\in\dom\tma
\end{equation}
form the surjective linear map $\G_b=(\G_{0b}:\,  \hat\G_b:\,
\G_{1b})^\top:\dom\tma\to \bC^{\nu_b}\oplus\bC^{\hat \nu_b}\oplus \bC^{\nu_b}$
satisfying the equality \eqref{3.19}.

This assertion  shows that $\G_b y$ is, in fact, a singular boundary value of a
function $y\in\dom\tma$ (c.f. \cite[Ch. 13.2]{DunSch}).
\end{remark}
\subsection{Decomposing boundary triplets}\label{sub2.3}
As is known (see for instance \cite{LesMal03}) the maximal relation $\Tma$
induced by the definite symmetric system \eqref{3.1} possesses the following
property: for each $\{\wt y, \wt f \}\in \Tma $ there exist a unique function
$y\in \AC \cap \lI $ such that $y\in \wt y$ and $\{y,f\}\in \tma$ for each
$f\in\wt f$. Below, without any additional comments, we associate such a
function $ y\in \AC \cap \lI$ with each pair $\{\wt y, \wt f\}\in\Tma$.

Let as before $\G_b$ and $\G_a$ be the operators \eqref{3.18} and \eqref{3.22}
respectively and let $H_0$ be the Hilbert \eqref{3.17a}.  Consider the
following three alternative cases:

 \underline {\emph {Case 1}}:
$\;\nu_{b+}-\nu_{b-}\geq \nu_- -\nu_+ \geq 0$.

It follows from \eqref{3.20} that in this case
\begin {gather}\label{3.34}
\dim \cH_b=\nu_{b_-}, \qquad \dim\wh\cH_b=\nu_{b_+}-\nu_{b_-}
\end{gather}
and \eqref{3.16a} gives $\dim\wh\cH_b\geq  \dim \wh H $. Therefore without loss
of generality we can assume that $\wh H\subset \wh \cH_b$ and hence
\begin {gather}\label{3.34.0}
\wh \cH_b= \cH_2'\oplus\wh H
\end{gather}
with $\cH_2'=\wh \cH_b\ominus\wh H$. Let $\wt\cH_b=\cH_b\oplus\cH_2'$ (so that
$\cH_b\subset\wt\cH_b$) and let
\begin {gather}\label{3.34a}
\wt\G_{0b}=\G_{0b}+P_{\cH_2'}\wh\G_b:\dom\tma\to \wt\cH_b
\end{gather}
In {\emph {Case 1}} we put
\begin {gather}
\cH_0=H_0 \oplus\wt\cH_b, \qquad \cH_1=H_0\oplus \cH_b,\label{3.34b}\\
\begin{array}{l}\label{3.35}
\G_0'=\begin{pmatrix} - \G_{1a} +i(\wh\G_a-P_{\wh H}\wh\G_b)\cr \wt \G_{0b}
\end{pmatrix}:\dom\tma \to H_0\oplus\wt\cH_b,\\
\G_1'=\begin{pmatrix} \G_{0a} + \tfrac 1 2(\wh\G_a+P_{\wh H}\wh\G_b)\cr
-\G_{1b}
\end{pmatrix}:\dom\tma \to H_0\oplus\cH_b,
\end{array}
\end{gather}
If in addition $n_+=n_-$, then in view of \eqref{3.17d} and \eqref{3.20}  $\wh
\cH_b=\wh H$ and $\cH_2'=\{0\}$. Therefore
\begin {equation}\label{3.35.0}
\wt\cH_b =\cH_b, \qquad \wt\G_{0b}=\G_{0b}
\end{equation}
and the equalities \eqref{3.34b} and \eqref{3.35} take the form
\begin {gather}
\cH=H_0 \oplus\cH_b(:=\cH_0=\cH_1), \label{3.35.1}\\
\begin{array}{l}\label{3.35.2}
\G_0'= (- \G_{1a} +i(\wh\G_a-\wh\G_b)\;:\;  \G_{0b})^\top:\dom\tma
\to H_0\oplus\cH_b,\\
\G_1'=(\G_{0a} + \tfrac 1 2(\wh\G_a+\wh\G_b)\;:\; -\G_{1b})^\top :\dom\tma \to
H_0\oplus\cH_b.
\end{array}
\end{gather}

\underline {\emph {Case 2}}: $\;\nu_--\nu_+> \nu_{b+}-\nu_{b-}> 0,$

\noindent so that  the equalities \eqref{3.34} holds. It follows from
\eqref{3.16a} that in this case $\dim \wh H > \dim\wh\cH_b$. Therefore one may
assume that $\wh\cH_b\subset \wh H$ and hence $\wh H=\wh\cH_b\oplus\cH_2'$ with
$\cH_2'=\wh H\ominus\wh\cH_b $. This implies that the Hilbert space
\eqref{3.17a} admits the representation
\begin {equation}\label{3.36}
H_0=H\oplus\underbrace {\wh\cH_b\oplus\cH_2'}_{\wh H}=H_0'\oplus \cH_2',
\end{equation}
where
\begin {equation}\label{3.37}
H_0'=H\oplus \wh\cH_b.
\end{equation}

In \emph{Case 2} we let
\begin{gather}
\cH_0=H_0'\oplus\cH_2'\oplus\cH_b, \quad \cH_1=H_0'\oplus\cH_b,\label{3.37a}\\
\begin{array}{l}\label{3.38}
\G_0'=\begin{pmatrix} - \G_{1a} +i(P_{\wh\cH_b}\wh\G_a-\wh\G_b)\cr i
P_{\cH_2'}\wh\G_a\cr \G_{0b}\end{pmatrix}:\dom\tma \to H_0'\oplus\cH_2'
\oplus\cH_b,\\
\G_1'=\begin{pmatrix} \G_{0a} + \tfrac 1 2(P_{\wh\cH_b}\wh\G_a+\wh\G_b)\cr
-\G_{1b}
\end{pmatrix}:\dom\tma \to H_0'\oplus\cH_b.
\end{array}
\end{gather}

\underline {\emph {Case 3}}: $\;\wh\nu\geq 0 \geq \nu_{b+}-\nu_{b-}$ and
$\wh\nu\neq \nu_{b+}-\nu_{b-}(\neq 0)$,

so that in view of \eqref{3.20}
\begin {equation}\label{3.39a}
\dim\cH_b=\nu_{b+}, \qquad \dim\wh\cH_b=\nu_{b-}-\nu_{b+}.
\end{equation}
Let $\wt\cH_b:=\cH_b\oplus \wh\cH_b$ (so that $\cH_b\subset\wt\cH_b $) and let
$\wt\G_{0b}:\dom\tma\to\wt\cH_b$ be the linear map given by
\begin {equation}\label{3.39b}
\wt\G_{0b}=\G_{0b}+ \wh \G_b.
\end{equation}
In \emph{Case 3} we put
\begin {gather}
\cH_0=H\oplus \wh H \oplus \wt \cH_b=H_0\oplus \wt \cH_b, \qquad \cH_1=H \oplus \cH_b,\label{3.39c}\\
\G_0'=\begin{pmatrix} - \G_{1a} \cr i \wh\G_a\cr
\wt\G_{0b}\end{pmatrix}:\dom\tma \to H\oplus\wh H\oplus \wt\cH_b,\;\;\;
\G_1'=\begin{pmatrix} \G_{0a} \cr -\G_{1b}
\end{pmatrix}:\dom\tma \to H\oplus\cH_b.\label{3.40}
\end{gather}

Note that for evrey system \eqref{3.1} one (and only one) of \emph{Cases 1--3}
holds. In each of these cases $\cH_1$ is a subspace in $\cH_0$ and $\G_j'$ is a
linear map from $\dom\tma$ to $\cH_j, \; j\in \{0.1\}$. Moreover, the subspace
$\cH_2=\cH_0\ominus \cH_1$ coincides with $\cH_2'$ in \emph{Cases 1--2} and
$\cH_2=\wh H\oplus\wh \cH_b$ in \emph{Case 3}. Observe also that according to
\cite[Proposition 5.5]{Mog11} the deficiency indices of $\Tmi$ are
$n_\pm=\nu_\pm+\nu_{b\pm}$. Therefore $n_-\leq n_+$ in  \emph{Case 1} and
$n_+<n_-$ in  \emph{Cases 2} and \emph{3}. Moreover, formulas \eqref{3.16a},
\eqref{3.34} and \eqref{3.39a} imply that in all  \emph{Cases 1--3}
\begin {equation}\label{3.41a}
\dim\cH_0 + \dim\cH_1=\nu_+ +\nu_- +\nu_{b+}+ \nu_{b-}=n_+ + n_-.
\end{equation}
\begin{proposition}\label{pr3.3}
Let $\cH_j$ be Hilbert spaces and $\G_j':\dom\tma\to\cH_j, \; j\in\{0,1\},$ be
linear mappings constructed for the alternative {Cases 1--3} before the
proposition and let $\G_j:\Tma\to\cH_j, \; j\in \{0,1\},$ be the operators
given by
\begin{gather}\label{3.42}
\G_0\{\wt y, \wt f\}=\G_0'y, \qquad \G_1\{\wt y, \wt f\}=\G_1'y, \qquad \{\wt
y, \wt f\}\in\Tma.
\end{gather}
Then the collection $\Pi_\a=\bta$ with $\a=+1$ in  {Case 1} and $\a=-1$ in
{Cases 2} and {3} is a boundary triplet for $\Tma$.

 If in addition $n_+=n_-$
(so that Case 1 holds), then $\Pi_+$ turns into an ordinary boundary triplet
$\Pi=\bt$ for $\Tma$, where $\cH$ is the Hilbert space \eqref{3.35.1} and
$\G_j:\Tma\to\cH, \; j\in\{0,1\},$ are the operators given by \eqref{3.42} and
\eqref{3.35.2}.
\end{proposition}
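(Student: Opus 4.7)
The plan is to verify the two defining properties of a boundary triplet from Definition~\ref{def2.5}: the Green's identity \eqref{2.28} and surjectivity of the joint map $\wh f\mapsto\{\G_0\wh f,\G_1\wh f\}$. For $\{\wt y,\wt f\},\{\wt z,\wt g\}\in\Tma$ represented by $\{y,f\},\{z,g\}\in\tma$, the Lagrange identity \eqref{3.6} reduces the task to expressing $[y,z]_b-(Jy(a),z(a))$ in terms of the six component maps $\G_{0a},\wh\G_a,\G_{1a},\G_{0b},\wh\G_b,\G_{1b}$ and matching against the right-hand side of \eqref{2.28}.

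At the regular endpoint, $X_a^*JX_a=J$ yields $(Jy(a),z(a))=(J\G_ay,\G_az)$, and the block form \eqref{1.3} of $J$ applied to $\G_ay=(\G_{0a}y,\wh\G_ay,\G_{1a}y)^\top$ gives
\begin{equation*}
-(Jy(a),z(a))=(\G_{1a}y,\G_{0a}z)-i(\wh\G_ay,\wh\G_az)-(\G_{0a}y,\G_{1a}z).
\end{equation*}
Together with \eqref{3.19} for $[y,z]_b$ this makes the left-hand side of Lagrange's identity explicit. In each Case I then substitute the definitions \eqref{3.35}, \eqref{3.38}, or \eqref{3.40} of $\G_j'$ and expand. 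The decisive cancellation in Case~1 comes from pairing $\tfrac12(\wh\G_a+P_{\wh H}\wh\G_b)$ inside $\G_1'$ with $i(\wh\G_a-P_{\wh H}\wh\G_b)$ inside $\G_0'$: the elementary identity
\begin{equation*}
\bigl(\tfrac12(u+v), i(u'-v')\bigr)-\bigl(i(u-v),\tfrac12(u'+v')\bigr)=-i(u,u')+i(v,v')
\end{equation*}
supplies both the $-i(\wh\G_ay,\wh\G_az)$ piece from the $a$-side and an $i(P_{\wh H}\wh\G_by,P_{\wh H}\wh\G_bz)$ piece from the $b$-side, while the correction $i\a(P_2\G_0'y,P_2\G_0'z)$ with $\a=+1$ and $P_2=P_{\cH_2'}$ supplies the remaining $i(P_{\cH_2'}\wh\G_by,P_{\cH_2'}\wh\G_bz)$; the sum telescopes to $i(\wh\G_by,\wh\G_bz)=i\,\sign(\nu_{b+}-\nu_{b-})(\wh\G_by,\wh\G_bz)$, matching the $\wh\G_b$-term of \eqref{3.19}. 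Cases~2 and~3 run analogously and force $\a=-1$.

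For surjectivity I factor $(\G_0',\G_1')$ through the auxiliary map $\Theta:\dom\tma\to\bH\oplus\cH_b\oplus\wh\cH_b\oplus\cH_b$, $\Theta y=(X_ay(a),\G_by)$. Here $\G_a$ is onto $\bH$ since the inhomogeneous equation in \eqref{3.4} can be solved with any prescribed $y(a)$ (take the right-hand side compactly supported near $a$), $\G_b$ is surjective by \cite[Lemma~5.1]{Mog11}, and a cutoff argument separating contributions near $a$ from contributions near $b$ gives independent joint prescription. Then $(\G_0',\G_1')=L\circ\Theta$ for an explicit finite-dimensional linear map $L$ whose source and target both have dimension $n_++n_-$ by \eqref{3.41a} and \eqref{3.17c}, so surjectivity reduces to injectivity of $L$, which is routine from the block-triangular form of the definitions. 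The main obstacle is the bookkeeping in Cases~2 and~3: in Case~2 the splitting $\wh H=\wh\cH_b\oplus\cH_2'$ places part of $\wh\G_a$ into the $\cH_2$-component, and in Case~3 the map $\wh\G_b$ itself contributes to $\cH_2$, so every projection has to be tracked to see the telescoping in the previous paragraph. Finally, when $n_+=n_-$ the relation $\wh\nu=\nu_{b+}-\nu_{b-}$ forces $\cH_2'=\{0\}$ by \eqref{3.35.0}, so $P_2=0$, the correction term in \eqref{2.28} disappears, and the triplet becomes an ordinary boundary triplet in the sense of \cite{GorGor,Mal92}.
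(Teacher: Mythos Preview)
Your verification of the Green's identity is essentially the paper's approach, carried out in more detail: the paper merely says ``immediate calculations with taking \eqref{3.19} into account'' establish the identity, and your expansion via the block form of $J$ and the algebraic identity for the $\wh\G_a,\wh\G_b$ cross terms makes that explicit.

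Where you diverge is in the surjectivity argument. You factor $(\G_0',\G_1')=L\circ\Theta$, argue that $\Theta=(\G_a,\G_b)$ is jointly surjective via a cutoff separating the two endpoints, and then check that the finite-dimensional map $L$ is injective (hence bijective by the dimension count \eqref{3.41a}). The paper instead observes directly that $\ker\G_0'\cap\ker\G_1'=\ker\G_a\cap\ker\G_b=\dom\tmi$ (this is exactly the injectivity of your $L$), passes to the quotient $\Tma/\Tmi$, and invokes the abstract fact $\dim(\Tma/\Tmi)=n_++n_-$ together with \eqref{3.41a} to conclude $\ran\G=\cH_0\oplus\cH_1$. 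The paper's route is shorter because it bypasses the joint surjectivity of $\Theta$ altogether: once you know $\ker\G=\Tmi$ and the target has the right dimension, you are done. Your cutoff argument is correct but unnecessary---indeed the same dimension count you use for $L$ already forces $\Theta$ to be onto, since $\ker\Theta=\dom\tmi$ and the target of $\Theta$ also has dimension $n_++n_-$ by \eqref{3.17c} and \eqref{3.20}. Both arguments are sound; the paper's is more economical.
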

\begin{proof}
The immediate calculations with taking \eqref{3.19} into account show that in
each of the \emph{Cases 1--3} the operators $\G_0'$ and $\G_1'$ satisfy the
relation
\begin{gather*}
[y,z]_b- (J y(a), z(a))=(\G_1'y, \G_0'z)-(\G_0'y, \G_1'z)+ i\a (P_2\G_0'y,
P_2\G_0'z), \quad  y,z\in\dom\tma.
\end{gather*}
This  and the Lagrange's identity \eqref{3.6} give the identity \eqref{2.28}
for the operators $\G_0$ and $\G_1$ defined by \eqref{3.42}. To prove
surjectivity of the mapping $\G=(\G_0:\G_1)^\top$ note that $\ker\G_0'\cap
\ker\G_1'=\ker\G_a\cap \ker\G_b=\dom\tmi$. Hence
$\ker\G(=\ker\G_0\cap\ker\G_1)=\Tmi$ and by  using \eqref{3.41a} one obtains
\begin {equation*}
\dim (\dom\G /\ker \G)=\dim (\Tma / \Tmi)=n_+ +n_-= \dim (\HH).
\end{equation*}
This implies that $\ran\G=\HH$ and, consequently, $\Pi_\a$ is a boundary
triplet for $\Tma$.

The latter statement of the proposition follows from reasonings before formula
\eqref{3.35.1}.
\end{proof}
\begin{definition}\label{def3.4}
The boundary triplet $\Pi_\a=\bta$  constructed in Proposition \ref{pr3.3} will
be called a decomposing  boundary triplet for $\Tma$.
\end{definition}
\begin{remark}\label{rem3.5}
 In the paper \cite{Mog11} decomposing boundary triplets $\Pi_+$  were
constructed for the maximal relations $\Tma $ satisfying the condition $n_-\leq
n_+$. In   \emph{Case 1} such a triplet coincides with the triplet $\Pi_+$
introduced in Proposition \ref{pr3.3}.
\end{remark}
Combining Propositions  \ref{pr3.3} and \ref{pr2.10a} we arrive at the
following three propositions.
\begin{proposition}\label{pr3.6}
Let in Case 1 $\Pi_+=\bta$ be a decomposing boundary triplet \eqref{3.35},
\eqref{3.42} for $\Tma$. Then:

1) The equalities
\begin{gather}
T=\{\{\wt y, \wt f\}\in\Tma: \, \G_{1a}y=0, \;\wh\G_a y=P_{\wh H}\wh\G_b y,\;
\wt\G_{0b}y =\G_{1b}y=0 \}\label {3.45}\\
T^*=\{\{\wt y, \wt f\}\in\Tma: \, \G_{1a}y=0, \;\wh\G_a y=P_{\wh H}\wh\G_b y
\}\label {3.45a}
\end{gather}
define a symmetric extension $T$ of $\Tmi$ and its adjoint $T^*$. Moreover, the
deficiency indices of $T$ are $ n_+(T)=\nu_{b+}-\wh\nu$ and $ n_-(T)=\nu_{b-}$.

2) The collection $\dot\Pi_+=\{\wt\cH_b\oplus \cH_b, \dot\G_0,\dot\G_1\}$ with
the operators
\begin {equation}\label {3.46}
\dot\G_0 \{\wt y,\wt f\}=\wt\G_{0b}y, \qquad \dot\G_1 \{\wt y,\wt
f\}=-\G_{1b}y, \qquad \{\wt y,\wt f\}\in T^*,
\end{equation}
is a boundary triplet for $T^*$ and the (maximal symmetric)  relation
$A_0(=\ker\dot\G_0)$ is of the form
\begin {equation}\label {3.47}
A_0=\{\{\wt y, \wt f\}\in\Tma: \, \G_{1a}y=0, \;\wh\G_a y=P_{\wh H}\wh\G_b y,\;
\wt\G_{0b}y =0 \}.
\end{equation}

If in addition $n_+=n_-$ and $\Pi=\bt$ is an ordinary decomposing boundary
triplet \eqref{3.35.2}, \eqref{3.42} for $\Tma$, then the equality \eqref{3.45}
 take the form
\begin{gather}
T=\{\{\wt y, \wt f\}\in\Tma: \, \G_{1a}y=0, \;\wh\G_a y=\wh\G_b y,\; \G_{0b}y
=\G_{1b}y=0 \}\label {3.48}
\end{gather}
and $ n_+(T)=n_-(T)=\nu_{b-}$. Moreover, in this case $A_0=A_0^*$ and
\begin {equation}\label {3.49}
A_0=\{\{\wt y, \wt f\}\in\Tma: \, \G_{1a}y=0, \;\wh\G_a y=\wh\G_b y,\; \G_{0b}y
=0 \}.
\end{equation}

\end{proposition}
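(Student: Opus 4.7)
The plan is to derive Proposition \ref{pr3.6} as a direct application of Proposition \ref{pr2.10a} to the decomposing boundary triplet $\Pi_+=\bta$ from Proposition \ref{pr3.3} (Case 1). The correct decomposition to feed into Proposition \ref{pr2.10a} is
\[
\wh\cH=H_0,\qquad \dot\cH_0=\wt\cH_b,\qquad \dot\cH_1=\cH_b,
\]
so that $\cH_0=\wh\cH\oplus\dot\cH_0$ and $\cH_1=\wh\cH\oplus\dot\cH_1$. With this choice, the formulas \eqref{3.35} for $\G_0'$ and $\G_1'$, pulled back to $\Tma$ via \eqref{3.42}, immediately identify the block components
\begin{gather*}
\wh\G_0\{\wt y,\wt f\}=-\G_{1a}y+i(\wh\G_a-P_{\wh H}\wh\G_b)y,\qquad \dot\G_0\{\wt y,\wt f\}=\wt\G_{0b}y,\\
\wh\G_1\{\wt y,\wt f\}=\G_{0a}y+\tfrac12(\wh\G_a+P_{\wh H}\wh\G_b)y,\qquad \dot\G_1\{\wt y,\wt f\}=-\G_{1b}y.
\end{gather*}

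Next I would apply Proposition \ref{pr2.10a}, part 1), to read off $T=\wt A$ and $T^*=\wt A^*$. The key observation is that, with respect to the orthogonal decomposition $H_0=H\oplus\wh H$, the term $-\G_{1a}y$ lies in $H$ while $i(\wh\G_a-P_{\wh H}\wh\G_b)y$ lies in $\wh H$; hence the single condition $\wh\G_0\{\wt y,\wt f\}=0$ splits into the two independent conditions $\G_{1a}y=0$ and $\wh\G_a y=P_{\wh H}\wh\G_b y$. Combined with $\dot\G_0\{\wt y,\wt f\}=0$ and $\dot\G_1\{\wt y,\wt f\}=0$, this reproduces \eqref{3.45}, while retaining only $\wh\G_0\{\wt y,\wt f\}=0$ gives \eqref{3.45a}. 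For the deficiency indices, Proposition \ref{pr2.10a} provides $n_\pm(T)=n_\pm(\Tmi)-\dim\wh\cH$; together with $n_\pm=\nu_\pm+\nu_{b\pm}$ from \eqref{3.17c} and $\dim H_0=\nu_++\wh\nu=\nu_-$ (see \eqref{3.16a}, \eqref{3.12}), a one-line computation yields $n_+(T)=\nu_{b+}-\wh\nu$ and $n_-(T)=\nu_{b-}$.

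Part 2) is then immediate from Proposition \ref{pr2.10a}, part 2): the collection $\dot\Pi_+=\{\wt\cH_b\oplus\cH_b,\dot\G_0\up T^*,\dot\G_1\up T^*\}$ is a boundary triplet for $T^*$, and \eqref{3.47} is exactly $A_0=\ker\dot\G_0$ written out.

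For the additional assertion, assume $n_+=n_-$. By \eqref{3.17d} this forces $\wh\nu=\nu_{b+}-\nu_{b-}$, and \eqref{3.20} together with the Case 1 embedding $\wh H\subset\wh\cH_b$ then gives $\wh\cH_b=\wh H$ (so $P_{\wh H}\wh\G_b=\wh\G_b$) and, via \eqref{3.35.0}, $\wt\cH_b=\cH_b$ with $\wt\G_{0b}=\G_{0b}$. Substitution into \eqref{3.45} and \eqref{3.47} produces \eqref{3.48} and \eqref{3.49}, and the index formula degenerates to $n_+(T)=\nu_{b+}-\wh\nu=\nu_{b-}=n_-(T)$. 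Finally, since now $\dot\cH_0=\dot\cH_1=\cH_b$, the triplet $\dot\Pi_+$ is an ordinary boundary triplet for $T^*$ in the sense of Remark \ref{rem2.13}, and therefore $A_0=\ker\dot\G_0$ is self-adjoint, i.e.\ $A_0=A_0^*$. Because Proposition \ref{pr2.10a} carries all the abstract work, the only substantive step is recognising the block decomposition above and splitting $\wh\G_0=0$ into its two vector components; I do not anticipate any serious obstacle.
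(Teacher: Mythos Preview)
Your proposal is correct and follows exactly the route indicated in the paper: the text preceding Proposition \ref{pr3.6} states that Propositions \ref{pr3.6}--\ref{pr3.8} are obtained by ``combining Propositions \ref{pr3.3} and \ref{pr2.10a}'', and your decomposition $\wh\cH=H_0$, $\dot\cH_0=\wt\cH_b$, $\dot\cH_1=\cH_b$ together with the splitting of $\wh\G_0=0$ via $H_0=H\oplus\wh H$ is precisely the application the authors have in mind. The deficiency-index computation and the specialization to $n_+=n_-$ via \eqref{3.35.0} are handled correctly.
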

\begin{proposition}\label{pr3.7}
Let Case 2 holds and let  $\Pi_-=\bta$ be a decomposing boundary triplet
\eqref{3.38}, \eqref{3.42} for $\Tma$. Then:

1)Statement 1) of Proposition \ref{pr3.6} holds with
\begin{gather}
T=\{\{\wt y, \wt f\}\in\Tma: \, \G_{1a}y=0, \;\wh\G_a y=\wh\G_b y,\;
\G_{0b}y =\G_{1b}y=0 \}\label {3.50}\\
T^*=\{\{\wt y, \wt f\}\in\Tma: \, \G_{1a}y=0, \;P_{\wh \cH_b}\wh\G_a y=\wh\G_b
y \}.\label {3.51}
\end{gather}
Moreover, the deficiency indices of $T$ are $n_+(T)=\nu_{b-}$ and
$n_-(T)=\wh\nu +2\nu_{b-}-\nu_{b+}$.

2) The collection $\dot\Pi_-=\{(\cH_2'\oplus \cH_b)\oplus \cH_b, \dot\G_0,
\dot\G_1\}$ with the operators
\begin {equation}\label {3.52}
\dot\G_0 \{\wt y,\wt f\}=\{iP_{\cH_2'}\wh \G_a y,\G_{0b}y\}(\in
\cH_2'\oplus\cH_b), \quad \dot\G_1 \{\wt y,\wt f\}=-\G_{1b}y(\in \cH_b), \quad
\{\wt y,\wt f\}\in T^*,
\end{equation}
is a boundary triplet for $T^*$ and   $A_0(=\ker\dot\G_0)$ is of the form
\begin {equation}\label {3.53}
A_0=\{\{\wt y, \wt f\}\in\Tma: \, \G_{1a}y=0, \;\wh\G_a y=\wh\G_b y,\;
 \G_{0b}y =0 \}.
\end{equation}
\end{proposition}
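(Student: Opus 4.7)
The plan is to apply Proposition \ref{pr2.10a} to the decomposing boundary triplet $\Pi_-=\bta$ given by \eqref{3.38}, \eqref{3.42}, with the internal decomposition of $\cH_0$ and $\cH_1$ forced by the Case 2 construction. Concretely, since $\cH_0=H_0'\oplus\cH_2'\oplus\cH_b$ and $\cH_1=H_0'\oplus\cH_b$, one has $\cH_2=\cH_0\ominus\cH_1=\cH_2'$; I would therefore set $\wh\cH:=H_0'$, $\dot\cH_1:=\cH_b$ and $\dot\cH_0:=\cH_2'\oplus\cH_b$, so that $\dot\cH_0=\dot\cH_1\oplus\cH_2$ as Proposition \ref{pr2.10a} requires. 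The block representations of $\G_0$ and $\G_1$ induced by \eqref{3.38} are then $\wh\G_0\{\wt y,\wt f\}=-\G_{1a}y+i(P_{\wh\cH_b}\wh\G_a y-\wh\G_b y)$ into $H_0'=H\oplus\wh\cH_b$, $\dot\G_0\{\wt y,\wt f\}=\{iP_{\cH_2'}\wh\G_a y,\G_{0b}y\}$, $\wh\G_1\{\wt y,\wt f\}=\G_{0a}y+\tfrac12(P_{\wh\cH_b}\wh\G_a y+\wh\G_b y)$, and $\dot\G_1\{\wt y,\wt f\}=-\G_{1b}y$.

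With this identification, the descriptions \eqref{3.50}--\eqref{3.51} of $T$ and $T^*$ follow directly from the formulas for $\wt A$ and $\wt A^*$ in Proposition \ref{pr2.10a}. The equation $\wh\G_0\wh f=0$ splits along the orthogonal decomposition $H_0'=H\oplus\wh\cH_b$ into $\G_{1a}y=0$ together with $P_{\wh\cH_b}\wh\G_a y=\wh\G_b y$, which is precisely \eqref{3.51}. Adjoining $\dot\G_0\wh f=\dot\G_1\wh f=0$ yields in addition $P_{\cH_2'}\wh\G_a y=0$, $\G_{0b}y=0$ and $\G_{1b}y=0$. Since in Case 2 one has $\wh H=\wh\cH_b\oplus\cH_2'$ and $\wh\G_b y\in\wh\cH_b\subset\wh H$, the pair of conditions $P_{\wh\cH_b}\wh\G_a y=\wh\G_b y$ and $P_{\cH_2'}\wh\G_a y=0$ collapses into $\wh\G_a y=\wh\G_b y$, giving \eqref{3.50}. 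The deficiency indices come from the formula $n_\pm(T)=n_\pm(\Tmi)-\dim\wh\cH$, using $n_\pm(\Tmi)=\nu_\pm+\nu_{b\pm}$ and $\dim\wh\cH=\dim H+\dim\wh\cH_b=\nu_++(\nu_{b+}-\nu_{b-})$: direct arithmetic gives $n_+(T)=\nu_{b-}$ and $n_-(T)=\wh\nu+2\nu_{b-}-\nu_{b+}$.

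For assertion 2) I would invoke Proposition \ref{pr2.10a}.2), which asserts that $\dot\Pi_-=\{\dot\cH_0\oplus\dot\cH_1,\dot\G_0\up T^*,\dot\G_1\up T^*\}$ is a boundary triplet for $T^*$; substituting the explicit forms of $\dot\G_0$ and $\dot\G_1$ above reproduces \eqref{3.52}. The description \eqref{3.53} of $A_0=\ker\dot\G_0$ is then obtained by adjoining the two conditions $P_{\cH_2'}\wh\G_a y=0$ and $\G_{0b}y=0$ to the defining relations of $T^*$ and once more using $\wh H=\wh\cH_b\oplus\cH_2'$ to collapse the $\wh\G_a$ constraints into the single identity $\wh\G_a y=\wh\G_b y$.

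The only real difficulty is bookkeeping: one has to keep straight two orthogonal decompositions, namely $H_0'=H\oplus\wh\cH_b$ (which governs the splitting of $\wh\G_0$) and $\wh H=\wh\cH_b\oplus\cH_2'$ (which governs the recombination of $P_{\wh\cH_b}\wh\G_a$ and $P_{\cH_2'}\wh\G_a$ into $\wh\G_a$). Beyond this indexing, the proposition is a formal corollary of Proposition \ref{pr2.10a} applied to the Case 2 decomposing triplet, and no further analytic input is needed.
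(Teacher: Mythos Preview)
Your proposal is correct and follows exactly the route the paper intends: the paper itself states that Propositions \ref{pr3.6}--\ref{pr3.8} are obtained by ``combining Propositions \ref{pr3.3} and \ref{pr2.10a}'', and you have carried out precisely that combination for Case~2 with the correct choice $\wh\cH=H_0'$, $\dot\cH_0=\cH_2'\oplus\cH_b$, $\dot\cH_1=\cH_b$. Your bookkeeping of the two decompositions $H_0'=H\oplus\wh\cH_b$ and $\wh H=\wh\cH_b\oplus\cH_2'$ and the resulting collapse of the $\wh\G_a$ constraints is accurate.
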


\begin{proposition}\label{pr3.8}
Let in Case 3   $\Pi_-=\bta$ be a decomposing boundary triplet \eqref{3.40},
\eqref{3.42} for $\Tma$. Then:

1)  Statement 1) of Proposition \ref{pr3.6} holds with
\begin{gather}
T=\{\{\wt y, \wt f\}\in\Tma: \, \G_{1a}y=0, \;\wh\G_a y=0,\;
\wt\G_{0b}y =\G_{1b}y=0 \}\label {3.56}\\
T^*=\{\{\wt y, \wt f\}\in\Tma: \, \G_{1a}y=0 \}.\label {3.57}
\end{gather}
Moreover, the deficiency indices of $T$ are $n_+(T)=\nu_{b+}$ and
$n_-(T)=\wh\nu +\nu_{b-}$.

2) The collection $\dot\Pi_-=\{(\wh H\oplus \wt\cH_b)\oplus \cH_b, \dot\G_0,
\dot\G_1\}$ with the operators
\begin {equation}\label {3.57a}
\dot\G_0 \{\wt y,\wt f\}=\{i\wh \G_a y,\wt\G_{0b}y\}(\in \wh H\oplus\wt\cH_b),
\quad \dot\G_1 \{\wt y,\wt f\}=-\G_{1b}y(\in \cH_b), \quad \{\wt y,\wt f\}\in
T^*,
\end{equation}
is a boundary triplet for $T^*$ and  $A_0(=\ker\dot\G_0)$ is of the form
\begin {equation}\label {3.58}
A_0=\{\{\wt y, \wt f\}\in\Tma: \, \G_{1a}y=0, \;\wh\G_a y=0,\; \; \wt\G_{0b}y
=0 \}.
\end{equation}
\end{proposition}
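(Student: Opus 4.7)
The plan is to derive Proposition 3.8 as a direct application of the abstract Proposition \ref{pr2.10a} to the decomposing boundary triplet $\Pi_-=\bta$ for $\Tma$ supplied by Proposition \ref{pr3.3} in \emph{Case 3}; this mirrors the arguments that give Propositions \ref{pr3.6} and \ref{pr3.7} in the other two cases.

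First I would identify the relevant orthogonal decomposition of $\cH_0$ and $\cH_1$ in \eqref{3.39c}: both spaces contain $H$ as a direct summand, so take $\wh\cH=H$, $\dot\cH_0=\wh H\oplus\wt\cH_b$ and $\dot\cH_1=\cH_b$, whence $\cH_0=\wh\cH\oplus\dot\cH_0$ and $\cH_1=\wh\cH\oplus\dot\cH_1$ (note also $\dot\cH_0=\dot\cH_1\oplus\cH_2$ with $\cH_2=\wh H\oplus\wh\cH_b$, consistent with the rules of Proposition \ref{pr2.10a}). Reading off \eqref{3.40} and \eqref{3.42}, the blocks of $\G_0$ and $\G_1$ with respect to this decomposition are
\begin{gather*}
\wh\G_0\{\wt y,\wt f\}=-\G_{1a}y,\qquad \dot\G_0\{\wt y,\wt f\}=\{i\wh\G_a y,\,\wt\G_{0b}y\},\\
\wh\G_1\{\wt y,\wt f\}=\G_{0a}y,\qquad \dot\G_1\{\wt y,\wt f\}=-\G_{1b}y.
\end{gather*}

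Now part 1) of Proposition \ref{pr2.10a} applied to $A=\Tmi$ gives $T=\{\wh f\in\Tma:\wh\G_0\wh f=\dot\G_0\wh f=\dot\G_1\wh f=0\}$ and $T^*=\{\wh f\in\Tma:\wh\G_0\wh f=0\}$, which, after substituting the block formulas above, are exactly \eqref{3.56} and \eqref{3.57}. For the deficiency indices the same proposition yields $n_\pm(T)=n_\pm(\Tmi)-\dim\wh\cH=n_\pm-\nu_+$; combining this with the equalities $n_+=\nu_++\nu_{b+}$ and $n_-=\nu_-+\nu_{b-}=\nu_++\wh\nu+\nu_{b-}$ from \eqref{3.17c} and \eqref{3.12} produces $n_+(T)=\nu_{b+}$ and $n_-(T)=\wh\nu+\nu_{b-}$, as claimed.

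For part 2) I would invoke part 2) of Proposition \ref{pr2.10a}: since $\a=-1$ in \emph{Case 3}, the proposition gives that $\dot\Pi_-=\{\dot\cH_0\oplus\dot\cH_1,\dot\G_0\up T^*,\dot\G_1\up T^*\}=\{(\wh H\oplus\wt\cH_b)\oplus\cH_b,\dot\G_0,\dot\G_1\}$ is a boundary triplet for $T^*$; this is precisely \eqref{3.57a}. Finally, $A_0=\ker\dot\G_0$ consists of $\{\wt y,\wt f\}\in T^*$ satisfying both $\wh\G_a y=0$ and $\wt\G_{0b}y=0$, i.e.\ $\G_{1a}y=\wh\G_a y=\wt\G_{0b}y=0$, which is exactly \eqref{3.58}. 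No analytic obstacle arises; the proof is essentially a bookkeeping translation of Proposition \ref{pr2.10a}, the only minor care being to match the block decomposition of $\cH_0,\cH_1$ with the orthogonality requirements there.
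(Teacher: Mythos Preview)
Your proposal is correct and follows exactly the approach the paper indicates: the paper simply states that Propositions \ref{pr3.6}--\ref{pr3.8} arise by combining Propositions \ref{pr3.3} and \ref{pr2.10a}, and you have correctly carried out the bookkeeping for \emph{Case 3} with the decomposition $\wh\cH=H$, $\dot\cH_0=\wh H\oplus\wt\cH_b$, $\dot\cH_1=\cH_b$. The identification of the blocks, the computation of the deficiency indices via \eqref{3.17c}, and the derivation of \eqref{3.56}--\eqref{3.58} are all accurate.
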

\section{$\cL_\D^2$-solutions of boundary value problems}
\subsection{Case 1}
Assume that in \emph{Case 1} $\Pi_+=\bta$ is a decomposing boundary triplet
\eqref{3.35}, \eqref{3.42} for $\Tma$ and $\pair\in\RP$ is a collection of
holomorphic pairs \eqref{2.2}. For a given $f\in\lI$ consider the following
boundary value problem:
\begin{gather}
J y'-B(t)y=\l \D(t)y+\D(t)f(t), \quad t\in\cI,\label{4.0.1}\\
\G_{1a}y=0, \quad \wh \G_a y=P_{\wh H} \wh\G_b y,\quad C_0(\l)\wt\G_{0b}y
+C_1(\l)\G_{1b}y=0, \quad \l\in\bC_+, \label{4.0.2}\\
\G_{1a}y=0, \quad \wh \G_a y=P_{\wh H} \wh\G_b y,\quad D_0(\l)\wt\G_{0b}y
+D_1(\l)\G_{1b}y=0, \quad \l\in\bC_-. \label{4.0.3}
\end{gather}
A function $y(\cd,\cd):\cI\tm (\CR)\to\bH$ is called a solution of this problem
if for each $\l\in\CR$ the function $y(\cd,\l)$ belongs to $\AC\cap\lI$ and
satisfies the equation \eqref{4.0.1} a.e. on $\cI$ (so that $y\in\dom\tma$) and
the boundary conditions \eqref{4.0.2}, \eqref{4.0.3}.

Application of Theorem \ref{th2.12} to the boundary triplet \eqref{3.46} yields
the following theorem.

\begin{theorem}\label{th4.0.1}
Let in Case 1 $T$ be a symmetric relation in $\LI$ defined by \eqref{3.45}. If
$\pair\in\RP$ is a collection \eqref{2.2}, then for every $f\in\lI$ the
boundary problem \eqref{4.0.1} - \eqref{4.0.3} has a unique solution
$y(t,\l)=y_f(t,\l) $ and the equality
\begin {equation}\label {4.0.4}
R(\l)\wt f = \pi(y_f(\cd,\l)), \quad \wt f\in \LI, \quad f\in\wt f, \quad
\l\in\CR,
\end{equation}
defines a generalized resolvent $R(\l)=:R_\tau(\l)$ of $T$. Conversely, for
each generalized resolvent $R(\l)$ of $T$ there exists a unique $\tau\in\RP$
such that $R(\l)=R_\tau(\l)$.
\end{theorem}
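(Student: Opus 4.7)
The plan is to deduce Theorem~\ref{th4.0.1} by applying the general Theorem~\ref{th2.12} to the boundary triplet $\dot\Pi_+=\{\wt\cH_b\oplus\cH_b,\dot\G_0,\dot\G_1\}$ for $T^*$ produced by Proposition~\ref{pr3.6}(2). The parameter class $\RP=\wt R_{+1}(\wt\cH_b,\cH_b)$ in the statement is exactly the admissible class for Theorem~\ref{th2.12} applied to $\dot\Pi_+$, so no extra compatibility of $\tau$ needs to be checked.

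First I would fix $\wt g\in\LI$ and $\l\in\CR$ and invoke Theorem~\ref{th2.12} for $A=T$ and the triplet $\dot\Pi_+$. It provides a unique $\wt y\in\LI$ with $\{\wt y,\l\wt y+\wt g\}\in T^*$ satisfying
\begin{gather*}
C_0(\l)\dot\G_0\{\wt y,\l\wt y+\wt g\}-C_1(\l)\dot\G_1\{\wt y,\l\wt y+\wt g\}=0,\quad \l\in\bC_+,\\
D_0(\l)\dot\G_0\{\wt y,\l\wt y+\wt g\}-D_1(\l)\dot\G_1\{\wt y,\l\wt y+\wt g\}=0,\quad \l\in\bC_-,
\end{gather*}
together with the fact that $R(\l)\wt g:=\wt y$ is a generalized resolvent of $T$ and that every generalized resolvent of $T$ arises from exactly one $\tau\in\RP$ in this way.

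Next I would translate this abstract result into the symmetric-system language. Given $f\in\lI$, apply the previous step with $\wt g=\wt f:=\pi f$. By the lifting property recalled at the beginning of Subsection~\ref{sub2.3}, the inclusion $\{\wt y,\l\wt y+\wt f\}\in T^*\subset\Tma$ produces a unique $y\in\AC\cap\lI$ with $\pi y=\wt y$ and $\{y,f'\}\in\tma$ for every $f'\in\l\wt y+\wt f$; the choice $f'=\l y+f$ yields precisely \eqref{4.0.1} a.e.\ on $\cI$. Moreover, by \eqref{3.45a}, the membership $\{\wt y,\l\wt y+\wt f\}\in T^*$ is equivalent to the two regular boundary conditions $\G_{1a}y=0$ and $\wh\G_a y=P_{\wh H}\wh\G_b y$ appearing in \eqref{4.0.2}, \eqref{4.0.3}. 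Finally, formulae \eqref{3.46} give $\dot\G_0\{\wt y,\l\wt y+\wt f\}=\wt\G_{0b}y$ and $\dot\G_1\{\wt y,\l\wt y+\wt f\}=-\G_{1b}y$, so the abstract $C$- and $D$-conditions above turn into
\begin{equation*}
C_0(\l)\wt\G_{0b}y+C_1(\l)\G_{1b}y=0,\qquad D_0(\l)\wt\G_{0b}y+D_1(\l)\G_{1b}y=0,
\end{equation*}
i.e.\ precisely the singular conditions in \eqref{4.0.2}--\eqref{4.0.3} (the sign is absorbed in the definition of $\dot\G_1$). Setting $y_f(\cd,\l):=y$, we verify that $y$ depends only on $\wt f$: two representatives $f_1,f_2\in\wt f$ satisfy $\D f_1=\D f_2$ a.e., so both produce the same right-hand side in \eqref{4.0.1}. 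Together with the analogous uniqueness of $\wt y$, this yields \eqref{4.0.4} and the equality $R(\l)=R_\tau(\l)$.

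The main technical point, more than a serious obstacle, is the careful passage between the abstract relation-theoretic solvability in $\LI$ supplied by Theorem~\ref{th2.12} and the pointwise solvability of \eqref{4.0.1}--\eqref{4.0.3}: one has to use that $\dot\G_0$ and $\dot\G_1$ are defined through the unique $\AC\cap\lI$-representative of an element of $\Tma$, and that this representative depends only on the equivalence class. Once this bookkeeping is done, the converse statement---existence and uniqueness of $\tau\in\RP$ representing a prescribed generalized resolvent of $T$---is inherited verbatim from the corresponding converse in Theorem~\ref{th2.12}.
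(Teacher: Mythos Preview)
Your proposal is correct and follows exactly the approach indicated in the paper, which simply states that the theorem follows by applying Theorem~\ref{th2.12} to the boundary triplet $\dot\Pi_+$ of \eqref{3.46}. Your write-up in fact supplies the bookkeeping details (the lift from $\LI$ to $\AC\cap\lI$, the identification of \eqref{3.45a} with the regular conditions in \eqref{4.0.2}--\eqref{4.0.3}, and the sign absorption via $\dot\G_1=-\G_{1b}$) that the paper leaves implicit.
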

If $n_+=n_-$, then \eqref{3.35.0} is valid. This and Theorem \ref{th4.0.1}
yield the following corollary.
\begin{corollary}\label{cor4.0.2}
Let $n_+=n_-$, let $\Pi=\bt$ be a decomposing boundary triplet \eqref{3.35.2},
\eqref{3.42} for $\Tma$ and let $T$ be a symmetric relation \eqref{3.48}. Then
the statements of Theorem \ref{th4.0.1} hold with the Nevanlinna operator pairs
$\tau\in\wt R(\cH_b)$ in the form \eqref{2.19} and the following boundary
conditions in place of \eqref{4.0.2} and  \eqref{4.0.3}:
\begin {equation}\label {4.0.5}
\G_{1a}y=0, \quad \wh \G_a y= \wh\G_b y,\quad C_0(\l)\G_{0b}y
+C_1(\l)\G_{1b}y=0, \quad \l\in\CR.
\end{equation}
In this case $R_\tau(\l)$ is a canonical resolvent of $T$ if and only if
$\tau\in\wt R^0(\cH_b)$.
\end{corollary}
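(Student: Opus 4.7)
The plan is to view Corollary~\ref{cor4.0.2} as a direct specialization of Theorem~\ref{th4.0.1} to the case $n_+=n_-$, rather than re-running the argument. The hypothesis $n_+=n_-$ together with \eqref{3.17d} and \eqref{3.20} forces $\nu_{b+}-\nu_{b-}=\wh\nu\geq 0$, so we are automatically in \emph{Case 1}, and moreover $\wh\cH_b=\wh H$, $\cH_2'=\{0\}$. By \eqref{3.35.0} this collapses $\wt\cH_b=\cH_b$ and $\wt\G_{0b}=\G_{0b}$, while $\wh\cH_b=\wh H$ turns $P_{\wh H}\wh\G_b$ into $\wh\G_b$. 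Consequently the boundary triplet \eqref{3.35},\eqref{3.42} becomes the ordinary decomposing triplet $\Pi=\bt$ of \eqref{3.35.2},\eqref{3.42} (by Proposition~\ref{pr3.3}), and the symmetric relation \eqref{3.45} coincides with $T$ in \eqref{3.48}.

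Next I would reduce the boundary parameters. By Remark~\ref{rem2.13}(2), the class $\RP=\wt R_{+1}(\cH_b,\cH_b)$ is exactly the class $\wt R(\cH_b)$ of Nevanlinna operator pairs, and each such $\tau$ admits the uniform representation \eqref{2.19} with a single holomorphic pair $(C_0(\l),C_1(\l))$ defined on all of $\CR$. Hence the separate conditions \eqref{4.0.2} and \eqref{4.0.3} of Theorem~\ref{th4.0.1} merge into the single condition \eqref{4.0.5}. The existence and uniqueness of the $\lI$-solution $y_f(\cdot,\l)$, together with the resolvent formula \eqref{4.0.4} and the parametrization of all generalized resolvents of $T$, then follow at once by invoking Theorem~\ref{th4.0.1} in this simplified setting.

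The canonical-resolvent statement I would read off from the last assertion of Theorem~\ref{th2.12} applied to the ordinary triplet $\Pi$: a generalized resolvent $R_\tau$ is canonical precisely when $\tau\in\wt R^0_{+1}(\cH_b,\cH_b)$, and by Remark~\ref{rem2.13}(2) this class coincides with $\wt R^0(\cH_b)$. In particular, by \eqref{2.22}--\eqref{2.23} every such $\tau$ is a constant self-adjoint pair $\{(\cos B,\sin B);\cH_b\}$ with $B=B^*\in[\cH_b]$.

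The main (modest) obstacle is bookkeeping: one must verify that the operator pair $(\dot\G_0,\dot\G_1)$ from \eqref{3.46}, together with the extension $T^*$ of \eqref{3.45a}, produces in the ordinary case exactly the concrete realization of the abstract boundary problem \eqref{2.45}--\eqref{2.47} whose incarnation gives \eqref{4.0.1},\eqref{4.0.5}. Tracing the definitions through \eqref{3.35.2} and \eqref{3.46} makes this identification routine, so no genuinely new computation is required.
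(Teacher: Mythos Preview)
Your approach is correct and essentially the same as the paper's: the paper simply states that when $n_+=n_-$ the equality \eqref{3.35.0} holds, and this together with Theorem~\ref{th4.0.1} yields the corollary. You have spelled out in more detail exactly why \eqref{3.35.0} collapses the data (via $\wh\cH_b=\wh H$, $\cH_2'=\{0\}$, $P_{\wh H}\wh\G_b=\wh\G_b$) and how the parameter class $\wt R_{+1}(\cH_b,\cH_b)$ becomes $\wt R(\cH_b)$, but the logical route is identical. One small imprecision: the canonical-resolvent criterion comes from Theorem~\ref{th2.12} applied to the triplet $\dot\Pi_+$ of \eqref{3.46} for $T^*$ (which becomes ordinary in this case), not to the ambient triplet $\Pi$ for $\Tma$; you effectively acknowledge this in your final paragraph, so the argument is sound.
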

\begin{remark}\label{rem4.0.3}
Let in Theorem \ref{th4.0.1} $\tau_0=\{\tau_{+},\tau_{-} \}\in\wt
R_{+1}(\wt\cH_b,\cH_b)$ be defined by \eqref{2.2} with
\begin {equation}\label{4.0.6}
C_0(\l) \equiv I_{\wt\cH_b}, \quad C_1(\l)\equiv 0 \;\;\;\text{and}\;\; \;
D_0(\l) \equiv P_{\cH_b}(\in [\wt\cH_b,\cH_b]), \quad D_1(\l)\equiv 0
\end{equation}
and let $R_0(\l)=R_{\tau_0}(\l)$ be the corresponding generalized resolvent of
$T$. Then
\begin {equation*}
R_0(\l)=(A_0-\l)^{-1},\;\l\in\bC_+ \;\;\text{and}\;\;
R_0(\l)=(A_0^*-\l)^{-1},\;\l\in\bC_-,
\end{equation*}
where $A_0$ is given by \eqref{3.47}.

Similarly, let in Corollary \ref{cor4.0.2} $\tau_0=\{(I_{\cH_b},0);\cH_b\}\in
\wt R^0(\cH_b)$. Then $R_0(\l):=R_{\tau_0}(\l)=(A_0-\l)^{-1}$, where $A_0$ is
the selfadjoint extension \eqref{3.49}.
\end{remark}
\begin{proposition}\label{pr4.1}
Let in Case 1 $\Pi_+=\bta$ be a decomposing boundary triplet \eqref{3.35},
\eqref{3.42} for $\Tma$, let $\g_\pm(\cd)$  be the corresponding $\g$-fields
 and let
\begin{gather}
M_+(\l)=\begin{pmatrix} m_0(\l )& M_{2+}(\l) \cr M_{3+}(\l) & M_{4+}(\l)
\end{pmatrix}: H_0\oplus\wt\cH_b\to H_0\oplus\cH_b,
\;\;\;\l\in\bC_+\label{4.1}\\
M_-(\l)=\begin{pmatrix} m_0(\l )& M_{2-}(\l) \cr M_{3-}(\l) & M_{4-}(\l)
\end{pmatrix}: H_0\oplus\cH_b\to H_0\oplus \wt\cH_b,
\;\;\;\l\in\bC_-\label{4.2}
\end{gather}
be the block representations of the corresponding Weyl functiions. Then:

1) For every $\l\in\CR$ there exists an operator solution
$v_0(\cd,\l)\in\lo{H_0}$ of Eq. \eqref{3.2} such that
\begin{gather}
\G_{1a} v_0(\l)=-P_H, \quad \l\in\CR,\label{4.3}\\
(\G_{0a}+\wh\G_a)v_0(\l)=m_0(\l)-\tfrac i 2 P_{\wh H}, \quad
\l\in\CR,\label{4.3a}\\
i(\wh\G_a-P_{\wh  H}\wh\G_b)v_0(\l)=P_{\wh  H}, \quad\l\in\CR,\label {4.4}\\
\wt\G_{0b} v_0(\l)=0, \qquad \G_{1b}v_0(\l)=-M_{3+}(\l),\quad  \l\in\bC_+,
\label {4.5}\\
\wt\G_{0b} v_0(\l)=-i P_{\cH_2'}M_{3-}(\l), \qquad
\G_{1b}v_0(\l)=-P_{\cH_b}M_{3-}(\l),\quad \l\in\bC_-.\label {4.6}
\end{gather}

2) For every $\l\in\bC_+ \; (\l\in\bC_-)$ there exists a solution
$u_+(\cd,\l)\in\lo{\wt\cH_b}$ (resp. $u_-(\cd,\l)\in\lo{\cH_b}$) such that
\begin{gather}
\G_{1a}u_\pm(\l)=0, \quad \l\in\bC_\pm,\label{4.7}\\
(\G_{0a}+\wh\G_a)u_\pm(\l)=M_{2\pm}(\l),\quad \l\in\bC_\pm, \label{4.7a}\\
i(\wh\G_a-P_{\wh  H}\wh\G_b)u_\pm(\l)=0,\quad \l\in\bC_\pm,\label{4.8}\\
\wt \G_{0b}u_+(\l)=I_{\wt\cH_b}, \qquad \G_{1b}u_+(\l)=-M_{4+}(\l), \quad
\l\in\bC_+,\label{4.9}\\
\wt \G_{0b}u_-(\l)=I_{\cH_b}-i P_{\cH_2'}M_{4-}(\l), \qquad \G_{1b}
u_-(\l)=-P_{\cH_b}M_{4-}(\l), \quad \l\in\bC_-. \label{4.10}
\end{gather}
In formulas \eqref{4.3}-- \eqref{4.10} $v_0(\l)$ and $u_\pm(\l)$ are linear
maps from Lemma \ref{lem3.2} corresponding to the solutions $v_0(\cd,\l)$ and
$u_{\pm}(\cd,\l)$ respectively.

3) The solutions $v_0(\cd,\l)$ and $u_\pm(\cd,\l)$ are connected with
$\g$-fields $\g_\pm(\cd)$ by
\begin {gather}
\g_\pm(\l)\up H_0=\pi v_0(\l),\;\;\;\;\l\in\bC_\pm;\label{4.10a}\\
\quad \g_+(\l)\up \wt\cH_b=\pi u_+(\l), \;\;\;\l\in\bC_+;\qquad \g_-(\l)\up
\cH_b=\pi u_-(\l), \;\;\;\l\in\bC_-.\label{4.10b}
\end{gather}
\end{proposition}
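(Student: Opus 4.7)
The plan is to construct the operator solutions $v_0(\cd,\l)$ and $u_\pm(\cd,\l)$ directly from the $\g$-fields of the decomposing boundary triplet $\Pi_+$, then read off the boundary conditions \eqref{4.3}--\eqref{4.10} from the explicit definitions of $\G_0',\G_1'$ in \eqref{3.35} and the block forms \eqref{4.1}, \eqref{4.2} of the Weyl functions via Proposition \ref{pr2.8}. Since the system \eqref{3.1} is definite, the quotient map $\pi$ restricted to $\cN_\l$ is a bijection onto $\gN_\l(\Tmi)$; combined with Lemma \ref{lem3.2}, any linear map from a finite-dimensional space into $\gN_\l(\Tmi)$ lifts uniquely to an operator solution belonging to the appropriate $\lo{\cK}$.

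For $\l\in\bC_+$, use the decomposition $\cH_0=H_0\oplus\wt\cH_b$ and define the operator solutions $v_0(\cd,\l)\in\lo{H_0}$ and $u_+(\cd,\l)\in\lo{\wt\cH_b}$ by the requirements
\[
\pi(v_0(\l)h_0)=\g_+(\l)(h_0,0),\qquad \pi(u_+(\l)h)=\g_+(\l)(0,h),
\]
where $v_0(\l),u_+(\l)$ are the maps from Lemma \ref{lem3.2}. By construction, $\{\wt y,\l\wt y\}\in\wh\gN_\l(\Tmi)$ with $\wt y=\pi v_0(\l)h_0$ satisfies $\G_0\{\wt y,\l\wt y\}=(h_0,0)$ and $\G_1\{\wt y,\l\wt y\}=M_+(\l)(h_0,0)=(m_0(\l)h_0,M_{3+}(\l)h_0)$; likewise for $u_+$. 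Substituting \eqref{3.35} and using that $\G_{1a}y\in H$, $i(\wh\G_a-P_{\wh H}\wh\G_b)y\in \wh H$ are orthogonal components of an element of $H_0$ produces \eqref{4.3}, \eqref{4.4}, \eqref{4.7}, \eqref{4.8} and the first equalities of \eqref{4.5}, \eqref{4.9}; comparing the components of $\G_1'y$ with $M_+(\l)(h_0,0)$ and $M_+(\l)(0,h)$ yields the remaining entries of \eqref{4.5}, \eqref{4.9}. Formulas \eqref{4.3a}, \eqref{4.7a} follow by eliminating $P_{\wh H}\wh\G_b$ via \eqref{4.4} and \eqref{4.8}: for instance $P_{\wh H}\wh\G_b v_0(\l)=\wh\G_a v_0(\l)+i P_{\wh H}$, substituted into the first component of $\G_1' v_0(\l)=m_0(\l)$.

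For $\l\in\bC_-$, the range of the $\g$-field is $\cH_1=H_0\oplus\cH_b$ and the Weyl function is computed through $\G_1+iP_2\G_0$ in \eqref{2.32b}. Define analogously
\[
\pi(v_0(\l)h_0)=\g_-(\l)(h_0,0),\qquad \pi(u_-(\l)h)=\g_-(\l)(0,h).
\]
The relation $P_1\G_0\{\wt y,\l\wt y\}=(h_0,0)$ (respectively $(0,h)$) forces the $H_0$-component of $\G_0'y$ to equal $h_0$ (respectively $0$), reproducing \eqref{4.3}, \eqref{4.4}, \eqref{4.7}, \eqref{4.8}; it also forces $P_{\cH_b}\wt\G_{0b}y=0$ (respectively $h$). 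The $\cH_2'$-components of $\wt\G_{0b}y$ are determined by the $iP_2\G_0$ correction: matching the $\cH_b$ and $\cH_2'$ blocks of $M_-(\l)(h_0,0)$ and $M_-(\l)(0,h)$ with the $\cH_b$-part $-\G_{1b}y$ and the $\cH_2'$-part $iP_{\cH_2'}\wt\G_{0b}y$ of $(\G_1+iP_2\G_0)\{\wt y,\l\wt y\}$ yields the terms $-iP_{\cH_2'}M_{3-}(\l)$ and $-iP_{\cH_2'}M_{4-}(\l)$ in \eqref{4.6} and \eqref{4.10}. Part 3, i.e. \eqref{4.10a} and \eqref{4.10b}, is immediate from the defining equations.

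The main source of difficulty is purely bookkeeping: one must carefully distinguish the splittings $\cH_0=H_0\oplus\cH_b\oplus\cH_2'$ and $\cH_1=H_0\oplus\cH_b$, and track how the $P_2$-correction in the $\bC_-$ formula for $M_-(\cd)$ is responsible for the extra $\cH_2'$-components appearing in \eqref{4.6} and \eqref{4.10} but absent from \eqref{4.5} and \eqref{4.9}. No additional analytic input beyond Proposition \ref{pr2.8}, the definite-system correspondence between $\cN_\l$ and $\gN_\l(\Tmi)$, and Lemma \ref{lem3.2} is needed.
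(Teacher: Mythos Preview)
Your proposal is correct and follows essentially the same approach as the paper: both lift the $\gamma$-fields $\g_\pm(\l)$ through the bijection $\pi\up\cN_\l$ to operator maps $Z_\pm(\l)$ into $\cN_\l$, take their block components with respect to $\cH_0=H_0\oplus\wt\cH_b$ and $\cH_1=H_0\oplus\cH_b$ to define $v_0(\l),u_\pm(\l)$, and then read off \eqref{4.3}--\eqref{4.10} by matching the explicit form \eqref{3.35} of $\G_0',\G_1'$ against the defining relations \eqref{2.32}--\eqref{2.32b} and the block Weyl data \eqref{4.1}, \eqref{4.2}. The paper carries out the component-by-component matching more explicitly (its equations (4.14)--(4.25)), but the logic and the ingredients are identical to yours.
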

\begin{proof}
Let $\g_\pm(\cd)$ be the $\g$-fields \eqref{2.32} of the triplet $\Pi_+$. Since
the quotient mapping $\pi$ isomorphically maps $\cN_\l$ onto $\gN_\l (\Tmi)$,
it follows that for every $\l\in\bC_+\;(\l\in\bC_-)$ there exists an
isomorphism $Z_+(\l):\cH_0\to \cN_\l$ (resp. $Z_-(\l):\cH_1\to \cN_\l$) such
that
\begin {equation}\label{4.11}
\g_+(\l)=\pi Z_+(\l), \;\;\;\l\in\bC_+; \qquad \g_-(\l)=\pi Z_-(\l),
\;\;\;\l\in\bC_-.
\end{equation}
Combining of \eqref{4.11} with \eqref{2.32} - \eqref{2.32b}  and the obvious
equality $\G_j\{\pi y, \l \pi y\}=\G_j' y, \; y\in\cN_\l, \; j \in \{0,1\},$
gives
\begin{gather}
\G_0' Z_+(\l)=I_{\cH_0}, \qquad \G_1' Z_+(\l)=M_+(\l), \quad
\l\in\bC_+,\label{4.12}\\
P_{\cH_1}\G_0' Z_-(\l)=I_{\cH_1}, \qquad (\G_1'+i P_{\cH_2'}\G_0')
Z_-(\l)=M_-(\l), \quad \l\in\bC_-,\label{4.13}
\end{gather}
which in view of \eqref{3.35} can be written as
\begin{gather}
\begin{pmatrix}-\G_{1a}+i (\wh\G_a- P_{\wh H}\wh\G_b) \cr \wt\G_{0b}
\end{pmatrix} Z_+(\l)=\begin{pmatrix} I_{H_0} & 0 \cr 0 & I_{\wt\cH_b}
\end{pmatrix}, \;\; \l\in\bC_+ \label{4.14}\\
\begin{pmatrix}\G_{0a}+\tfrac 1 2  (\wh\G_a+ P_{\wh H}\wh\G_b) \cr -\G_{1b}
\end{pmatrix} Z_+(\l)=\begin{pmatrix} m_0(\l )& M_{2+}(\l) \cr M_{3+}(\l) & M_{4+}
(\l)
\end{pmatrix}, \;\; \l\in\bC_+ \label{4.15}\\
\begin{pmatrix}-\G_{1a}+i (\wh\G_a- P_{\wh H}\wh\G_b) \cr \G_{0b}
\end{pmatrix} Z_-(\l)=\begin{pmatrix} I_{H_0} & 0 \cr 0 & I_{\cH_b}
\end{pmatrix}, \;\; \l\in\bC_- \label{4.16}\\
\begin{pmatrix}\G_{0a}+\tfrac 1 2  (\wh\G_a+ P_{\hat H}\wh\G_b) \cr
-\G_{1b}+iP_{\cH_2'}\wh\G_b
\end{pmatrix} Z_-(\l)=\begin{pmatrix} m_0(\l )& M_{2-}(\l) \cr M_{3-}(\l) & M_{4-}(\l)
\end{pmatrix}, \;\; \l\in\bC_-. \label{4.17}
\end{gather}
It follows from \eqref{4.14}--\eqref{4.17} that
\begin{gather}
\G_{1a}Z_\pm(\l)=(-P_H\,:\, 0), \;\;\; \tfrac 1 2 (\wh\G_a- P_{\hat H}\wh\G_b)
Z_\pm(\l)=(-\tfrac i 2 P_{\wh H}\;:\; 0), \;\;\;\l\in\bC_\pm \label{4.18}\\
\G_{0a}Z_\pm(\l)=(P_H m_0(\l)\,:\, P_H M_{2\pm}(\l)), \;\;\;\l\in\bC_\pm
\label{4.19} \\
\tfrac 1 2 (\wh\G_a+ P_{\wh H}\wh\G_b) Z_\pm(\l)=(P_{\wh H} m_0(\l)\,:\, P_{\wh
H} M_{2\pm}(\l)),\; \;\;\l\in\bC_\pm. \label{4.19a}
\end{gather}
Summing up the second equality in \eqref{4.18} with  \eqref{4.19} and
\eqref{4.19a} one obtains
\begin {equation}\label{4.20}
(\G_{0a}+\hat\G_a)Z_{\pm}(\l)=(m_0(\l)-\tfrac i 2 P_{\wh H}:M_{2\pm}(\l)),
\;\;\;\l\in\bC_\pm.
\end{equation}
Moreover, \eqref{4.14}-\eqref{4.17} yield
\begin{gather}
\wt \G_{0b} Z_+(\l)= (0\,:\, I_{\wt \cH_b}), \qquad \G_{1b}
Z_+(\l)=(-M_{3+}(\l)
\,:\, -M_{4+}(\l), \quad \l\in\bC_+,\label{4.22}\\
\G_{1b} Z_-(\l)=(-P_{\cH_b}M_{3-}(\l) \,:\, - P_{\cH_b}M_{4-}(\l), \quad
\l\in\bC_-,\label{4.23}\\
\G_{0b} Z_-(\l)=(0\,:\, I_{\cH_b}), \quad P_{\cH_2'}\wh\G_b Z_-(\l)=(-i
P_{\cH_2'} M_{3-}(\l)\,:\,-i P_{\cH_2'}M_{4-}(\l) ), \quad \l\in\bC_- \nonumber
\end{gather}
and in view of \eqref{3.34a} one has
\begin {equation}\label{4.25}
\wt \G_{0b} Z_-(\l)=(-i P_{\cH_2'} M_{3-}(\l)\,:\, I_{\cH_b}-i
P_{\cH_2'}M_{4-}(\l)),\quad\l\in\bC_-.
\end{equation}
Assume now that the block representations of $Z_\pm(\l)$ are
\begin {gather}
Z_+(\l)=(v_0(\l):u_+(\l)):H_0\oplus\wt\cH_b\to
\cN_\l,\;\;\l\in\bC_+\label{4.26}\\
 Z_-(\l)=(v_0(\l):u_-(\l)):H_0\oplus \cH_b\to
\cN_\l,\;\;\l\in\bC_-\label{4.26a}
\end{gather}
and let $v_0(\cd,\l)\in \lo{H_0}, \; u_+(\cd,\l)\in \lo{\wt\cH_b} $ and
$u_-(\cd,\l)\in \lo{\cH_b} $ be the operator solutions of Eq. \eqref{3.2}
corresponding to $v_0(\l), \; u_+(\l)$ and $u_-(\l)$ respectively (see Lemma
\ref{lem3.2}). Then  the representations \eqref{4.26} and \eqref{4.26a}
together with \eqref{4.18}, \eqref{4.20} and \eqref{4.22} - \eqref{4.25} yield
the relations \eqref{4.3}-\eqref{4.10} for $v_0(\cd,\l)$ and $u_\pm (\cd,\l)$.

Finally, \eqref{4.10a} and \eqref{4.10b} follow from \eqref{4.11} and
\eqref{4.26}, \eqref{4.26a}.
\end{proof}
\begin{theorem}\label{th4.2}
Let the assumptions of Proposition \ref{pr4.1} be satisfied and let $\pair\in
\RP$ be a collection of operator pairs \eqref{2.2}.
 Then:

 1) For each $\l\in\CR$ there exists a unique operator solution
$v_\tau(\cd,\l)\in\lo{H_0}$ of Eq.  \eqref{3.2} satisfying the boundary
conditions
\begin {gather}
\G_{1a}v_\tau(\l)=-P_H, \quad \l\in\CR,\label {4.27}\\
i(\wh\G_a-P_{\wh H}\wh\G_b)v_\tau(\l)=P_{\wh H},\quad \l\in\CR,\label {4.27a}\\
C_0(\l)\wt\G_{0b}v_\tau(\l)+C_1(\l)\G_{1b}v_\tau(\l)=0,
\;\;\l\in\bC_+,\label {4.28} \\
D_0(\l)\wt\G_{0b}v_\tau(\l)+D_1(\l)\G_{1b}v_\tau(\l)=0, \;\;\l\in\bC_-\label
{4.28a}
\end{gather}
(here $P_H$ and $P_{\wh H}$ are the orthoprojectors  in  $H_0$ onto $H$ and
$\wh H$ respectively).

2) $v_\tau(\cd,\l)$ is connected with the  solutions $v_0(\cd,\l)$ and
$u_\pm(\cd,\l)$  from  Proposition \ref{pr4.1} by
\begin {gather}
v_\tau(t,\l)=v_0(t,\l)-u_+(t,\l)(\tau_+(\l)+M_{4+}(\l))^{-1}M_{3+}(\l), \quad
\l\in\bC_+ \label{4.29}\\
v_\tau(t,\l)=v_0(t,\l)-u_-(t,\l)(\tau_+^*(\ov\l)+M_{4-}(\l))^{-1}M_{3-}(\l),
\quad \l\in\bC_- \label{4.30}.
\end{gather}

If in addition $n_+=n_-$ and $\Pi=\bt$ is a decomposing boundary triplet
\eqref{3.35.2}, \eqref{3.42} for $\Tma$, then $\tau\in \wt R(\cH_b)$ is given
by \eqref{2.19} and the boundary conditions \eqref{4.27}-\eqref{4.28a} take the
form
\begin {equation*}
\G_{1a}v_\tau(\l)=-P_H, \;\; i(\wh\G_a-\wh\G_b)v_\tau(\l)=P_{\wh H},\;\;
C_0(\l)\G_{0b}v_\tau(\l)+C_1(\l)\G_{1b}v_\tau(\l)=0, \;\;\l\in\CR.
\end{equation*}
\end{theorem}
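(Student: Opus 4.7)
The plan is to construct $v_\tau(\cdot,\lambda)$ as a specific linear combination of the auxiliary solutions $v_0$ and $u_\pm$ furnished by Proposition \ref{pr4.1}, and then verify the boundary conditions by direct substitution. Fix $\lambda\in\bC_+$ and try the ansatz
\begin{equation*}
v_\tau(\cdot,\lambda)=v_0(\cdot,\lambda)-u_+(\cdot,\lambda)K_\lambda, \qquad K_\lambda\in[H_0,\wt\cH_b],
\end{equation*}
to be determined. Since $v_0\in\lo{H_0}$ and $u_+\in\lo{\wt\cH_b}$ are operator solutions of Eq. \eqref{3.2}, the ansatz automatically produces an element of $\lo{H_0}$ satisfying Eq. \eqref{3.2}. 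Moreover, \eqref{4.3}, \eqref{4.4}, \eqref{4.7}, and \eqref{4.8} give $\G_{1a}v_\tau(\lambda)=-P_H$ and $i(\wh\G_a-P_{\wh H}\wh\G_b)v_\tau(\lambda)=P_{\wh H}$, so the regular conditions \eqref{4.27} and \eqref{4.27a} hold for every choice of $K_\lambda$.

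The heart of the matter is the singular condition \eqref{4.28}. Using \eqref{4.5} and \eqref{4.9} one computes $\wt\G_{0b}v_\tau(\lambda)=-K_\lambda$ and $\G_{1b}v_\tau(\lambda)=-M_{3+}(\lambda)+M_{4+}(\lambda)K_\lambda$, so \eqref{4.28} reduces to the linear equation
\begin{equation*}
(C_0(\lambda)-C_1(\lambda)M_{4+}(\lambda))K_\lambda=-C_1(\lambda)M_{3+}(\lambda).
\end{equation*}
Thus existence and uniqueness in $\bC_+$ hinge on invertibility of $C_0(\lambda)-C_1(\lambda)M_{4+}(\lambda)$, which I anticipate as the main obstacle. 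The plan for this step is to identify $M_{4+}(\cdot)$ with the ``$+$''-Weyl function $\dot M_+(\cdot)$ of the boundary triplet $\dot\Pi_+$ for $T^*$ constructed in Proposition \ref{pr3.6}(2); this identification follows from Proposition \ref{pr2.10a}(3) with $\wh\cH=H_0$, $\dot\cH_0=\wt\cH_b$, $\dot\cH_1=\cH_b$, since then $\dot M_+(\lambda)=P_{\cH_b}M_+(\lambda)\up\wt\cH_b=M_{4+}(\lambda)$. Because $\tau=\{\tau_+,\tau_-\}\in\wt R_{+1}(\wt\cH_b,\cH_b)$, the required invertibility is the algebraic content of the unique-solvability statement of Theorem \ref{th2.12} applied to the triplet $\dot\Pi_+$ and the parameter $\tau$. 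Interpreting the resulting $K_\lambda$ in the shorthand of linear-relation-theoretic inversion yields $K_\lambda=(\tau_+(\lambda)+M_{4+}(\lambda))^{-1}M_{3+}(\lambda)$, which is precisely \eqref{4.29}. For $\lambda\in\bC_-$ I would run the parallel argument with the ansatz $v_\tau=v_0-u_- K_\lambda'$, $K_\lambda'\in[H_0,\cH_b]$, using \eqref{4.6}, \eqref{4.10}, the boundary condition \eqref{4.28a}, and the duality $\tau_-(\lambda)=-(\tau_+(\ov\lambda))_{+1}^\tm$ of Proposition \ref{pr2.2}(1), which converts the algebraic equation into one governed by $\tau_+^*(\ov\lambda)$ and produces \eqref{4.30}.

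For uniqueness in part 1), suppose $v_\tau^{(1)},v_\tau^{(2)}\in\lo{H_0}$ both satisfy \eqref{4.27}--\eqref{4.28a}. Their difference $w$ is an $\lo{H_0}$-operator solution of the homogeneous system obeying $\G_{1a}w=0$, $(\wh\G_a-P_{\wh H}\wh\G_b)w=0$, and $C_0(\lambda)\wt\G_{0b}w+C_1(\lambda)\G_{1b}w=0$ (and its $D_j$-analog for $\lambda\in\bC_-$). From the proof of Proposition \ref{pr4.1} the map $Z_+(\lambda)=(v_0(\lambda):u_+(\lambda)):H_0\oplus\wt\cH_b\to\cN_\lambda$ is an isomorphism, so $w(\cdot,\lambda)=v_0(\cdot,\lambda)h_0+u_+(\cdot,\lambda)h_1$ with uniquely determined $h_0\in[H_0]$, $h_1\in[H_0,\wt\cH_b]$. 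The homogeneous versions of \eqref{4.27} and \eqref{4.27a} force $P_Hh_0=0$ and $P_{\wh H}h_0=0$, whence $h_0=0$; the homogeneous version of \eqref{4.28} then forces $h_1=0$ by the invertibility already established, so $w=0$. The case $\lambda\in\bC_-$ is identical with $Z_-(\lambda)$ and $u_-$ in place of $Z_+(\lambda)$ and $u_+$. Finally, the last assertion of the theorem (case $n_+=n_-$) is immediate upon substituting \eqref{3.35.0}, i.e., $\wt\cH_b=\cH_b$ and $\wt\G_{0b}=\G_{0b}$, into \eqref{4.27}--\eqref{4.28a}.
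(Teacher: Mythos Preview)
Your proof is correct and follows essentially the same route as the paper: define $v_\tau$ by the formulas \eqref{4.29}--\eqref{4.30}, verify \eqref{4.27}--\eqref{4.28a} from the data \eqref{4.3}--\eqref{4.10} of Proposition \ref{pr4.1}, and justify the needed invertibility by identifying $M_{4\pm}$ with the Weyl functions of the triplet $\dot\Pi_+$ of Proposition \ref{pr3.6}(2) (the paper cites \cite{Mog06.2} directly for $0\in\rho(\tau_+(\lambda)+M_{4+}(\lambda))$, and in the $\bC_-$ case it unpacks the duality via \eqref{2.8b} into the explicit representation \eqref{4.31.2}, exactly the mechanism you sketch). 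The only minor deviation is uniqueness: the paper observes that the difference of two candidate solutions, evaluated at any $h_0\in H_0$, solves the homogeneous boundary problem \eqref{4.0.1}--\eqref{4.0.3} and hence vanishes by Theorem \ref{th4.0.1}, whereas you argue directly via the isomorphism $Z_\pm(\lambda)$---both are valid and equally short.
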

\begin{proof}
Since in view of Proposition \ref{pr3.6}, 2) $M_{4\pm}(\cd)$ are the Weyl
functions of the boundary triplet $\dot\Pi_+$, it follows from \cite{Mog06.2}
that $0\in\rho (\tau_+(\l)+M_{4+}(\l)), \;\l\in\bC_+,$ and $0\in\rho
(\tau_+^*(\ov\l)+M_{4-}(\l)), \;\l\in\bC_-$. Therefore for each $\l\in\CR$ the
equalities \eqref{4.29} and \eqref{4.30} correctly define the solution
$v_\tau(\cd,\l)\in\lo{H_0}$ of Eq. \eqref{3.2}. Let us show that this solution
satisfies \eqref{4.27}--\eqref{4.28a}.

 Combining \eqref{4.29} and \eqref{4.30}
with  \eqref{4.3}, \eqref{4.4} and \eqref{4.7}, \eqref{4.8} one gets the
equalities \eqref{4.27} and \eqref{4.27a}. To prove \eqref{4.28} and
\eqref{4.28a} we let $T_+(\l)=(\tau_+(\l)+M_{4+}(\l))^{-1},\;\l\in\bC_+,$ and
$T_-(\l)=(\tau_+^*(\ov\l)+M_{4-}(\l))^{-1},\;\;\l\in\bC_-$. Then
\begin {equation}\label{4.31.1}
\tau_+(\l)=\{\{T_+(\l)h, (I-M_{4+}(\l)T_+(\l))h\}:h\in\cH_b\}
\end{equation}
and $\tau_+^*(\ov\l)=\{\{T_-(\l)h, h-M_{4-}(\l)T_-(\l))h\}:h\in\wt\cH_b\}$,
which in view of \eqref{2.8b} yields
\begin {gather}
\tau_-(\l)=\{\{(-T_-(\l)-iP_{\cH_2'}+iP_{\cH_2'}M_{4-}(\l)T_-(\l))h ,\qquad
\qquad\qquad \qquad
\qquad\qquad\label{4.31.2}\\
\qquad\qquad \qquad\qquad\qquad \qquad\qquad \qquad\qquad(-P_{\cH_b} +
P_{\cH_b}M_{4-}(\l)T_-(\l))h\}:h\in\wt\cH_b\}.\nonumber
\end{gather}
Moreover, the relations \eqref{4.29} and \eqref{4.30} with taking \eqref{4.5},
\eqref{4.6}, \eqref{4.9} and \eqref{4.10} into account give
\begin {gather*}
\wt\G_{0b}v_\tau{\l}=-T_+(\l)M_{3+}(\l), \quad
\G_{1b}v_\tau{\l}=-(I-M_{4+}(\l)T_+(\l))M_{3+}(\l), \quad \l\in\bC_+,\\
\wt\G_{0b}v_\tau{\l}=(-iP_{\cH_2'}-T_-(\l)+iP_{\cH_2'}M_{4-}(\l)T_-(\l))
M_{3-}(\l),\;\;\;\l\in\bC_-,\\
\G_{1b}v_\tau{\l}=(-P_{\cH_b} + P_{\cH_b}M_{4-}(\l)T_-(\l))M_{3-}(\l),
\;\;\;\l\in\bC_-
\end{gather*}
Hence by \eqref{4.31.1} and \eqref{4.31.2} one has
\begin {equation}\label{4.31.5}
\{\wt\G_{0b} v_\tau(\l)h_0, \G_{1b}v_\tau(\l)h_0\}\in\tau_\pm(\l), \;\;\;
h_0\in H_0, \;\;\l\in\bC_\pm,
\end{equation}
 which in view of the equalities \eqref{2.2} yields \eqref{4.28} and
\eqref{4.28a}.

Next assume that $v_1(\cd,\l)\in\lo{H_0}$ and $v_2(\cd,\l)\in\lo{H_0}$ are the
operator solutions of Eq. \eqref{3.2} satisfying \eqref{4.27}--\eqref{4.28a}
and let $v(t,\l)=v_1(t,\l)-v_2(t,\l)$. Then for each $h_0\in H_0$ the function
$y=v(t,\l)h_0$ is a solution of the homogenous boundary problem
\eqref{4.0.1}--\eqref{4.0.3} (with $f=0$). Since by Theorem \ref{th4.0.1} such
a problem has a unique solution $y=0$, it follows that $v(t,\l)=0$. This proves
the uniqueness of $v_\tau(\cd,\l)$.
\end{proof}
\subsection{Case 2}
Applying Corollary \ref{cor2.12a} to the boundary triplet \eqref{3.52} we
obtain the following theorem.
\begin{theorem}\label{th4.2a}
Let in Case 2 $\Pi_-=\bta$ be a decomposing boundary triplet \eqref{3.38},
\eqref{3.42} for $\Tma$, let $T$ be a symmetric relation in $\LI$ defined by
\eqref{3.50} and let $\tau\in\wt R(\cH_b)$ be a Nevanlinna operator pair
\eqref{2.19}. Then  for every $f\in\lI$ the boundary value problem
\begin{gather}
J y'-B(t)y=\l \D(t)y+\D(t)f(t), \quad t\in\cI,\label{4.32.1}\\
\G_{1a}y=0, \quad P_{\wh\cH_b}\wh \G_a y= \wh\G_b y,\quad C_0(\l) \G_{0b}y
+C_1(\l)\G_{1b}y=0, \quad \l\in\bC_+, \label{4.32.2}\\
\G_{1a}y=0, \quad \wh \G_a y= \wh\G_b y,\quad C_0(\l)\G_{0b}y
+C_1(\l)\G_{1b}y=0, \quad \l\in\bC_-. \label{4.32.3}
\end{gather}
has a unique solution $y(t,\l)=y_f(t,\l) $ (in the same sense as the problem
\eqref{4.0.1}--\eqref{4.0.3}) and the equality \eqref{4.0.4} gives a
generalized resolvent $R(\l)=:R_\tau(\l)$ of $T$.
\end{theorem}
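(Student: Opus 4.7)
The plan is to apply Corollary \ref{cor2.12a} to the boundary triplet $\dot\Pi_-=\{(\cH_2'\oplus\cH_b)\oplus\cH_b,\dot\G_0,\dot\G_1\}$ for $T^*$ constructed in Proposition \ref{pr3.7}. In the notation of the corollary this corresponds to the choice $\cH'=\cH_2'$ and $\wt\cH_1=\cH_1=\cH_b$, so the admissible boundary parameters form the class $\wt R_{-1}(\cH_b,\cH_b)$, which by Remark \ref{rem2.4} coincides with the class $\wt R(\cH_b)$ of Nevanlinna pairs \eqref{2.19}. Given $\wt f=\pi f$ with $f\in\lI$, the corollary supplies, for each $\l\in\CR$, a unique pair $\{\wt y,\l\wt y+\wt f\}\in T^*$ satisfying the abstract boundary conditions, and guarantees that $\wt f\mapsto \wt y$ is a generalized resolvent of $T$.

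The remaining task is to translate these abstract conditions into \eqref{4.32.1}--\eqref{4.32.3}. By the general property of $\Tma$ recalled at the beginning of Subsection \ref{sub2.3}, the pair $\{\wt y,\l\wt y+\wt f\}\in\Tma$ corresponds to a function $y\in\AC\cap\lI$ with $\{y,\l y+f\}\in\tma$, which is precisely \eqref{4.32.1}. The membership in $T^*$, via \eqref{3.51}, delivers $\G_{1a}y=0$ and $P_{\wh\cH_b}\wh\G_a y=\wh\G_b y$. Substituting the explicit expressions $\dot\G_0\{\wt y,\l\wt y+\wt f\}=\{iP_{\cH_2'}\wh\G_a y,\G_{0b}y\}$ and $\dot\G_1\{\wt y,\l\wt y+\wt f\}=-\G_{1b}y$ from \eqref{3.52} into the abstract boundary conditions of Corollary \ref{cor2.12a} yields $C_0(\l)\G_{0b}y+C_1(\l)\G_{1b}y=0$ on both half-planes and, additionally for $\l\in\bC_-$, the equality $P_{\cH_2'}\wh\G_a y=0$. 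Combining the latter with $P_{\wh\cH_b}\wh\G_a y=\wh\G_b y$ and invoking the orthogonal decomposition $\wh H=\wh\cH_b\oplus\cH_2'$ peculiar to Case 2 upgrades the $T^*$-condition to the full equality $\wh\G_a y=\wh\G_b y$ appearing in \eqref{4.32.3}. In $\bC_+$ no such upgrade is available or needed, and the weaker relation $P_{\wh\cH_b}\wh\G_a y=\wh\G_b y$ is retained in \eqref{4.32.2}.

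Once this dictionary is established, existence and uniqueness of $y_f(\cd,\l)$ and the identification $R_\tau(\l)\wt f=\pi(y_f(\cd,\l))$ as a generalized resolvent of $T$ are immediate from Corollary \ref{cor2.12a}. No serious obstacle is expected; the only delicate point is the asymmetric treatment of the two half-planes, since Corollary \ref{cor2.12a} imposes the extra condition $P_{\cH'}\G_0=0$ only on $\bC_-$. This asymmetry accounts for the fact that \eqref{4.32.2} carries $P_{\wh\cH_b}\wh\G_a y=\wh\G_b y$ while \eqref{4.32.3} carries the strictly stronger $\wh\G_a y=\wh\G_b y$, and must be tracked carefully in the bookkeeping but presents no essential difficulty.
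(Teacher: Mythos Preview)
Your proposal is correct and follows precisely the same route as the paper: the paper's proof consists of the single sentence ``Applying Corollary \ref{cor2.12a} to the boundary triplet \eqref{3.52} we obtain the following theorem,'' and you have simply unpacked that application in full detail, correctly identifying $\cH'=\cH_2'$, $\wt\cH_1=\cH_1=\cH_b$, and tracking the asymmetry between $\bC_+$ and $\bC_-$ that produces the stronger condition $\wh\G_a y=\wh\G_b y$ in \eqref{4.32.3}.
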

\begin{remark}\label{rem4.2b}
Let in Theorem \ref{th4.2a} $\tau_0=\{(I_{\cH_b},0);\cH_b\}\in \wt R^0 (\cH_b)$
and let $A_0$ be the symmetric extension \eqref{3.53}. Then
$R_0(\l):=R_{\tau_0}(\l)$ is if the form
\begin {equation}\label{4.33.0}
R_0(\l)=(A_0^*-\l)^{-1},\;\l\in\bC_+ \;\;\text{and}\;\;
R_0(\l)=(A_0-\l)^{-1},\;\l\in\bC_-,
\end{equation}
\end{remark}
\begin{proposition}\label{pr4.3}
Assume that in Case 2  $\Pi_-=\bta$ is a decomposing boundary triplet
\eqref{3.38}, \eqref{3.42} for $\Tma$,  $\g_\pm(\cd)$  are the $\g$-fields of
$\Pi_-$ and
\begin{gather}
M_+(\l)=\begin{pmatrix} M_1(\l )& M_2(\l) \cr N_{1+}(\l) &  N_{2+}(\l) \cr
M_{3}(\l) & M_{4}(\l)\end{pmatrix}: H_0'\oplus\cH_b\to
H_0'\oplus\cH_2'\oplus\cH_b,
\;\;\;\l\in\bC_+\label{4.33.1}\\
M_-(\l)=\begin{pmatrix} M_1(\l )& N_{1-}(\l) & M_{2}(\l) \cr M_{3}(\l) &
N_{2-}(\l) & M_{4}(\l)
\end{pmatrix}: H_0'\oplus \cH_2'\oplus\cH_b\to H_0'\oplus \cH_b,
\;\;\;\l\in\bC_-\label{4.33.2}
\end{gather}
are the block representations of the corresponding Weyl functions. Moreover,
let $m_0(\cd):\CR\to [H_0]$ be the operator function given by
\begin{gather}
m_0(\l)=\begin{pmatrix}M_1(\l) & 0 \cr N_{1+}(\l) & \tfrac i 2
I_{\cH_2'}\end{pmatrix}:\underbrace{H_0'\oplus\cH_2'}_{H_0}\to
\underbrace{H_0'\oplus\cH_2'}_{H_0}, \quad \l\in\bC_+\label{4.34}\\
m_0(\l)=\begin{pmatrix}M_1(\l) &  N_{1-}(\l) \cr  0   & -\tfrac i 2
I_{\cH_2'}\end{pmatrix}:\underbrace{H_0'\oplus\cH_2'}_{H_0}\to
\underbrace{H_0'\oplus\cH_2'}_{H_0}, \quad \l\in\bC_-\label{4.35}.
\end{gather}
 Then: 1) for every $\l\in\CR$ there exists an operator solution
$v_0(\cd,\l)\in\lo{H_0}$ of Eq. \eqref{3.2} such that \eqref{4.3} and
\eqref{4.3a} hold and
\begin{gather}
i(P_{\wh\cH_b}\wh\G_a -\wh\G_b)v_0(\l)=P_{\wh\cH_b}, \quad\l\in\bC_+; \qquad
i(\wh\G_a -\wh\G_b)v_0(\l)=P_{\wh H}, \quad\l\in\bC_-,\label{4.35a}\\
\G_{0b} v_0(\l)=0,\;\;\;\l\in\CR,\label {4.36}\\
\G_{1b}v_0(\l)=-M_{3}(\l)P_{H_0'}
,\;\;\l\in\bC_+;\qquad \G_{1b}v_0(\l)=(-M_{3}(\l)\,:\,
-N_{2-}(\l)),\;\;\l\in\bC_-\label {4.37a}
\end{gather}
(in \eqref{4.37a} $P_{H_0'}$ is the orthoprojector in $H_0$ onto $H_0'$).
Moreover,  for every $\l\in\CR$ there exists a solution $u(\cd,\l)\in
\lo{\cH_b}$ of Eq. \eqref{3.2} such that
\begin{gather}
\G_{1a} u(\l)=0, \quad \l\in\CR, \label{4.38} \\
(\G_{0a}+\wh\G_{\a})u(\l)=M_{2}(\l)+N_{2+}(\l),\;\; \l\in\bC_+; \quad
(\G_{0a}+\wh\G_{\a})u(\l)=M_{2}(\l), \;\; \l\in\bC_-\label{4.39}\\
i(P_{\wh\cH_b}\wh\G_a -\wh\G_b)u(\l)=0, \;\;\;\l\in\bC_+; \qquad i(\wh\G_a
-\wh\G_b)u(\l)=0, \quad\l\in\bC_-,\label{4.39a}\\
\G_{0b} u (\l)=I_{\cH_b}, \qquad \G_{1b} u (\l)=-M_4(\l), \;\;\;
\;\;\l\in\CR.\label{4.39b}
\end{gather}
2) The following equalities hold
\begin{gather}
\g_-(\l)\up H_0= \pi v_0 (\l), \quad\l\in\bC_-; \qquad \g_\pm(\l)\up\cH_b=\pi u
(\l), \quad\l\in\bC_\pm.\label{4.39c}
\end{gather}
\end{proposition}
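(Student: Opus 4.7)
The plan is to adapt the argument of Proposition~\ref{pr4.1} to Case~2, where the boundary triplet $\Pi_-$ has $\cH_1\subsetneq\cH_0$, with $\cH_2=\cH_2'$ and $H_0=H_0'\oplus\cH_2'$.

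First I would lift the $\g$-fields $\g_\pm(\cd)$ to $\cN_\l$ using the isomorphism $\pi:\cN_\l\to\gN_\l(\Tmi)$: there exist isomorphisms $Z_+(\l):\cH_1\to\cN_\l$ for $\l\in\bC_+$ and $Z_-(\l):\cH_0\to\cN_\l$ for $\l\in\bC_-$ with $\g_\pm(\l)=\pi Z_\pm(\l)$. Since $\G_j\{\pi y,\l\pi y\}=\G_j'y$ for $y\in\cN_\l$, and since by \eqref{3.38} one has $P_2=P_{\cH_2'}$ and $P_2\G_0'=iP_{\cH_2'}\wh\G_a$, the defining relations \eqref{2.38}--\eqref{2.40} translate to
\begin{equation*}
P_1\G_0'Z_+(\l)=I_{\cH_1},\quad (\G_1'-iP_2\G_0')Z_+(\l)=M_+(\l),\quad \l\in\bC_+,
\end{equation*}
\begin{equation*}
\G_0'Z_-(\l)=I_{\cH_0},\quad \G_1'Z_-(\l)=M_-(\l),\quad \l\in\bC_-.
\end{equation*}

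Second, I would substitute the explicit block forms \eqref{3.38} of $\G_0'$ and $\G_1'$, together with the block representations \eqref{4.33.1}, \eqref{4.33.2} of $M_\pm(\l)$, into the identities above, and read off componentwise the values of $\G_{1a}Z_\pm$, $(\G_{0a}+\tfrac12(P_{\wh\cH_b}\wh\G_a+\wh\G_b))Z_\pm$, $P_{\cH_2'}\wh\G_aZ_\pm$, $i(P_{\wh\cH_b}\wh\G_a-\wh\G_b)Z_\pm$, $\G_{0b}Z_\pm$ and $\G_{1b}Z_\pm$ (mirroring \eqref{4.14}--\eqref{4.25}). Adding the $\G_{0a}$-row to its appropriate counterparts cancels the $\tfrac12(P_{\wh\cH_b}\wh\G_a+\wh\G_b)$ combinations and produces the value of $(\G_{0a}+\wh\G_a)Z_\pm$ in terms of $m_0(\l)$ as defined by \eqref{4.34}--\eqref{4.35} and the summand $-\tfrac i2P_{\wh H}$, reproducing \eqref{4.3a}.

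Third, I would decompose $Z_-(\l)=(v_0(\l):u(\l)):H_0\oplus\cH_b\to\cN_\l$ for $\l\in\bC_-$, extracting the solutions $v_0(\cd,\l)\in\lo{H_0}$ and $u(\cd,\l)\in\lo{\cH_b}$ satisfying \eqref{4.3}, \eqref{4.3a}, the second version of \eqref{4.35a}, \eqref{4.36}, the second version of \eqref{4.37a} and \eqref{4.38}--\eqref{4.39b} directly. For $\l\in\bC_+$ the domain of $Z_+(\l)$ does not contain $\cH_2'$, and the decomposition $Z_+(\l)=(v_0'(\l):u(\l))$ only yields $v_0'(\l):H_0'\to\cN_\l$; I would extend this to $v_0(\l):H_0\to\cN_\l$ by setting $v_0(\l)\up\cH_2'=0$. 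A direct verification shows that each of \eqref{4.3}, \eqref{4.3a}, the first version of \eqref{4.35a}, \eqref{4.36} and the first version of \eqref{4.37a}, when restricted to $\cH_2'$, reduces to the trivial identity $0=0$; here one uses $\cH_2'\subset\wh H$, $\cH_2'\perp H\oplus\wh\cH_b$, and the fact that the $\cH_2'$-column of $m_0(\l)$ in \eqref{4.34} is precisely $\tfrac i2P_{\wh H}\up\cH_2'$, so the trivial extension is consistent. The assertion \eqref{4.39c} is then immediate from $\g_\pm(\l)=\pi Z_\pm(\l)$ and the block decompositions.

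The main obstacle is the bookkeeping of Step~2: one must match four decompositions simultaneously---$\bH=H\oplus\wh H\oplus H$, $H_0=H_0'\oplus\cH_2'$ with $H_0'=H\oplus\wh\cH_b$, the $3\times2$ block form of $M_+(\l)$ and the $2\times3$ block form of $M_-(\l)$---without mis-identifying the orthogonal projections onto $\wh\cH_b$, $\cH_2'$ and $\wh H$. The algebra is otherwise routine and parallels the proof of Proposition~\ref{pr4.1}.
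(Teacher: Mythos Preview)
Your proposal is correct and follows essentially the same approach as the paper: you lift the $\g$-fields to isomorphisms $Z_\pm(\l)$ onto $\cN_\l$, unwind the defining relations \eqref{2.38}--\eqref{2.40} in the block form \eqref{3.38}, read off the components of $\G_a$ and $\G_b$ on $Z_\pm(\l)$, and then for $\l\in\bC_+$ extend the $H_0'$-restriction of $Z_+(\l)$ by zero on $\cH_2'$ to obtain $v_0(\l)$---which is exactly what the paper does in \eqref{4.54.3}. The algebraic bookkeeping you flag as the main obstacle is precisely what the paper carries out in \eqref{4.42}--\eqref{4.54}.
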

\begin{proof}
1) By using the reasonings from Proposition \ref{pr4.1} to the $\g$-fields
\eqref{2.38} of the triplet $\Pi_-$ one can prove that for every $\l\in\bC_+\;
(\l\in\bC_-)$ there  exists an isomorphism $Z_+(\l):\cH_1\to \cN_\l$ (resp.
$Z_-(\l):\cH_0\to \cN_\l$) such that \eqref{4.11} holds and the relations
\begin{gather}
P_{\cH_1}\G_0' Z_+(\l)=I_{\cH_1}, \qquad (\G_1'-i P_{\cH_2'}\G_0')Z_+(\l)=
M_+(\l), \quad \l\in\bC_+,\label{4.40}\\
\G_0' Z_-(\l)=I_{\cH_0}, \qquad \G_1' Z_-(\l)=M_-(\l), \quad
\l\in\bC_-.\label{4.41}
\end{gather}
are valid. In view of \eqref{3.38} the equalities \eqref{4.40}  can be
represented as
\begin{gather}
\begin{pmatrix}-\G_{1a}+i (P_{\wh \cH_b}\wh\G_a- \wh\G_b) \cr \G_{0b}
\end{pmatrix} Z_+(\l)=\begin{pmatrix} I_{H_0'} & 0 \cr 0 & I_{\cH_b}
\end{pmatrix}, \;\; \l\in\bC_+ \label{4.42}\\
\begin{pmatrix}\G_{0a}+\tfrac 1 2  (P_{\wh \cH_b}\wh\G_a+ \wh\G_b)\cr
P_{\cH_2'}\wh\G_a \cr -\G_{1b}\end{pmatrix} Z_+(\l)=\begin{pmatrix} M_1(\l )&
M_{2}(\l)\cr N_{1+}(\l) & N_{2+}(\l) \cr M_{3}(\l) & M_{4}(\l)
\end{pmatrix}, \;\; \l\in\bC_+ \label{4.43}
\end{gather}
Therefore for all  $\l\in\bC_+$ one has
\begin{gather}
\G_{1a}Z_+(\l)=(-P_H\,:\,0), \qquad \tfrac 1 2 (P_{\wh\cH_b}\wh\G_a -\wh\G_b)
Z_+(\l)= (-\tfrac i 2 P_{\wh\cH_b}\,:\,0),\label{4.44}\\
\G_{0a}Z_+(\l)= (P_H M_1(\l)\,:\, P_H M_2(\l)),\label{4.45}\\
\tfrac 1 2 (P_{\wh\cH_b}\wh\G_a +\wh\G_b) Z_+(\l)=(P_{\wh\cH_b}M_1(\l):
P_{\wh\cH_b}M_2(\l)),\label{4.46}\\
 \quad P_{\cH_2'}\wh\G_a Z_+(\l)= (N_{1+}(\l)
:N_{2+}(\l)).\label{4.46a}
\end{gather}
Moreover, summing up the second equality in \eqref{4.44} with the equalities
\eqref{4.45}, \eqref{4.46} and \eqref{4.46a} one gets
\begin {equation}\label{4.47}
(\G_{0a}+\wh\G_a)Z_+(\l)=(M_1(\l)+N_{1+}(\l) - \tfrac i 2 P_{\wh\cH_b}\,:
\,M_2(\l)+N_{2+}(\l) ):H_0'\oplus\cH_b\to H_0.
\end{equation}
Next, by using \eqref{3.38} we may rewrite the equalities \eqref{4.41} as
\begin{gather}
\begin{pmatrix}-\G_{1a}+i (P_{\wh \cH_b}\wh\G_a- \wh\G_b)\cr i P_{\cH_2'}
\wh\G_a \cr \G_{0b}\end{pmatrix} Z_-(\l)=\begin{pmatrix} I_{H_0'} & 0 & 0 \cr 0
& I_{\cH_2'} & 0 \cr 0 & 0 & I_{\cH_b}\end{pmatrix}, \;\; \l\in\bC_-
\label{4.48}\\
\begin{pmatrix}\G_{0a}+\tfrac 1 2  (P_{\wh \cH_b}\wh\G_a+ \wh\G_b) \cr
-\G_{1b}\end{pmatrix} Z_-(\l)=\begin{pmatrix} M_1(\l )& N_{1-}(\l) & M_{2}(\l)
\cr M_{3}(\l) & N_{2-}(\l) & M_{4}(\l)\end{pmatrix}, \;\; \l\in\bC_-.
\label{4.49}
\end{gather}
In view of \eqref{3.37a} and \eqref{3.36} one has
\begin {equation}\label{4.50}
\cH_0=H_0\oplus \cH_b.
\end{equation}
Let $T_-(\l)\in [H_0,H_0']$ be the operator given by the block representation
\begin {equation}\label{4.51}
T_-(\l)=(M_1(\l)\,:\, N_{1-}(\l)):H_0'\oplus\cH_2'\to H_0', \quad \l\in\bC_-.
\end{equation}
It follows from \eqref{4.48} and \eqref{4.49} that in the decomposition
\eqref{4.50} of $\cH_0$ the following equalities hold for all $\l\in\bC_-$:
\begin{gather}
\G_{1a}Z_-(\l)=(-P_H\,:\,0), \qquad \tfrac 1 2 (P_{\wh\cH_b}\wh\G_a -\wh\G_b)
Z_-(\l)= (-\tfrac i 2 P_{\wh\cH_b}\,:\,0),\label{4.52.1}\\
P_{\cH_2'}\wh\G_a Z_-(\l)= (-i P_{\cH_2'}\,:\, 0), \qquad
\G_{0a}Z_-(\l)= (P_H T_-(\l)\,:\, P_H M_2(\l)),\label{4.52.2}\\
\tfrac 1 2 (P_{\wh\cH_b}\wh\G_a +\wh\G_b) Z_-(\l)=(P_{\wh\cH_b}T_-(\l):
P_{\wh\cH_b}M_2(\l)),\label{4.52.3}
\end{gather}
Multiplying the second equality in \eqref{4.52.1} by $2$ and summing up with
the first equality in \eqref{4.52.2} we obtain
\begin {equation}\label{4.53}
(\wh\G_a-\wh \G_b)Z_-(\l)= -i (P_{\wh H}\,:\, 0), \quad \l\in\bC_-.
\end{equation}
Moreover, summing up the second equality in \eqref{4.52.1} with the equalities
\eqref{4.52.2} and \eqref{4.52.3} one gets
\begin {equation}\label{4.54}
(\G_{0a}+\wh\G_a)Z_-(\l)= (T_-(\l) - \tfrac i 2 P_{\wh\cH_b} - i
P_{\cH_2'}\;:\; M_2(\l) ),\quad \l\in\bC_-.
\end{equation}
 Assume now that
\begin {gather}
Z_+(\l)=(r(\l)\,: \,u(\l)):H_0'\oplus\cH_b\to\cN_\l,\;\;\;\l\in\bC_+
\label{4.54.1}\\
 Z_-(\l)=(v_0(\l):u(\l)):H_0\oplus \cH_b\to
\cN_\l,\;\;\;\l\in\bC_-\label{4.54.2}
\end{gather}
are the block representations of $Z_\pm (\l)$ (in \eqref{4.54.2} we make use of
the decomposition \eqref{4.50} of $\cH_0$). Moreover,  let
\begin {equation}\label{4.54.3}
v_0(\l):= (r(\l)\;:\; 0):H_0'\oplus\cH_2'\to \cN_\l,\;\;\;\l\in\bC_+.
\end{equation}
Then the equalities \eqref{4.54.2} and \eqref{4.54.3} define the linear mapping
$v_0(\l):H_0\to\cN_\l$, while \eqref{4.54.1} and \eqref{4.54.2} give the
mapping $u(\l):\cH_b\to \cN_\l$. Next we show that the operator solutions
$v_0(\cd,\l)\in \lo{H_0}$ and  $u(\cd,\l)\in \lo{\cH_b} $   of Eq. \eqref{3.2}
corresponding to $v_0(\l)$ and $u(\l)$ in accordance with Lemma \ref{lem3.2}
have the desired properties.

Combining \eqref{4.54.1}--\eqref{4.54.3} with  \eqref{4.44}, the first equality
in  \eqref{4.52.1} and \eqref{4.53} we obtain \eqref{4.3} and the relations
\eqref{4.35a}, \eqref{4.38} and \eqref{4.39a}. Moreover, in view of
\eqref{4.34}, \eqref{4.35} and \eqref{4.51} one has
\begin {gather*}
m_0(\l)-\tfrac i 2 P_{\wh H}=(M_1(\l)+N_{1+}(\l))P_{H_0'}+\tfrac i 2
P_{\cH_2'}-\tfrac i 2 (P_{\wh\cH_b}+P_{\cH_2'})=\\
(M_1(\l)+N_{1+}(\l)- \tfrac i 2 P_{\wh\cH_b})P_{H_0'}, \quad \l\in\bC_+,\\
m_0(\l)-\tfrac i 2 P_{\wh H}=T_-(\l)-\tfrac i 2 P_{\cH_2'}-\tfrac i 2
(P_{\wh\cH_b}+P_{\cH_2'})= T_-(\l)-\tfrac i 2 P_{\wh\cH_b}-i P_{\cH_2'}, \quad
\l\in\bC_-.
\end{gather*}
Combining these equalities with \eqref{4.47}, \eqref{4.54} and taking the block
representations \eqref{4.54.1}--\eqref{4.54.3} into account one gets the
equalities \eqref{4.3a} and \eqref{4.39}. Finally, the relations \eqref{4.36},
\eqref{4.37a} and \eqref{4.39b} are implied by the block representations
\eqref{4.54.1}-\eqref{4.54.3} and the equalities \eqref{4.42}, \eqref{4.43} and
\eqref{4.48}, \eqref{4.49}.

2) The equalities \eqref{4.39c} are immediate from \eqref{4.11} and the block
representations  \eqref{4.54.1}, \eqref{4.54.2}.
\end{proof}
\begin{theorem}\label{th4.4}
Let the assumptions of Proposition \ref{pr4.3} be fulfilled and let
$\tau=\tau(\l)\in \wt R(\cH_b)$ be a Nevanlinna  operator pair \eqref{2.19}.
Then:

1) The statement 1) of Theorem \ref{th4.2} holds with the boundary condition
\eqref{4.27} and the following boundary conditions instead of
\eqref{4.27a}--\eqref{4.28a}:
\begin {gather}
i(P_{\wh\cH_b}\wh\G_a- \wh\G_b)v_\tau(\l)=P_{\wh \cH_b},\quad \l\in\bC_+;
\qquad i(\wh\G_a- \wh\G_b)v_\tau(\l)=P_{\wh H},\quad\; \l\in\bC_-\label {4.56}\\
C_0(\l)\G_{0b}v_\tau(\l)+C_1(\l)\G_{1b}v_\tau(\l)=0, \;\;\;\;\l\in\CR \label
{4.57}
\end{gather}
(here $P_{\wh\cH_b}\; (P_{\wh H})$ is the orthoprojector in $H_0$ onto
$\wh\cH_b$ (resp. $\wh H$) in accordance with the decomposition \eqref{3.36});

 2) the operator function  $v_\tau(\cd,\l)$ can be represented as
\begin {gather}
v_\tau(t,\l)=v_0(t,\l)-u(t,\l)(\tau(\l)+M_{4}(\l))^{-1}M_3(\l)P_{H_0'}, \quad
\l\in\bC_+,\label{4.58}\\
v_\tau(t,\l)=v_0(t,\l)-u(t,\l)(\tau(\l)+M_{4}(\l))^{-1}S_-(\l), \quad
\l\in\bC_-,\label{4.58.1}
\end{gather}
where $v_0(t,\l)$ and $u(t,\l)$ are defined in Proposition \ref{pr4.3} and
\begin {equation}\label{4.58a}
S_-(\l):=(M_3(\l)\,:\,N_{2-}(\l)):H_0'\oplus \cH_2'\to \cH_b,\;\;\;\;
\l\in\bC_-.
\end{equation}
\end{theorem}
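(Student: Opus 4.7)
\medskip
\noindent\textbf{Plan.} The plan is to follow the three-step strategy of the proof of Theorem~\ref{th4.2}: (i) show that $\tau(\l)+M_4(\l)$ is boundedly invertible on $\cH_b$ so that the formulas \eqref{4.58} and \eqref{4.58.1} are meaningful; (ii) verify the boundary conditions \eqref{4.27}, \eqref{4.56}, \eqref{4.57} by direct substitution from Proposition~\ref{pr4.3}; (iii) deduce uniqueness from Theorem~\ref{th4.2a}.

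For step (i), I would use Proposition~\ref{pr3.7}\,2), according to which $\dot\Pi_-=\{(\cH_2'\oplus\cH_b)\oplus\cH_b,\dot\G_0,\dot\G_1\}$ is a boundary triplet for $T^*$, together with Proposition~\ref{pr2.10a}, which identifies the Weyl functions of $\dot\Pi_-$ as certain projections of $M_\pm(\cd)$. Tracing these formulas, the $\cH_b\to\cH_b$ block $M_4(\cd)$ of $M_\pm(\cd)$ retains the Nevanlinna property in $[\cH_b]$, and hence for every Nevanlinna pair $\tau(\l)=\{(C_0(\l),C_1(\l));\cH_b\}\in\wt R(\cH_b)$ one obtains $0\in\rho(C_0(\l)-C_1(\l)M_4(\l))$ (cf.\ \cite{DM00,Mog06.2}). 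It follows that $(\tau(\l)+M_4(\l))^{-1}=-(C_0(\l)-C_1(\l)M_4(\l))^{-1}C_1(\l)\in[\cH_b]$ for all $\l\in\CR$, and hence \eqref{4.58}--\eqref{4.58.1} correctly define an operator solution $v_\tau(\cd,\l)\in\lo{H_0}$ of Eq.~\eqref{3.2}.

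For step (ii), I would set $g(\l):=(\tau(\l)+M_4(\l))^{-1}w(\l)$ with $w(\l)=M_3(\l)P_{H_0'}$ for $\l\in\bC_+$ and $w(\l)=S_-(\l)$ for $\l\in\bC_-$, so that by the very definition of the inverse of a linear relation $\{g(\l)h_0,\,w(\l)h_0-M_4(\l)g(\l)h_0\}\in\tau(\l)$ for every $h_0\in H_0$. Then \eqref{4.27} is immediate from \eqref{4.3} and \eqref{4.38}; \eqref{4.56} follows by applying $i(P_{\wh\cH_b}\wh\G_a-\wh\G_b)$ (for $\l\in\bC_+$) or $i(\wh\G_a-\wh\G_b)$ (for $\l\in\bC_-$) to the formulas for $v_\tau$ and invoking \eqref{4.35a} together with \eqref{4.39a}. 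For \eqref{4.57}, the relations \eqref{4.36}, \eqref{4.37a}, \eqref{4.39b} give $\G_{0b}v_\tau(\l)h_0=-g(\l)h_0$ and $\G_{1b}v_\tau(\l)h_0=-\bigl(w(\l)h_0-M_4(\l)g(\l)h_0\bigr)$, whence linearity of $\tau(\l)$ yields $\{\G_{0b}v_\tau(\l)h_0,\G_{1b}v_\tau(\l)h_0\}\in\tau(\l)$, which is equivalent to \eqref{4.57} by the definition \eqref{2.1}.

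For step (iii), if $v_\tau^{(1)}$ and $v_\tau^{(2)}$ were two such solutions, then for each $h_0\in H_0$ the difference $y(\cd,\l)=(v_\tau^{(1)}(\cd,\l)-v_\tau^{(2)}(\cd,\l))h_0$ would solve the homogeneous boundary value problem \eqref{4.32.1}--\eqref{4.32.3} with $f=0$; Theorem~\ref{th4.2a} forces $y\equiv 0$, so $v_\tau^{(1)}=v_\tau^{(2)}$. The hardest point will be step~(i): unlike in Theorem~\ref{th4.2}, where $M_{4\pm}$ were already Weyl functions of $\dot\Pi_+$ and the invertibility of $\tau_\pm+M_{4\pm}$ was inherited from \cite{Mog06.2}, here the intermediate triplet $\dot\Pi_-$ still has unequal Hilbert spaces $\cH_2'\oplus\cH_b$ and $\cH_b$, so establishing that $M_4$ alone behaves like a Nevanlinna function on $[\cH_b]$ requires a careful further reduction via Proposition~\ref{pr2.10a} combined with the block structure \eqref{4.34}--\eqref{4.35}.
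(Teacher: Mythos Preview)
Your proposal is correct and follows essentially the same route as the paper: define $v_\tau$ by the formulas \eqref{4.58}--\eqref{4.58.1}, verify \eqref{4.27}, \eqref{4.56} from \eqref{4.3}, \eqref{4.35a}, \eqref{4.38}, \eqref{4.39a}, check \eqref{4.57} by computing $\G_{0b}v_\tau(\l)$ and $\G_{1b}v_\tau(\l)$ from \eqref{4.36}, \eqref{4.37a}, \eqref{4.39b} and observing that the resulting pair lies in $\tau(\l)$, and finish with uniqueness via Theorem~\ref{th4.2a}. The paper actually omits any discussion of your step~(i) and simply writes down the formulas; your reduction via Propositions~\ref{pr3.7} and~\ref{pr2.10a} to identify $M_4(\cd)$ with the $\cH_b$-block of the Weyl function of $\dot\Pi_-$ (hence a Nevanlinna $[\cH_b]$-valued function, so that $0\in\rho(C_0(\l)-C_1(\l)M_4(\l))$ by the standard argument) is the right way to close that gap.
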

\begin{proof}
Let us show that the solution $v_\tau(\cd,\l)$ defined in \eqref{4.58}
satisfies the boundary conditions  \eqref{4.27},  \eqref{4.56} and
\eqref{4.57}. Combining \eqref{4.58} with \eqref{4.3}, \eqref{4.35a} and
\eqref{4.38}, \eqref{4.39a} one obtains the relations \eqref{4.27} and
\eqref{4.56}. Similarly, the relations \eqref{4.36}, \eqref{4.37a} and
\eqref{4.39b} give
\begin {gather*}
\G_{0b}v_\tau(\l)=-(\tau(\l)+M_{4}(\l))^{-1}M_3(\l)P_{\cH_0'}, \;\;\;\l\in\bC_+,\\
\G_{1b}v_\tau(\l)=(-I+M_{4}(\l)(\tau(\l)+M_{4}(\l))^{-1})M_3(\l)P_{\cH_0'},
\;\;\; \l\in\bC_+,\\
\G_{0b}v_\tau(\l)=-(\tau(\l)+M_{4}(\l))^{-1}S_-(\l),\;\;\;\l\in\bC_-,\\
\G_{1b}v_\tau(\l)=(-I+M_{4}(\l)(\tau(\l)+M_{4}(\l))^{-1})S_-(\l),
\;\;\;\l\in\bC_-.
\end{gather*}
 Hence $\{\G_{0b} v_\tau(\l)h_0,
\G_{1b}v_\tau(\l)h_0\}\in\tau(\l), \; h_0\in H_0, \; \l\in\CR,$ which yields
\eqref{4.57}. Finally, by using Theorem \ref{th4.2a} one proves uniqueness of
$v_\tau (\cd,\l)$ in the same way as in Theorem \ref{th4.2}.
\end{proof}
\subsection{Case 3}
Application of Corollary \ref{cor2.12a} to the boundary triplet \eqref{3.57a}
gives the following theorem.
\begin{theorem}\label{th4.5}
Let in Case 3 $\Pi_-=\bta$ be a decomposing boundary triplet \eqref{3.40},
\eqref{3.42} for $\Tma$, let $T$ be a symmetric relation \eqref{3.56} and let
$\pair\in\RM$ be a collection of holomorphic operator pairs  \eqref{2.2}. Then
for every $f\in\lI$ the boundary value problem
\begin{gather}
J y'-B(t)y=\l \D(t)y+\D(t)f(t), \quad t\in\cI,\label{4.59}\\
\G_{1a}y=0, \quad C_0(\l)\wt\G_{0b}y+C_1(\l)\G_{1b}y=0, \quad \l\in\bC_+,
 \label{4.60}\\
\G_{1a}y=0, \quad \wh \G_a y= 0,\quad D_0(\l)\wt\G_{0b}y +D_1(\l)\G_{1b}y=0,
\quad \l\in\bC_-. \label{4.61}
\end{gather}
 has a unique solution $y(t,\l)=y_f(t,\l) $ and the equality \eqref{4.0.4}
defines a generalized resolvent $R(\l)=:R_\tau(\l)$ of $T$. If in addition $\wh
H=\{0\} $, then for each generalized resolvent $R(\l)$ of $T$ there exists a
unique $\tau\in\RM$ such that $R(\l)=R_\tau(\l)$.
\end{theorem}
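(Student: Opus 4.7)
The proof will follow the same template as Theorem \ref{th4.2a}, namely, reduce Theorem \ref{th4.5} to Corollary \ref{cor2.12a} applied to the boundary triplet $\dot\Pi_-=\{(\wh H\oplus \wt\cH_b)\oplus \cH_b, \dot\G_0,\dot\G_1\}$ for $T^*$ provided by Proposition \ref{pr3.8}. With the identifications $\cH'=\wh H$, $\wt\cH_1=\wt\cH_b$ and $\cH_1=\cH_b$, the parameter class $\wt R_{-1}(\wt\cH_1,\cH_1)$ in Corollary \ref{cor2.12a} is precisely $\RM$, so any $\tau\in\RM$ is admissible.

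First I translate the abstract problem $\{\wt y,\l\wt y+\wt f\}\in T^*$ into its concrete form. The inclusion $\{\wt y,\l\wt y+\wt f\}\in T^*\subset \Tma$ combined with \eqref{3.57} automatically yields $\G_{1a} y=0$ and, via the standard identification $\{\wt y,\wt g\}\in\Tma\leftrightarrow y\in AC(\cI;\bH)\cap\lI$, the differential equation \eqref{4.59}. From the explicit formulas \eqref{3.57a} one reads off
\begin{equation*}
P_{\wt\cH_b}\dot\G_0\{\wt y,\wt g\}=\wt\G_{0b}y,\qquad P_{\wh H}\dot\G_0\{\wt y,\wt g\}=i\wh\G_a y,\qquad \dot\G_1\{\wt y,\wt g\}=-\G_{1b}y.
\end{equation*}
Substituting these into the boundary conditions of Corollary \ref{cor2.12a} (and absorbing the sign in $\dot\G_1$) gives exactly \eqref{4.60} in $\bC_+$ and \eqref{4.61} in $\bC_-$; in particular, the requirement $P_{\cH'}\dot\G_0\{\wt y,\wt g\}=0$ in $\bC_-$ becomes $\wh\G_a y=0$. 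Hence existence and uniqueness of the abstract solution supplied by Corollary \ref{cor2.12a} translates to existence and uniqueness of $y_f(\cd,\l)$, and the equality $R_\tau(\l)\wt f=\wt y$ becomes \eqref{4.0.4}; by Corollary \ref{cor2.12a} the map $R_\tau(\l)$ is a generalized resolvent of $T$.

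For the converse under the additional assumption $\wh H=\{0\}$, the auxiliary space $\cH'$ in Corollary \ref{cor2.12a} collapses, so $\dot\Pi_-$ reduces to an (ordinary) boundary triplet $\{\wt\cH_b\oplus\cH_b,\dot\G_0,\dot\G_1\}$ of type $\Pi_-$ in the sense of Definition \ref{def2.5}. Theorem \ref{th2.12} then applies directly to $\dot\Pi_-$ and delivers the bijective correspondence $\tau\leftrightarrow R_\tau$ between $\RM$ and all generalized resolvents of $T$, as claimed.

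The main delicate point is the interplay between the extra $\wh H$-summand of $\dot\G_0$ and the asymmetric structure of Corollary \ref{cor2.12a}: when $\wh H\neq\{0\}$ the constraint $\wh\G_a y=0$ (present in \eqref{4.61} but absent from \eqref{4.60}) forces the parameter family $\RM$ to cover only a proper subset of the generalized resolvents of $T$, which is precisely why the converse statement is available only for $\wh H=\{0\}$. Matching this mixed upper/lower half-plane structure with the concrete conditions on $\wt\G_{0b}$, $\G_{1b}$ and $\wh\G_a$ is the one step requiring genuine care; everything else is a direct transcription from the abstract framework of boundary triplets, parallel to the arguments already carried out in Theorems \ref{th4.0.1} and \ref{th4.2a}.
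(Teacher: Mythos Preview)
Your proposal is correct and follows exactly the approach the paper indicates: the paper's entire proof is the one-line remark ``Application of Corollary \ref{cor2.12a} to the boundary triplet \eqref{3.57a} gives the following theorem,'' and you have carried out precisely that application with the identifications $\cH'=\wh H$, $\wt\cH_1=\wt\cH_b$, $\cH_1=\cH_b$. Your handling of the converse under $\wh H=\{0\}$ via direct appeal to Theorem \ref{th2.12} is also the natural (and only reasonable) way to obtain it, since Corollary \ref{cor2.12a} itself only asserts the direct implication.
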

\begin{remark}\label{rem4.5a}
Let in Theorem \ref{th4.5} $\tau_0=\{\tau_+,\tau_-\}\in \RM$ be defined by
\eqref{2.2} with
\begin {equation}\label{4.61a}
C_0(\l) \equiv P_{\cH_b}(\in [\wt\cH_b,\cH_b]), \quad C_1(\l)\equiv 0, \;\;\;
  D_0(\l)\equiv I_{\wt\cH_b},\quad D_1(\l)\equiv 0
\end{equation}
and let $A_0$ be the symmetric extension \eqref{3.58}. Then
$R_0(\l):=R_{\tau_0}(\l)$ is if the form \eqref{4.33.0}.
\end{remark}
\begin{proposition}\label{pr4.6}
Let in Case 3   $\Pi_-=\bta$ be a decomposing boundary triplet \eqref{3.40},
\eqref{3.42} for $\Tma$, let $\g_\pm(\cd)$ be the $\g$-fields  and let
\begin{gather}
M_+(\l)=\begin{pmatrix} M_1(\l )& M_{2+}(\l) \cr N_{1+}(\l) &  N_{2+}(\l) \cr
M_{3+}(\l) & M_{4+}(\l)\end{pmatrix}: H\oplus\cH_b\to H\oplus\wh
H\oplus\wt\cH_b, \;\;\;\l\in\bC_+\label{4.61b}\\
M_-(\l)=\begin{pmatrix} M_1(\l )& N_{1-}(\l) & M_{2-}(\l) \cr M_{3-}(\l) &
N_{2-}(\l) & M_{4-}(\l)
\end{pmatrix}: H\oplus \wh H\oplus \wt\cH_b\to H\oplus \cH_b,
\;\;\;\l\in\bC_-\label{4.61c}
\end{gather}
be the block representations of the corresponding Weyl functions. Assume also
that $m_0(\cd):\CR\to [H_0]$ is the operator function defined by
\begin{gather}
m_0(\l)=\begin{pmatrix}M_1(\l) & 0 \cr N_{1+}(\l) & \tfrac i 2
I_{\wh\cH}\end{pmatrix}:H\oplus\wh H\to H\oplus\wh H, \quad \l\in\bC_+\label{4.62}\\
m_0(\l)=\begin{pmatrix}M_1(\l) &  N_{1-}(\l) \cr  0   & -\tfrac i 2 I_{\wh H
}\end{pmatrix}:H\oplus\wh H\to H\oplus\wh H, \quad
\l\in\bC_-\label{4.63}.\end{gather}
 Then: 1) For every $\l\in\CR$ there exists an operator solution
$v_0(\cd,\l)\in\lo{H_0}$ of Eq. \eqref{3.2} such that \eqref{4.3} and
\eqref{4.3a} hold and
\begin{gather}
i\wh\G_a v_0(\l)=P_{\wh H}, \quad \l\in\bC_-,\label{4.63a}\\
 \wt\G_{0b} v_0(\l)=i P_{\wh\cH_b}M_{3+}(\l)P_H(\in [H_0,\wt\cH_b]),
\;\;\;\;
\G_{1b}v_0(\l)= -P_{\cH_b}M_{3+}(\l)P_H, \quad\l\in\bC_+,\label{4.64}\\
\wt\G_{0b} v_0(\l)=0, \quad \G_{1b}v_0(\l)=(-M_{3-}(\l): -N_{2-}(\l)),\quad
\l\in\bC_-.\label{4.65}
\end{gather}

2) For every $\l\in\bC_+ \; (\l\in\bC_-)$ there exists a solution
$u_+(\cd,\l)\in\lo{\cH_b}$ (resp. $u_-(\cd,\l)\in\lo{\wt\cH_b}$) of Eq.
\eqref{3.2} such that
\begin{gather}
\G_{1a} u_\pm(\l)=0, \quad \l\in\bC_\pm, \label{4.66}\\
(\G_{0a}+\wh\G_a)u_+(\l)=M_{2+}(\l)+N_{2+}(\l), \quad \l\in\bC_+,\label{4.66.1}\\
\G_{0a}u_-(\l)=M_{2-}(\l), \qquad \wh\G_a u_-(\l)=0, \quad\l\in\bC_-.\label{4.66.2}\\
\wt \G_{0b}u_+(\l)=I_{\cH_b}+iP_{\wh\cH_b}M_{4+}(\l), \qquad
\G_{1b}u_+(\l)=-P_{\cH_b}M_{4+}(\l), \quad\l\in\bC_+,\label{4.66a}\\
\wt \G_{0b}u_-(\l)=I_{\wt\cH_b}, \qquad \G_{1b} u_-(\l)=-M_{4-}(\l), \quad\;
\l\in\bC_-. \label{4.67}
\end{gather}

3) The following equalities hold
\begin{gather}\label{4.67a}
\g_+(\l)\up \cH_b=\pi u_+(\l), \;\; \;\l\in\bC_+; \qquad \g_-(\l)\up \wt\cH_b
=\pi u_-(\l), \;\; \;\l\in\bC_-.
\end{gather}
\end{proposition}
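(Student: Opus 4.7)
The proposal is to mimic, almost verbatim, the strategy used for Proposition \ref{pr4.1} in Case 1 and Proposition \ref{pr4.3} in Case 2. First I would invoke Proposition \ref{pr2.9}: since the quotient map $\pi$ restricts to an isomorphism $\cN_\l\to\gN_\l(\Tmi)$, the $\g$-fields in \eqref{2.38} factor uniquely through isomorphisms $Z_+(\l):\cH_1\to\cN_\l$ ($\l\in\bC_+$) and $Z_-(\l):\cH_0\to\cN_\l$ ($\l\in\bC_-$) with $\g_\pm(\l)=\pi Z_\pm(\l)$. Using \eqref{2.38}--\eqref{2.40} together with the identity $\G_j\{\pi y,\l\pi y\}=\G_j'y$ for $y\in\cN_\l$, one obtains the defining relations $P_{\cH_1}\G_0'Z_+(\l)=I_{\cH_1}$, $(\G_1'-iP_{\cH_2}\G_0')Z_+(\l)=M_+(\l)$ for $\l\in\bC_+$ and $\G_0'Z_-(\l)=I_{\cH_0}$, $\G_1'Z_-(\l)=M_-(\l)$ for $\l\in\bC_-$, where $\cH_2=\wh H\oplus\wh\cH_b$ and $P_{\cH_2}=P_{\wh H}+P_{\wh\cH_b}$.

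Next I would substitute the explicit block form \eqref{3.40} of $\G_0',\G_1'$ and the block decompositions \eqref{4.61b}, \eqref{4.61c} of $M_\pm(\l)$ into these matrix identities and read off, component-by-component, the action of each of the boundary operators $\G_{1a},\G_{0a},\wh\G_a,\wt\G_{0b},\G_{1b}$ on $Z_\pm(\l)$ (as was done in \eqref{4.18}--\eqref{4.25} and \eqref{4.44}--\eqref{4.54}). I would then introduce the block representations
\begin{equation*}
Z_+(\l)=(z^{(1)}_+(\l):u_+(\l)):H\oplus\cH_b\to\cN_\l,\quad
Z_-(\l)=(z^{(1)}_-(\l):z^{(2)}_-(\l):u_-(\l)):H\oplus\wh H\oplus\wt\cH_b\to\cN_\l,
\end{equation*}
and define $v_0(\l):H_0\to\cN_\l$ by $v_0(\l)=(z^{(1)}_+(\l):0)$ for $\l\in\bC_+$ (extending by zero on the $\wh H$-component, as was done in \eqref{4.54.3}) and $v_0(\l)=(z^{(1)}_-(\l):z^{(2)}_-(\l))$ for $\l\in\bC_-$. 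Lemma \ref{lem3.2} then yields unique operator $\cL_\D^2$-solutions $v_0(\cd,\l)\in\lo{H_0}$, $u_+(\cd,\l)\in\lo{\cH_b}$ and $u_-(\cd,\l)\in\lo{\wt\cH_b}$ corresponding to these linear maps.

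The remaining boundary identities \eqref{4.3}, \eqref{4.3a}, \eqref{4.63a}--\eqref{4.67} follow by direct verification from the component-wise relations of the preceding step, using \eqref{4.62} and \eqref{4.63} to identify $m_0(\l)-\tfrac{i}{2}P_{\wh H}$ with the appropriate combination of $M_1(\l),N_{1\pm}(\l)$, precisely as in the passages following \eqref{4.54.3} in Case 2. Finally \eqref{4.67a} is immediate from $\g_\pm(\l)=\pi Z_\pm(\l)$ together with the block decompositions of $Z_\pm(\l)$.

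The only genuine bookkeeping obstacle is tracking the projections onto the $\wh\cH_b$-summand of $\wt\cH_b$: since $\wt\G_{0b}=\G_{0b}+\wh\G_b$ and $\cH_2\subset H_0\oplus\wt\cH_b$ contains $\wh\cH_b$, the term $-iP_{\cH_2}\G_0'$ appearing in the definition of $M_+(\l)$ contributes exactly the summands $+iP_{\wh\cH_b}M_{3+}(\l)P_H$ and $+iP_{\wh\cH_b}M_{4+}(\l)$ that distinguish \eqref{4.64} and \eqref{4.66a} from their Case-1 counterparts. Once this identification is done carefully the rest of the verification is a straightforward matching of blocks.
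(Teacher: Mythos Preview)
Your proposal is correct and follows essentially the same approach as the paper's own proof: both construct the isomorphisms $Z_\pm(\l)$ via the $\g$-fields of the $\Pi_-$ triplet, translate the defining relations \eqref{2.38}--\eqref{2.40} into the component identities \eqref{4.40}--\eqref{4.41}, insert the block form \eqref{3.40} of $\G_0',\G_1'$, and then read off the boundary data from the resulting block decompositions (the paper's \eqref{4.68}--\eqref{4.78}). Your identification of the role of the $-iP_{\cH_2}\G_0'$ term in producing the $iP_{\wh\cH_b}M_{3+}(\l)$ and $iP_{\wh\cH_b}M_{4+}(\l)$ corrections in \eqref{4.64} and \eqref{4.66a} is exactly the point the paper handles through \eqref{4.72}--\eqref{4.75}.
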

\begin{proof}
As in Proposition \ref{pr4.1} one proves the existence of isomorphisms
$Z_+(\l):\cH_1\to \cN_\l\; (\l\in\bC_+)$ and   $Z_-(\l):\cH_0\to \cN_\l
\;(\l\in\bC_-)$ satisfying  \eqref{4.40} and \eqref{4.41} or, equivalently, the
equalities
\begin{gather}
\begin{pmatrix}-\G_{1a} \cr \G_{0b}\end{pmatrix} Z_+(\l)=
\begin{pmatrix}
I_{H} & 0 \cr 0 & I_{\cH_b}\end{pmatrix},\; \;\;\begin{pmatrix}\G_{0a} \cr
\wh\G_a \cr  -\G_{1b}-i\wh \G_b\end{pmatrix}
 Z_+(\l)=\begin{pmatrix} M_1(\l )&
M_{2+}(\l)\cr N_{1+}(\l) & N_{2+}(\l) \cr M_{3+}(\l) & M_{4+}(\l)
\end{pmatrix}, \;\; \l\in\bC_+,\label{4.68}\\
\begin{pmatrix}-\G_{1a}\cr i \wh\G_a \cr \wt\G_{0b}\end{pmatrix} Z_-(\l)=
\begin{pmatrix} I_{H} & 0 & 0 \cr
0 & I_{\wh H} & 0 \cr 0 & 0 & I_{\wt\cH_b}\end{pmatrix}, \;\;\;\; \l\in\bC_-
\label{4.69}\\
\begin{pmatrix}\G_{0a}\cr -\G_{1b}\end{pmatrix} Z_-(\l)=\begin{pmatrix}
M_1(\l )& N_{1-}(\l) & M_{2-}(\l) \cr M_{3-}(\l) & N_{2-}(\l) &
M_{4-}(\l)\end{pmatrix}, \;\;\;\; \l\in\bC_-. \label{4.69a}
\end{gather}
It follows from \eqref{4.68}--\eqref{4.69a} that
\begin{gather}
\G_{1a}Z_+(\l)=(-I_H\; : \; 0 ), \;\;\;\l\in\bC_+; \qquad
\G_{1a}Z_-(\l)=(-I_H\;
: \; 0 \; : \; 0 ), \;\;\;\l\in\bC_-,\label{4.70}\\
(\G_{0a}+\wh\G_a)Z_+(\l)=\begin{pmatrix}\G_{0a} \cr \wh\G_a\end{pmatrix}
Z_+(\l)=\begin{pmatrix} M_1(\l)& M_{2+}(\l)  \cr  N_{1+}(\l) & N_{2+}(\l
)\end{pmatrix}:H \oplus \cH_b\to H\oplus \wh H,\label{4.71}\\
(\G_{0a}+\wh\G_a)Z_-(\l)=\begin{pmatrix}\G_{0a} \cr \wh\G_a\end{pmatrix}
Z_-(\l)=\qquad\qquad\qquad\qquad\qquad\qquad\qquad\qquad\qquad
\label{4.71a}\\
\qquad\qquad\qquad\qquad\qquad\qquad\qquad\begin{pmatrix} M_1(\l)& N_{1-}(\l) &
M_{2-}(\l) \cr 0& -iI_{\wh H}
& 0\end{pmatrix}:H \oplus\wh H\oplus \wt\cH_b\to H\oplus \wh H,\nonumber\\
\G_{0b}Z_+(\l)=(0\,:\, I_{\cH_b}), \qquad \wh\G_b Z_+(\l)=(i P_{\wh\cH_b}
M_{3+}(\l):i P_{\wh\cH_b} M_{4+}(\l)), \;\;\;\l\in\bC_+,\label{4.72}\\
\G_{1b}Z_+(\l)=(-P_{\cH_b}M_{3+}(\l)\,:\, -P_{\cH_b}M_{4+}(\l)), \quad
\l\in\bC_+,\label{4.73}\\
 \wt\G_{0b}Z_-(\l)=(0\,:\, 0\,:\, I_{\wt\cH_b}),\quad
\G_{1b}Z_-(\l)=-(M_{3-}(\l)\,:\, N_{2-}(\l)\,:\, M_{4-}(\l)),
\;\;\l\in\bC_-,\label{4.74}
\end{gather}
and combining of \eqref{3.39b} with \eqref{4.72} gives
\begin {equation}\label{4.75}
\wt\G_{0b}Z_+(\l)=(i P_{\wh\cH_b} M_{3+}(\l):I_{\cH_b}+i P_{\wh\cH_b}
M_{4+}(\l)),\quad \l\in\bC_+.
\end{equation}
Assume that
\begin {gather}
Z_+(\l)=(r(\l)\,: \,u_+(\l)):H\oplus\cH_b\to\cN_\l,\;\;\;\l\in\bC_+,
\label{4.76}\\
 Z_-(\l)=(v_0(\l):u_-(\l)):H_0\oplus \wt\cH_b\to
\cN_\l,\;\;\;\l\in\bC_-,\label{4.77}
\end{gather}
are the block representations of $Z_\pm (\l)$ (see  \eqref{3.39c} ) and let
\begin {equation}\label{4.78}
v_0(\l):= (r(\l)\;:\; 0):H\oplus\wh H\to \cN_\l,\;\;\;\l\in\bC_+.
\end{equation}
Then the equalities \eqref{4.76}--\eqref{4.78} define
 the linear mappings $v_0(\l):H_0\to\cN_\l \; (\l\in\CR), \; u_+(\l):\cH_b\to
\cN_\l\; (\l\in\bC_+)$ and $u_-(\l):\wt\cH_b\to \cN_\l\; (\l\in\bC_-)$, which
according to Lemma \ref{lem3.2} generate the operator solutions $v_0(\cd,\l)\in
\lo{H_0}, \; u_+(\cd,\l)\in\lo{\cH_b}$ and $ u_-(\cd,\l)\in\lo{\wt\cH_b}$ of
Eq. \eqref{3.2}. Combining now \eqref{4.76}--\eqref{4.78}  with \eqref{4.70}--
\eqref{4.71a} and taking the equalities \eqref{4.62} and \eqref{4.63} into
account we arrive  at the relations \eqref{4.3}, \eqref{4.3a}, \eqref{4.63a}
and \eqref{4.66}--\eqref{4.66.2}. Moreover, the block representations
\eqref{4.76}--\eqref{4.78} and the equalities \eqref{4.73}--\eqref{4.75} lead
to \eqref{4.64}, \eqref{4.65} and \eqref{4.66a}, \eqref{4.67}.

Finally, \eqref{4.67a} is implied by \eqref{4.11} and  \eqref{4.76},
\eqref{4.77}.
\end{proof}
\begin{theorem}\label{th4.7}
Let the assumptions of Proposition \ref{pr4.6} be satisfied, let $\pair\in \RM$
be a collection of operator pairs \eqref{2.2} and let
\begin {equation}\label{4.83}
S_-(\l)=(M_{3-}(\l)\,:\, N_{2-}(\l)):H\oplus\wh H\to\cH_b, \;\;\;\l\in\bC_-.
\end{equation}

 Then: 1)the statement 1) of Theorem \ref{th4.2} holds with the boundary
condition \eqref{4.27} and the following boundary conditions in place of
\eqref{4.27a}--\eqref{4.28a}:
\begin {gather}
i \wh\G_a v_\tau(\l)=P_{\wh H},\quad \l\in\bC_-,\label {4.79}\\
C_0(\l)\wt\G_{0b}v_\tau(\l)+C_1(\l)\G_{1b}v_\tau(\l)=0,
\;\;\l\in\bC_+,\label {4.80} \\
D_0(\l)\wt\G_{0b}v_\tau(\l)+D_1(\l)\G_{1b}v_\tau(\l)=0, \;\;\l\in\bC_-.\label
{4.80a}
\end{gather}

2) the solution $v_\tau(\cd,\l)$ is of the form
\begin {gather}
v_\tau(t,\l)=v_0(t,\l)-u_+(t,\l)(\tau_-^*(\ov\l)+M_{4+}(\l))^{-1}M_{3+}(\l)P_H,
\quad
\l\in\bC_+ \label{4.81}\\
v_\tau(t,\l)=v_0(t,\l)-u_-(t,\l)(\tau_-(\l)+M_{4-}(\l))^{-1}S_-(\l), \quad
\l\in\bC_- \label{4.82}.
\end{gather}
\end{theorem}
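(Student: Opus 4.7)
The plan is to follow verbatim the pattern of the proofs of Theorems~\ref{th4.2} and \ref{th4.4}. First I would \emph{define} the operator function $v_\tau(\cd,\l)$ by the right-hand sides of \eqref{4.81} and \eqref{4.82}; then I would verify in turn: (a) that these right-hand sides make sense, (b) that the so-defined $v_\tau$ satisfies all of \eqref{4.27}, \eqref{4.79}, \eqref{4.80}, \eqref{4.80a}; (c) that any other $\lo{H_0}$-solution of \eqref{3.2} satisfying those boundary conditions must coincide with $v_\tau$.

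For (a), I would appeal to Proposition \ref{pr3.8}(2): the collection $\dot\Pi_-=\{(\wh H\oplus \wt\cH_b)\oplus \cH_b,\dot\G_0,\dot\G_1\}$ is a boundary triplet for $T^*$, and in view of Proposition \ref{pr2.10a} the blocks $M_{4+}(\cd)$ and $M_{4-}(\cd)$ of the Weyl functions \eqref{4.61b}, \eqref{4.61c} of $\Pi_-$ appear as (components of) the Weyl functions $\dot M_\pm(\cd)$ of $\dot\Pi_-$. Theorem \ref{th2.12} applied to $\dot\Pi_-$ (cf.\ the corresponding step in the proof of Theorem~\ref{th4.2}) yields $0\in\rho(\tau_-^*(\ov\l)+M_{4+}(\l))$ for $\l\in\bC_+$ and $0\in\rho(\tau_-(\l)+M_{4-}(\l))$ for $\l\in\bC_-$, so that \eqref{4.81}, \eqref{4.82} correctly define operator solutions $v_\tau(\cd,\l)\in\lo{H_0}$.

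For (b), the ``regular'' conditions \eqref{4.27} and \eqref{4.79} drop out immediately by combining \eqref{4.81} and \eqref{4.82} with \eqref{4.3}, \eqref{4.63a}, and \eqref{4.66}, \eqref{4.66.2}. The main technical step is to verify \eqref{4.80}, \eqref{4.80a}; this is the essential novelty here relative to Cases~1 and~2 and is, I expect, the main obstacle, because of the triangular interaction between the $\wh H$- and $\wt\cH_b$-components in Case~3 (reflected, e.g., in the presence of $P_H$ in \eqref{4.81} but not in \eqref{4.82}, and in the combined symbol $S_-(\l)$ of \eqref{4.83}). I would set $T_+(\l):=(\tau_-^*(\ov\l)+M_{4+}(\l))^{-1}$, $T_-(\l):=(\tau_-(\l)+M_{4-}(\l))^{-1}$, then use Proposition \ref{pr2.2}, formula \eqref{2.8b} (with $\a=-1$) to express $\tau_+(\l)$ and $\tau_-(\l)$ as graphs parametrized by $T_\pm(\l)$, in complete analogy with \eqref{4.31.1}--\eqref{4.31.2}. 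A direct computation of $\wt\G_{0b}v_\tau(\l)$ and $\G_{1b}v_\tau(\l)$ from \eqref{4.81}, \eqref{4.82} using \eqref{4.64}, \eqref{4.65}, \eqref{4.66a}, \eqref{4.67} should then yield the graph membership
\[
\{\wt\G_{0b}v_\tau(\l)h_0,\,\G_{1b}v_\tau(\l)h_0\}\in\tau_\pm(\l),\qquad h_0\in H_0,\;\l\in\bC_\pm,
\]
which in view of \eqref{2.1}, \eqref{2.2} is precisely \eqref{4.80}, \eqref{4.80a}. The bookkeeping of the projections $P_H$, $P_{\wh\cH_b}$, $P_{\cH_b}$ in \eqref{4.64}--\eqref{4.67} is the delicate point; I expect it will work out because of the way $v_0(\cd,\l)$ and $u_\pm(\cd,\l)$ were constructed in Proposition \ref{pr4.6} to match exactly the block structure of the Weyl functions.

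For (c), the uniqueness argument is identical to the one in Theorem~\ref{th4.2}: if $v_1(\cd,\l),v_2(\cd,\l)\in\lo{H_0}$ both satisfy \eqref{4.27}, \eqref{4.79}, \eqref{4.80}, \eqref{4.80a}, then for each $h_0\in H_0$ the function $y(t)=(v_1(t,\l)-v_2(t,\l))h_0$ solves the homogeneous boundary value problem \eqref{4.59}--\eqref{4.61} with $f=0$; by Theorem~\ref{th4.5} that problem has only the trivial solution, so $v_1=v_2$.
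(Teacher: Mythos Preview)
Your proposal is correct and follows essentially the same route as the paper's own proof: define $v_\tau$ by \eqref{4.81}--\eqref{4.82}, verify \eqref{4.27} and \eqref{4.79} from \eqref{4.3}, \eqref{4.63a}, \eqref{4.66}, \eqref{4.66.2}, then set $T_\pm(\l)$ exactly as you do, use \eqref{2.8b} (with $\a=-1$) to write $\tau_\pm(\l)$ as graphs, compute $\wt\G_{0b}v_\tau(\l)$ and $\G_{1b}v_\tau(\l)$ via \eqref{4.64}, \eqref{4.65}, \eqref{4.66a}, \eqref{4.67} to obtain the inclusion \eqref{4.31.5}, and finally invoke Theorem~\ref{th4.5} for uniqueness. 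The only minor difference is that you spell out the invertibility step (a) explicitly via Proposition~\ref{pr2.10a}, whereas the paper leaves it implicit here (having treated it in the proof of Theorem~\ref{th4.2}).
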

\begin{proof}
 Let us show that the  solution $v_\tau(\cd,\l)\in\lo{H_0}$ of Eq. \eqref{3.2}
defined by \eqref{4.81} and \eqref{4.82} satisfies the boundary conditions
\eqref{4.27} and \eqref{4.79}--\eqref{4.80a}.

Combining \eqref{4.81} and \eqref{4.82} with \eqref{4.3} and \eqref{4.66} we
obtain the equality  \eqref{4.27}. Moreover, \eqref{4.81} and \eqref{4.82}
together with \eqref{4.63a} and the second equality in \eqref{4.66.2} give
\eqref{4.79}.

Next assume that $T_+(\l)=(\tau_-^*(\ov\l)+M_{4+}(\l))^{-1},\;\l\in\bC_+,$ and
$T_-(\l)=(\tau_-(\l)+M_{4-}(\l))^{-1},\;\;\l\in\bC_-$. Then by using
\eqref{2.8b} one obtains
\begin {gather}
\tau_+(\l)=\{\{(-T_+(\l)+iP_{\wh\cH_b}-iP_{\wh\cH_b}M_{4+}(\l)T_+(\l))h ,\qquad
\qquad\qquad \qquad
\qquad\qquad\label{4.84}\\
\qquad\qquad \qquad\qquad\qquad \qquad\qquad \qquad\qquad(-P_{\cH_b} +
P_{\cH_b}M_{4+}(\l)T_+(\l))h\}:h\in\wt\cH_b\},\nonumber\\
\tau_-(\l)=\{\{T_-(\l)h, (I-M_{4-}(\l)T_-(\l))h\}:h\in\cH_b\}\label{4.85}
\end{gather}
(c.f. proofs of the equalities \eqref{4.31.1} and \eqref{4.31.2} in Theorem
\ref{th4.2}). Moreover, combining  \eqref{4.81} and \eqref{4.82} with with the
equalities \eqref{4.64}, \eqref{4.65} and \eqref{4.66a},  \eqref{4.67} one gets
\begin {gather*}
\wt\G_{0b}v_\tau(\l)=(iP_{\wh\cH_b}-T_+(\l)-iP_{\wh\cH_b}M_{4+}(\l)T_+(\l))
M_{3+}(\l)P_H,\;\;\;\l\in\bC_+,\\
\G_{1b}v_\tau(\l)=(-P_{\cH_b} + P_{\cH_b}M_{4+}(\l)T_+(\l))M_{3+}(\l)P_H,
\;\;\;\l\in\bC_+,\\
\wt\G_{0b}v_\tau(\l)=-T_-(\l)S_-(\l), \quad \G_{1b}v_\tau(\l)=
-(I-M_{4-}(\l)T_-(\l))S_-(\l), \quad \l\in \bC_-,
\end{gather*}
which in view of \eqref{4.84} and \eqref{4.85} yields the inclusions
\eqref{4.31.5}. This implies that $v_\tau(\cd,\l)$ satisfies \eqref{4.80} and
\eqref{4.80a}. Finally, by using the same arguments as in Theorem \ref{th4.2}
one proves uniqueness of the solution $v_\tau(\cd,\l)$.
\end{proof}
\section{$m$-functions}\label{sect5}
Assume that $\Pi_\a=\bta$ is a decomposing boundary triplet for $\Tma$ defined
by \eqref{3.42} and one of the equalities \eqref{3.35}, \eqref{3.38} or
\eqref{3.40}.
\begin{definition}\label{def5.1}
A boundary parameter $\tau $ (at the endpoint $b$) is:

--- \, a collection $\pair\in\wt R_\a(\wt\cH_b,\cH_b)$ of operator pairs
\eqref{2.2} with $\a=+1$ in \emph{Case 1} and  $\a=-1$ in \emph{Case 3};

--- \, an operator pair $\tau\in\wt R(\cH_b)$ defined by \eqref{2.19} in
 \emph{Case 2}.

If  $n_+=n_-$ and $\Pi=\bt$ is a decomposing boundary triplet \eqref{3.35.2},
\eqref{3.42} for $\Tma$, then a boundary parameter is an operator pair
$\tau=\wt R(\cH_b)$ of the form \eqref{2.19}.
\end{definition}
Let $\tau$ be a boundary parameter and let $v_\tau (\cd,\l)\in\lo{H_0}$ be the
corresponding operator solution of Eq. \eqref{3.2} defined in Theorems
\ref{th4.2}, \ref{th4.4} and \ref{th4.7}.
\begin{definition}\label{def5.2}
The operator function $m_\tau(\cd):\CR\to [H_0]$ defined by
\begin {equation}\label{5.0}
m_\tau(\l)=(\G_{0a}+\wh \G_a)v_\tau (\l)+\tfrac i 2 P_{\wh H}, \quad\l\in\CR,
\end{equation}
will be called the $m$-function corresponding to the boundary parameter $\tau$
or, equivalently, to the boundary value problem \eqref{4.0.1}--\eqref{4.0.3}
(in \emph{Case 1}), \eqref{4.32.1}--\eqref{4.32.3} (in \emph{Case 2}) or
\eqref{4.59}--\eqref{4.61} (in \emph{Case 3}).

If $n_+=n_-$, then  $m_\tau(\cd)$ corresponds to the boundary value problem
\eqref{4.0.1}, \eqref{4.0.5}. In this case the $m$-function $m_\tau(\cd)$ will
be called canonical if $\tau\in \wt R^0(\cH_b)$.
\end{definition}
It follows from  \eqref{4.27} that $m_\tau(\cd)$ satisfies the equality
\begin {equation}\label{5.1}
v_{\tau,a}(\l)\left(=\begin{pmatrix} \G_{0a}+\wh \G_a\cr \G_{1a} \end{pmatrix}
v_\tau(\l)\right )=\begin{pmatrix} m_\tau(\l)-\tfrac i 2 P_{\wh H}\cr -P_H
\end{pmatrix}:H_0\to H_0\oplus H, \quad \l\in\CR.
\end{equation}
In the following proposition we show that the $m$-function  $m_\tau(\cd)$ can
be defined in a somewhat different way.
\begin{proposition}\label{pr5.3}
Let $\Pi_\a=\bta$ be a decomposing boundary triplet for $\Tma$, let $\tau$ be a
boundary parameter at the endpoint $b$ and let $\f(\cd,\l)(\in [H_0,\bH])$ and
$\psi(\cd,\l)(\in [H_0,\bH]), \;\l\in\bC,$ be the operator solutions of Eq.
\eqref{3.2} with the initial data
\begin {equation}\label{5.2}
\f_a(\l)=\begin{pmatrix} I_{H_0}\cr 0\end{pmatrix}:H_0\to H_0\oplus H, \quad
\psi_a(\l)=\begin{pmatrix} -\tfrac i 2 P_{\wh H}\cr -P_H\end{pmatrix}:H_0\to
H_0\oplus H, \quad \l\in\bC.
\end{equation}
Then there exists a unique operator function $m(\cd):\CR\to [H_0]$ such that
for every $\l\in\CR$ the operator solution $v(\cd,\l)$ of Eq. \eqref{3.2} given
by
\begin {equation}\label{5.3}
v(t,\l):=\f (t,\l)m(\l)+\psi (t,\l)
\end{equation}
belongs to $\lo{H_0}$ and satisfies the following boundary conditions:
\eqref{4.27a}--\eqref{4.28a} in Case 1; \eqref{4.56} and \eqref{4.57} in
\emph\Ca{2}; \eqref{4.79}--\eqref{4.80a} in \emph\Ca{3}.  Moreover, the
equalities $v(t,\l)= v_\tau(t,\l)$ and $m(\l)=m_\tau(\l)$ are valid.
\end{proposition}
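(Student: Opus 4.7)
The plan is to take $m(\l):=m_\tau(\l)$ as defined by \eqref{5.0} and verify that the resulting operator solution $v(t,\l):=\f(t,\l)m(\l)+\psi(t,\l)$ of Eq. \eqref{3.2} coincides with $v_\tau(t,\l)$. Since both $v$ and $v_\tau$ satisfy the same homogeneous first-order linear equation, it suffices to compare their initial data at $t=a$. Using \eqref{5.2} one computes
\begin{equation*}
X_a v(a,\l)=\begin{pmatrix} I_{H_0}\cr 0\end{pmatrix}m(\l)+\begin{pmatrix} -\tfrac{i}{2}P_{\wh H}\cr -P_H\end{pmatrix}=\begin{pmatrix} m(\l)-\tfrac{i}{2}P_{\wh H}\cr -P_H\end{pmatrix},
\end{equation*}
which in view of the block form \eqref{5.1} of $v_{\tau,a}(\l)$ together with the defining formula \eqref{5.0} of $m_\tau$ equals $X_a v_\tau(a,\l)$. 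Hence $v=v_\tau$, and therefore $v(\cd,\l)\in\lo{H_0}$ and satisfies the requisite boundary conditions at $b$, these being inherited verbatim from the characterization of $v_\tau$ in Theorem \ref{th4.2}, \ref{th4.4} or \ref{th4.7} according to the case.

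For uniqueness, suppose $m_1(\cd)$ and $m_2(\cd)$ both yield operator solutions $v_j:=\f m_j+\psi$ in $\lo{H_0}$ satisfying the listed boundary conditions. Since \eqref{5.2} gives $\G_{1a}\f(\l)=0$ and $\G_{1a}\psi(\l)=-P_H$, each $v_j$ automatically satisfies $\G_{1a}v_j(\l)=-P_H$; thus $v_j$ obeys the full set of boundary conditions characterizing $v_\tau$ in the corresponding theorem from Section 4. The uniqueness statements in those theorems then force $v_1=v_2=v_\tau$. Subtracting yields $\f(t,\l)(m_1(\l)-m_2(\l))\equiv 0$ on $\cI$; evaluating at $t=a$, applying $X_a$ and reading off the top block of \eqref{5.2} collapses this to $m_1(\l)=m_2(\l)$, which completes the argument and simultaneously identifies $m(\l)=m_\tau(\l)$.

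The main technical point is simply the initial-value bookkeeping: the three cases can be handled uniformly because the regular-endpoint behaviour of $v_\tau$ is encoded in all cases by the same pair of relations $\G_{1a}v_\tau(\l)=-P_H$ and $(\G_{0a}+\wh\G_a)v_\tau(\l)=m_\tau(\l)-\tfrac{i}{2}P_{\wh H}$ (see \eqref{5.1}), and these are precisely the two components of the initial data of $\f(\cd,\l)m_\tau(\l)+\psi(\cd,\l)$ under $X_a$. No new analysis of the singular conditions at $b$ is required, so the proof reduces entirely to uniqueness of the boundary problem established in Section 4.
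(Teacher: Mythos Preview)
Your proof is correct and follows essentially the same approach as the paper's: match the initial data at $a$ using \eqref{5.2} and \eqref{5.1} to identify $v$ with $v_\tau$, then invoke the uniqueness statements of Theorems \ref{th4.2}, \ref{th4.4}, \ref{th4.7} to conclude uniqueness of $m$. Your version is slightly more explicit in writing out $X_a v(a,\l)$ and in the subtraction step $\f(\cd,\l)(m_1(\l)-m_2(\l))\equiv 0$, but the underlying argument is identical.
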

\begin{proof}
Let $m_\tau(\cd)$ be the  $m$-function in the sense of Definition \ref{def5.2}
and let $v(\cd,\l), \;\l\in\CR,$ be the solution of Eq. \eqref{3.2} given by
\eqref{5.3} with $m(\l)=m_\tau(\l)$. Then in view of \eqref{5.2} and
\eqref{5.1} one has $v_a(\l)=v_{\tau,a}(\l)$ and, consequently, $v(t,\l)=
v_\tau(t,\l)$. Therefore by Theorems \ref{th4.2}, \ref{th4.4} and \ref{th4.7}
$v(\cd,\l)$ belongs to $\lo{H_0}$ and satisfies the  required boundary
conditions. Hence there exists an operator function $m(\l)(=m_\tau(\l))$ with
the desired properties.

Assume now that the solution $v(\cd,\l)$ of Eq. \eqref{3.2} given by
\eqref{5.3} with some $m(\l)$ belongs to $\lo{H_0}$ and satisfies the specified
boundary conditions. Then in view of \eqref{5.2} $\G_{1a}v(\l)=-P_H$ and
according to Theorems \ref{th4.2}, \ref{th4.4} and \ref{th4.7} $v(t,\l)=
v_\tau(t,\l)$. Therefore $m(\l)=m_\tau(\l)$, which proves uniqueness of
$m(\l)$.
\end{proof}
Description of all $m$-functions immediately in terms of the boundary parameter
$\tau$ is contained in the following three theorems.
\begin{theorem}\label{th5.4}
Let in Case 1 the assumptions of Proposition \ref{pr4.1} be satisfied and  let
$\tau_0=\{\tau_+,\tau_- \}$ be a boundary parameter \eqref{2.2}, \eqref{4.0.6}.
Then $m_0(\l)=m_{\tau_0}(\l)$ and for every boundary parameter $\pair$
 defined by \eqref{2.2} the
corresponding $m$-function $m_\tau(\cd)$ is of the form
\begin {equation}\label{5.5}
m_\tau(\l)=m_0(\l)+M_{2+}(\l)(C_0(\l)-C_1(\l)M_{4+}(\l))^{-1}C_1(\l)M_{3+}(\l),
\quad\l\in\bC_+.
\end{equation}
\end{theorem}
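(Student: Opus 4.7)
The plan is to combine the formula for $v_\tau(\cd,\l)$ from Theorem \ref{th4.2} with the boundary values of $v_0(\cd,\l)$ and $u_+(\cd,\l)$ computed in Proposition \ref{pr4.1}, and then translate the abstract inverse $(\tau_+(\l)+M_{4+}(\l))^{-1}$ into the concrete Nevanlinna-pair form.

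First I would start from the Definition \ref{def5.2}: $m_\tau(\l) = (\G_{0a}+\wh\G_a)v_\tau(\l) + \tfrac{i}{2}P_{\wh H}$ for $\l\in\bC_+$. Applying the linear map $(\G_{0a}+\wh\G_a)$ to formula \eqref{4.29},
\begin{equation*}
v_\tau(t,\l)=v_0(t,\l)-u_+(t,\l)(\tau_+(\l)+M_{4+}(\l))^{-1}M_{3+}(\l),
\end{equation*}
and invoking \eqref{4.3a} and \eqref{4.7a} from Proposition \ref{pr4.1}, I obtain
\begin{equation*}
m_\tau(\l) = m_0(\l) - M_{2+}(\l)(\tau_+(\l)+M_{4+}(\l))^{-1}M_{3+}(\l).
\end{equation*}

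Next I would identify the operator $T_+(\l):=(\tau_+(\l)+M_{4+}(\l))^{-1}$ in terms of $C_0(\l)$ and $C_1(\l)$. Using the representation \eqref{4.31.1} of $\tau_+(\l)$, every pair $\{h_0,h_1\}\in\tau_+(\l)$ has the form $h_0=T_+(\l)h$, $h_1=(I-M_{4+}(\l)T_+(\l))h$ for some $h\in\cH_b$. On the other hand, by the defining identity $\tau_+(\l)=\{(C_0(\l),C_1(\l));\wt\cH_b\}$ such a pair must satisfy $C_0(\l)h_0+C_1(\l)h_1=0$. Substituting and simplifying yields $(C_0(\l)-C_1(\l)M_{4+}(\l))T_+(\l)h = -C_1(\l)h$, and since $0\in\rho(C_0(\l)-iC_1(\l)P_1)$ by \eqref{2.8} together with the solvability of the boundary problem (Theorem \ref{th4.0.1}) the operator $C_0(\l)-C_1(\l)M_{4+}(\l)$ is boundedly invertible, so
\begin{equation*}
(\tau_+(\l)+M_{4+}(\l))^{-1} = -(C_0(\l)-C_1(\l)M_{4+}(\l))^{-1}C_1(\l).
\end{equation*}

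Substituting this back into the expression for $m_\tau(\l)$ gives exactly \eqref{5.5}. Finally, the assertion $m_0(\l)=m_{\tau_0}(\l)$ follows by specializing \eqref{5.5} to $C_0(\l)\equiv I_{\wt\cH_b}$, $C_1(\l)\equiv 0$, in which case the second term vanishes.

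The only nonroutine step is justifying the invertibility of $C_0(\l)-C_1(\l)M_{4+}(\l)$ in the equation derived for $T_+(\l)$, but this is immediate once one observes that $T_+(\l)$ is well-defined (since $M_{4\pm}(\cd)$ are Weyl functions of $\dot\Pi_+$ by Proposition \ref{pr3.6}(2), so $0\in\rho(\tau_+(\l)+M_{4+}(\l))$), and the two equivalent expressions for $T_+(\l)$ determine the inverse uniquely on its range.
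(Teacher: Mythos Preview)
Your argument follows the same route as the paper's proof: apply $(\G_{0a}+\wh\G_a)$ to \eqref{4.29}, use \eqref{4.3a} and \eqref{4.7a} to obtain \eqref{5.6}, and then rewrite $-(\tau_+(\l)+M_{4+}(\l))^{-1}$ as $(C_0(\l)-C_1(\l)M_{4+}(\l))^{-1}C_1(\l)$. The paper simply cites \cite[Lemma 2.1]{MalMog02} for this last identity (including the invertibility of $C_0(\l)-C_1(\l)M_{4+}(\l)$), whereas you derive the relation $(C_0-C_1M_{4+})T_+=-C_1$ from \eqref{4.31.1} directly.

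One point to tighten: the sentence ``the two equivalent expressions for $T_+(\l)$ determine the inverse uniquely on its range'' does not by itself establish that $C_0(\l)-C_1(\l)M_{4+}(\l)$ is invertible; the equation $(C_0-C_1M_{4+})T_+=-C_1$ constrains $C_0-C_1M_{4+}$ only on $\ran T_+$. A clean way to close this is to argue injectivity directly: if $(C_0-C_1M_{4+})x=0$ then $\{x,-M_{4+}x\}\in\tau_+(\l)$, hence $\{x,0\}\in\tau_+(\l)+M_{4+}(\l)$, and $0\in\rho(\tau_+(\l)+M_{4+}(\l))$ forces $x=0$. Since $\dim\cK_+=\dim\wt\cH_b$ by \eqref{2.9}, injectivity gives invertibility. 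With this fix your proof is complete for $\l\in\bC_+$, which is all the theorem claims; the paper additionally records the $\bC_-$ formula \eqref{5.7} and the symmetry \eqref{5.7b}, but these go beyond the stated assertion.
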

\begin{proof}
It follows from \eqref{4.3} and \eqref{4.4}--\eqref{4.6} that $v_0(t,\l)=v_
{\tau_0}(t,\l)$ and \eqref{4.3a} yields $m_0(\l)=m_{\tau_0}(\l)$. Next,
applying the operator $\G_{0a}+\wh\G_a$ to the equalities \eqref{4.29} and
\eqref{4.30} with taking \eqref{4.3a} and  \eqref{4.7a} into account one
obtains
\begin {gather}
m_\tau(\l)=m_0(\l)-M_{2+}(\l)(\tau_+(\l)+M_{4+}(\l))^{-1}M_{3+}(\l),
\quad\l\in\bC_+, \label{5.6}\\
m_\tau(\l)=m_0(\l)-M_{2-}(\l)(\tau_+^*(\ov\l)+M_{4-}(\l))^{-1}M_{3-}(\l),
\quad\l\in\bC_-. \label{5.7}
\end{gather}
It follows from the equality $M_+^*(\ov\l)=M_-(\l)$ that $m_0^*(\ov\l)=m_0(\l),
\;M_{2+}^*(\ov\l)=M_{3-}(\l), \;  M_{3+}^*(\ov\l)=M_{2-}(\l) $ and
$M_{4+}^*(\ov\l)=M_{4-}(\l)$. This and \eqref{5.6},  \eqref{5.7} yield
\begin {equation}\label{5.7b}
m_\tau^*(\ov\l)=m_\tau(\l), \quad \l\in\CR.
\end{equation}
Moreover, according to \cite[Lemma 2.1]{MalMog02}
$0\in\rho(C_0(\l)-C_1(\l)M_{4+}(\l))$ and
\begin {equation*}
-(\tau_+(\l)+M_{4+}(\l))^{-1}=(C_0(\l)-C_1(\l)M_{4+}(\l))^{-1}C_1(\l),
\quad\l\in\bC_+,
\end{equation*}
which together with \eqref{5.6} yields \eqref{5.5}.
\end{proof}
The following corollary is immediate from Theorem \ref{th5.4} and the
equalities \eqref{3.35.0}.
\begin{corollary}\label{cor5.4a}
Let $n_+=n_-$, let $\Pi=\bt$ be a decomposing boundary triplet \eqref{3.35.2},
\eqref{3.42} for $\Tma$, let $\tau_0=\{(I_{\cH_b},0);\cH_b\}\in\wt R^0(\cH_b)$
and let
\begin {equation}\label{5.8}
M(\l)=\begin{pmatrix} m_0(\l) & M_2(\l) \cr M_3(\l) & M_4(\l)
\end{pmatrix}:H_0\oplus\cH_b\to
H_0\oplus\cH_b, \quad \l\in\CR,
\end{equation}
be the block representation of the  Weyl function of $\Pi$. Then
$m_0(\l)=m_{\tau_0}(\l)$ and for every boundary parameter $\tau$ defined by
\eqref{2.19}  the corresponding $m$-function $m_\tau(\cd)$ is
\begin {equation}\label{5.8a}
m_\tau(\l)=m_0(\l)+M_2(\l)(C_0(\l)-C_1(\l)M_4(\l))^{-1}C_1(\l)M_3(\l),
\quad\l\in\CR.
\end{equation}
\end{corollary}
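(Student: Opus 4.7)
The plan is to deduce the corollary directly from Theorem \ref{th5.4} by specializing to the case $n_+=n_-$. First I would observe that when $n_+=n_-$, by \eqref{3.17d} Case 1 applies, and the equalities \eqref{3.35.0} give $\wt\cH_b=\cH_b$ and $\wt\G_{0b}=\G_{0b}$. Consequently the spaces from \eqref{3.34b} satisfy $\cH_0=\cH_1=\cH=H_0\oplus\cH_b$, so the decomposing triplet $\Pi_+$ of \eqref{3.35}, \eqref{3.42} reduces to the ordinary decomposing triplet $\Pi=\bt$ of \eqref{3.35.2}, \eqref{3.42}, and the subspace $\cH_2=\cH_0\ominus\cH_1=\{0\}$. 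By Remark \ref{rem2.13}, item 2), the two $\g$-fields $\g_\pm(\cd)$ of Proposition \ref{pr4.1} coincide with the single $\g$-field $\g(\cd)$ of $\Pi$ on their respective half-planes, and similarly $M_+(\l)$ and $M_-(\l)$ both agree with the single Weyl function $M(\l)$ from \eqref{5.8}.

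Under these identifications the $2\times 2$ block forms \eqref{4.1} and \eqref{4.2} collapse to the single representation \eqref{5.8}, so that
\[
m_0(\l)=m_0(\l),\qquad M_{2\pm}(\l)=M_2(\l),\qquad M_{3\pm}(\l)=M_3(\l),\qquad M_{4\pm}(\l)=M_4(\l),
\]
with the convention that the $\pm$ versions live on $\bC_\pm$. Next I would show that the boundary parameter $\tau_0=\{(I_{\cH_b},0);\cH_b\}\in\wt R^0(\cH_b)$ corresponds, in the sense of Definition \ref{def5.1}, to the collection $\{\tau_+,\tau_-\}$ with $C_0\equiv I,\ C_1\equiv 0$ and $D_0\equiv P_{\cH_b}=I_{\cH_b},\ D_1\equiv 0$ from \eqref{4.0.6}; this is immediate since in the present setting $P_{\cH_b}=I_{\cH_b}$. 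More generally, a Nevanlinna pair $\tau(\l)=\{(C_0(\l),C_1(\l));\cH_b\}\in\wt R(\cH_b)$ given by \eqref{2.19} is identified with the collection $\pair\in \RP$ where $(D_0(\l),D_1(\l))=(C_0(\l),C_1(\l))$ on $\bC_-$; the Nevanlinna relations \eqref{2.20}, which characterize $\wt R(\cH_b)$ by Remark \ref{rem2.4}, guarantee this identification is consistent with Definition \ref{def2.1}.

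With these reductions in place, applying Theorem \ref{th5.4} verbatim yields $m_{\tau_0}(\l)=m_0(\l)$ and, for $\l\in\bC_+$, the formula
\[
m_\tau(\l)=m_0(\l)+M_2(\l)(C_0(\l)-C_1(\l)M_4(\l))^{-1}C_1(\l)M_3(\l),
\]
which is exactly \eqref{5.8a} on the upper half-plane. To extend \eqref{5.8a} to all $\l\in\CR$, I would invoke the symmetry $m_\tau^*(\ov\l)=m_\tau(\l)$ from \eqref{5.7b} (established inside the proof of Theorem \ref{th5.4}) together with $M^*(\ov\l)=M(\l)$ and the Nevanlinna symmetry $C_1(\l)C_0^*(\ov\l)-C_0(\l)C_1^*(\ov\l)=0$ from \eqref{2.20}; taking adjoints of the above display for $\ov\l\in\bC_+$ then reproduces \eqref{5.8a} for $\l\in\bC_-$.

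There is really no obstacle here: the entire argument is bookkeeping, matching the notation of Theorem \ref{th5.4} in Case 1 against the notation of the ordinary boundary triplet under $\cH_2=\{0\}$. The only slightly delicate point is making the correspondence between $\wt R(\cH_b)$-parameters and the collections in $\wt R_{+1}(\wt\cH_b,\cH_b)$ explicit on both half-planes, which is handled transparently by the Nevanlinna identities in Remark \ref{rem2.4}.
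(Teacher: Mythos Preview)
Your proposal is correct and follows the same approach as the paper, which simply states that the corollary is ``immediate from Theorem \ref{th5.4} and the equalities \eqref{3.35.0}.'' You have fleshed out exactly those details: specializing Case~1 to $\cH_2=\{0\}$ via \eqref{3.35.0}, identifying $M_{j\pm}(\l)$ with the blocks of the single Weyl function $M(\l)$, and then invoking Theorem \ref{th5.4}. One minor remark: for the extension to $\l\in\bC_-$ it is slightly more direct to use formula \eqref{5.7} from the proof of Theorem \ref{th5.4} together with $\tau_+^*(\ov\l)=\tau(\l)$ (the Nevanlinna relation for $\tau\in\wt R(\cH_b)$), which immediately yields the same expression as \eqref{5.6} with $M_{j-}=M_j$ and hence \eqref{5.8a} on all of $\CR$; your symmetry argument via \eqref{5.7b} also works but essentially re-derives this.
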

\begin{theorem}\label{th5.5}
Let in Case 2 the assumptions of Proposition \ref{pr4.3} be satisfied, let
$\tau_0=\{(I_{\cH_b},0);\cH_b\}\in\wt R^0(\cH_b)$ and let $S(\l)$ be the
operator function \eqref{4.58a}.

Then: 1) $m_0(\l)=m_{\tau_0}(\l)$ and for every boundary parameter $\tau$
defined by \eqref{2.19} the corresponding $m$-function $m_\tau(\cd)$ is
\begin {equation}\label{5.9}
m_\tau(\l)=m_0(\l)+M_2(\l)(C_0(\l)-C_1(\l)M_{4}(\l))^{-1}C_1(\l)S_-(\l),
\quad\l\in\bC_-.
\end{equation}

2)  $m_\tau(\l) $ admits the triangular block representation
\begin {gather}
m_\tau(\l)=\begin{pmatrix} m_{1,\tau}(\l) & 0 \cr  m_{+,\tau}(\l) & \tfrac i 2
I_{\cH_2'}\end{pmatrix}:\underbrace{H_0'\oplus\cH_2'}_{H_0}\to
\underbrace{H_0'\oplus\cH_2'}_{H_0}, \quad \l\in\bC_+\label{5.10}\\
m_\tau(\l)=\begin{pmatrix} m_{1,\tau}(\l)  &   m_{-,\tau}(\l)  \cr 0 & -\tfrac
i 2 I_{\cH_2'}\end{pmatrix}:\underbrace{H_0'\oplus\cH_2'}_{H_0}\to
\underbrace{H_0'\oplus\cH_2'}_{H_0}, \quad \l\in\bC_-,\label{5.11}
\end{gather}
where
\begin {gather}
m_{1,\tau}(\l)=M_1(\l)+M_2(\l)(C_0(\l)-C_1(\l)M_{4}(\l))^{-1}C_1(\l)M_3(\l),
\quad \l\in\CR,\label{5.12}\\
 m_{-,\tau}(\l) = N_{1-}(\l)+M_2(\l)(C_0(\l)-C_1(\l)M_{4}(\l))^{-1}C_1(\l)N_{2-}
(\l),\quad \l\in\bC_-,\label{5.13}\\
m_{+,\tau}(\l)=m_{-,\tau}^*(\ov\l),\quad \l\in\bC_+.\label{5.14}
\end{gather}
\end{theorem}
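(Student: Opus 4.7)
The plan is to follow the template of Theorem \ref{th5.4}: first identify $m_0(\l)$ as $m_{\tau_0}(\l)$, then apply the map $\G_{0a}+\wh\G_a$ to the explicit representation of $v_\tau(\cd,\l)$ provided by Theorem \ref{th4.4}, and finally extract the triangular block structure from the block decompositions of $m_0$, $M_2$ and $S_-$.

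For $\tau_0=\{(I_{\cH_b},0);\cH_b\}$ the boundary condition \eqref{4.57} reduces to $\G_{0b}v_\tau(\l)=0$, and together with \eqref{4.27} and \eqref{4.56} these are precisely the relations that Proposition \ref{pr4.3} establishes for $v_0(\cd,\l)$; the uniqueness statement in Theorem \ref{th4.4} then forces $v_0(\cd,\l)=v_{\tau_0}(\cd,\l)$, and \eqref{5.0} combined with \eqref{4.3a} yields $m_{\tau_0}(\l)=m_0(\l)$. For a general Nevanlinna pair $\tau$, applying $\G_{0a}+\wh\G_a$ to \eqref{4.58.1} and substituting \eqref{4.3a} and the $\bC_-$ half of \eqref{4.39} produces
\begin{equation*}
m_\tau(\l)=m_0(\l)-M_2(\l)(\tau(\l)+M_4(\l))^{-1}S_-(\l),\quad\l\in\bC_-.
\end{equation*}
Since by Proposition \ref{pr3.7}(2) the function $M_4(\cd)$ is (modulo compression on $\cH_b$) the Weyl function of the ordinary boundary triplet \eqref{3.52}, the algebraic identity of \cite[Lemma 2.1]{MalMog02} converts $-(\tau(\l)+M_4(\l))^{-1}$ into $(C_0(\l)-C_1(\l)M_4(\l))^{-1}C_1(\l)$ and delivers \eqref{5.9}.

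For part (2) I would note that on the decomposition $H_0=H_0'\oplus\cH_2'$ the matrix $m_0(\l)$ in \eqref{4.35} is already upper-triangular with blocks $M_1(\l)$, $N_{1-}(\l)$, $0$, $-\tfrac{i}{2}I_{\cH_2'}$ for $\l\in\bC_-$; the correction term in \eqref{5.9} has range in $H_0'$ because $M_2(\l)\in[\cH_b,H_0']$, while its domain splits according to the block structure \eqref{4.58a} of $S_-(\l)=(M_3(\l):N_{2-}(\l))$, and multiplying out gives precisely the entries $m_{1,\tau}(\l)$ and $m_{-,\tau}(\l)$ of \eqref{5.12}, \eqref{5.13} in the upper row, zero in the lower row, producing \eqref{5.11}. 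For $\l\in\bC_+$ the representation \eqref{5.10} and the identity \eqref{5.14} will then be read off from the Nevanlinna symmetry $m_\tau^*(\ov\l)=m_\tau(\l)$, combined with the conjugation relations $M_1^*(\ov\l)=M_1(\l)$, $M_3^*(\ov\l)=M_2(\l)$, $M_4^*(\ov\l)=M_4(\l)$ and $N_{1+}^*(\ov\l)=N_{1-}(\l)$ that follow from $M_+^*(\ov\l)=M_-(\l)$.

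The main obstacle will be justifying this symmetry cleanly in \Ca{2}, since here the two half-planes carry Weyl functions of different block sizes and the direct $\bC_+$ analogue of \eqref{5.9} (derived from \eqref{4.58} and the $\bC_+$ half of \eqref{4.39}) involves the extra column $N_{2+}(\l)$ that must be reconciled with $N_{1-}(\l)$ via conjugation. I expect the cleanest route is to identify $m_\tau$, up to an additive constant, with the Weyl function of a boundary triplet for a symmetric extension of $\Tmi$ built from $T$ in \eqref{3.50} via the triplet \eqref{3.52} and the parameter $\tau$; the Nevanlinna property of that Weyl function then supplies the required symmetry automatically, after which the triangular form in $\bC_+$ becomes pure bookkeeping.
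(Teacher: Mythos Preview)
Your approach is essentially the paper's: identify $m_0=m_{\tau_0}$ via uniqueness, apply $\G_{0a}+\wh\G_a$ to the representation \eqref{4.58.1}, convert the resolvent-like expression via \cite[Lemma~2.1]{MalMog02}, and read off the block structure from \eqref{4.35} and \eqref{4.58a}.

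The only place you diverge is the last paragraph. You flag the symmetry $m_\tau^*(\ov\l)=m_\tau(\l)$ as an obstacle and propose to recover it by recognising $m_\tau$ as (a shift of) a Weyl function of an auxiliary boundary triplet. The paper does something more elementary: it also writes down the $\bC_+$ formula directly, namely
\[
m_\tau(\l)=m_0(\l)-\bigl(M_2(\l)+N_{2+}(\l)\bigr)\bigl(\tau(\l)+M_4(\l)\bigr)^{-1}M_3(\l)P_{H_0'},\quad \l\in\bC_+,
\]
obtained from \eqref{4.58} and the $\bC_+$ half of \eqref{4.39}, and then checks by hand that this is the adjoint of the $\bC_-$ formula using the block consequences of $M_+^*(\ov\l)=M_-(\l)$. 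The relevant identities are $(M_3(\ov\l)P_{H_0'})^*=M_2(\l)$, $M_4^*(\ov\l)=M_4(\l)$, and $(M_2(\ov\l)+N_{2+}(\ov\l))^*=(M_3(\l)\,:\,N_{2-}(\l))=S_-(\l)$; note that $N_{2+}$ conjugates to $N_{2-}$, not to $N_{1-}$ as you wrote. Once \eqref{5.7b} is in hand, \eqref{5.10} and \eqref{5.14} follow by taking adjoints of \eqref{5.11}--\eqref{5.13}. So your proposed detour through an auxiliary triplet is unnecessary here (though it is the mechanism the paper uses later, in Proposition~\ref{pr5.7}, for the identity \eqref{5.39}).
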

\begin{proof}
1) The equality $m_0(\l)=m_{\tau_0}(\l)$ is implied by \eqref{4.3},
\eqref{4.35a} and \eqref{4.36} in the same way as in Theorem \ref{th5.4}. Next,
by \eqref{4.58}, \eqref{4.58.1} and \eqref{4.3a}, \eqref{4.39} one has
\begin {gather}
m_\tau(\l)=m_0(\l)-(M_2(\l)+N_{2+}(\l))(\tau(\l)+M_{4}(\l))^{-1}M_3(\l)P_{H_0'},
\quad\l\in\bC_+,\label{5.14a}\\
m_\tau(\l)=m_0(\l)-M_2(\l)(\tau(\l)+M_{4}(\l))^{-1}M_3(\l)S_-(\l),
\quad\l\in\bC_-.\label{5.14b}
\end{gather}
It follows from \eqref{4.33.1},  \eqref{4.33.2} and the equality
$M_+^*(\ov\l)=M_-(\l)$ that for all $\l\in\bC_-$
\begin {gather*}
(M_3(\ov\l)P_{H_0'})^*=M_2(\l), \qquad M_4^*(\ov\l)=M_4(\l),\\
(M_2(\ov\l)+N_{2+}(\ov\l))^*= (M_3(\l)\,:\,N_{2-}(\l))=S_-(\l).
\end{gather*}
This and \eqref{5.14a}, \eqref{5.14b} yield the equality \eqref{5.7b}.
Moreover, applying to \eqref{5.14b} the same arguments as in the proof of
Theorem \ref{th5.4} one obtains \eqref{5.9}.

2) It follows from \eqref{5.9} and \eqref{4.58a} that
\begin {equation*}
m_\tau(\l)=\begin{pmatrix}M_1(\l) &  N_{1-}(\l) \cr  0   & -\tfrac i 2
I_{\cH_2'}\end{pmatrix} +\begin{pmatrix}M_2(\l)\cr 0
\end{pmatrix}(C_0(\l)- C_1(\l) M_{4}(\l))^{-1}C_1(\l) (M_3(\l):N_{2-}(\l))
\end{equation*}
for all $\l\in\bC_-$. This proves \eqref{5.11}--\eqref{5.13}, which in view of
\eqref{5.7b} implies  \eqref{5.10} and \eqref{5.14}.
\end{proof}
\begin{theorem}\label{th5.6}
Let in Case 3 the conditions of Proposition \ref{pr4.6} be fulfilled, let
$\tau_0=\{\tau_{+},\tau_{-} \}$ be a boundary parameter \eqref{2.2},
\eqref{4.61a} and let $S_-(\l)$ be the operator function \eqref{4.83}. Then:

1) $m_0(\l)= m_{\tau_0}(\l)$ and for every boundary parameter $\pair$ defined
by \eqref{2.2} the
 $m$-function $m_\tau(\cd)$ is of the form
\begin {equation}\label{5.15}
m_\tau(\l)=m_0(\l)+M_{2-}(\l)(D_0(\l)-D_1(\l)M_{4-}(\l))^{-1}D_1(\l)S_-(\l),
\quad\l\in\bC_-.
\end{equation}

2) The $m$-function $m_\tau(\cd) $ has the triangular block representation
\begin {gather}
m_\tau(\l)=\begin{pmatrix} m_{1,\tau}(\l) & 0 \cr  m_{+,\tau}(\l) & \tfrac i 2
I_{\wh \cH}\end{pmatrix}:\underbrace{H\oplus \wh H}_{H_0}\to
\underbrace{H\oplus \wh H}_{H_0}, \quad \l\in\bC_+\label{5.16}\\
m_\tau(\l)=\begin{pmatrix} m_{1,\tau}(\l)  &   m_{-,\tau}(\l)  \cr 0 & -\tfrac
i 2 I_{\wh H}\end{pmatrix}: \underbrace{H\oplus\wh H}_{H_0}\to \underbrace
{H\oplus\wh H}_{H_0}, \quad \l\in\bC_-,\label{5.17}
\end{gather}
where
\begin {gather}
m_{1,\tau}(\l)=M_1(\l)+M_{2-}(\l)(D_0(\l)-D_1(\l)M_{4-}(\l))^{-1}D_1(\l)M_{3-}(\l),
\quad \l\in\bC_-,\label{5.18}\\
 m_{-,\tau}(\l) = N_{1-}(\l)+M_{2-}(\l)(D_0(\l)-D_1(\l)M_{4-}(\l))^{-1}D_1(\l)
N_{2-}(\l),\quad \l\in\bC_-,\label{5.19}\\
 m_{1,\tau}(\l)=m_{1,\tau}^*(\ov\l), \quad
m_{+,\tau}(\l)=m_{-,\tau}^*(\ov\l),\quad \l\in\bC_+. \label{5.20}
\end{gather}
\end{theorem}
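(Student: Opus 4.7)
The proof will closely parallel the arguments already given for Theorems \ref{th5.4} and \ref{th5.5}, so I will organize it in the same three stages: first identify $v_0$ with $v_{\tau_0}$, then apply $(\G_{0a}+\wh\G_a)$ to the representations of $v_\tau(\cd,\l)$ to obtain the linear-fractional formula, and finally read off the triangular block structure.

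First I would verify that with the pair $\tau_0$ given by \eqref{4.61a} the solution $v_0(\cd,\l)$ from Proposition \ref{pr4.6} coincides with $v_{\tau_0}(\cd,\l)$. For $\l\in\bC_-$, equation \eqref{4.63a} yields the condition \eqref{4.79}, and \eqref{4.65} together with $D_0(\l)\equiv I_{\wt\cH_b}$, $D_1(\l)\equiv 0$ gives \eqref{4.80a}. For $\l\in\bC_+$, by \eqref{4.64} one has $C_0(\l)\wt\G_{0b}v_0(\l)=P_{\cH_b}\bigl(iP_{\wh\cH_b}M_{3+}(\l)P_H\bigr)=0$ since $P_{\cH_b}P_{\wh\cH_b}=0$, hence \eqref{4.80} holds. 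Uniqueness in Theorem \ref{th4.7} then forces $v_0(\cd,\l)=v_{\tau_0}(\cd,\l)$, and applying \eqref{5.0} to this equality together with \eqref{4.3a} gives $m_0(\l)=m_{\tau_0}(\l)$.

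Next, applying $\G_{0a}+\wh\G_a$ to \eqref{4.81} and \eqref{4.82} and invoking \eqref{4.3a}, \eqref{4.66.1}, \eqref{4.66.2} produces the two intermediate identities
\begin{gather*}
m_\tau(\l)=m_0(\l)-(M_{2+}(\l)+N_{2+}(\l))(\tau_-^*(\ov\l)+M_{4+}(\l))^{-1}M_{3+}(\l)P_H, \quad\l\in\bC_+,\\
m_\tau(\l)=m_0(\l)-M_{2-}(\l)(\tau_-(\l)+M_{4-}(\l))^{-1}S_-(\l), \quad\l\in\bC_-.
\end{gather*}
The symmetry relation $M_+^*(\ov\l)=M_-(\l)$ applied to \eqref{4.61b}, \eqref{4.61c} (together with the matching of \eqref{4.62} and \eqref{4.63}) gives $m_0^*(\ov\l)=m_0(\l)$, $M_{2-}^*(\ov\l)=M_{3+}(\l)P_H$, $S_-^*(\ov\l)=M_{2+}(\l)+N_{2+}(\l)$, $M_{4-}^*(\ov\l)=M_{4+}(\l)$, so the two formulas are adjoints of each other and therefore $m_\tau^*(\ov\l)=m_\tau(\l)$. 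Then, exactly as in Theorem \ref{th5.4}, \cite[Lemma 2.1]{MalMog02} applied to the pair $\tau_-$ converts $-(\tau_-(\l)+M_{4-}(\l))^{-1}$ into $(D_0(\l)-D_1(\l)M_{4-}(\l))^{-1}D_1(\l)$, yielding \eqref{5.15}.

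For the triangular form, I would plug \eqref{5.15} into the block decomposition \eqref{4.63} of $m_0(\l)$, writing $M_{2-}(\l)=\binom{M_{2-}(\l)}{0}$ as a column from $\cH_b$ into $H\oplus\wh H$ (since the second row of the rectangular part of $M_-(\l)$ contributes only to the $-\tfrac i2 I_{\wh H}$-block via \eqref{4.63}) and $S_-(\l)=(M_{3-}(\l):N_{2-}(\l))$ by \eqref{4.83}. Multiplying these out produces the triangular matrix \eqref{5.17} with the entries \eqref{5.18} and \eqref{5.19}, and the $\bC_+$ representation \eqref{5.16} together with \eqref{5.20} follows by applying $m_\tau^*(\ov\l)=m_\tau(\l)$. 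The only point that requires a little care is the bookkeeping of the various projections $P_H$, $P_{\wh H}$, $P_{\wh\cH_b}$ in the off-diagonal blocks; beyond this, every step is a direct transcription of the Case~2 argument into the Case~3 setting, so I do not anticipate a substantive obstacle.
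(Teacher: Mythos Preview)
Your proposal is correct and follows essentially the same route as the paper's own proof, which is given only as a sketch ``similar to that of Theorems \ref{th5.4} and \ref{th5.5}'': verify $v_0=v_{\tau_0}$ from \eqref{4.3} and \eqref{4.63a}--\eqref{4.65}, apply $\G_{0a}+\wh\G_a$ to \eqref{4.81}, \eqref{4.82} using \eqref{4.3a}, \eqref{4.66.1}, \eqref{4.66.2} to get the two intermediate formulas, derive $m_\tau^*(\ov\l)=m_\tau(\l)$ from $M_+^*(\ov\l)=M_-(\l)$, convert via \cite[Lemma 2.1]{MalMog02}, and then read off the block form from \eqref{4.63} and \eqref{4.83}. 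The only slip is cosmetic: in the last paragraph $M_{2-}(\l)$ is a column from $\wt\cH_b$ (not $\cH_b$) into $H\oplus\wh H$, but this does not affect the argument.
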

\begin{proof}
We give only the sketch of the proof, because it is similar to that of Theorems
\ref{th5.4} and \ref{th5.5}. The equality $m_0(\l)= m_{\tau_0}(\l)$ follows
from \eqref{4.3} and \eqref{4.63a}-\eqref{4.65}. Next, by using \eqref{4.3a},
\eqref{4.66.1}, \eqref{4.66.2} and \eqref{4.81},\eqref{4.82} one proves the
equalities
\begin {gather*}
m_\tau(\l)=m_0(\l)-(M_{2+}(\l)+N_{2+}(\l))(\tau_-^*(\ov\l)+M_{4+}(\l))^{-1}
M_{3+}(\l)P_H,\quad\l\in\bC_+,\\
m_\tau(\l)=m_0(\l)-M_{2-}(\l)(\tau_-(\l)+M_{4-}(\l))^{-1}S_-(\l),
\quad\l\in\bC_-,
\end{gather*}
which imply \eqref{5.7b} and \eqref{5.15}. Moreover, in view of \eqref{4.83}
the equality  \eqref{5.15} can be written as
\begin {equation*}
m_\tau(\l)=\begin{pmatrix}M_1(\l) &  N_{1-}(\l) \cr  0   & -\tfrac i 2 I_{\wh
H}\end{pmatrix} +\begin{pmatrix}M_{2-}(\l)\cr 0
\end{pmatrix}(D_0(\l)- D_1(\l) M_{4_-}(\l))^{-1}D_1(\l)
(M_{3_-}(\l):N_{2-}(\l)).
\end{equation*}
This and  \eqref{5.7b} yield \eqref{5.16}--\eqref{5.20}.
\end{proof}
\begin{proposition}\label{pr5.6}
The $m$-function $m_\tau(\cd)$ is a Nevanlinna operator function such that the
relation
\begin {equation}\label{5.21}
(\im \,\l)^{-1}\cd \im\, m_\tau(\l)\geq \int_\cI v_\tau^*(t,\l)\D(t)
v_\tau(t,\l)\, dt
\end{equation}
holds for all $\l\in\bC_+$ in Case 1 and $\l\in\bC_-$ in Cases 2 and 3.

If in addition $n_+=n_-$, then \eqref{5.21} holds for all $\l\in\CR$.
\end{proposition}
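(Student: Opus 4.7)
My plan is to work out the argument in Case 1 for definiteness; Cases 2 and 3 follow the same template with $\g_-(\cd)$ and identity \eqref{2.44a} in place of $\g_+(\cd)$ and \eqref{2.37}, and the Krein-type formulas from Theorems \ref{th5.5} and \ref{th5.6} in place of \eqref{5.5}. The key idea is to factor the solution $v_\tau(\cd,\l)$ through the $\g$-field of the decomposing boundary triplet $\Pi_+$ and then exploit the Nevanlinna identity for the extended Weyl function $\cM(\cd)$. For $\l\in\bC_+$ I would introduce the coupling operator
$$K_+(\l)=\begin{pmatrix}I_{H_0}\\ -(\tau_+(\l)+M_{4+}(\l))^{-1}M_{3+}(\l)\end{pmatrix}:H_0\to H_0\oplus\wt\cH_b=\cH_0,$$
and check, using formula \eqref{4.29} together with the identifications \eqref{4.10a} and \eqref{4.10b}, that $\pi v_\tau(\l)=\g_+(\l)K_+(\l)$.

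Next, applying \eqref{3.31} of Lemma \ref{lem3.2}, one obtains the factorization
$$\int_\cI v_\tau^*(t,\l)\D(t)v_\tau(t,\l)\,dt=K_+^*(\l)\bigl(\g_+^*(\l)\g_+(\l)\bigr)K_+(\l).$$
Setting $\mu=\l$ in identity \eqref{2.37} yields $\g_+^*(\l)\g_+(\l)=(\im\l)^{-1}\,\im\cM(\l)$, so inequality \eqref{5.21} is equivalent to
$$\im\,m_\tau(\l)\ \geq\ K_+^*(\l)\,\im\cM(\l)\,K_+(\l),\qquad \l\in\bC_+.$$

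The main step is then a direct algebraic manipulation: expanding the left-hand side via the preliminary form \eqref{5.6} of the Krein-type formula and the right-hand side via the block representation \eqref{2.35} of $\cM(\l)$, I would show that, after cancellations using the symmetries $M_{2+}^*(\ov\l)=M_{3-}(\l)$, $M_{4+}^*(\ov\l)=M_{4-}(\l)$, the difference of the two sides reduces to a nonnegative quadratic form in the Schur-complement operator $(\tau_+(\l)+M_{4+}(\l))^{-1}M_{3+}(\l)$, whose positivity is precisely the defining inequality $2\,\im(C_1C_{01}^*)+\a C_{02}C_{02}^*\geq 0$ of \eqref{2.6}. In the canonical case $n_+=n_-$ and $\tau\in\wt R^0(\cH_b)$ this residual form vanishes identically, so \eqref{5.21} holds with equality for $\l\in\bC_+$; by the already established symmetry $m_\tau^*(\ov\l)=m_\tau(\l)$ (see \eqref{5.7b} in the proof of Theorem \ref{th5.4}), the equality then propagates to $\l\in\bC_-$ as well.

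Once the inequality is in hand, the Nevanlinna property of $m_\tau(\cd)$ comes as a byproduct: holomorphy on $\CR$ is immediate from the Krein-type formulas \eqref{5.5}, \eqref{5.9}, \eqref{5.15}, the relation $m_\tau^*(\l)=m_\tau(\ov\l)$ is already \eqref{5.7b}, and the sign condition $\im\l\cd\im m_\tau(\l)\geq 0$ follows from \eqref{5.21} on the half-plane where it is proved together with this symmetry on the opposite half-plane. I expect the main obstacle to be the bookkeeping in the Schur-complement reduction, which is somewhat heavier in Cases 2 and 3 because there $\tau\in\wt R(\cH_b)$ couples $\cH_b$ and $\wt\cH_b$ asymmetrically (only the bottom-right $\cH_b$-block of $M_+$ or $M_-$ participates directly in the inverse, while the off-diagonal pieces $N_{1\pm}, N_{2\pm}$ enter through the vector $S_-(\l)$ of \eqref{4.58a}, \eqref{4.83}); however, the same nonnegative residue appears and the argument goes through verbatim.
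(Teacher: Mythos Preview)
Your overall strategy is sound and does lead to a proof, but it is \emph{not} the route the paper takes. The paper works pointwise: it fixes $h_0\in H_0$, sets $y=v_\tau(\l)h_0$, and applies the Lagrange identity \eqref{3.6} directly to $\{y,\l y\}\in\tma$. After inserting the concrete boundary values \eqref{5.1} and the relation \eqref{4.27a}, this produces
\[
\im\l\cdot(y,y)_\D=\im(m_\tau(\l)h_0,h_0)-\bigl(\im(\G_{1b}y,\wt\G_{0b}y)-\tfrac12\|P_{\cH_2'}\wt\G_{0b}y\|^2\bigr),
\]
and the bracketed term is nonnegative because $\{\wt\G_{0b}y,\G_{1b}y\}\in\tau_+(\l)$ (this is the defining inequality of the class, quoted from \cite[Prop.~4.3]{Mog06.1}). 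Your route instead lifts everything to the abstract $\g$-field via $\pi v_\tau(\l)=\g_+(\l)K_+(\l)$ and the identity $\g_+^*\g_+=(\im\l)^{-1}\im\cM$, then reduces to a Schur-complement inequality; when $\tau$ is an operator the residual you arrive at is exactly $T^*(\im\tau)T\geq 0$ with $T=(\tau+M_{4+})^{-1}M_{3+}$, which is the same nonnegative term the paper isolates, written in different coordinates. The paper's argument is shorter and avoids the block-matrix algebra, but yours makes the role of the abstract Nevanlinna function $\cM(\cd)$ more transparent.

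One point you should correct: the final clause of the proposition asserts that when $n_+=n_-$ the \emph{inequality} \eqref{5.21} holds for every boundary parameter $\tau\in\wt R(\cH_b)$ and all $\l\in\CR$, not that equality holds for canonical $\tau\in\wt R^0(\cH_b)$ (that equality is the content of Proposition~\ref{pr5.7}, not of this proposition). Your symmetry argument via $m_\tau^*(\ov\l)=m_\tau(\l)$ does not transfer the inequality from $\bC_+$ to $\bC_-$, because the right-hand side of \eqref{5.21} involves $v_\tau(\cd,\l)$ and there is no a priori relation between $\int v_\tau^*(\cd,\l)\D v_\tau(\cd,\l)$ and the same integral at $\ov\l$. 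What actually works is to rerun your factorization argument (or the paper's Lagrange computation) for $\l\in\bC_-$: when $n_+=n_-$ the triplet is ordinary, $\g(\cd)$ and the identity \eqref{2.49} are available on both half-planes, and the Nevanlinna sign condition \eqref{2.20} on $\tau$ flips appropriately so that the residual term keeps the correct sign.
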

\begin{proof}
We prove the proposition only for \emph{Case 1} (in \emph{Cases 2} and \emph{3}
the proof is similar).

Let $\Pi_+=\bta$ be a decomposing boundary triplet \eqref{3.42}, \eqref{3.35}
for $\Tma$ and let $\pair\in\RP$  be a boundary parameter defined by
\eqref{2.2}. Let us show that the corresponding $m$-function $m_\tau(\cd)$
satisfies \eqref{5.21}.

Assume that $\l\in\bC_+, \; h_0\in H_0$ and let $y:=v_\tau(\l)h_0$, so that
$y=y(t)=v_\tau(t,\l)h_0,\; t\in\cI$. Applying the Lagrange's identity
\eqref{3.6} to $\{y,\l y\}\in\tma$ and taking the equalities \eqref{1.3} and
\eqref{3.19} into account one obtains
\begin {equation}\label{5.22}
\im\, \l \cd (y,y)_\D=\tfrac 1 2 (||\wh \G_b y||^2-||\wh \G_a y||^2 )+ \im\,
(\G_{1a}y,\G_{0a}y )-\im\, (\G_{1b}y,\G_{0b}y ).
\end{equation}
It follows from \eqref{4.27a} that $P_{\wh H} \wh \G_b y=\wh\G_a y+i P_{\wh H}
h_0$ and, therefore,
\begin {equation}\label{5.23}
||P_{\wh H} \wh \G_b y||^2= ||\wh\G_a y||^2+||P_{\wh H} h_0||^2 + 2\im (\wh\G_a
y,P_{\wh H} h_0 ).
\end{equation}
Moreover, in view of \eqref{3.34a} one has
\begin {equation}\label{5.24}
P_{\cH_2'}\wh\G_b y=P_{\cH_2'}\wt\G_{0b} y, \qquad
(\G_{1b}y,\G_{0b}y)=(\G_{1b}y,\wt\G_{0b}y).
\end{equation}
Now by using first the decomposition \eqref{3.34.0} and then  the equality
\eqref{5.23} one gets
\begin{gather}
||\wh \G_b y||^2-||\wh \G_a y||^2 =||P_{\wh H} \wh \G_b
y||^2+||P_{\cH_2'}\wh\G_{b} y||^2- ||\wh\G_a y||^2=\label {5.25}\\
||P_{\wh H} h_0||^2+ 2\im (\wh\G_a y,P_{\wh H} h_0 )+||P_{\cH_2'}
\wt\G_{0b}y||^2. \nonumber
\end{gather}
Next, according to \eqref{5.1}
\begin {gather}
\G_{0a}y=P_H m_\tau(\l)h_0, \qquad \G_{1a}y=-P_H h_0, \label{5.26}\\
\wh\G_a y=P_{\wh H}m_\tau(\l)h_0 - \tfrac i 2 P_{\wh H}h_0\label{5.27}
\end{gather}
and substitution of \eqref{5.27} to \eqref{5.25} yields
\begin {equation}\label{5.28}
||\wh \G_b y||^2-||\wh \G_a y||^2 =2\im\, (P_{\wh H}m_\tau(\l)h_0, P_{\wh
H}h_0)+||P_{\cH_2'} \wt\G_{0b}y||^2.
\end{equation}
Moreover, by \eqref{5.26} one has
\begin {equation}\label{5.29}
\im\, (\G_{1a}y,\G_{0a}y )=\im\, (P_H m_\tau(\l)h_0, P_H h_0).
\end{equation}
Substituting now \eqref{5.28} and \eqref{5.29} to \eqref{5.22} and taking the
second equality in \eqref{5.24} into account we obtain
\begin {equation}\label{5.30}
\im\, \l \cd (y,y)_\D=\im\, (m_\tau(\l)h_0, h_0)- (\im\,(\G_{1b}y,
\wt\G_{0b}y)-\tfrac 1 2  ||P_{\cH_2'} \wt\G_{0b}y||^2).
\end{equation}
It follows from  \eqref{4.28} that $\{\wt\G_{0b}y, \G_{1b}y\}\in\tau_+(\l)$.
Therefore according to \cite[Proposition 4.3]{Mog06.1}
\begin {equation}\label{5.31}
\im\,(\G_{1b}y, \wt\G_{0b}y)-\tfrac 1 2  ||P_{\cH_2'} \wt\G_{0b}y||^2\geq 0.
\end{equation}
Moreover, in view of \eqref{3.0} one has
\begin {equation}\label{5.32}
(y,y)_\D=\int_\cI (\D(t)v_\tau(t,\l)h_0, v_\tau(t,\l)h_0)\,dt=((\int_\cI
v_\tau^*(t,\l)\D(t) v_\tau(t,\l)\, dt)h_0, h_0).
\end{equation}
Combining now  \eqref{5.31} and \eqref{5.32} with \eqref{5.30} we arrive at the
relation \eqref{5.21}.

It follows from \eqref{5.5} that the operator function $m_\tau(\cd)$ is
holomorphic in $\bC_+$. Moreover, the relation \eqref{5.21} shows that $\im\,
m_\tau(\l)\geq 0$ for all $\l\in\bC_+$. This and \eqref{5.7b} imply that
$m_\tau(\cd)$ is a Nevanlinna function.
\end{proof}
In the following proposition we show that a canonical $m$-function
$m_\tau(\cd)$ is the Weyl function of some symmetric extension of $\Tmi$.
\begin{proposition}\label{pr5.7}
Assume that $n_+=n_-$ and $\Pi=\bt$ is a decomposing boundary triplet
\eqref{3.35.2}, \eqref{3.42} for $\Tma$. Moreover, let $\tau\in\wt R^0(\cH_b)$
be a boundary parameter \eqref{2.22} and let $v_\tau(\cd,\l)\in\lo{H_0}$ be the
operator solution of Eq. \eqref{3.2} defined in Theorem \ref{th4.2}.

 Then: 1) The equalities
\begin{gather*}
\wt T=\{\{\wt y,\wt f\}\in\Tma : y(a)=0, \;\wh \G_a y=\wh \G_b y, \;
C_0\G_{0b}y+ C_1 \G_{1b}y =0 \},\\
\wt T^*=\{\{\wt y,\wt f\}\in\Tma : C_0\G_{0b}y + C_1 \G_{1b}y =0 \}
\end{gather*}
define a symmetric extension $\wt T$ of $\Tmi$ and its adjoint $\wt T^*$;

2) The collection $\Pi^n=\{H_0, \G_0^n, \G_1^n\}$ with the operators
\begin {equation}\label{5.37}
\G_0^n \{\wt y,\wt f\}=-\G_{1a}y+i (\wh \G_a -\wh \G_b) y, \quad \G_1^n \{\wt
y,\wt f\}=\G_{0a}y+ \tfrac 1 2 (\wh \G_a +\wh \G_b) y, \quad \{\wt y,\wt
f\}\in\wt T^*,
\end{equation}
is a boundary triplet for $\wt T^*$. Moreover, the $\g$-field $\g^n(\cd)$ and
Weyl function $M^n(\cd)$ of $\Pi^n$ are
\begin {equation}\label{5.38}
\g^n(\l)=\pi v_\tau(\l), \qquad M^n(\l)=m_\tau(\l), \qquad \l\in\CR;
\end{equation}

3) the following identity holds
\begin {equation}\label{5.39}
m_\tau(\mu)-m_\tau^*(\l)= (\mu-\ov\l)\int_\cI v_\tau^*(t,\l)\D(t)
v_\tau(t,\mu)\, dt, \quad \mu,\l\in\CR.
\end{equation}
This implies that for the canonical $m$-function $m_\tau(\cd)$ the inequality
\eqref{5.21} turns into the eq1uality, which holds for all $\l\in\CR$.
\end{proposition}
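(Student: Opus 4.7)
The plan is to deduce Proposition \ref{pr5.7} from Proposition \ref{pr2.10a} by rotating the decomposing boundary triplet so that the self-adjoint condition at $b$ takes the canonical form ``the $\cH_b$-component of the first boundary map is zero''. By Remark \ref{rem2.4} I may write $(C_0,C_1) = (\cos B, \sin B)$ with $B = B^* \in [\cH_b]$. Define a new ordinary boundary triplet $\wt\Pi = \{H_0\oplus\cH_b, \wt\G_0, \wt\G_1\}$ by keeping the $H_0$-components of $\G_0, \G_1$ unchanged and rotating the $\cH_b$-components:
\begin{gather*}
P_{\cH_b}\wt\G_0 = \cos B\cd\G_{0b} + \sin B\cd\G_{1b}, \qquad P_{\cH_b}\wt\G_1 = \sin B\cd\G_{0b} - \cos B\cd\G_{1b}.
\end{gather*}
Recalling that $P_{\cH_b}\G_0 = \G_{0b}$ and $P_{\cH_b}\G_1 = -\G_{1b}$, the rotation on $(P_{\cH_b}\G_0, P_{\cH_b}\G_1)$ is given by the matrix $\left(\begin{smallmatrix}\cos B & -\sin B \\ \sin B & \cos B\end{smallmatrix}\right)$. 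Using $B = B^*$ and $\cos^2 B + \sin^2 B = I$, a short calculation shows that the symplectic form $(\G_1\wh f, \G_0\wh g) - (\G_0\wh f, \G_1\wh g)$ is invariant under this rotation; invertibility of the rotation preserves the surjectivity of $(\G_0,\G_1)^\top$. Hence $\wt\Pi$ is an ordinary boundary triplet for $\Tma$.

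Next I apply Proposition \ref{pr2.10a} to $\wt\Pi$ with the decomposition $\wh\cH := \cH_b$, $\dot\cH := H_0$. In the notation of that proposition, $\wh\G_0 = P_{\cH_b}\wt\G_0 = \cos B\cd\G_{0b} + \sin B\cd\G_{1b}$, while $\dot\G_0 = P_{H_0}\wt\G_0 = \G_0^n$ and $\dot\G_1 = P_{H_0}\wt\G_1 = \G_1^n$. Proposition \ref{pr2.10a}(1)--(2) immediately yield part 2) of Proposition \ref{pr5.7} together with the description of $\wt T^*$ in part 1); the description of $\wt T$ follows after a short algebraic simplification, since invertibility of $X_a$ and orthogonality of $H,\wh H$ imply that $\dot\G_0 y = \dot\G_1 y = 0$ is equivalent to $y(a) = 0$ together with $\wh\G_a y = \wh\G_b y$.

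For the $\g$-field, Proposition \ref{pr2.10a}(3) gives $\g^n(\l) = \wt\g(\l)\up H_0$, where $\wt\g(\l)$ is the $\g$-field of $\wt\Pi$. For $h_0 \in H_0$ the element $\g^n(\l)h_0 \in \gN_\l(\Tmi)$ has representative $y \in \cN_\l$ characterized by $\wt\G_0\{\wt y, \l\wt y\} = (h_0, 0)$, i.e.\ by $\G_{1a} y = -P_H h_0$, $i(\wh\G_a - \wh\G_b) y = P_{\wh H} h_0$ and $\cos B\cd\G_{0b} y + \sin B\cd\G_{1b} y = 0$. These are precisely the boundary conditions characterizing $v_\tau(\cd,\l)h_0$ in Theorem \ref{th4.2} (in the form stated there for $n_+=n_-$), so $\g^n(\l) = \pi v_\tau(\l)$. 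For the Weyl function, a direct computation of $M^n(\l)h_0 = \G_1^n\{\wt v_\tau(\l)h_0, \l\wt v_\tau(\l)h_0\}$ uses \eqref{5.1} together with $\wh\G_b(v_\tau(\l)h_0) = \wh\G_a(v_\tau(\l)h_0) + iP_{\wh H}h_0$ (from the second condition above) to show that the $H$- and $\wh H$-components of $\G_{0a}(v_\tau(\l)h_0) + \tfrac 1 2 (\wh\G_a + \wh\G_b)(v_\tau(\l)h_0)$ assemble to $m_\tau(\l)h_0$; hence $M^n(\l) = m_\tau(\l)$.

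Finally, part 3) follows by applying the standard identity \eqref{2.49} to the ordinary boundary triplet $\Pi^n$ for $\wt T^*$: $m_\tau(\mu) - m_\tau^*(\l) = (\mu - \ov\l)(\g^n(\l))^* \g^n(\mu)$ for $\mu,\l\in\CR$. Using $\g^n = \pi v_\tau$ and Lemma \ref{lem3.2}(2), one finds $(\g^n(\l))^*\g^n(\mu) = \int_\cI v_\tau^*(t,\l)\D(t) v_\tau(t,\mu)\,dt$, which yields \eqref{5.39}; setting $\mu = \l$ then turns \eqref{5.21} into the equality. The most error-prone step is the symplectic-invariance check under the $\cH_b$-rotation, which must be carried out carefully because of the sign convention $P_{\cH_b}\G_1 = -\G_{1b}$; once this is handled, everything else reduces to bookkeeping via Proposition \ref{pr2.10a} and the boundary conditions of Theorem \ref{th4.2}.
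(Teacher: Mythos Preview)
Your proposal is correct and follows essentially the same route as the paper: you normalize $(C_0,C_1)=(\cos B,\sin B)$, rotate the $\cH_b$-components of the decomposing triplet to obtain a new ordinary boundary triplet (the paper's $\ov\Pi$ in \eqref{5.40}--\eqref{5.40a} is exactly your $\wt\Pi$), apply Proposition \ref{pr2.10a} with $\wh\cH=\cH_b$ and $\dot\cH=H_0$ to extract $\wt T,\wt T^*,\Pi^n$, and then read off $\g^n$ and $M^n$ before invoking \eqref{2.49} and Lemma \ref{lem3.2}(2) for \eqref{5.39}. The only cosmetic difference is that the paper identifies $\g^n$ and $M^n$ via Proposition \ref{pr4.1} and \eqref{4.3a}, \eqref{4.10a} applied to $\ov\Pi$, whereas you compute them directly from the defining boundary conditions and \eqref{5.1}; both computations are equivalent.
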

\begin{proof}
Clearly, we may assume that $\tau $ is given in the normalized form
\eqref{2.23}, in which case the operators
\begin{gather}
\ov\G_0\{\wt y,\wt f\}=\{-\G_{1a}y+i (\wh \G_a -\wh \G_b) y,\;C_0\G_{0b}y+ C_1
\G_{1b}y  \}(\in H_0\oplus\cH_b), \label{5.40}\\
\ov\G_1\{\wt y,\wt f\}=\{\G_{0a}y+ \tfrac 1 2 (\wh \G_a +\wh \G_b) y,\;
C_1\G_{0b}y-C_0\G_{1b}y  \}(\in H_0\oplus\cH_b),  \quad \{\wt y,\wt
f\}\in\Tma,\label{5.40a}
\end{gather}
form a decomposing boundary triplet $\ov\Pi=\{\cH, \ov\G_0, \ov\G_1\}$ for
$\Tma$. Let $\ov\g(\l)$ be the $\g$-field and
\begin {equation}\label{5.41}
\ov M(\l)=\begin{pmatrix} \ov m_0(\l) & \ov M_2(\l) \cr \ov M_3(\l) & \ov
M_4(\l)
\end{pmatrix}:H_0\oplus\cH_b\to
H_0\oplus\cH_b, \quad \l\in\CR,
\end{equation}
be the Weyl function of the triplet $\ov\Pi$. Assume also that $\ov
v_0(\cd,\l)\in \lo{H_0}$ is the operator solution of Eq. \eqref{3.2} defined in
Proposition \ref{pr4.1} (for the triplet $\ov\Pi$). Then $\ov v_0(t,\l)=
v_\tau(t,\l) $ and \eqref{4.10a} yields $\ov\g(\l)\up H_0=\pi v_\tau (\l)$.
Moreover, in view of \eqref{4.3a} one has $\ov m_0(\l)=m_\tau(\l), \;
\l\in\CR$. Applying now Proposition \ref{pr2.10a} to the triplet $\ov\Pi$ (with
$\dot\cH_0=\dot\cH_1=H_0$) we obtain the statements 1) and 2). Finally,
\eqref{5.39} follows from the identity \eqref{2.49} for the triplet $\Pi^n$ and
Lemma \ref{lem3.2}, 2) applied to the solution $v_\tau(\cd,\l)$.
\end{proof}
\begin{remark}\label{rem5.8}
Let $\Pi=\bt$ be a decomposing boundary triplet for $\Tma$, let $\tau\in\wt
R^0(\cH_b)$ be a boundary parameter given in the normalized form  \eqref{2.22},
\eqref{2.23} and let $\ov\Pi=\{\cH, \ov\G_0, \ov\G_1\}$ be a decomposing
boundary triplet \eqref{5.40}, \eqref{5.40a} for $\Tma$. It is easy to see that
$\Pi$ and $\ov\Pi$ are connected by
\begin {equation*}
\begin{pmatrix}\ov\G_0 \cr\ov\G_1 \end{pmatrix}=\begin{pmatrix} X_1 & X_2
\cr X_3 & X_4 \end{pmatrix}
\begin{pmatrix} \G_0 \cr \G_1 \end{pmatrix},
\end{equation*}
where $X_j\in [H_0\oplus\cH_b] $ are  defined as follows:
\begin {equation*}
X_1=\begin{pmatrix} I & 0 \cr 0 &C_0 \end{pmatrix}, \;\;\; X_2=\begin{pmatrix}
0 & 0 \cr 0 & -C_1 \end{pmatrix} , \;\;\; X_3=\begin{pmatrix} 0 & 0 \cr 0 &
C_1\end{pmatrix} , \;\;\; X_4=\begin{pmatrix} I & 0 \cr 0 &C_0 \end{pmatrix}.
\end{equation*}
Therefore according to \cite{DM95} the Weyl functions $M(\cd)$ and $\ov M(\cd)$
of the triplets $\Pi$ and $\ov\Pi$ respectively are connected via
\begin {equation} \label{5.45}
\ov M(\l)=(X_3+X_4 M(\l))(X_1+X_2 M(\l))^{-1}.
\end{equation}
By using the block representation \eqref{5.8} of $M(\l)$ one obtains $(X_1+X_2
M(\l))^{-1}=$

$=\begin{pmatrix}I & 0 \cr -C_1M_3 & C_0-C_1M_4
\end{pmatrix}^{-1}=\begin{pmatrix}I & 0 \cr  (C_0-C_1M_4)^{-1}C_1M_3
& (C_0-C_1M_4)^{-1}\end{pmatrix}$

\noindent and \eqref{5.45}, \eqref{5.41}  imply that $\ov m_0(\l)$ coincides
with the right hand side of \eqref{5.8a}. This and the equality $m_\tau(\l)=\ov
m_0(\l)$ obtained in the proof of Proposition \ref{pr5.7} yield \eqref{5.8a}.
Thus, for canonical $m$-functions $m_\tau(\cd)$ formula \eqref{5.8a} is a
simple consequence of the relation \eqref{5.45} for Weyl functions.

Note that considerations in this remark are inspired by \cite[Remark 86]{DM95},
where the Krein formula for resolvents was proved in a similar way.
\end{remark}
\section{Particular cases}\label{sect6}
\subsection{Symmetric systems with minimal deficiency indices}
It follows from \eqref{3.17c} that the minimally possible deficiency indices of
$\Tmi$ are
\begin {equation} \label{6.1}
n_+=\nu_+, \qquad n_-=\nu_-
\end{equation}
and \eqref{6.1} holds if and only if $\nu_{b+}=\nu_{b-}=0$ or, equivalently,
$[y,z]_b=0$ for all $y,z\in\dom\tma$. This implies that the system \eqref{3.1}
with minimal deficiency indices of $\Tmi$ is in \emph{Case 2} and by
\eqref{3.20} $\cH_b=\wh\cH_b=\{0\}$. Therefore in \eqref{3.36}
\begin {equation} \label{6.2}
H_0'=H, \qquad \cH_2'=\wh H
\end{equation}
and the decomposing boundary triplet for $\Tma$ takes the form
$\Pi_-=\{H_0\oplus H,\G_0,\G_1\}$, where $H_0$ is given by \eqref{3.17a} and
\begin {equation} \label{6.3}
\G_0\{\wt y,\wt f\}=\{-\G_{1a}y, i \wh\G_a y\}\,(\in H\oplus \wh H), \quad
\G_1\{\wt y,\wt f\}=\G_{0a}y\,(\in H), \quad \{\wt y,\wt f\}\in \Tma.
\end{equation}

In the case of minimal deficiency indices the symmetric extension $T$ defined
by \eqref{3.50} coincides with
\begin {equation} \label{6.4}
A_0(=\ker\G_0)=\{\{\wt y, \wt f\}\in\Tma: \, \G_{1a}y=\wh\G_a y=0\}
\end{equation}
(c.f. \eqref{3.53}). The unique generalized resolvent $R_0(\l)$ of $A_0$ is of
the form \eqref{4.33.0} and according to Theorem \ref{th4.2a} it is given by
the boundary value problem
\begin{gather}
J y'-B(t)y=\l\D (t)y +\D(t) f(t), \quad t\in\cI,\label{6.5}\\
\G_{1a}y =0, \;\;\;\l\in\bC_+; \qquad \G_{1a}y =0, \; \wh\G_a y=0,
\;\;\;\l\in\bC_-.\label{6.6}
\end{gather}

In view of Theorem \ref{th4.4} for each $\l\in\CR$ there exists a unique
operator solution $v(\cd,\l)\in\lo{H_0}$ of Eq. \eqref{3.2} such that
\begin {equation*}
\G_{1a}v(\l)=-P_H, \;\;\;\l\in\bC_+;\qquad \G_{1a}v(\l)=-P_H, \;\;\; i\wh \G_a
v(\l)= P_{\wh  H}, \;\;\;\l\in\bC_-,
\end{equation*}
and according to Definition \ref{def5.2} the $m$-function of the boundary value
problem \eqref{6.5}, \eqref{6.6} is given by
\begin {equation} \label{6.7}
m(\l)=(\G_{0a}+\wh\G_a)v(\l)+\tfrac i 2 P_{\wh H}, \quad \l\in\CR.
\end{equation}
In view of Proposition \ref{pr5.3} the $m$-function $m(\cd)$ can be also
defined by the relations
\begin {equation} \label{6.8}
v(t,\l):=\f (t,\l)m(\l)+\psi (t,\l)\in\lo{H_0}, \;\;\l\in\CR; \;\;\;\;\;
i\wh\G_a v(\l)=P_{\wh H}, \;\; \l\in\bC_-,
\end{equation}
where $\f(\cd,\l)$ and $\psi(\cd,\l)$  are the solutions of Eq. \eqref{3.2}
with the initial data \eqref{5.2}.

Next, Proposition \ref{pr4.3}, 2) and Theorem \ref{th5.5} yield the following
proposition.
\begin{proposition}\label{pr6.1}
Assume that $\Tmi$ has minimal deficiency indices \eqref{6.1},
$\Pi_-=\{H_0\oplus H,\G_0,\G_1\}$ is a decomposing boundary triplet \eqref{6.3}
for $\Tma$, $\g_-(\cd) $ is the $\g$-field and
\begin{gather}
M_+(\l)=(M(\l)\,:\, N_+(\l))^\top :H\to H\oplus \wh H, \quad\l\in\bC_+,
\label{6.9}\\
M_-(\l)=(M(\l)\,:\, N_-(\l)) :H\oplus \wh H\to H , \quad\l\in\bC_-,
\label{6.10}
\end{gather}
are the corresponding Weyl functions. Then $\g_-(\l)=\pi v(\l), \;\l\in\bC_-,$
and the following equalities hold
\begin{gather}
m(\l)=\begin{pmatrix} M(\l) & 0 \cr  N_+(\l) & \tfrac i 2 I_{\wh
\cH}\end{pmatrix}:\underbrace{H\oplus \wh H}_{H_0}\to
\underbrace{H\oplus \wh H}_{H_0}, \quad \l\in\bC_+\label{6.11}\\
m(\l)=\begin{pmatrix} M(\l)  &   N_-(\l)  \cr 0 & -\tfrac i 2 I_{\wh
H}\end{pmatrix}: \underbrace{H\oplus\wh H}_{H_0}\to \underbrace {H\oplus\wh
H}_{H_0}, \quad \l\in\bC_-.\label{6.12}
\end{gather}
Formulas \eqref{6.11} and \eqref{6.12} imply that the $m$-function $m(\cd)$
coincides with the function $\cM(\cd)$ corresponding to the decomposing
boundary triplet $\Pi_-$ (see \eqref{2.43} and \eqref{2.44}).
\end{proposition}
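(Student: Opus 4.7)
The plan is to recognize Proposition \ref{pr6.1} as the specialization of the general Case 2 results to the degenerate setting in which the boundary form at $b$ vanishes identically. Since $n_\pm=\nu_\pm$ amounts to $\nu_{b+}=\nu_{b-}=0$, formula \eqref{3.20} forces $\cH_b=\wh\cH_b=\{0\}$, so in the decomposition \eqref{3.36} one has $H_0'=H$ and $\cH_2'=\wh H$. I would begin by checking that \eqref{6.3} is exactly what \eqref{3.38} and \eqref{3.42} produce under these identifications, and that the block Weyl functions \eqref{4.33.1}, \eqref{4.33.2} collapse to the rectangular forms \eqref{6.9}, \eqref{6.10} with $M_1=M$, $N_{1\pm}=N_\pm$, all blocks involving $\cH_b$ disappearing.

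Next, with $\cH_b=\{0\}$ the only admissible boundary parameter is the trivial one $\tau_0=\{(I_{\cH_b},0);\cH_b\}$, and the boundary condition \eqref{4.57} is vacuous. Hence Theorems \ref{th4.4} and \ref{th5.5}, applied to this $\tau_0$, yield $v_{\tau_0}(\cd,\l)=v_0(\cd,\l)$ and $m_{\tau_0}(\l)=m_0(\l)$. On the other hand, the solution $v(\cd,\l)$ in \eqref{6.8} is characterized by exactly the properties defining $v_0(\cd,\l)$ in Proposition \ref{pr4.3}, namely \eqref{4.3} together with \eqref{4.35a} (which in our degenerate case reduce to $\G_{1a}v(\l)=-P_H$ for $\l\in\CR$ and $i\wh\G_a v(\l)=P_{\wh H}$ for $\l\in\bC_-$, since $\wh\G_b=0$). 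Hence uniqueness gives $v(\cd,\l)=v_0(\cd,\l)$ and, via \eqref{5.0}/\eqref{6.7}, $m(\l)=m_0(\l)$.

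The identity $\g_-(\l)=\pi v(\l)$ for $\l\in\bC_-$ then follows at once from \eqref{4.39c}, because in the present situation $\cH_0=H_0$, so the restriction $\up H_0$ is trivial. Substituting $\cH_2'=\wh H$ together with the identifications $M_1=M$, $N_{1\pm}=N_\pm$ into \eqref{4.34}, \eqref{4.35} transforms these formulas verbatim into \eqref{6.11}, \eqref{6.12}. Finally, a direct comparison with \eqref{2.43}, \eqref{2.44} (using $\cH_2=\cH_0\ominus\cH_1=\wh H$) confirms that $m(\cd)$ coincides with the Nevanlinna function $\cM(\cd)$ associated with $\Pi_-$. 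No serious obstacle arises; the whole argument is a systematic reading off of the general Case 2 formulas under the triviality of $\cH_b$ and $\wh\cH_b$, and the main care needed is to track the identifications $H_0'=H$, $\cH_2'=\wh H$ consistently through the block decompositions.
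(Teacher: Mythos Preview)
Your proposal is correct and follows essentially the same approach as the paper, which simply states that Proposition~\ref{pr4.3}, 2) and Theorem~\ref{th5.5} yield the result. You have spelled out precisely how the Case~2 machinery degenerates when $\cH_b=\wh\cH_b=\{0\}$: the identifications $H_0'=H$, $\cH_2'=\wh H$, $M_1=M$, $N_{1\pm}=N_\pm$ turn \eqref{4.34}, \eqref{4.35} into \eqref{6.11}, \eqref{6.12}, and \eqref{4.39c} gives $\g_-(\l)=\pi v(\l)$ since $\cH_0=H_0$.
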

Combining the latter statement of Proposition \ref{pr6.1} with \eqref{2.44a}
and taking the equality $\g_-(\l)=\pi v(\l)$ and Lemma \ref{lem3.2}, 2) into
account we obtain the following corollary.
\begin{corollary}\label{cor6.2}
$m(\cd)$ is a Nevanlinna operator function satisfying the identity
\begin{gather}\label{6.13}
m(\mu)-m^*(\l)=(\mu-\ov\l)\int_\cI v^*(t,\l)\D(t)v(t,\mu)\,dt, \quad
\mu,\l\in\bC_-.
\end{gather}
\end{corollary}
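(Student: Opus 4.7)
The plan is to derive Corollary \ref{cor6.2} as a direct consequence of three already-established ingredients: the identification of $m(\cd)$ with the Weyl-type function $\cM(\cd)$ of the decomposing triplet $\Pi_-$ from Proposition \ref{pr6.1}, the abstract identity \eqref{2.44a} associated with the triplet $\Pi_-$, and Lemma \ref{lem3.2}(2) which converts inner products of $\g$-field values into integrals involving the solution $v(\cd,\l)$.

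First I would verify the setting. In the minimal deficiency case $\nu_{b\pm}=0$, so $\cH_b=\wh\cH_b=\{0\}$, and from the analysis before \eqref{6.3} the underlying Hilbert spaces of the triplet $\Pi_-$ collapse to $\cH_0=H_0$ and $\cH_1=H$. Hence the $\g$-field of $\Pi_-$ is $\g_-(\cd):\bC_-\to[H_0,\LI]$, and Proposition \ref{pr4.3}(2) (which in this case reduces precisely to Proposition \ref{pr6.1}) gives $\g_-(\l)=\pi v(\l)$, $\l\in\bC_-$. The function $\cM(\cd)$ associated with $\Pi_-$ via \eqref{2.43}--\eqref{2.44} is then an operator function on $H_0=\cH_1\oplus\cH_2=H\oplus\wh H$, and by the last assertion of Proposition \ref{pr6.1} its block form coincides with $m(\l)$ given by \eqref{6.11}--\eqref{6.12}. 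Consequently $m(\l)=\cM(\l)$ for all $\l\in\CR$, and since $\cM(\cd)$ is Nevanlinna by Proposition \ref{pr2.9}(2), so is $m(\cd)$.

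Next I would apply \eqref{2.44a} to $\cM(\cd)$: for all $\mu,\l\in\bC_-$,
\begin{equation*}
m(\mu)-m^*(\l)=\cM(\mu)-\cM^*(\l)=(\mu-\ov\l)\,\g_-^*(\l)\g_-(\mu).
\end{equation*}
It then remains to evaluate the composition $\g_-^*(\l)\g_-(\mu):H_0\to H_0$. For $h\in H_0$ the element $\g_-(\mu)h=\pi(v(\cd,\mu)h)\in\LI$ has representative $v(\cd,\mu)h\in\lI$. Applying Lemma \ref{lem3.2}(2) with $\cK=H_0$, $Y(\cd,\l)=v(\cd,\l)$, and $F(\l)=\pi v(\l)=\g_-(\l)$ yields
\begin{equation*}
\g_-^*(\l)\g_-(\mu)h=\int_\cI v^*(t,\l)\D(t)v(t,\mu)h\,dt=\left(\int_\cI v^*(t,\l)\D(t)v(t,\mu)\,dt\right)h,
\end{equation*}
so that $\g_-^*(\l)\g_-(\mu)=\int_\cI v^*(t,\l)\D(t)v(t,\mu)\,dt$ as an operator in $[H_0]$. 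Substituting this into the previous display gives \eqref{6.13}.

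There is essentially no serious obstacle: the work has already been done in Propositions \ref{pr2.9}, \ref{pr4.3}, \ref{pr6.1} and Lemma \ref{lem3.2}. The only point requiring modest care is the bookkeeping in step one, namely confirming that in the minimally deficient subcase of \emph{Case 2} one really does have $\cH_0=H_0$ (not $H_0\oplus H$ or $H_0\oplus\cH_b$) so that the abstract identity \eqref{2.44a} from $[\cH_0]$ and the concrete identity \eqref{6.13} in $[H_0]$ live in the same operator space and the chain $m=\cM$ is a genuine operator identity. Once this identification is in place, the corollary follows immediately.
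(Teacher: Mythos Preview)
Your proposal is correct and follows essentially the same route as the paper: the paper derives Corollary \ref{cor6.2} by combining the identification $m(\cd)=\cM(\cd)$ from Proposition \ref{pr6.1} with the abstract identity \eqref{2.44a} and then invoking $\g_-(\l)=\pi v(\l)$ together with Lemma \ref{lem3.2}(2) to convert $\g_-^*(\l)\g_-(\mu)$ into the integral. Your bookkeeping check that $\cH_0=H_0$ in the minimal deficiency subcase of \emph{Case 2} is accurate and is the only thing the paper leaves implicit.
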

\begin{remark}\label{rem6.3}
It follows from \eqref{6.8} and \eqref{6.11} that the Weyl function \eqref{6.9}
of the decomposing boundary triplet $\Pi_-$ for $\Tma$ is defined by the
relation
\begin {equation*}
\f(t,\l) M_+(\l) +\chi(t,\l) \in \lo{H}, \quad \l\in\bC_+,
\end{equation*}
where $\chi(t,\l)(\in [H,\bH])$  is a solution of Eq. \eqref{3.2} with the
initial data \eqref{1.38}. This and \eqref{1.37} imply that $M_+(\l)$ coincides
with the Titchmarsh - Weyl coefficient $M_{TW}(\l)$ introduced in
\cite{HinSch06} for symmetric systems \eqref{3.1} with minimal deficiency
indices $n_\pm$ under the additional assumption that the operator $X_a$ is of
the special  form \eqref{3.23}. Observe also that the square matrix $m(\l)$
defined by \eqref{6.11} appears in \cite[p.34]{HinSch06}, where it forms the
upper left block of the Nevanlinna matrix $\wh M(\l)$ (here $\wh M(\l)$ is the
Titchmarsh - Weyl  coefficient of the ''doubled'' system with equal deficiency
indices).
\end{remark}
\subsection{Symmetric systems with minimal equal deficiency indices}
It follows from \eqref{3.17c} and \eqref{3.17d} that the minimally possible
equal deficiency indices of $\Tmi$ are
\begin {equation}\label{6.15}
n_+= n_-=\nu_-
\end{equation}
and the equalities \eqref{6.15} hold if and only if $\nu_{b-}=0$ and
$\nu_{b+}=\wh\nu$. Therefore by \eqref{3.20} and Proposition \ref{pr3.3} the
(ordinary) decomposing boundary triplet for $\Tma$ in the case \eqref{6.15}
takes the form $\Pi=\{H_0, \G_0, \G_1\}$ with
\begin {equation}\label{6.16}
\G_0\{\wt y,\wt f\}=(-\G_{1a}+ i (\wh\G_a - \wh\G_b))y, \quad \G_1\{\wt y,\wt
f\}=(\G_{0a}+ \tfrac 1 2  (\wh\G_a + \wh\G_b))y, \quad \{\wt y,\wt f\}\in \Tma,
\end{equation}
where $\wh\G_b:\dom\tma\to \wh H$ is a surjective linear mapping such that
\begin {equation*}
[y,z]_b=i(\wh\G_b y, \wh\G_b z), \quad y,z\in\dom\tma.
\end{equation*}
Moreover, the extension \eqref{3.48} coincides with the self-adjoint extension
\begin {equation}\label{6.17}
A_0(=\ker \G_0)=\{\{\wt y,\wt f\}\in\Tma: \G_{1a}y=0, \; \wh\G_a y = \wh\G_b y
\}
\end{equation}
and the canonical resolvent $R_0(\l)=(A_0-\l)^{-1}, \; \l\in\CR,$ is defined by
the boundary value problem (c.f. \eqref{4.0.5})
\begin{gather}
J y'-B(t)y=\l\D (t)y +\D(t) f(t), \quad t\in\cI,\label{6.18}\\
\G_{1a}y =0,  \qquad \wh\G_a y=\wh\G_b y, \;\;\;\l\in\CR.\label{6.19}
\end{gather}
It follows from Theorem \ref{th4.2} that in the case \eqref{6.15} for each
$\l\in\CR$ there exists a unique operator solution $v(\cd,\l)\in\lo{H_0}$ of
Eq. \eqref{3.2} such that
\begin {equation*}
\G_1 v(\l)=-P_H, \qquad i(\wh\G_a y-\wh\G_b)v(\l)=P_{\wh H}, \quad \l\in\CR,
\end{equation*}
and according to Definition \ref{def5.2} and Proposition \ref{pr5.3} the
(canonical) $m$-function $m(\cd)$ of the boundary value problem \eqref{6.18},
\eqref{6.19} is defined by \eqref{6.7} or, equivalently, by the relations
\begin {equation*}
v(t,\l):=\f (t,\l)m(\l)+\psi (t,\l)\in\lo{H_0}, \qquad  i(\wh\G_a-\wh\G_b)
v(\l)=P_{\wh H}, \;\;\l\in\CR.
\end{equation*}
Finally, \eqref{4.10a} and Corollary \ref{cor5.4a} yield the following
proposition.
\begin{proposition}\label{pr6.4}
Let $\Tmi$ has minimal equal deficiency indices \eqref{6.15}, let $\Pi=\{H_0,
\G_0, \G_1 \}$ be the decomposing boundary triplet \eqref{6.16} for $\Tma$, let
$\g(\cd)$ and $M(\cd)$ be the corresponding $\g$-field and Weyl function
respectively and let $m(\cd)$ be the $m$-function of the boundary value problem
\eqref{6.18}, \eqref{6.19}. Then
\begin {equation*}
\g(\l)=\pi v(\l), \qquad M(\l)=m(\l), \qquad \l\in\CR,
\end{equation*}
and the identity \eqref{5.39} holds with $m_\tau(\l)=m(\l)$ and
$v_\tau(t,\l)=v(t,\l)$.
\end{proposition}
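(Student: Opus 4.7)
The plan is to exploit the fact that the hypothesis of minimal equal deficiency indices collapses the singular boundary space $\cH_b$ to zero, after which the assertion reduces to specializations of results already established. I would begin with the numerology: by \eqref{3.17c}, $\nu_{b+} = n_+ - \nu_+ = \nu_- - \nu_+ = \wh\nu$ and $\nu_{b-} = n_- - \nu_- = 0$, so \eqref{3.20} forces $\dim\cH_b = 0$ and $\dim\wh\cH_b = \wh\nu = \dim\wh H$. Identifying $\cH_b = \{0\}$ and $\wh\cH_b = \wh H$ puts us in Case 1 with $\wt\cH_b = \cH_b = \{0\}$, so that the decomposing boundary triplet of \eqref{3.35.2} collapses to the ordinary triplet $\Pi = \{H_0, \G_0, \G_1\}$ of \eqref{6.16}.

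To identify $\g(\l) = \pi v(\l)$, I would apply \eqref{4.10a} of Proposition \ref{pr4.1}: since $\cH_b = \{0\}$, the boundary conditions \eqref{4.5}, \eqref{4.6} defining the solution $v_0(\cd,\l)$ of Proposition \ref{pr4.1} become vacuous, so $v_0(\cd,\l)$ coincides with the solution $v(\cd,\l)$ of the boundary value problem \eqref{6.18}, \eqref{6.19}. The triplet $\Pi$ being ordinary, Remark \ref{rem2.13} identifies $\g(\l) = \g_\pm(\l)$ for $\l \in \bC_\pm$, and the conclusion $\g(\l) = \pi v(\l)$ follows. To identify $M(\l) = m(\l)$, I would invoke Corollary \ref{cor5.4a} with the trivial parameter $\tau_0 = \{(I_{\cH_b}, 0); \cH_b\}$, which is the unique element of $\wt R^0(\cH_b)$ when $\cH_b = \{0\}$: it gives $m_0(\l) = m_{\tau_0}(\l) = m(\l)$, while the block representation \eqref{5.8} of $M(\l)$ degenerates to its $(1,1)$-entry $m_0(\l)$, so $M(\l) = m_0(\l) = m(\l)$.

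Finally, the identity \eqref{5.39} with $m_\tau(\l) = m(\l)$ and $v_\tau(t,\l) = v(t,\l)$ drops out as an instance of Proposition \ref{pr5.7}, 3) applied to $\tau_0$; equivalently, it follows from the abstract identity \eqref{2.49} for the ordinary triplet $\Pi$ combined with Lemma \ref{lem3.2}, 2) to rewrite the inner product $(\pi v(\l))^* \pi v(\mu)$ as the required integral. The main obstacle -- such as it is -- is the opening numerology that forces $\cH_b = \{0\}$; after that, the work is essentially bookkeeping, namely checking that all the singular boundary data trivialize in this degenerate case and that the solution $v$ of \eqref{6.18}, \eqref{6.19} genuinely coincides with the $v_0$ of Proposition \ref{pr4.1} for the trivial parameter $\tau_0$.
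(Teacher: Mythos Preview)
Your proposal is correct and follows essentially the same route as the paper, which simply cites \eqref{4.10a} and Corollary~\ref{cor5.4a} to obtain the proposition; you have merely filled in the bookkeeping (the numerology forcing $\cH_b=\{0\}$, the identification $v_0=v$, and the appeal to Proposition~\ref{pr5.7} or equivalently \eqref{2.49} for the integral identity) that the paper leaves implicit.
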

\subsection{Hamiltonian systems}
Recall that the system \eqref{3.1} ia called Hamiltonian if $\nu_+=\nu_-=:\nu$,
in which case the following assertions hold:

1) $\wh H=\{0\}$, so that $\bH=H\oplus H$ (with $\dim H=\nu$) and the signature
operator \eqref{1.3} takes the form \eqref{1.5a};

2) the deficiency indices of $\Tmi$ are $n_\pm=\nu+\nu_{b\pm}$ (c.f.
\eqref{3.17c});

3) the block representation \eqref{3.22} of the mapping $\G_a$ takes the form
\begin {equation*}
\G_a= \begin{pmatrix}\G_{0a}\cr \G_{1a} \end{pmatrix}:\AC\to H\oplus H.
\end{equation*}

In this subsection we let
\begin {equation*}
\a=\sign (\nu_{b+}-\nu_{b-})=\sign (n_+ - n_-).
\end{equation*}
Clearly, the Hamiltonian system \eqref{3.1} is in \emph{Case 1} when $\a\in
\{0,1\}$ and in \emph{Case 3} when $\a=-1$. Moreover, in view of \eqref{3.17d}
 $n_+=n_-$ if and only if
\begin {equation}\label{6.22}
\nu_{b+}=\nu_{b-}=:\nu_b.
\end{equation}

Assume that $\cH_b$ and $\wh\cH_b$ are Hilbert spaces and $\G_b$ is a
surjective linear map \eqref{3.18} such that \eqref{3.19} holds. Let
$\wt\cH_b=\cH_b\oplus\wh\cH_b$ and let $\wt\G_{0b}:\dom\tma\to \wt\cH_b$ be the
mapping \eqref{3.39b}.

The following proposition is implied by Proposition \ref{pr3.3}.
\begin{proposition}\label{pr6.14}
Let in the case of the Hamiltonian system \eqref{3.1} $\cH_0:=H\oplus\wt \cH_b,
\; \cH_1= H\oplus \cH_b$ and let $\G_j:\Tma\to\cH_j, \; j\in\{0,1\},$  be the
linear maps given for $\{\wt y,\wt f\}\in \Tma$ by
\begin {equation}\label{6.23}
\G_0\{\wt y,\wt f\}=\{-\G_{1a}y, \wt\G_{0b}y\}(\in H\oplus\wt\cH_b),\quad
\G_1\{\wt y,\wt f\}=\{\G_{0a}y, -\G_{1b}y\}(\in H\oplus\cH_b) .
\end{equation}
Then the collection $\Pi_\a=\bta$ is a decomposing boundary triplet for $\Tma$.

If in addition $n_+=n_-$, then $\Pi_\a$ turns into the ordinary boundary
triplet $\Pi=\bt$, where $\cH=H\oplus\cH_b$ and the operators $\G_0$ and $\G_1$
are given by \eqref{6.23}  with $\wt\cH_b=\cH_b$ and $\wt\G_{0b}=\G_{0b}$.
\end{proposition}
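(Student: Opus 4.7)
The plan is to obtain Proposition \ref{pr6.14} as a direct specialization of Proposition \ref{pr3.3} to the Hamiltonian setting. The key simplifying observation is that for a Hamiltonian system one has $\wh H=\{0\}$, so $H_0=H$ and all terms in the general formulas \eqref{3.35} and \eqref{3.40} involving $\wh\G_a$, $P_{\wh H}\wh\G_b$, $P_{\wh\cH_b}$, etc.\ either vanish or reduce trivially.

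First I would split into the two possibilities allowed for a Hamiltonian system. Since $\nu_+=\nu_-$, the sign $\a=\sign(\nu_{b+}-\nu_{b-})$ places us in \emph{Case 1} when $\a\in\{0,+1\}$ (where $\nu_{b+}-\nu_{b-}\geq \nu_--\nu_+=0$) and in \emph{Case 3} when $\a=-1$. \emph{Case 2} is impossible since it requires $\nu_--\nu_+>0$. In \emph{Case 1}, formula \eqref{3.35} with $\wh H=\{0\}$ becomes
\begin{equation*}
\G_0'=\begin{pmatrix}-\G_{1a}\cr \wt\G_{0b}\end{pmatrix},\qquad \G_1'=\begin{pmatrix}\G_{0a}\cr -\G_{1b}\end{pmatrix},
\end{equation*}
and similarly in \emph{Case 3} formula \eqref{3.40} collapses to the very same pair of maps (since the middle row $i\wh\G_a$ disappears together with $\wh H$, and $H\oplus\wh\cH_b$ becomes $H\oplus\wh\cH_b=H\oplus\wh\cH_b$, so $\wt\cH_b=\cH_b\oplus\wh\cH_b$ appears already in the definition of $\G_0$). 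In both cases the resulting maps $\G_j'$ coincide with those in \eqref{6.23} once we pass to $\G_j:\Tma\to\cH_j$ via \eqref{3.42}.

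Next I would invoke Proposition \ref{pr3.3}: since the operators in \eqref{6.23} are precisely the specializations of the maps produced by that proposition in \emph{Cases 1} and \emph{3}, the collection $\Pi_\a=\bta$ is automatically a decomposing boundary triplet for $\Tma$ in the sense of Definition \ref{def3.4}. No independent verification of the Green's identity or surjectivity is needed, since these follow from the general statement.

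For the last claim, assume $n_+=n_-$. Then \eqref{3.17d} together with $\wh\nu=0$ forces $\nu_{b+}=\nu_{b-}=:\nu_b$, and \eqref{3.20} yields $\dim\wh\cH_b=|\nu_{b+}-\nu_{b-}|=0$, so $\wh\cH_b=\{0\}$ and consequently $\wt\cH_b=\cH_b$, $\wt\G_{0b}=\G_{0b}$. Hence $\cH_0=\cH_1=H\oplus\cH_b$, which matches the setting of an ordinary boundary triplet in the sense of \cite{GorGor,Mal92}. The triplet then has the stated form $\Pi=\bt$ with the Hilbert space $\cH=H\oplus\cH_b$ and the maps \eqref{6.23}. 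Since the argument is a pure specialization, there is no real obstacle; the only point requiring a moment of attention is checking that \emph{Case 1} and \emph{Case 3} produce identical formulas in the Hamiltonian situation, which is immediate once all $\wh H$-contributions are dropped.
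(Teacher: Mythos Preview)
Your proposal is correct and follows exactly the paper's approach: the paper simply states that Proposition \ref{pr6.14} is implied by Proposition \ref{pr3.3}, and your argument spells out precisely how that specialization works when $\wh H=\{0\}$. Your case analysis (\emph{Case 2} being excluded, \emph{Cases 1} and \emph{3} collapsing to the same formulas, and the reduction $\wt\cH_b=\cH_b$ when $n_+=n_-$) is accurate and is more detailed than what the paper itself provides.
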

It follows from Propositions \ref{pr3.6} and \ref{pr3.8} that in the case of
the Hamiltonian system  the equality
\begin {equation}\label{6.25}
T=\{\{\wt y,\wt f\}\in\Tma: \G_{1a}y=0, \; \wt\G_{0b}y=\G_{1b}y =0\}
\end{equation}
defines a symmetric extension $T$ of $\Tmi$ with the deficiency indices
$n_\pm(T)=\nu_{b\pm}$. If in addition $n_+=n_-$, then in \eqref{6.25}
$\wt\G_{0b}=\G_{0b}$ and $T$ has equal deficiency indices
$n_+(T)=n_-(T)=\nu_b$.

Theorems \ref{th4.0.1} and \ref{th4.5} yield the following theorem.
\begin{theorem}\label{th6.5}
Let in the case of the Hamiltonian system \eqref{3.1} $\Pi_\a=\bta$ be a
decomposing boundary triplet \eqref{6.23} for $\Tma$. If $\pair\in \wt
R_\a(\wt\cH_b,\cH_b)$ is a collection \eqref{2.2}, then for every $f\in\lI$ the
boundary value  problem
\begin{gather}
J y'-B(t)y=\l \D(t)y+\D(t)f(t), \quad t\in\cI,\label{6.26}\\
\G_{1a}y=0, \quad \l\in\CR \label {6.27}\\
C_0(\l)\wt\G_{0b}y+C_1(\l)\G_{1b}y=0, \;\; \l\in\bC_+; \quad
D_0(\l)\wt\G_{0b}y+D_1(\l)\G_{1b}y=0, \;\; \l\in\bC_-, \label{6.27a}
\end{gather}
 has a unique solution $y(t,\l)=y_f(t,\l) $ and the equality \eqref{4.0.4}
defines a generalized resolvent $R(\l)=:R_\tau(\l)$ of $T$ (see \eqref{6.25}).
Conversely, for each generalized resolvent $R(\l)$ of $T$ there exists a unique
$\tau\in\wt R_\a(\wt\cH_b,\cH_b) $ such that $R(\l)=R_\tau(\l)$.

If in addition $n_+=n_-$ and $\Pi=\bt$ is  an ordinary decomposing boundary
triplet  for $\Tma$, then the statements of the  theorem  hold with the
Nevanlinna operator pairs $\tau\in\wt R(\cH_b)$ in the form \eqref{2.19} and
the  boundary conditions
\begin {equation}\label {6.28}
\G_{1a}y=0, \qquad  C_0(\l)\G_{0b}y +C_1(\l)\G_{1b}y=0, \quad \l\in\CR,
\end{equation}
instead  of \eqref{6.27} and \eqref{6.27a}. In this case $R_\tau(\l)$ is a
canonical resolvent of $T$ if and only if $\tau\in\wt R^0(\cH_b)$.
\end{theorem}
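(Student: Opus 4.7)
The plan is to deduce Theorem \ref{th6.5} from Theorems \ref{th4.0.1} and \ref{th4.5} by specializing to the Hamiltonian setting $\wh H=\{0\}$, in which most of the intermediate data collapse. First I would record that under $\wh H=\{0\}$ we have $H_0=H$, the block decomposition \eqref{3.22} of $\G_a$ reduces to $(\G_{0a}:\G_{1a})^\top$, the component $\wh\G_a$ vanishes identically, and $P_{\wh H}=0$. With these substitutions Proposition \ref{pr6.14} (which is the Hamiltonian specialization of Proposition \ref{pr3.3}) identifies $\Pi_\a=\bta$ with the decomposing boundary triplet constructed in \eqref{3.35} when $\a\in\{0,+1\}$ (Case 1, with $\cH_2'=\wh\cH_b$ so that $\wt\cH_b=\cH_b\oplus\wh\cH_b$) or in \eqref{3.40} when $\a=-1$ (Case 3), and the extension $T$ of \eqref{6.25} coincides with \eqref{3.45} or \eqref{3.56} respectively, because the middle boundary condition at $a$ involving $\wh\G_a$ is automatic.

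Next I would split on $\a=\sign(\nu_{b+}-\nu_{b-})\in\{-1,0,+1\}$. For $\a\in\{0,+1\}$ the system lies in Case 1 and Theorem \ref{th4.0.1} applies verbatim: its boundary conditions \eqref{4.0.2}--\eqref{4.0.3} collapse to \eqref{6.27}--\eqref{6.27a} once the redundant middle condition $\wh\G_a y=P_{\wh H}\wh\G_b y$ is discarded, so both the unique solvability of the boundary value problem \eqref{6.26}--\eqref{6.27a} and the bijective parametrization of generalized resolvents by $\tau\in\wt R_{+1}(\wt\cH_b,\cH_b)$ transfer directly. For $\a=-1$ the system lies in Case 3 and Theorem \ref{th4.5} applies similarly; the boundary conditions \eqref{4.60}--\eqref{4.61} collapse to \eqref{6.27}--\eqref{6.27a}, and the converse direction is available precisely because the additional hypothesis $\wh H=\{0\}$ required by the last sentence of Theorem \ref{th4.5} is exactly our Hamiltonian hypothesis.

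For the $n_+=n_-$ addendum, criterion \eqref{3.17d} with $\wh\nu=0$ forces $\nu_{b+}=\nu_{b-}$, whence \eqref{3.20} gives $\wh\cH_b=\{0\}$, so $\wt\cH_b=\cH_b$ and $\wt\G_{0b}=\G_{0b}$. The triplet \eqref{6.23} thus becomes an ordinary boundary triplet $\Pi=\bt$ with $\cH=H\oplus\cH_b$, and Corollary \ref{cor4.0.2} applies, the boundary conditions \eqref{4.0.5} collapsing to \eqref{6.28} and the canonical-resolvent characterization via $\tau\in\wt R^0(\cH_b)$ transferring directly. I do not expect any genuine analytical obstacle; the main work is the bookkeeping required to separate Cases 1 and 3 (and to check that only these two occur in the Hamiltonian setting, which follows from $\nu_+=\nu_-$ together with the definitions of Cases 1--3) and to verify that $\wh H=\{0\}$ trivializes all the $\wh\G_a$-based boundary conditions.
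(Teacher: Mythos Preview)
Your proposal is correct and is precisely the approach the paper takes: the paper simply states that Theorem \ref{th6.5} is implied by Theorems \ref{th4.0.1} and \ref{th4.5}, and your argument spells out exactly the specialization $\wh H=\{0\}$ (whence $\wh\G_a$ and $P_{\wh H}$ vanish, only Cases 1 and 3 can occur, and the hypothesis $\wh H=\{0\}$ needed for the converse in Theorem \ref{th4.5} is automatic) together with Corollary \ref{cor4.0.2} for the $n_+=n_-$ addendum.
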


For Hamiltonian systems the operator solution $v_\tau(\cd,\l)$ takes on values
in $[H,\bH]$ in place of $[H_0, \bH]$ for general systems. More precisely, the
following theorem is implied by Theorems \ref{th4.2} and \ref{th4.7}.
\begin{theorem}\label{th6.6}
Let the assumptions of Theorem \ref{th6.5} be satisfied. Then for each
collection $\pair\in \wt R_\a(\wt\cH_b,\cH_b)$ defined by \eqref{2.2} there
exists a unique solution $v_\tau(\cd,\l)\in\lo{H}$ of Eq. \eqref{3.2} such that
\begin{gather}
\G_{1a}v_\tau(\l)=-I_H, \quad \l\in\CR, \label{6.29}\\
C_0(\l)\wt\G_{0b}v_\tau(\l)+C_1(\l)\G_{1b}v_\tau(\l)=0, \quad  \l\in\bC_+,
\label{6.30}\\
D_0(\l)\wt\G_{0b}v_\tau(\l)+D_1(\l)\G_{1b}v_\tau(\l)=0, \quad  \l\in\bC_-.
\label{6.31}
\end{gather}
If $n_+=n_-$, then $\tau\in\wt R(\cH_b )$ is given by \eqref{2.19} and the
conditions \eqref{6.29}--\eqref{6.31} take the form
\begin {equation}\label {6.32}
\G_{1a}v_\tau(\l)=-I_H, \quad C_0(\l) \G_{0b} v_\tau(\l)+C_1(\l)\G_{1b}
v_\tau(\l)=0, \quad  \l\in\CR.
\end{equation}
\end{theorem}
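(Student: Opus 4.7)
The plan is to reduce to the already-established Theorems \ref{th4.2} (Case 1) and \ref{th4.7} (Case 3), since the Hamiltonian hypothesis forces $\wh H=\{0\}$ and therefore $H_0=H$, $P_H=I_H$, $P_{\wh H}=0$, and $\wh\G_a=0$. First I would verify that the Hamiltonian system falls within the dichotomy \emph{Case 1} or \emph{Case 3}: Case 2 requires $\nu_- - \nu_+ > \nu_{b+}-\nu_{b-}>0$ which is incompatible with $\nu_+=\nu_-$, so Case 1 applies when $\nu_{b+}\ge \nu_{b-}$ (giving $\a\in\{0,+1\}$) and Case 3 applies when $\nu_{b+}<\nu_{b-}$ (giving $\a=-1$). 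Substituting $\wh H=\{0\}$ into the defining formulas \eqref{3.35} and \eqref{3.40} collapses the block structure to precisely the two-component expressions \eqref{6.23}, so the decomposing boundary triplet from Proposition \ref{pr6.14} coincides with the one obtained by specialization.

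Next I would apply Theorem \ref{th4.2} in Case 1 (respectively Theorem \ref{th4.7} in Case 3) to the collection $\tau\in\wt R_\a(\wt\cH_b,\cH_b)$. Both theorems produce a unique operator solution $v_\tau(\cd,\l)\in\lo{H_0}$ of Eq. \eqref{3.2} satisfying the boundary conditions \eqref{4.27}--\eqref{4.28a} in Case 1 and \eqref{4.27}, \eqref{4.79}--\eqref{4.80a} in Case 3. With $H_0=H$, the solution automatically lies in $\lo{H}$; the normalization \eqref{4.27} reads $\G_{1a}v_\tau(\l)=-P_H=-I_H$, which is \eqref{6.29}; the ``middle'' conditions \eqref{4.27a} and \eqref{4.79} become trivial because $P_{\wh H}=0$ and $\wh\G_a=0$; and the conditions \eqref{4.28}, \eqref{4.28a} (respectively \eqref{4.80}, \eqref{4.80a}) are verbatim \eqref{6.30} and \eqref{6.31}. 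Uniqueness is inherited from the uniqueness part of Theorems \ref{th4.2} and \ref{th4.7}, which ultimately rests on the uniqueness of the solution to the boundary problem \eqref{4.0.1}--\eqref{4.0.3} (respectively \eqref{4.59}--\eqref{4.61}) established via Theorems \ref{th4.0.1} and \ref{th4.5}.

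For the case $n_+=n_-$, I would invoke the final statement of Proposition \ref{pr6.14}, which gives $\wt\cH_b=\cH_b$ and $\wt\G_{0b}=\G_{0b}$ when the ordinary boundary triplet $\Pi=\bt$ is used; simultaneously the boundary parameter reduces to a Nevanlinna pair $\tau\in\wt R(\cH_b)$ of the form \eqref{2.19}, so the two half-plane conditions \eqref{6.30}, \eqref{6.31} merge into a single identity valid on $\CR$, yielding \eqref{6.32}. This parallels the passage from Theorem \ref{th4.2} to Corollary \ref{cor4.0.2}.

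I do not expect any serious obstacle: once the dictionary ``Hamiltonian $\Longleftrightarrow$ $\wh H=\{0\}$'' is applied, the theorem is a direct specialization. The only point that deserves care is confirming that the trivial conditions on $P_{\wh H}\wh\G_b v_\tau(\l)$ coming from \eqref{4.27a} and \eqref{4.79} genuinely carry no information in the Hamiltonian setting; this is guaranteed because every occurrence of $\wh H$, $P_{\wh H}$, or $\wh\G_a$ in the general formulas disappears identically, and the $\wt\cH_b$ space (which still carries the singular data at $b$) remains intact.
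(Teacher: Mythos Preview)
Your proposal is correct and follows exactly the route the paper takes: the authors simply state that Theorem \ref{th6.6} is implied by Theorems \ref{th4.2} and \ref{th4.7}, and your argument spells out precisely that specialization (Hamiltonian $\Rightarrow \wh H=\{0\}$, hence $H_0=H$, the $P_{\wh H}$ conditions vanish, and only Cases 1 and 3 occur). The additional care you take in verifying that Case 2 cannot arise and that the middle boundary conditions collapse is appropriate and matches the paper's earlier remark that the Hamiltonian system is in Case 1 when $\a\in\{0,1\}$ and in Case 3 when $\a=-1$.
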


Next, a boundary parameter in the sense of Definition \ref{def5.1} is a
collection $\pair\in \wt R_\a(\wt\cH_b,\cH_b)$ of operator pairs \eqref{2.2}.
Moreover, in the case $n_+=n_-$ the boundary parameter is an operator pair
$\tau\in\wt R(\cH_b )$ given by \eqref{2.19}.

For Hamiltonian systems Definition \ref{def5.2} and  Proposition \ref{pr5.3}
take the following form.
\begin{definition}\label{def6.7}
The operator function $m_\tau(\cd):\CR\to [H]$ defined by
\begin {equation*}
m_\tau(\l)=\G_{0a} v_\tau(\l), \quad \l\in\CR,
\end{equation*}
is called the $m$-function corresponding to the boundary parameter $\tau$ or,
equivalently, to the boundary value problem \eqref{6.26}--\eqref{6.27a}.

If $n_+=n_-$, then $m_\tau(\cd)$ corresponds to the problem \eqref{6.26},
\eqref{6.28}. In this case the $m$-function $m_\tau(\cd)$ is called canonical
if $\tau\in\wt R^0(\cH_b)$.
\end{definition}
\begin{proposition}\label{pr6.7a}
The $m$-function $m_\tau(\cd)$ is a unique $[H]$-valued function such that
\begin {equation}\label{6.33}
v_\tau(t,\l):=\f (t,\l)m_\tau(\l)+\psi (t,\l)\in\lo{H}, \quad \l\in\CR,
\end{equation}
and the boundary conditions \eqref{6.30} and \eqref{6.31} are satisfied. If, in
addition, $n_+=n_-$ and $\tau\in\wt R^0(\cH_b)$ is given by \eqref{2.22}, then
the conditions  \eqref{6.30} and \eqref{6.31} take the form
\begin {equation}\label{6.34}
C_0\G_{0b}v_\tau(\l) + C_1\G_{1b}v_\tau(\l)=0, \quad \l\in\CR.
\end{equation}
In formula \eqref{6.33} $\f(\cd,\l)$ and $\psi(\cd,\l)$ are the operator
solutions of Eq. \eqref{3.2} with values in $[H,H\oplus H]$ and such that
\begin {equation*}
\f_a(\l)=\begin{pmatrix} I_{H}\cr 0\end{pmatrix}:H\to H\oplus H, \quad
\psi_a(\l)=\begin{pmatrix} 0\cr -I_H\end{pmatrix}:H\to H\oplus H, \quad
\l\in\bC.
\end{equation*}
\end{proposition}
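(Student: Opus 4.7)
The plan is to run the argument of Proposition \ref{pr5.3} in its Hamiltonian incarnation. Since $\wh H=\{0\}$, the mapping $\G_a$ reduces to $(\G_{0a},\G_{1a})^\top$ and the initial data \eqref{5.2} simplify to the ones stated here; existence will follow by matching Cauchy data, and uniqueness will follow from the uniqueness clause in Theorem \ref{th6.6}.

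First I would establish existence. Let $m_\tau(\cd)$ be the $m$-function of Definition \ref{def6.7} and $v_\tau(\cd,\l)\in\lo{H}$ the associated operator solution from Theorem \ref{th6.6}. Define $w(t,\l):=\f(t,\l)m_\tau(\l)+\psi(t,\l)$. Using $X_a\f(a,\l)=\binom{I_H}{0}$ and $X_a\psi(a,\l)=\binom{0}{-I_H}$, one computes
\begin{equation*}
\begin{pmatrix}\G_{0a}w(\l)\\ \G_{1a}w(\l)\end{pmatrix}
=\begin{pmatrix}m_\tau(\l)\\ -I_H\end{pmatrix},
\end{equation*}
while by Definition \ref{def6.7} and \eqref{6.29} the analogous identity holds for $v_\tau(\cd,\l)$. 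Since $X_a$ is invertible, $w$ and $v_\tau$ are two solutions of Eq. \eqref{3.2} with the same value at $a$; by uniqueness for the Cauchy problem they coincide on $\cI$. Hence $w=v_\tau\in\lo{H}$ and $w$ satisfies \eqref{6.30} and \eqref{6.31}, proving \eqref{6.33}.

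Next I would prove uniqueness. Suppose $m(\cd):\CR\to [H]$ is another function such that the solution $\wt w(t,\l):=\f(t,\l)m(\l)+\psi(t,\l)$ belongs to $\lo{H}$ and satisfies \eqref{6.30}, \eqref{6.31}. Applying $\G_{1a}$ and using $\G_{1a}\f(\l)=0$, $\G_{1a}\psi(\l)=-I_H$ yields $\G_{1a}\wt w(\l)=-I_H$, i.e. \eqref{6.29}. The uniqueness part of Theorem \ref{th6.6} then forces $\wt w=v_\tau$, and applying $\G_{0a}$ (which vanishes on $\psi$ and equals $I_H$ on $\f$) gives $m(\l)=\G_{0a}\wt w(\l)=\G_{0a}v_\tau(\l)=m_\tau(\l)$.

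For the last assertion, in the case $n_+=n_-$ with $\tau\in\wt R^0(\cH_b)$ written in the normalized form \eqref{2.22}, the operator pairs $\{\tau_+,\tau_-\}$ are both represented by the constant $\{(C_0,C_1);\cH_b\}$, so \eqref{6.30} and \eqref{6.31} collapse to the single boundary condition \eqref{6.34}. The only real bookkeeping point — and the nearest thing to an obstacle — is verifying that the Cauchy data of $\f m_\tau+\psi$ indeed produce the pair $(m_\tau,-I_H)^\top$ in the absence of any $\wh H$-component, which is immediate once the simplified form of \eqref{5.2} in the Hamiltonian setting is written out.
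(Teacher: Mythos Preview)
Your proof is correct and follows essentially the same route as the paper. Proposition \ref{pr6.7a} is stated in the paper as the Hamiltonian specialization of Proposition \ref{pr5.3} without a separate proof, and your argument is precisely that specialization: match the Cauchy data of $\f m_\tau+\psi$ with those of $v_\tau$ to get existence, then invoke the uniqueness clause of Theorem \ref{th6.6} (the Hamiltonian analogue of Theorems \ref{th4.2} and \ref{th4.7} used in the proof of Proposition \ref{pr5.3}) to get uniqueness.
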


The following theorem is implied by  Theorems  \ref{th5.4}, \ref{th5.6} and
Corollary \ref{cor5.4a}.
\begin{theorem}\label{th6.8}
Let  the conditions of Theorem \ref{th6.5} be satisfied and  let
$\tau_0=\{\tau_+,\tau_- \}$ be a boundary parameter \eqref{2.2} defined by
\eqref{4.0.6} in the case $n_+>n_-$ and by \eqref{4.61a} in the case $n_+<n_-$.
Assume also that the Weyl functions $M_\pm(\cd) $ have the block
representations:

--- in the case $n_+ > n_-$
\begin {equation*}
M_+(\l)=\begin{pmatrix} m_0(\l) & M_{2+}(\l) \cr M_{3+}(\l) &
M_{4+}(\l)\end{pmatrix}:H\oplus \wt\cH_b\to H\oplus \cH_b, \quad \l\in\bC_+;
\end{equation*}
--- in the case $n_+ < n_-$
\begin {equation*}
M_-(\l)=\begin{pmatrix} m_0(\l) & M_{2-}(\l) \cr M_{3-}(\l) &
M_{4-}(\l)\end{pmatrix}:H\oplus \wt\cH_b\to H\oplus \cH_b, \quad \l\in\bC_-.
\end{equation*}
Moreover, let $\pair$ be a boundary parameter \eqref{2.2}. Then:  1)
$m_0(\l)=m_{\tau_0}(\l)$; 2) in the case $n_+>n_-$ the equality \eqref{5.5}
holds; 3) in the case $n_+<n_-$ the equality
\begin {equation*}
m_\tau(\l)=m_0(\l)+M_{2-}(\l)(D_0(\l)-D_1(\l)M_{4-}(\l))^{-1}D_1(\l)M_{3-}(\l),
\quad\l\in\bC_-,
\end{equation*}
is valid. Moreover, if $n_+=n_-$ and the Weyl function $M(\l)$ has the block
representation \eqref{5.8} with $H$ instead of $H_0$, then for every boundary
parameter $\tau$ in the form \eqref{2.19} the equality \eqref{5.8a} is valid.
\end{theorem}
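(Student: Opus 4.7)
The plan is to deduce Theorem \ref{th6.8} by specialization from the corresponding general theorems (Theorems \ref{th5.4}, \ref{th5.6} and Corollary \ref{cor5.4a}), using the fact that for a Hamiltonian system $\wh H=\{0\}$ forces $H_0=H$, $P_H=I_H$ and $P_{\wh H}=0$, so that all blocks involving the auxiliary space $\wh H$ collapse.

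First I would record the structural simplifications that result from $\wh H=\{0\}$. In the representations of the decomposing triplet used in Proposition \ref{pr6.14}, the maps $\wh\G_a$ and $\wh\G_b$ disappear, and the boundary condition \eqref{4.27a} (together with its analogue \eqref{4.79}) becomes vacuous, so that \eqref{4.27}--\eqref{4.28a} (in Case 1) and \eqref{4.79}--\eqref{4.80a} (in Case 3) reduce precisely to \eqref{6.29}--\eqref{6.31}. Likewise, the solutions $\varphi(\cdot,\l),\,\psi(\cdot,\l)$ of Proposition \ref{pr5.3} take on values in $[H,H\oplus H]$ with the simplified initial data given in Proposition \ref{pr6.7a}, and $m_\tau(\l)$ reduces to $\G_{0a}v_\tau(\l)$.

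Next I would check that the decomposing triplet \eqref{6.23} is the Hamiltonian instance of the triplet from Proposition \ref{pr3.3}. In Case 1 (i.e.\ $\nu_{b+}\geq\nu_{b-}$, equivalently $n_+\geq n_-$) one has $H_0=H$, the term $i(\wh\G_a-P_{\wh H}\wh\G_b)y$ in \eqref{3.35} vanishes, and $\frac12(\wh\G_a+P_{\wh H}\wh\G_b)y$ vanishes as well; hence \eqref{3.35} specializes to \eqref{6.23}. In Case 3 the analogous collapse of the terms $i\wh\G_a$ and $\wh\G_b$ in \eqref{3.40} again gives \eqref{6.23}. With this identification, the Weyl functions $M_\pm(\cdot)$ of Proposition \ref{pr6.14} are obtained from the general ones by erasing the rows/columns corresponding to $\cH_2'$ or $\wh H$ that appear in \eqref{4.1}, \eqref{4.2} and \eqref{4.61b}, \eqref{4.61c}; the $2\times 2$ block form with entries $m_0(\l),M_{2\pm}(\l),M_{3\pm}(\l),M_{4\pm}(\l)$ survives, which is exactly what the statement of Theorem \ref{th6.8} records.

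Finally I would apply the three existing descriptions. For $n_+>n_-$ apply Theorem \ref{th5.4}: the identification $m_0(\l)=m_{\tau_0}(\l)$ and formula \eqref{5.5} carry over verbatim, since the extra $-\tfrac i2 P_{\wh H}$ terms in \eqref{5.0} and \eqref{5.1} vanish. For $n_+<n_-$ apply Theorem \ref{th5.6}: since $\wh H=\{0\}$ implies $N_{1-}(\l)=0$ and $N_{2-}(\l)=0$ in \eqref{4.61c}, the operator $S_-(\l)$ of \eqref{4.83} reduces to $M_{3-}(\l)$, and the triangular representation \eqref{5.17} collapses to the scalar block $m_{1,\tau}(\l)$, yielding exactly the formula stated in assertion 3). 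For $n_+=n_-$ apply Corollary \ref{cor5.4a} with $H_0=H$ to obtain \eqref{5.8a}. The only mildly delicate step will be the bookkeeping of which block in \eqref{4.1}, \eqref{4.2}, \eqref{4.61b}, \eqref{4.61c} survives after setting $\wh H=\{0\}$, i.e.\ checking that the residual Weyl function really has the $2\times 2$ form declared in the hypothesis of the theorem; once this is verified the conclusions are immediate specializations and no further computation is required.
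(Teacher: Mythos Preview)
Your proposal is correct and takes essentially the same approach as the paper, which simply states that Theorem \ref{th6.8} is implied by Theorems \ref{th5.4}, \ref{th5.6} and Corollary \ref{cor5.4a} without further argument. Your sketch supplies precisely the specialization the paper leaves implicit: with $\wh H=\{0\}$ one has $H_0=H$, the $\wh\G_a$ and $P_{\wh H}\wh\G_b$ terms drop out so \eqref{3.35} and \eqref{3.40} collapse to \eqref{6.23}, and in Case~3 the middle $\wh H$-column in \eqref{4.61c} vanishes so $S_-(\l)$ of \eqref{4.83} reduces to $M_{3-}(\l)$, giving assertion~3) directly from \eqref{5.15}.
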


For the Hamiltonian system the minimally possible deficiency indices of $\Tmi$
are
\begin {equation}\label{6.35}
n_+=n_-=\nu(=\dim H),
\end{equation}
in which case the boundary triplet \eqref{6.23} for $\Tma$ takes the form
$\Pi=\{H,\G_0,\G_1\}$ with
\begin {equation}\label{6.36}
\G_0\{\wt y,\wt f\}=-\G_{1a}y,\quad \G_1\{\wt y,\wt f\}=\G_{0a}y, \quad \{\wt
y,\wt f\}\in\Tma,
\end{equation}
and the extension \eqref{6.25} turns into the self-adjoint extension
\begin {equation*}
A_0(=\ker \G_0)= \{\{\wt y,\wt f\}\in\Tma:\,\G_{1a}y=0 \}.
\end{equation*}
Moreover, the resolvent $R(\l)=(A_0-\l)^{-1}, \; \l\in\CR,$ of this extension
is defined by the boundary value problem  \eqref{6.26}, \eqref{6.27}.

It follows from Theorem \ref{th6.6} that in the case  \eqref{6.35} for each
$\l\in\CR$ there exists a unique operator solution $v(\cd,\l)\in\lo{H}$ of Eq.
\eqref{3.2} such that $\G_{1a}v(\l)=-I_H$. The $m$-function of the problem
\eqref{6.26}, \eqref{6.27} is defined by $m(\l)=\G_{0a}v(\l), \; \l\in\CR,$ or,
equivalently, by the relation
\begin {equation*}
v(t,\l):=\f (t,\l)m(\l)+\psi (t,\l)\in\lo{H}, \;\;\l\in\CR.
\end{equation*}
 Note in conclusion, that according to Proposition \ref{pr6.4}
$m(\cd)$ is the Weyl function of the boundary triplet \eqref{6.36}.
\begin{remark}\label{rem6.9}
Assume that $n_+=n_-$ and the operators $\G_{0b}$ and $\G_{1b}$ in \eqref{6.23}
are defined by  \eqref{3.33}. Moreover, let  $\tau\in\wt R^0(\bC^{\nu_b})$ be a
self-adjoint boundary parameter \eqref{2.22}. Then by using the matrix
representation of $C_0$ and $C_1$ one can express the boundary condition
\eqref{6.34} as
\begin {equation*}
[v_\tau(\cd,\l)h, \chi_j ]_b=0, \quad h\in H, \quad j=1\div \nu_b,
\end{equation*}
where $\chi_j\in\dom\tma$ are linear combinations of $\psi_k $ and $\t_k$. This
and Proposition \ref{pr6.7a} imply that the canonical $m$-function
$m_\tau(\cd)$ is the Titchmarsh - Weyl coefficient of the Hamiltonian system in
the sense of \cite{HinSch93}.
\end{remark}
\section{Applications to differential operators}
In this section we apply the above results to operators generated by a
differential expression $l[y]$ of  an odd order $r=2n+1$ defined on an interval
$\cI=[a,b\rangle\;(-\infty< a<b\leq \infty)$ with the regular endpoint $a$.
Such an expression is of the form \cite{Wei}
\begin {equation}\label{6.39}
l[y]=\tfrac 1 w \left\{ \sum_{k=0}^n  (-1)^k \left( \tfrac {i}{2}
[(q_{n-k}y^{(k)})^{(k+1)}+(q_{n-k} y^{(k+1)})^{(k)}] +
(p_{n-k}y^{(k)})^{(k)}\right) \right\},
\end{equation}
where $p_j(\cd), \; q_j(\cd)$ and $w(\cd)$ are real valued functions on $\cI$
such that : 1) $p_j(\cd)$ and $q_j(\cd)$ are smooth enough and $q_0(t)>0,\;
t\in\cI $; \; 2) $w(\cd)\in L_1([a,\b])$ for each $\b\in\cI$ and $w(t)>0$ a.e.
on $\cI$. Denote by $y^{[k]}(\cd), \; k=0\div 2n+1$ the quasi-derivatives of a
complex-valued function $y(\cd)\in Ac(\cI)$ and let $\dom l$ be the set of
functions $y(\cd)$ for which $l[y]:=y^{[2n+1]}$ makes sense
\cite{Wei,KogRof75,Rof69}. For each $y\in \dom l$ we let
\begin{gather*}
y^{(1)}(t):=\{y^{[k-1]}(t)\}_{k=1}^n (\in \bC^n), \qquad y^{(2)}(t):=
\{y^{[2n-k+1]}(t)\}_{k=1}^n (\in \bC^n),\\
\bold y(t)=\{y^{(1)}(t),\,\wh y(t),\,y^{(2)}(t)\}\, (\in \bC^{2n+1}),
\end{gather*}
where $\wh y(t)=q_0^{\frac 1 2}(t)y^{(n)}(t)$. Moreover, for each $m$-component
operator solution
\begin {equation}\label{6.40}
Y(t,\l)= (y_1(t,\l)\,:\; y_2(t,\l)\,:\;\dots\; y_m(t,\l) ): \bC^m\to \bC
\end{equation}
of the differential equation
\begin {equation}\label{6.41}
l[y]=\l y \quad (\l\in\bC)
\end{equation}
we put
\begin{gather*}
Y^{(j)}(t,\l)=(y_1^{(j)}(t,\l)\,:\; y_2^{(j)}(t,\l)\,:\;\dots\;:\,
y_m^{(j)}(t,\l)
): \bC^m\to \bC^n, \quad j\in \{1,2\},\\
\wh Y(t,\l)=(\wh y_1(t,\l)\,:\; \wh y_2(t,\l)\,:\;\dots\; :\, \wh y_m(t,\l) ):
\bC^m\to \bC,\\
\bold Y (t,\l)=(Y^{(1)}(t,\l)\,:\,\wh Y(t,\l)\,:\,Y^{(2)}(t,\l))^\top: \bC^m\to
\bC^{2n+1}, \quad t\in\cI.
\end{gather*}

Next assume that $L_r^2(\cI)$ is the Hilbert space of all complex-valued Borel
functions on $\cI$ such that $\int_\cI r(t)|f(t)|^2 \, dt<\infty$. It is known
\cite{Wei} that the expression \eqref{6.39} generates in $L_r^2(\cI)$ the
maximal operator $L$ and the minimal operator $L_0$. Moreover, $L_0$ is a
closed densely defined symmetric operator and $L_0^*=L$.

By using the results of \cite{KogRof75} one can easily prove the following
assertion.
\begin{assertion}\label{as6.10}
Let $l[y]$ be the expression \eqref{6.39} and let
\begin {equation*}
J_0=\begin{pmatrix} 0 & 0& -I_n \cr 0 & i I_1 & 0 \cr I_n & 0 & 0
\end{pmatrix}\in [\bC^n\oplus\bC\oplus\bC^n], \quad \D_0(t)=\begin{pmatrix}
w(t) & 0 \cr 0 & 0 \end{pmatrix} \in [\bC\oplus \bC^{2n}].
\end{equation*}
Then there exists a continuous  operator function $B_0(t)=B_0^*(t)\in
[\bC^{2n+1}]$ (defined in terms of $p_j$ and $q_j$) such that:

1) a complex-valued function $ y(\cd,\l)$  (operator function $Y(\cd,\l)$ of
the form \eqref{6.40}) is a solution of Eq. \eqref{6.41} if and only if $\bold
y (\cd,\l) $ (resp. $\bold Y(\cd,\l)$) is a solution of the symmetric system
\begin {equation}\label{6.43}
J_0y'-B_0(t)y=\l\D_0(t)y, \quad t\in\cI;
\end{equation}

2) the equality
\begin {equation}\label{6.44}
V\{y,f\}=\{\bold y, \dot f\}, \quad \{y,f\}\in \rm {gr}\, L,
\end{equation}
with $\dot f(t)=\{ f(t), \, 0,\; \dots ,\; 0\}(\in \bC^{2n+1})$ defines a
unitary operator  $V:\rm {gr} L\to \tma$, where $\tma$ is the maximal relation
in $\cL_{\D_0}^2 (\cI)$ for the system
\begin {equation}\label{6.45}
J_0y'-B_0(t)y=\D_0(t)f(t), \quad t\in\cI.
\end{equation}
Moreover, $V \,\rm{gr} L_0=\tmi$, where $\tmi$ is the minimal relation for the
system \eqref{6.45}.
\end{assertion}
Assertion \ref{as6.10} enables us to identify all the objects related to the
expression \eqref{6.39} with similar objects for the system \eqref{6.45}. In
particular, we assume that:\, 1) $\nu_{b+}$ and $\nu_{b-}$ are indices of
inertia of the bilinear form \eqref{3.7} for the system \eqref{6.45}; \, 2) the
linear map $\G_b$ in \eqref{3.18} is defined on $\dom L$, so that $\G_{0b}y,\;
\G_{1b}y $ and $\wh\G_b y$ are the singular boundary values of a function
$y\in\dom L$ and its quasi-derivatives  (c.f. Remark \ref{rem3.2a}). Moreover,
let $X_a\in [\bC^{2n+1}]$ be the operator such that $X_a^*J_0X_a=J_0$ and let
\begin {equation*}
\G_a = (\G_{0a}\,:\, \wh \G_a \,:\, \G_{1a})^\top : \dom l\to \bC^n\oplus
\bC\oplus\bC^n
\end{equation*}
be the linear map given by $\G_a y=X_a \bold y(a), \; y\in\dom l$.

Clearly, for the system \eqref{6.45} one has $\nu_- -\nu_+=1$. Hence this
system is either in \emph{Case 1} or in \emph{Case 3} and the reasonings in
Subsection \ref{sub2.3} take the following form:

1) \emph{Case 1}: $\;\nu_{b+}-\nu_{b-}\geq 1$ or, equivalently, $n_-(L_0)\leq
n_+(L_0)$. In this case we put $d=\nu_{b+}-\nu_{b-}-1, \; \cH_b=\bC^{\nu_{b-}}$
and $\wh \cH_b=\bC^d\oplus\bC$ (c.f. \eqref{3.34.0}), so that the operator
$\wh\G_b$ can be represented as
\begin {equation*}
\wh\G_b=(\wh\G_b' \,:\, \wh\G_b'')^\top:\dom L\to\bC^d\oplus\bC.
\end{equation*}
This implies that $\wt\cH_b=\bC^{\nu_{b-}}\oplus\bC^d$ and by \eqref{3.34a}
$\wt\G_{0b}=(\G_{0b}:\wh\G_b' )^\top$.

2)\emph{ Case 3}: $\;\nu_{b+}-\nu_{b-}\leq 0$ or, equivalently, $n_+(L_0)<
n_-(L_0)$. We put $d'=\nu_{b-}-\nu_{b+},\;\cH_b=\bC^{\nu_{b+}} $ and $\wh
\cH_b=\bC^{d'}$ (c.f. \eqref{3.39a}). Then $\wt\cH_b=
\bC^{\nu_{b+}}\oplus\bC^{d'}(=\bC^{\nu_{b-}})$ and in view of \eqref{3.39b} one
has
$$
\wt\G_{0b}=(\G_{0b}:\wh\G_b )^\top:\dom L\to\bC^{\nu_{b+}}\oplus\bC^{d'}.
$$

Now by using Assertion \ref{as6.10} one can easily reformulate all the previous
results for symmetric systems \eqref{3.1} in terms of the expression
\eqref{6.39}. For example Theorems \ref{th4.2}, \ref{th4.4} and \ref{th4.7}
take the following form.
\begin{theorem}\label{th6.11}
Let $\pair\in\wt R_\a (\wt\cH_b,\cH_b)$ be a collection of operator pairs
\eqref{2.2} with $\a=+1$ in the case $n_-(L_0)\leq n_+(L_0)$ and $\a=-1$ in the
case $n_+(L_0)< n_-(L_0)$. Then for each $\l\in\CR$ there exists a unique
$(n+1)$-component operator solution
\begin {equation}\label{6.47}
v_\tau(t,\l)= (v_1(t,\l)\,:\; v_2(t,\l)\,:\;\dots\; v_n(t,\l)\,:\,v_{n+1}(t,\l)
): \bC^n\oplus\bC\to \bC
\end{equation}
of Eq. \eqref{6.41} such that $v_j(\cd,\l)\in L_r^2(\cI), \; j=1\div (n+1),$
and the following boundary conditions are satisfied:

1) in the case $n_-(L_0)\leq n_+(L_0)$
\begin{gather}
\G_{1a}v_\tau(\l)=(-I_{\bC^n}\,:\, 0):\bC^n\oplus\bC\to \bC^n, \quad
\l\in\CR,\label{6.48}\\
i(\wh \G_a-\wh \G_b'')v_\tau(\l)=(0_{\bC^n}\,:\, I_\bC):\bC^n\oplus\bC\to \bC,
\quad \l\in\CR,\label{6.49}\\
C_0(\l)\wt\G_{0b}v_\tau(\l)+C_1(\l)\G_{1b}v_\tau(\l)=0,
\;\;\l\in\bC_+,\label {6.50} \\
D_0(\l)\wt\G_{0b}v_\tau(\l)+D_1(\l)\G_{1b}v_\tau(\l)=0, \;\;\l\in\bC_-.\label
{6.51}
\end{gather}

2) in the case $n_+(L_0)< n_-(L_0)$ -- the conditions \eqref{6.48},
\eqref{6.50}, \eqref{6.51} and
\begin {equation}\label{6.52}
i\wh \G_a v_\tau(\l)=(0_{\bC^n}\,:\, I_\bC):\bC^n\oplus\bC\to \bC, \quad
\l\in\bC_-.
\end{equation}

Here the linear map $v_\tau(\l):\bC^n\oplus \bC\to \gN_\l(L_0)$ is given by
\begin {equation*}
(v_\tau(\l)h)(t)=v_\tau(t,\l)h=\sum_{j=1}^{n+1}v_j(t,\l)h_j, \quad
h=\{h_1,\,h_2, \, \dots , \, h_n, \, h_{n+1} \}\in\bC^n\oplus \bC,
\end{equation*}
so that $\G_a v_\tau(\l)=X_a \bold v_\tau(a,\l)$.
\end{theorem}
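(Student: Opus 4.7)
The plan is to pull Theorems \ref{th4.2} and \ref{th4.7}, which give existence and uniqueness of the solution $v_\tau(\cd,\l)\in \lo{H_0}$ for the symmetric system \eqref{3.2}, back through Assertion \ref{as6.10} to the differential equation \eqref{6.41}. The odd-order expression \eqref{6.39} is a definite symmetric system on the $(2n+1)$-dimensional space $\bH=\bC^n\oplus\bC\oplus\bC^n$, with $\nu_+=n$ (so $H=\bC^n$) and $\wh\nu=1$ (so $\wh H=\bC$); hence $H_0=\bC^n\oplus\bC$ and $\nu_--\nu_+=1$, which places the problem in \Ca 1 when $n_-(L_0)\le n_+(L_0)$ (take $\a=+1$) and in \Ca 3 when $n_+(L_0)<n_-(L_0)$ (take $\a=-1$). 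In particular $\wt\cH_b$, $\cH_b$, $\wt\G_{0b}$ and $\G_{1b}$ are exactly those assembled above the theorem, so that $\tau\in\wt R_\a(\wt\cH_b,\cH_b)$ is an admissible boundary parameter in the sense of Definition \ref{def5.1}.

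First I would apply Theorem \ref{th4.2} (in \Ca 1) or Theorem \ref{th4.7} (in \Ca 3) to produce the unique operator solution $V(\cd,\l)\in \lo{H_0}$ of the system \eqref{3.2} satisfying the boundary conditions \eqref{4.27}--\eqref{4.28a}, respectively \eqref{4.27}, \eqref{4.79}--\eqref{4.80a}. By Assertion \ref{as6.10}, part 1) (applied componentwise), $V(t,\l)$ is of the form $\bold Y(t,\l)$ for some $(n+1)$-component operator solution $Y(t,\l):\bC^n\oplus\bC\to\bC$ of \eqref{6.41}; denote this $Y$ by $v_\tau(\cd,\l)$, so that $\bold v_\tau(t,\l)=V(t,\l)$ and \eqref{6.47} holds. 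Since $\D_0(t)=\mathrm{diag}(w(t),0,\dots,0)$, the integrability $V(\cd,\l)h\in\cL_{\D_0}^2(\cI)$ for each $h$ is exactly the statement that the first component $v_\tau(\cd,\l)h\in L_r^2(\cI)$ with $r=w$; testing on basis vectors, this gives $v_j(\cd,\l)\in L_r^2(\cI)$ for $j=1,\ldots,n+1$.

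Next I would translate the abstract boundary conditions into the concrete ones \eqref{6.48}--\eqref{6.52}. Since $\G_a$ acts on the differential-operator side by $\G_a y=X_a\bold y(a)$, the identity $\G_{1a}V(\l)=-P_H$ from \eqref{4.27}, together with the block decomposition $H_0=\bC^n\oplus\bC$, yields \eqref{6.48}. At the right endpoint, the construction of $\wt\G_{0b}$ specified above the theorem gives $P_{\wh H}\wh\G_b=\wh\G_b''$ in \Ca 1 and $P_{\wh H}$ identified with the second projection $(0:I_\bC)$ in both cases; consequently \eqref{4.27a} becomes \eqref{6.49} in \Ca 1, while in \Ca 3 the condition $i\wh\G_a V(\l)=P_{\wh H}$ (valid only for $\l\in\bC_-$ by \eqref{4.79}) becomes \eqref{6.52}. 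The Nevanlinna boundary conditions \eqref{4.28}, \eqref{4.28a}, respectively \eqref{4.80}, \eqref{4.80a}, coincide verbatim with \eqref{6.50}, \eqref{6.51} once one recalls that on the differential-operator side $\wt\G_{0b}$ and $\G_{1b}$ are the linear maps listed in the Case-by-Case discussion. Uniqueness of $v_\tau(t,\l)$ is inherited from the uniqueness of $V(\cd,\l)$ asserted in Theorems \ref{th4.2} and \ref{th4.7}, since the correspondence $Y\leftrightarrow\bold Y$ is a bijection via \eqref{6.44}.

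The main obstacle is bookkeeping rather than a hard estimate: one must verify that the specific choices of $\wh\cH_b$, $\wt\cH_b$ and of the decomposition $\wh\cH_b=\bC^d\oplus\bC$ in \Ca 1 align $P_{\wh H}\wh\G_b$ with $\wh\G_b''$ and $\wt\G_{0b}$ with $(\G_{0b}:\wh\G_b')^\top$ (and analogously in \Ca 3), so that the operator-valued boundary conditions from Sections 4--5 become, term by term, the conditions \eqref{6.48}--\eqref{6.52} of the theorem. Once these identifications are made, the result is essentially a restatement of Theorems \ref{th4.2} and \ref{th4.7} under the unitary identification \eqref{6.44}.
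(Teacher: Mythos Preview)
Your proposal is correct and follows exactly the paper's approach: the paper states that Theorem \ref{th6.11} is simply the reformulation of Theorems \ref{th4.2} and \ref{th4.7} for the odd-order expression via the identification in Assertion \ref{as6.10}, and you carry out precisely this translation, checking the dictionary between the abstract boundary data $(\G_{1a},\wh\G_a,P_{\wh H}\wh\G_b,\wt\G_{0b},\G_{1b})$ and their concrete incarnations for the system \eqref{6.45}. Your observation that only \emph{Cases 1} and \emph{3} arise (since $\nu_--\nu_+=1$) is also made explicitly in the paper just before the theorem.
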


Next, the $m$-function $m_\tau(\cd)$ of the expression $l[y]$ corresponding to
the boundary parameter $\tau\in\wt R_\a (\wt\cH_b,\cH_b)$ (with the same $\a$
as in Theorem \ref{th6.11}) is defined as the $m$-function of the system
\eqref{6.45}. In view of Proposition \ref{pr5.3} this means  that
$m_\tau(\cd):\CR\to [\bC^{n+1}] $ is a unique operator function such that for
every $\l\in\CR$ the $(n+1)$-component operator solution \eqref{6.47} of Eq.
\eqref{6.41} given by
\begin {equation*}
v_\tau(t,\l):=Y_1(t,\l)m_\tau(\l)+Y_2(t,\l)
\end{equation*}
possesses the following properties: 1)$v_j(\cd,\l)\in L_r^2(\cI)$ for all
$j=1\div (n+1)$, 2) $v_\tau(\cd,\l)$ satisfies the boundary conditions
\eqref{6.49}-\eqref{6.51} in the case $n_-(L_0)\leq n_+(L_0)$ and
\eqref{6.50}-\eqref{6.52} in the case $n_+(L_0)< n_-(L_0)$. Here $Y_1(t,\l)$
and $Y_2(t,\l)$ are the $(n+1)$-component operator solutions of Eq.
\eqref{6.41} with the initial data $X_a \bold Y_1(a,\l)= \begin{pmatrix} I_n &
0\cr 0 & I_1 \cr 0 & 0
\end{pmatrix}\in [\bC_n\oplus\bC, \bC_n\oplus\bC\oplus\bC^n]$ and $ X_a \bold Y_2(a,\l)
= \begin{pmatrix} 0 & 0\cr 0 & -\tfrac i 2 I_1 \cr -I_n & 0
\end{pmatrix}\in [\bC_n\oplus\bC, \bC_n\oplus\bC\oplus\bC^n].$

According to results in Section \ref{sect5} $m_\tau(\cd)$ is a Nevanlinna
operator function, which in the case $n_+(L_0)< n_-(L_0)$ has the triangular
form \eqref{5.16} and \eqref{5.17} with $H=\bC^n$ and $\wh H=\bC$.

The reformulations of other results in Sections \ref{sect5} and \ref{sect6} to
the case of the $m$-functions of the differential expression \eqref{6.39} are
left to the reader.

\end{document}